\theoremstyle{definition}
\newtheorem{definition}{Definition}[subsection]
\theoremstyle{plain}
\newtheorem{proposition}[definition]{Proposition}
\newtheorem{propdef}[definition]{Proposition-Definition}
\newtheorem{lemma}[definition]{Lemma}
\newtheorem{theorem}[definition]{Theorem}
\newtheorem{corollary}[definition]{Corollary}
\numberwithin{equation}{section}
\numberwithin{figure}{section}
\newcommand{\N}{\mathbb{N}}
\newcommand{\Q}{\mathbb{Q}}
\newcommand{\C}{\mathbb{C}}
\newcommand{\K}{\mathbf{k}} 
\newcommand{\G}{\mathcal{G}} 
\newcommand{\V}{\mathcal{V}} 
\newcommand{\W}{\mathcal{W}} 
\newcommand{\M}{\mathcal{M}} 
\newcommand{\KX}{\mathbf{k}\langle\langle X \rangle \rangle}
\newcommand{\KY}{\mathbf{k}\langle\langle Y \rangle \rangle}
\newcommand{\KZ}{\mathbf{k}\langle\langle Z \rangle \rangle}
\newcommand{\QX}{\mathbb{Q}\langle\langle X \rangle \rangle}
\newcommand{\QY}{\mathbb{Q}\langle\langle Y \rangle \rangle}
\newcommand{\LibX}{\widehat{\mathfrak{Lib}}(X)}
\newcommand{\DMR}{\mathsf{DMR}} 
\newcommand{\dmr}{\mathfrak{dmr}} 
\newcommand{\Stab}{\mathsf{Stab}} 
\newcommand{\stab}{\mathfrak{stab}} 
\newcommand{\aut}{\mathrm{aut}}
\newcommand{\der}{\mathrm{der}}
\newcommand{\Aut}{\mathrm{Aut}}
\newcommand{\Mor}{\mathrm{Mor}}
\newcommand{\alg}{\mathrm{alg}}
\newcommand{\Mod}{\mathrm{mod}}
\newcommand{\End}{\mathrm{End}}
\newcommand{\Lib}{\mathfrak{Lib}}
\newcommand{\Ad}{\mathrm{Ad}}
\newcommand{\ad}{\mathrm{ad}}
\newcommand{\q}{\mathbf{q}}
\author{YADDADEN Khalef}
\address{Institut de Recherche Mathématique Avancée, UMR 7501, Université de Strasbourg, \, 7 rue René Descartes, 67000 Strasbourg, France}
\email{kyaddaden@math.unistra.fr}
\date{}
\thanks{} 
\title[Crossed product interpretation of the Double Shuffle Lie algebra]{Crossed product interpretation of the Double Shuffle Lie algebra attached to a finite Abelian group}
\begin{document}
	\maketitle
	\begin{abstract}
	    Racinet studied the scheme associated with the double shuffle and regularization relations between multiple polylogarithm values at $N^{th}$ roots of unity and constructed a group scheme attached to the situation; he also showed it to be the specialization for $G=\mu_N$ of a group scheme $\DMR_0^G$ attached to a finite abelian group $G$. Then, Enriquez and Furusho proved that $\DMR_0^G$ can be essentially identified with the stabilizer of a coproduct element arising in Racinet's theory with respect to the action of a group of automorphisms of a free Lie algebra attached to $G$. We reformulate Racinet's construction in terms of crossed products. Racinet's coproduct can then be identified with a coproduct $\hat{\Delta}^{\M}_G$ defined on a module $\hat{\M}_G$ over an algebra $\hat{\W}_G$, which is equipped with its own coproduct $\hat{\Delta}^{\W}_G$, and the group action on $\hat{\M}_G$ extends to a compatible action of $\hat{\W}_G$. We then show that the stabilizer of $\hat{\Delta}^{\M}_G$, hence $\DMR_0^G$, is contained in the stabilizer of $\hat{\Delta}^{\W}_G$. This yields an explicit group scheme containing $\DMR_0^G$, which we also express in the Racinet formalism. 
	\end{abstract}
	
	{\footnotesize \tableofcontents}
	
	\section*{Introduction}
A \emph{multiple L-value} (MLV in short) is a complex number defined by the following series
\begin{equation}
    L_{(k_1, \dots, k_r)}(z_1, \dots, z_r) := \sum_{0 < m_1 < \dots < m_r} \frac{z_1^{k_1} \cdots z_r^{k_r}}{m_1^{k_1} \cdots m_r^{k_r}}
	\label{MLV}
\end{equation}
where $r, k_1, \dots, k_r \in \N^{\ast}$ and $z_1, \dots, z_r$ in $\mu_N$ the group of $N$\textsuperscript{th} roots of unity in $\C$, where $N$ is an integer $\geq 1$. The series (\ref{MLV}) converges if and only if $(k_r, z_r) \neq (1, 1)$.
These values have been defined and studied by Goncharov in \cite{Gon98} and \cite{Gon01} and appear as a generalisation of the so called multiple zeta values which in turn generalise the special values of the Riemann zeta function.
Among the relations satisfied by the MLVs, our main interest here are the \emph{double shuffle and regularisation} ones. Understanding these relations has been greatly improved thanks to Racinet's work \cite{Rac}.

Essentially, he attached to each pair $(G, \iota)$ of a finite cyclic group $G$ and a group injection $\iota : G \to \C^{\times}$, a $\Q$-scheme $\DMR^{\iota}$ which associates to each commutative $\Q$-algebra $\K$, a set $\DMR^{\iota}(\K)$ that can be decomposed as a disjoint union of sets $\DMR^{\iota}_{\lambda}(\K)$ ($\lambda \in \K$). For any $\lambda \in \K$, $\DMR^{\iota}_{\lambda}(\K)$ is a subset of the algebra of non-commutative power series $\KX$ over formal non-commutative variables $x_0$ and $(x_g)_{g \in G}$ satisfying the following conditions : (a) group-likeness for the coproduct $\hat{\Delta} : \KX \to \KX^{\hat{\otimes} 2}$ for which the elements of $X$ are primitive (b) group-likeness of the image in $\KX/\KX x_0$ of a suitable correction of the element for the coproduct $\hat{\Delta}_{\star} : \KX/\KX x_0 \to \left(\KX/\KX x_0\right)^{\hat{\otimes} 2}$ (see \cite{Rac}, Definition 2.3.1) (c) conditions on the degree $1$ and $2$ terms of the element.
The double shuffle and regularisation relations on MLVs are then encoded in the statement that a suitable generating series of these values belongs to the set $\DMR^{\iota_{can}}_{i 2\pi}(\C)$ where $\iota_{can} : G=\mu_N \to \C^{\star}$ is the canonical embedding.
Racinet also proved that for any pair $(G, \iota)$, the set $\DMR^{\iota}_0(\K)$ equipped with the product $\circledast$ given in (\ref{circledast}) is a group that is independent of the choice of the embedding $\iota$, so we denote it $\DMR_0^G(\K)$. The pair $\G(\KX), \circledast)$ is a group (see Proposition-Definition \ref{group_law}) which contains $\DMR_0^G$ as a subgroup.
Thanks to \cite{Rac} Theorem I, the sets $\mathrm{DMR}^{\iota}_{\lambda}(\K)$ have a torsor structure over $(\DMR_0^G(\K), \circledast)$. This motivates the study of this group.

In order to improve the understanding of the group $(\DMR_0^G(\K), \circledast)$, Enriquez and Furusho related this group with the stabilizer $\Stab(\hat{\Delta}_{\star})(\K)$ of the coproduct $\hat{\Delta}_{\star}$ in \cite{EF0} for an action of $(\G(\KX), \circledast)$ on $\Mor_{\K-\Mod}(\KX/\KX x_0, (\KX/\KX x_0)^{\hat{\otimes} 2})$ arising from an action of the latter group on $\KX/\KX x_0$ (see §\ref{subsection_DMR0}).

In addition, Racinet's work also introduced a subalgebra $\KY$ of $\KX$ spanned by the words ending with $x_g$ for some $g \in G$. It is identified, as a $\K$-module, with $\KX/\KX x_0$ and is equipped with a coproduct $\KY \to \KY^{\hat{\otimes} 2}$ compatible with $\hat{\Delta}_{\star}$. For this reason, the former coproduct has also the same notation in \cite{Rac}.
However, we will adopt distinct notation for these two coproducts, by denoting respectively $\hat{\Delta}^{\alg}_{\star}$ and $\hat{\Delta}^{\Mod}_{\star}$ the coproducts on $\KY$ and $\KX/\KX x_0$.
The situation, detailed in §\ref{Racinet_formalism}, may be summarised by the diagram
\begin{equation}
    \label{sequence1}
    \begin{tikzcd}
        \mathbf{k}\langle\langle Y \rangle\rangle \arrow[rr, hook] &  & \mathbf{k}\langle\langle X \rangle\rangle \hspace{-0,5cm} & {} \arrow[loop, distance=2em, in=145, out=215] & \hspace{-1,10cm} \mathbf{k}\langle\langle X \rangle\rangle \arrow[rr, two heads] &  & \mathbf{k}\langle\langle X \rangle\rangle/\mathbf{k}\langle\langle X \rangle\rangle x_0
    \end{tikzcd}
\end{equation}
where the first arrow is an algebra morphism, the second one is the module structure of the algebra $\KX$ on itself, and the last one is a module morphism. The three last terms of sequence (\ref{sequence1}) are equipped with compatible actions of the group $(\G(\KX), \circledast)$ while the first and last terms are equipped with the compatible coproducts $\hat{\Delta}^{\alg}_{\star}$ and $\hat{\Delta}^{\Mod}_{\star}$. The stabilizer group construction of \cite{EF0} is then based on the fourth term of (\ref{sequence1}).

When $G=\{1\}$, it was proved in \cite{EF1} (Part 2, §3) that the subalgebra $\KY$ of $\KX$ is stable under the action of $(\G(\KX), \circledast)$ on $\KX$. One can therefore construct the stabilizer group $\Stab(\hat{\Delta}^{\alg}_{\star})(\K)$ of $\hat{\Delta}^{\alg}_{\star}$ with respect to the action of $(\G(\KX), \circledast)$ on $\Mor_{\K-\Mod}(\KY, \KY^{\hat{\otimes} 2})$. By \cite{EF2} (§3.1), one then has the inclusion $\Stab(\hat{\Delta}^{\Mod}_{\star})(\K) \subset \Stab(\hat{\Delta}^{\alg}_{\star})(\K)$.      

However, if $G \neq \{1\}$ one can see that the previous group action on $\KX$ no longer restricts to an action on $\KY$. This forbids a direct generalisation of the result of \cite{EF2}. Such a generalisation is obtained in §\ref{crossed_product} by introducing an algebra containing $\KX$, namely, the crossed product algebra $\KX \rtimes G$ (see Definition \ref{Def_Crossed_Product}) and developing a formalism on it parallel to Racinet's.
In this framework, there is a subalgebra $\hat{\W}_G$ of $\hat{\V}_G$ isomorphic to the algebra $\KY$ (see Proposition \ref{WG_free_algebra}) and a quotient module $\hat{\M}_G$ of the left regular $\hat{\V}_G$-module isomorphic to the module $\KX/\KX x_0$ (see Proposition \ref{isoMG}). The algebra $\hat{\W}_G$ is equipped with a bialgebra coproduct $\hat{\Delta}^{\W}_G$ and the module $\hat{\M}_G$ is equipped with a compatible coalgebra coproduct $\hat{\Delta}^{\M}_G$.
The group $(\G(\KX), \circledast)$ acts compatibly on the algebra $\hat{\V}_G$ and on its regular left module. In contrast to the situation with $\KY \subset \KX$, the action on the algebra $\hat{\V}_G$ restricts to the subalgebra $\hat{\W}_G$, while the action on the left regular $\hat{\V}_G$-module induces an action of the quotient module $\hat{\M}_G$.
This can be summarised in the following diagram
\begin{equation}
    \label{sequence2}
    \begin{tikzcd}
        \hat{\mathcal{W}}_G \arrow[rr, hook] &  & \hat{\mathcal{V}}_G \hspace{-0,5cm} & \arrow[loop, distance=2em, in=145, out=215] & \hspace{-1cm} \hat{\mathcal{V}}_G \arrow[rr, two heads] &  & \hat{\mathcal{M}}_G
    \end{tikzcd}
\end{equation}
This situation allows us to define two stabilizers : one denoted $\Stab(\hat{\Delta}^{\M}_{G})(\K)$ that is identified with $\Stab(\hat{\Delta}^{\Mod}_{\star})(\K)$ and another one denoted $\Stab(\hat{\Delta}^{\W}_{G})(\K)$.
One shows that the latter group is a generalisation of the group with the same notation defined in \cite{EF2} for $G=\{1\}$. One also shows the inclusion (see Theorem \ref{Stab_Inclusion}, generalising \cite{EF2}, Theorem 3.1)
\[
    \Stab(\hat{\Delta}^{\M}_{G})(\K) \subset \Stab(\hat{\Delta}^{\W}_{G})(\K).
\]

In §\ref{StabRac}, we express the group $\Stab(\hat{\Delta}^{\W}_{G})(\K)$ in Racinet's formalism by working out the suitable isomorphisms (see Proposition \ref{explicit_autY}).

In §\ref{LA_Side}, we show that the group functors $\K \mapsto \Stab(\hat{\Delta}^{\M}_{G})(\K)$ and $\K \mapsto \Stab(\hat{\Delta}^{\W}_{G})(\K)$ are affine $\Q$-group subschemes of $\K \mapsto (\G(\KX), \circledast)$ and study their Lie algebras. We show that these are stabilizer Lie algebras corresponding to the Lie algebra actions which are the infinitesimal versions of the $\Q$-group scheme morphisms obtained from the previous actions of the group $(\G(\KX), \circledast)$.
	\subsubsection*{\textbf{Acknowledgements}} The author is grateful to Benjamin Enriquez for the helpful discussions, ideas and careful reading.
	\subsubsection*{\textbf{Notation}} Throughout this paper, $G$ is a finite abelian group whose product will be denoted multiplicatively. For a commutative $\Q$-algebra $\K$, a $\K$-algebra $A$, an element $x \in A$ and an $A$-module $M$ we consider:
	\begin{itemize}[leftmargin=*]
        \item $\ell_{x} : M \to M$ (resp. $r_{x} : M \to M$) to be the $\K$-module endomorphism defined by $m \mapsto xm$ (resp. $m \mapsto mx$) and if $x$ is invertible, then $\ell_x$ (resp. $r_x$) is an automorphism.
        \item $\ad_x : A \to A$ to be the $\K$-module endomorphism given by $\ad_x(a) = [x, a] = x a - a x$.
        \item $\Ad_x : A \to A$ to be the $\K$-algebra automorphism defined by $a \mapsto x a x^{-1}$ with $x \in A^{\times}$.
    \end{itemize}
	\section{Racinet's formalism of the double shuffle theory} \label{Racinet_formalism}
In this part, we recall from \cite{Rac} the basic formalism of the double shuffle theory, the main ingredients being presented in §\ref{basic_objects}. In §\ref{double_shuffle_group} and §\ref{double_shuffle_LA} we introduce the double shuffle group and the double shuffle Lie algebra respectively; and we recall from \cite{EF0} the stabilizer interpretation of both objects.

\subsection{Basic objects of Racinet's formalism} \label{basic_objects}
Let $\K$ be a commutative $\Q$-algebra. Let $\KX$ be the free noncommutative associative series algebra with unit over the alphabet $X=\{x_0\} \sqcup \{x_g | g \in G\}$. It is complete graded with $\deg(x_0) = \deg(x_g) = 1$ for $g \in G$.
This algebra is endowed with a Hopf algebra structure for the coproduct $\hat{\Delta} : \KX \to {\KX}^{\hat{\otimes} 2}$, which is the unique morphism of topological $\K$-algebras given by $\hat{\Delta}(x_g) = x_g \otimes 1 + 1 \otimes x_g$, for any $g \in G \sqcup \{0\}$ (\cite{Rac}, §2.2.3).
Let then $\G(\KX)$ be the set of grouplike elements of $\KX$ for the coproduct $\hat{\Delta}$. It is a group for the product of $\KX$.  

The group $G$ acts on the set $X$, the permutation $t_g$ corresponding to $g \in G$ being given by $t_g(x_0) = x_0, \, t_g(x_h) = x_{gh}$ for $h \in G$. This action extends to an action by $\K$-algebra automorphisms on $\KX$ (\cite{Rac}, §3.1.1) which will also be denoted $g \mapsto t_g$. One can verify by checking on generators the identity:
\begin{equation}
    \forall g \in G, \, \hat{\Delta} \circ t_g = t_g^{\otimes 2} \circ \hat{\Delta},
    \label{Delta_circ_t}
\end{equation}
since both sides are given as a composition of $\K$-algebra morphisms. As a consequence of (\ref{Delta_circ_t}), for any $g \in G$, the $\K$-algebra automorphism $t_g : \KX \to \KX$ restricts to a group automorphism $t_g : \G(\KX) \to \G(\KX)$.

Throughout the document, let us denote $\KX \to \K^{\{\text{words in } x_0, (x_g)_{g \in G}\}}, v \mapsto \big((v | w)\big)_{w}$ the map such that $v = \sum_{w} (v | w) w$.

Each word in $X$ can be uniquely written $\left(x_0^{n_1}x_{g_1}x_0^{n_2}x_{g_2} \cdots x_0^{n_r}x_{g_r}x_0^{n_{r+1}}\right)_{\substack{r, n_1, \dots, n_{r+1} \in \N \\ g_1, \dots, g_r \in G}}$. This family forms a topological $\K$-module basis of $\KX$.
Let $\q$ be the $\K$-module automorphism of $\KX$ given by (\cite{Rac}, §2.2.7)
\begin{align}
    \q(x_0^{n_1-1}x_{g_1}x_0^{n_2-1}x_{g_2} \cdots x_0^{n_r-1}x_{g_r} & x_0^{n_{r+1}-1}) = \\
    & x_0^{n_1-1}x_{g_1}x_0^{n_2-1}x_{g_2g_1^{-1}} \cdots x_0^{n_r-1}x_{g_rg_{r-1}^{-1}}x_0^{n_{r+1}-1} \notag
\end{align}

For $(n, g) \in \N^{\ast} \times G$, set $y_{n, g} := x_0^{n-1}x_g$. Let $Y := \{ y_{n, g} | (n, g) \in \N^{\ast} \times G \}$. We define $\KY$ to be the topological free $\K$-algebra over $Y$, where for every $(n, g) \in \N^{\ast} \times G$, the element $y_{n, g}$ is of degree $n$. One can show that $\KY$ is equal to the $\K$-subalgebra $\K \oplus \bigoplus_{g \in G} \KX x_g$ of $\KX$ (\cite{Rac}, §2.2.5 and \cite{EF0}, §2.2).

One denotes by $\q_Y$ the $\K$-module automorphism of $\KY$ given by (\cite{Rac}, §2.2.7.)
\begin{equation}
    \q_Y(y_{n_1, g_1} \cdots y_{n_r, g_r}) := y_{n_1, g_1} y_{n_2, g_2g_1^{-1}} \cdots y_{n_r, g_rg_{r-1}^{-1}} 
\end{equation}

Let $\hat{\Delta}^{\alg}_{\star} : \KY \to \left(\KY \right)^{\hat{\otimes} 2}$ be the unique topological $\K$-algebra morphism such that for any $(n, g) \in \N^{\ast} \times G$
\begin{equation}
    \label{harmonic_coproduct}
    \hat{\Delta}^{\alg}_{\star}(y_{n,g}) = y_{n,g} \otimes 1 + 1 \otimes y_{n,g} + \sum_{\substack{k=1 \\ h \in G}}^{n-1} y_{k,h} \otimes y_{n-k,hg^{-1}}.
\end{equation}
The map $\hat{\Delta}^{\alg}_{\star}$ is called the \emph{harmonic coproduct} (\cite{Rac}, §2.3.1) and endows $\KY$ with a bialgebra structure. Moreover, one can easily check that the action $t$ on $\KX$ restricts to an action on $\KY$ by $\K$-algebra automorphisms.

The topological $\K$-module quotient $\KX / \KX x_0$ is a left $\KY$-module free of rank $1$.
The topological $\K$-module morphism $\pi_Y : \KX \to \KX / \KX x_0$ is a surjective map and its restriction to $\KY$ is a bijective map. It follows that there is a topological $\K$-module morphism $\hat{\Delta}_{\star}^{\Mod} : \KX / \KX x_0 \to \left(\KX / \KX x_0 \right)^{\hat{\otimes} 2}$ uniquely defined by the condition that the diagram
\begin{equation}
    \label{harmonic_coproduct_M}
    \begin{tikzcd}
        \KY \ar["\hat{\Delta}_{\star}^{\alg}"]{r} \ar["\pi_Y"']{d} & \Big(\KY \Big)^{\hat{\otimes} 2} \ar["(\pi_Y)^{\otimes 2}"]{d} \\
        \KX / \KX x_0 \ar["\hat{\Delta}_{\star}^{\Mod}"']{r}& \left(\KX / \KX x_0\right)^{\hat{\otimes} 2}
    \end{tikzcd}
\end{equation}
commutes. This equips $\KX / \KX x_0$ with a cocommutative coassociative coalgebra structure.

The $\K$-module automorphism $\q$ of $\KX$ preserves the submodule $\KX x_0$ and, therefore, induces a $\K$-module automorphism of $\KX / \KX x_0$ denoted $\overline{\q}$, which is intertwined with the $\K$-module automorphism $\q_Y$ of $\KY$ via the identification $\KY \simeq \KX/\KX x_0$.

\subsection{The double shuffle group \texorpdfstring{$\DMR_0^G(\K)$}{DMR0G(K)}} \label{double_shuffle_group}
\subsubsection{The group $(\G(\KX), \circledast)$}
Let $\K$ be a commutative $\Q$-algebra. Recall that the set of grouplike elements of $\KX$ for the coproduct $\hat{\Delta}$ is
\[
    \G(\KX) = \{ \Psi \in \KX^{\times} \, | \, \hat{\Delta}(\Psi) = \Psi \otimes \Psi \}.
\]

For $\Psi \in \G(\KX)$, let $\aut_{\Psi}$ be the topological $\K$-algebra automorphism of $\KX$ given by (\cite{EF0}, §4.1.3 based on \cite{Rac}, §3.1.2)
\begin{equation}
    x_0 \mapsto x_0 \qquad \text{and for } g \in G, x_g \mapsto \Ad_{t_g(\Psi^{-1})}(x_g).
    \label{aut_Psi}
\end{equation}
Define $S_{\Psi}$ to be the topological $\K$-module automorphism of $\KX$ given by (\cite{EF0}, (5.15) based on \cite{Rac}, (3.1.2.1))
\begin{equation}
    \label{eq_S_Psi}
    S_{\Psi} := \ell_{\Psi} \circ \aut_{\Psi}.
\end{equation}

\begin{lemma}
    For $\Psi \in \G(\KX)$, the $\K$-algebra automorphism $\aut_{\Psi}$ is a bialgebra automorphism of $\left(\KX, \hat{\Delta}\right)$.
    \label{aut_bialg}
\end{lemma}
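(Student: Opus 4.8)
The plan is to show that $\aut_\Psi$ intertwines the coproduct and respects the counit; combined with the given fact that $\aut_\Psi$ is a $\K$-algebra automorphism, this yields that it is a bialgebra automorphism. Concretely, I would establish
\[
\hat{\Delta} \circ \aut_\Psi = (\aut_\Psi \otimes \aut_\Psi) \circ \hat{\Delta}.
\]
Since both sides are composites of topological $\K$-algebra morphisms, hence themselves topological $\K$-algebra morphisms $\KX \to \KX^{\hat{\otimes} 2}$, it is enough to check the identity on the algebra generators $x_0$ and $(x_g)_{g \in G}$.

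The crucial preliminary step is to observe that the automorphisms $t_g$ preserve grouplikeness. Applying the identity (\ref{Delta_circ_t}) to $\Psi$ and using $\hat{\Delta}(\Psi) = \Psi \otimes \Psi$ gives $\hat{\Delta}(t_g(\Psi)) = t_g^{\otimes 2}(\Psi \otimes \Psi) = t_g(\Psi) \otimes t_g(\Psi)$, so $t_g(\Psi) \in \G(\KX)$. As the inverse of a grouplike element is grouplike, the element $\Phi_g := t_g(\Psi^{-1}) = t_g(\Psi)^{-1}$ satisfies $\hat{\Delta}(\Phi_g) = \Phi_g \otimes \Phi_g$, and likewise $\hat{\Delta}(\Phi_g^{-1}) = \Phi_g^{-1} \otimes \Phi_g^{-1}$.

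With this in hand the verification on generators is routine. On $x_0$ both sides return $x_0 \otimes 1 + 1 \otimes x_0$, since $\aut_\Psi$ fixes $x_0$ and $1$. For $x_g$, I would write $\aut_\Psi(x_g) = \Ad_{\Phi_g}(x_g) = \Phi_g x_g \Phi_g^{-1}$ and compute, using that $\hat{\Delta}$ is an algebra morphism and the grouplikeness of $\Phi_g$,
\[
\hat{\Delta}(\aut_\Psi(x_g)) = (\Phi_g \otimes \Phi_g)(x_g \otimes 1 + 1 \otimes x_g)(\Phi_g^{-1} \otimes \Phi_g^{-1}).
\]
Expanding with $(a \otimes b)(c \otimes d) = ac \otimes bd$ and cancelling $\Phi_g \Phi_g^{-1} = 1$ in each tensor factor collapses this to $(\Phi_g x_g \Phi_g^{-1}) \otimes 1 + 1 \otimes (\Phi_g x_g \Phi_g^{-1})$, which is exactly $(\aut_\Psi \otimes \aut_\Psi)(\hat{\Delta}(x_g))$. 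Compatibility with the counit $\varepsilon$ is immediate, since $\aut_\Psi$ fixes $1$ and sends $x_0$ and each $x_g$ to series with vanishing constant term, so $\varepsilon \circ \aut_\Psi = \varepsilon$, both being algebra morphisms agreeing on generators.

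I do not expect any serious obstacle here. The single point that must not be glossed over is the stability of $\G(\KX)$ under the $t_g$, which is exactly where (\ref{Delta_circ_t}) enters; once that is recorded, the whole statement reduces to the formal manipulation of a conjugation by a grouplike element.
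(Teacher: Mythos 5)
Your proposal is correct and follows the same route as the paper's proof: both reduce the identity $\hat{\Delta} \circ \aut_{\Psi} = (\aut_{\Psi})^{\otimes 2} \circ \hat{\Delta}$ to a check on the generators $x_0$ and $x_g$, using Identity (\ref{Delta_circ_t}) to see that $t_g(\Psi^{-1})$ is grouplike so that conjugation by it commutes with $\hat{\Delta}$ in the required way. You have simply written out the generator computation that the paper leaves implicit.
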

\begin{proof}
    Both $\aut_{\Psi}$ and $\hat{\Delta}$ are $\K$-algebra automorphisms. So, using Identity (\ref{Delta_circ_t}), one can check on generators that
    \begin{equation}
        \hat{\Delta} \circ \aut_{\Psi} = \left(\aut_{\Psi}\right)^{\otimes 2} \circ \hat{\Delta}, 
    \end{equation}
    which is the wanted result.
\end{proof}

\begin{propdef}[\cite{Rac}, Proposition 3.1.6]
    \label{group_law}
    The pair $(\G(\KX), \circledast)$ is a group, where for $\Psi, \Phi \in \G(\KX)$,
    \begin{equation}
        \label{circledast}
        \Psi \circledast \Phi := S_{\Psi}(\Phi).
    \end{equation}
\end{propdef}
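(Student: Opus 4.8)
The plan is to exhibit $1$ as the neutral element, verify closure and associativity, and produce inverses, the whole argument resting on a single multiplicativity property of the assignment $\Psi \mapsto \aut_\Psi$. First I would rewrite the law as $\Psi \circledast \Phi = S_\Psi(\Phi) = \Psi\, \aut_\Psi(\Phi)$, directly from \eqref{eq_S_Psi} and \eqref{circledast}. Closure is then immediate: by Lemma \ref{aut_bialg} the map $\aut_\Psi$ is a bialgebra automorphism, hence sends grouplike elements to grouplike elements, so $\aut_\Psi(\Phi) \in \G(\KX)$; since $\G(\KX)$ is stable under the product of $\KX$, the element $\Psi\,\aut_\Psi(\Phi)$ is again grouplike.

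The core of the proof is the identity $S_\Psi \circ S_\Phi = S_{\Psi \circledast \Phi}$ of $\K$-module automorphisms of $\KX$, from which associativity follows at once, since $(\Psi\circledast\Phi)\circledast\Xi = S_{\Psi\circledast\Phi}(\Xi) = S_\Psi(S_\Phi(\Xi)) = \Psi\circledast(\Phi\circledast\Xi)$. To obtain it I would first use that $\aut_\Psi$ is an algebra morphism to commute it past a left multiplication, $\aut_\Psi \circ \ell_\Phi = \ell_{\aut_\Psi(\Phi)} \circ \aut_\Psi$, whence
\[
    S_\Psi \circ S_\Phi = \ell_\Psi \circ \aut_\Psi \circ \ell_\Phi \circ \aut_\Phi = \ell_{\Psi\,\aut_\Psi(\Phi)} \circ \big(\aut_\Psi \circ \aut_\Phi\big) = \ell_{\Psi\circledast\Phi}\circ\big(\aut_\Psi\circ\aut_\Phi\big).
\]
Comparing with $S_{\Psi\circledast\Phi} = \ell_{\Psi\circledast\Phi}\circ\aut_{\Psi\circledast\Phi}$, everything reduces to the multiplicativity property $\aut_\Psi \circ \aut_\Phi = \aut_{\Psi\circledast\Phi}$.

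This last identity is the main obstacle, and I would prove it by checking it on the topological algebra generators, both sides being continuous $\K$-algebra automorphisms. On $x_0$ it is trivial. On $x_g$, expanding $\aut_\Phi(x_g) = \Ad_{t_g(\Phi^{-1})}(x_g)$ via \eqref{aut_Psi} and applying $\aut_\Psi$ gives $\aut_\Psi\aut_\Phi(x_g) = \Ad_{\aut_\Psi(t_g(\Phi^{-1}))\, t_g(\Psi^{-1})}(x_g)$, so the claim amounts to recognizing the conjugating factor as $t_g\big((\Psi\,\aut_\Psi(\Phi))^{-1}\big)$. The point on which this hinges is the $G$-equivariance of $\aut_\Psi$, namely $\aut_\Psi \circ t_g = t_g \circ \aut_\Psi$; I would verify this separately on generators using $t_g \circ t_h = t_{gh}$ and $\aut_\Psi(x_h) = \Ad_{t_h(\Psi^{-1})}(x_h)$. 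Granting the equivariance, one has $\aut_\Psi(t_g(\Phi^{-1})) = t_g(\aut_\Psi(\Phi)^{-1})$ and the conjugating factor becomes $t_g(\aut_\Psi(\Phi)^{-1}\Psi^{-1}) = t_g((\Psi\circledast\Phi)^{-1})$, which is exactly $\aut_{\Psi\circledast\Phi}(x_g)$. The delicate bookkeeping is keeping track of the conjugating series $t_g(\Psi^{-1})$ and $t_g(\Phi^{-1})$ through the composition.

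Finally I would treat the unit and inverses. Since $t_g(1) = 1$, formula \eqref{aut_Psi} gives $\aut_1 = \mathrm{id}$, so $1 \circledast \Phi = \aut_1(\Phi) = \Phi$ and $\Psi \circledast 1 = \Psi\,\aut_\Psi(1) = \Psi$, and $1 \in \G(\KX)$; thus $1$ is a two-sided neutral element. For inverses, set $\Psi^{\dagger} := \aut_\Psi^{-1}(\Psi^{-1})$, where $\Psi^{-1}$ is the inverse in $\KX^\times$; as $\aut_\Psi^{-1}$ is again a bialgebra automorphism it preserves $\G(\KX)$, so $\Psi^{\dagger} \in \G(\KX)$, and $\Psi \circledast \Psi^{\dagger} = \Psi\,\aut_\Psi(\aut_\Psi^{-1}(\Psi^{-1})) = \Psi\Psi^{-1} = 1$, giving a right inverse. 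Having an associative unital law in which every element admits a right inverse suffices: if $a \circledast b = 1$ and $b \circledast c = 1$ then $a = a \circledast (b \circledast c) = (a \circledast b) \circledast c = c$, so $b \circledast a = 1$ as well, and the right inverse is two-sided. Hence $(\G(\KX), \circledast)$ is a group.
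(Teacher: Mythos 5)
Your proposal is correct and follows essentially the same route as the paper: closure via Lemma \ref{aut_bialg}, and associativity via the identity $S_{\Psi\circledast\Phi}=S_\Psi\circ S_\Phi$, itself reduced to $\aut_{\Psi\circledast\Phi}=\aut_\Psi\circ\aut_\Phi$ checked on generators using the $G$-equivariance $\aut_\Psi\circ t_g=t_g\circ\aut_\Psi$ (the paper's Lemmas \ref{group_morphs} and \ref{commut_t_aut}). The only difference is that you spell out the unit and the right inverse $\Psi^{\dagger}=\aut_\Psi^{-1}(\Psi^{-1})$, which the paper dismisses as "easy to check"; your verification of these is correct.
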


A proof of this claim is already available in Racinet's paper, however, considering the way it has been stated (using categorical considerations), it might be hard to read. Thus, we find it useful to rewrite it here. In order to do so, we will need this result:

\begin{lemma}
    \label{group_morphs}
    For $\Psi, \Phi \in \G(\KX)$, we have
    \begin{align}
        &\aut_{\Psi \circledast \Phi} = \aut_{\Psi} \circ \aut_{\Phi}&
        \label{aut_group_morph} \\
        &S_{\Psi \circledast \Phi} = S_{\Psi} \circ S_{\Phi}&
        \label{S_group_morph}
    \end{align}
\end{lemma}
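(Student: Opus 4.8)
The plan is to prove the first identity \eqref{aut_group_morph} by comparing two $\K$-algebra automorphisms on the generators, and then to deduce the second identity \eqref{S_group_morph} from it purely formally. Since both $\aut_{\Psi \circledast \Phi}$ and $\aut_\Psi \circ \aut_\Phi$ are $\K$-algebra automorphisms of $\KX$, it suffices to check that they agree on the alphabet $X$. On $x_0$ both sides act as the identity, so the whole content lies in the equality on each $x_g$, $g \in G$.

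The key auxiliary step is the commutation relation $t_g \circ \aut_\Psi = \aut_\Psi \circ t_g$, valid for every $g \in G$ and $\Psi \in \G(\KX)$. I would verify it on generators: on $x_0$ both sides fix $x_0$, and on $x_h$ one computes, using that $t_g$ and $\aut_\Psi$ are algebra morphisms and that $t_g \circ t_h = t_{gh}$, that both sides equal $t_{gh}(\Psi^{-1}) x_{gh} t_{gh}(\Psi)$. With this in hand, set $\Theta := \Psi \circledast \Phi = \Psi\,\aut_\Psi(\Phi)$; note first that $\Theta \in \G(\KX)$, since by Lemma \ref{aut_bialg} the element $\aut_\Psi(\Phi)$ is grouplike and a product of grouplike elements is grouplike, so $\aut_\Theta$ is well defined. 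Then I would expand, using that $\aut_\Psi$ is an algebra morphism,
\[
(\aut_\Psi \circ \aut_\Phi)(x_g) = \aut_\Psi\big( t_g(\Phi^{-1})\, x_g\, t_g(\Phi) \big) = \aut_\Psi(t_g(\Phi))^{-1}\,\aut_\Psi(x_g)\,\aut_\Psi(t_g(\Phi)).
\]
Substituting $\aut_\Psi(x_g) = t_g(\Psi^{-1}) x_g t_g(\Psi)$ and rewriting $\aut_\Psi(t_g(\Phi)) = t_g(\aut_\Psi(\Phi))$ by the commutation relation, the conjugating factors collapse: the element conjugating $x_g$ becomes $t_g(\Psi)\, t_g(\aut_\Psi(\Phi)) = t_g\big(\Psi\,\aut_\Psi(\Phi)\big) = t_g(\Theta)$. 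Hence $(\aut_\Psi \circ \aut_\Phi)(x_g) = t_g(\Theta)^{-1} x_g\, t_g(\Theta) = \aut_\Theta(x_g)$, which proves \eqref{aut_group_morph}.

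For \eqref{S_group_morph} I would argue from the definition $S_\Psi = \ell_\Psi \circ \aut_\Psi$. Writing $S_\Psi \circ S_\Phi = \ell_\Psi \circ \aut_\Psi \circ \ell_\Phi \circ \aut_\Phi$, the relation $\aut_\Psi(\Phi w) = \aut_\Psi(\Phi)\,\aut_\Psi(w)$ gives $\aut_\Psi \circ \ell_\Phi = \ell_{\aut_\Psi(\Phi)} \circ \aut_\Psi$, and $\ell_a \circ \ell_b = \ell_{ab}$ lets me merge the two left multiplications into $\ell_{\Psi\,\aut_\Psi(\Phi)} = \ell_\Theta$. Using \eqref{aut_group_morph} to replace $\aut_\Psi \circ \aut_\Phi$ by $\aut_\Theta$, I obtain $S_\Psi \circ S_\Phi = \ell_\Theta \circ \aut_\Theta = S_\Theta = S_{\Psi \circledast \Phi}$.

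The main obstacle is the bookkeeping in the second paragraph: one must track the four conjugating factors $\aut_\Psi(t_g(\Phi))^{-1}$, $t_g(\Psi^{-1})$, $t_g(\Psi)$, $\aut_\Psi(t_g(\Phi))$ and recognise that the commutation relation $t_g \circ \aut_\Psi = \aut_\Psi \circ t_g$ is exactly what is needed to merge them into a single conjugation by $t_g(\Theta)$. Once that relation is isolated and proved on generators, the remainder of the argument is routine.
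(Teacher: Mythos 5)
Your proposal is correct and follows essentially the same route as the paper: identity \eqref{aut_group_morph} is checked on generators using the commutation $\aut_\Psi \circ t_g = t_g \circ \aut_\Psi$ (the paper's Lemma \ref{commut_t_aut}), and \eqref{S_group_morph} is then deduced via $\aut_\Psi \circ \ell_\Phi = \ell_{\aut_\Psi(\Phi)} \circ \aut_\Psi$ exactly as in the paper. The only differences are cosmetic (explicit conjugating factors in place of the $\Ad$ notation, and an upfront remark that $\Psi \circledast \Phi$ is grouplike, which the paper defers to the proof of Proposition-Definition \ref{group_law}).
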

\noindent This, in turn, uses the following technical Lemma which can be easily obtained by checking this identity on generators
\begin{lemma}
    \label{commut_t_aut}
    For $\Psi \in \G(\KX)$ and $g \in G$, we have
    \(
        \aut_{\Psi} \circ t_g = t_g \circ \aut_{\Psi}.
    \)
\end{lemma}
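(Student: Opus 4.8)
The plan is to reduce the identity to a computation on the topological algebra generators of $\KX$. Both $t_g$ and $\aut_{\Psi}$ are continuous $\K$-algebra automorphisms of $\KX$ (the former by construction, the latter by (\ref{aut_Psi})), hence so are the two compositions $\aut_{\Psi} \circ t_g$ and $t_g \circ \aut_{\Psi}$. Since $\KX$ is topologically generated as a $\K$-algebra by $x_0$ and $(x_h)_{h \in G}$, two continuous $\K$-algebra endomorphisms that agree on these elements coincide. So I only need to verify $\aut_{\Psi}(t_g(\xi)) = t_g(\aut_{\Psi}(\xi))$ for $\xi \in X$. For $\xi = x_0$ this is immediate, since both $t_g$ and $\aut_{\Psi}$ fix $x_0$ and hence both sides equal $x_0$.

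The substance is the case $\xi = x_h$ with $h \in G$. Here I first isolate two elementary rules. Because $t_g$ is a $\K$-algebra automorphism, for any invertible $u \in \KX$ one has $t_g \circ \Ad_u = \Ad_{t_g(u)} \circ t_g$ (apply $t_g$ to $u a u^{-1}$ and use $t_g(u^{-1}) = t_g(u)^{-1}$); and because $t$ is a group action with $G$ abelian, $t_g \circ t_h = t_{gh}$. Using these, the left-hand side is
\[
\aut_{\Psi}(t_g(x_h)) = \aut_{\Psi}(x_{gh}) = \Ad_{t_{gh}(\Psi^{-1})}(x_{gh})
\]
by the defining formula (\ref{aut_Psi}), while the right-hand side expands as
\[
t_g(\aut_{\Psi}(x_h)) = t_g\big(\Ad_{t_h(\Psi^{-1})}(x_h)\big) = \Ad_{t_g(t_h(\Psi^{-1}))}(t_g(x_h)) = \Ad_{t_{gh}(\Psi^{-1})}(x_{gh}),
\]
using the conjugation rule with $u = t_h(\Psi^{-1})$, the composition law $t_g \circ t_h = t_{gh}$, and $t_g(x_h) = x_{gh}$. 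The two expressions coincide, which finishes the verification on generators.

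The argument has no genuine obstacle; the only point requiring attention is the interplay of $t_g$ with the inner automorphism $\Ad$ and with the twist $t_h(\Psi^{-1})$ built into the definition of $\aut_{\Psi}$. Once the conjugation identity $t_g \circ \Ad_u = \Ad_{t_g(u)} \circ t_g$ and the action law $t_g \circ t_h = t_{gh}$ are recorded, the two sides collapse to the same expression, so the whole proof reduces to this bookkeeping on generators.
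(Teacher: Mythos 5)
Your proof is correct and follows exactly the route the paper indicates: the paper dismisses this lemma as "easily obtained by checking this identity on generators," and your verification on $x_0$ and $x_h$ via the conjugation rule $t_g \circ \Ad_u = \Ad_{t_g(u)} \circ t_g$ and the action law $t_g \circ t_h = t_{gh}$ is precisely that check, carried out in full.
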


\begin{proof}[Proof of Lemma \ref{group_morphs}]
    It is enough to prove the identity (\ref{aut_group_morph}) on generators. Since for $\Psi \in \G(\KX)$ we have $\aut_{\Psi}(x_0)=x_0$, Identity (\ref{aut_group_morph}) is immediately true for $x_0$. Then, for $g \in G$, we have
    \[\begin{aligned}
        &\aut_{\Psi} \circ \aut_{\Phi}(x_g) = \aut_{\Psi} \circ \Ad_{t_g(\Phi^{-1})}(x_g) = \Ad_{\aut_{\Psi}\left(t_g(\Phi^{-1})\right)} \circ \aut_{\Psi}(x_{g}) \\
        & = \Ad_{\aut_{\Psi}\left(t_g(\Phi^{-1})\right)} \circ \Ad_{t_g(\Psi^{-1})}(x_g) = \Ad_{t_g\left(\aut_{\Psi}(\Phi^{-1})\right)t_g(\Psi^{-1})}(x_g) \\
        & = \Ad_{t_g\left(\aut_{\Psi}(\Phi^{-1})\Psi^{-1}\right)}(x_g) = \Ad_{t_g\left((\Psi \circledast \Phi)^{-1}\right)}(x_g) = \aut_{\Psi \circledast \Phi}(x_g)  
    \end{aligned}\]
    where the fourth equality is obtained by applying Lemma \ref{commut_t_aut}.
    This concludes the proof of Identity (\ref{aut_group_morph}). Finally, by using the latter, we get
    \[\begin{aligned}
        S_{\Psi} \circ S_{\Phi} = & \ell_{\Psi} \circ \aut_{\Psi} \circ \ell_{\Phi} \circ \aut_{\Phi} = \ell_{\Psi} \circ \ell_{\aut_{\Psi}(\Phi)} \circ \aut_{\Psi} \circ \aut_{\Phi} \\
        =& \ell_{\Psi\aut_{\Psi}(\Phi)} \circ \aut_{\Psi} \circ \aut_{\Phi} = \ell_{\Psi \circledast \Phi} \circ \aut_{\Psi \circledast \Phi} = S_{\Psi \circledast \Phi},
    \end{aligned}\]
    thus, establishing Identity (\ref{S_group_morph}).
\end{proof}

\begin{proof}[Proof of Proposition-Definition \ref{group_law}]
    \noindent From Lemma \ref{aut_bialg}, we deduce that $\circledast$ has its image in $\G(\KX)$. Next, thanks to Identity \ref{S_group_morph} in Lemma \ref{group_morphs}, the product $\circledast$ is associative. Indeed, for $\Psi, \Phi$ and $\Lambda \in \G(\KX)$, we have
    \[
        (\Psi \circledast \Phi) \circledast \Lambda
        = S_{\Psi \circledast \Phi}(\Lambda) 
        = S_{\Psi}\left(S_{\Phi}(\Lambda)\right)
        = S_{\Psi}(\Phi \circledast \Lambda)
        = \Psi \circledast (\Phi \circledast \Lambda).
    \]
    Finally, the other group axioms being easy to check, this proves Proposition \ref{group_law}.
\end{proof}

\begin{corollary}
    \label{group_morph} \ 
    \begin{enumerate}[label=(\alph*), leftmargin=*]
        \item There is a group action of $(\G(\KX), \circledast)$ on $\KX$ by $\K$-algebra automorphisms
        \begin{equation}
            (\G(\KX), \circledast) \longrightarrow \Aut_{\K-\alg}(\KX), \, \Psi \longmapsto \aut_{\Psi}
        \end{equation}
        \item There is a group action of $(\G(\KX), \circledast)$ on $\KX$ by $\K$-module automorphisms
        \begin{equation}
            (\G(\KX), \circledast) \longrightarrow \Aut_{\K-\Mod}(\KX), \, \Psi \longmapsto S_{\Psi}
        \end{equation}
    \end{enumerate}
\end{corollary}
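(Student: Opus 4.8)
The plan is to observe that giving a group action of $(\G(\KX), \circledast)$ on $\KX$ by $\K$-algebra (resp. $\K$-module) automorphisms is precisely the same as exhibiting a group homomorphism from $(\G(\KX), \circledast)$ into $\Aut_{\K-\alg}(\KX)$ (resp. $\Aut_{\K-\Mod}(\KX)$). Almost all of the substantive work has already been carried out in Lemma \ref{group_morphs}, which supplies the multiplicativity of both assignments; so it remains only to verify that each assignment is well defined (lands in the appropriate automorphism group) and sends the neutral element of $(\G(\KX), \circledast)$ to the identity map.

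For part (a), I would first recall that by construction (\ref{aut_Psi}) each $\aut_{\Psi}$ is a $\K$-algebra endomorphism of $\KX$, and that by Lemma \ref{aut_bialg} it is even a bialgebra morphism. The neutral element of $(\G(\KX), \circledast)$ is the unit $1 \in \KX$: indeed $1 \circledast \Phi = S_1(\Phi) = \aut_1(\Phi)$ and $\Phi \circledast 1 = \ell_{\Phi}(\aut_{\Phi}(1)) = \Phi$, while a direct check on generators gives $\aut_1 = \mathrm{id}_{\KX}$ (since $\Ad_{t_g(1)} = \Ad_1 = \mathrm{id}$). Writing $\Psi'$ for the inverse of $\Psi$ in the group $(\G(\KX), \circledast)$ and combining this normalisation with identity (\ref{aut_group_morph}) of Lemma \ref{group_morphs}, one obtains $\aut_{\Psi} \circ \aut_{\Psi'} = \aut_{\Psi \circledast \Psi'} = \aut_1 = \mathrm{id}_{\KX}$ and symmetrically; hence each $\aut_{\Psi}$ is an automorphism and $\Psi \mapsto \aut_{\Psi}$ is a group homomorphism, that is, a group action.

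For part (b), I would note that $S_{\Psi} = \ell_{\Psi} \circ \aut_{\Psi}$ is a composite of two $\K$-module automorphisms: $\aut_{\Psi}$ by part (a), and $\ell_{\Psi}$ because $\Psi \in \KX^{\times}$. Identity (\ref{S_group_morph}) of Lemma \ref{group_morphs} then gives the multiplicativity, and $S_1 = \ell_1 \circ \aut_1 = \mathrm{id}_{\KX}$, so exactly as before $\Psi \mapsto S_{\Psi}$ is a group homomorphism into $\Aut_{\K-\Mod}(\KX)$.

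Since the genuine algebraic content is entirely encapsulated in Lemma \ref{group_morphs}, I expect no real obstacle here; the only point demanding attention is the bookkeeping that upgrades the homomorphism identities into bona fide group actions, namely confirming the neutral-element normalisation $\aut_1 = S_1 = \mathrm{id}_{\KX}$ and deducing from it the invertibility of each $\aut_{\Psi}$ and $S_{\Psi}$.
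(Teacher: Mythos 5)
Your proposal is correct and follows essentially the same route as the paper, whose entire proof is the one-line observation that the statement is exactly Lemma \ref{group_morphs}. The additional bookkeeping you supply (identifying $1$ as the neutral element, checking $\aut_1 = S_1 = \mathrm{id}_{\KX}$, and deducing invertibility of each $\aut_{\Psi}$ and $S_{\Psi}$ from the homomorphism identities) is accurate and merely makes explicit what the paper leaves implicit.
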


\begin{proof}
    This result is exactly Lemma \ref{group_morphs}.
\end{proof}

Next, we aim to give a group action of $(\G(\KX), \circledast)$ on the topological $\K$-module $\KX / \KX x_0$ which is compatible with its action $S$ on $\KX$. It is important to notice that this action is not given by compatibility using $\pi_Y$ but by the following: 
\begin{propdef}[\cite{EF0}, §5.4] 
    \label{S_Y}
    For $\Psi \in \G(\KX)$, there is a unique $\K$-module automorphism $S_{\Psi}^Y$ of $\KX / \KX x_0$ such that the following diagram
    \begin{equation}
        \begin{tikzcd}
            \KX \ar[rrr, "S_{\Psi}"] \ar[d, "\overline{\q} \circ \pi_Y"'] &&& \KX \ar[d, "\overline{\q} \circ \pi_Y"] \\
            \KX / \KX x_0 \ar[rrr, "S_{\Psi}^Y"'] &&& \KX / \KX x_0
        \end{tikzcd}
    \end{equation}
    commutes.
\end{propdef}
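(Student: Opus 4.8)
The plan is to realize $S_{\Psi}^Y$ as the map induced on the quotient by $S_{\Psi}$, via the universal property of the cokernel. First I would observe that $\overline{\q}\circ\pi_Y : \KX \to \KX/\KX x_0$ is a surjective topological $\K$-module morphism: indeed $\pi_Y$ is surjective by construction and $\overline{\q}$ is a $\K$-module automorphism of $\KX/\KX x_0$. Moreover, since $\overline{\q}$ is injective, the kernel of $\overline{\q}\circ\pi_Y$ coincides with $\ker \pi_Y = \KX x_0$. Surjectivity of $\overline{\q}\circ\pi_Y$ then yields uniqueness at once: if $S_{\Psi}^Y$ and $\tilde{S}_{\Psi}^Y$ both make the square commute, they agree on the image of $\overline{\q}\circ\pi_Y$, which is all of $\KX/\KX x_0$, hence are equal.

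For existence, by the universal property of the quotient it suffices to check that $S_{\Psi}$ sends $\ker(\overline{\q}\circ\pi_Y) = \KX x_0$ into itself, for then $S_{\Psi}^Y\big(\overline{\q}\circ\pi_Y(a)\big) := \overline{\q}\circ\pi_Y\big(S_{\Psi}(a)\big)$ is well-defined. The key step is therefore to prove that $S_{\Psi}(\KX x_0) = \KX x_0$. Writing $S_{\Psi} = \ell_{\Psi}\circ\aut_{\Psi}$ as in (\ref{eq_S_Psi}), I would first use that $\aut_{\Psi}$ is a $\K$-algebra automorphism fixing $x_0$ by (\ref{aut_Psi}): for any $a \in \KX$ one has $\aut_{\Psi}(a\,x_0) = \aut_{\Psi}(a)\,x_0$, and since $\aut_{\Psi}$ is bijective on $\KX$ this gives $\aut_{\Psi}(\KX x_0) = \KX x_0$. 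Then, because $\Psi \in \KX^{\times}$ we have $\Psi\,\KX = \KX$, so $\ell_{\Psi}(\KX x_0) = \Psi\,\KX x_0 = \KX x_0$. Composing yields $S_{\Psi}(\KX x_0) = \KX x_0$, as wanted.

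Finally I would check that $S_{\Psi}^Y$ is an automorphism. Since $S_{\Psi}$ is a $\K$-module automorphism of $\KX$ (by Corollary \ref{group_morph}(b)) preserving $\KX x_0$ with equality, its descent is bijective; equivalently, unwinding the square gives $S_{\Psi}^Y = \overline{\q}\circ \overline{S}_{\Psi}\circ\overline{\q}^{-1}$, where $\overline{S}_{\Psi}$ is the descent of $S_{\Psi}$ along $\pi_Y$, so $S_{\Psi}^Y$ is a conjugate of an automorphism by the automorphism $\overline{\q}$. Continuity of $S_{\Psi}^Y$ follows from that of $S_{\Psi}$ together with the quotient topology. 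The only substantive point in the whole argument is the stability $S_{\Psi}(\KX x_0)\subseteq \KX x_0$; everything else is the universal property of the quotient and the remark that $\overline{\q}$, being invertible, leaves the relevant kernel unchanged.
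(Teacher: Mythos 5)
Your proposal is correct. Note that the paper does not actually supply a proof of this Proposition-Definition — it is imported wholesale from \cite{EF0}, §5.4 — so there is nothing internal to compare against; your argument is the natural one and supplies exactly what the citation covers. The reduction of the whole statement to the single stability claim $S_{\Psi}(\KX x_0) = \KX x_0$ is the right move, and your verification of it is sound: $\ker(\overline{\q}\circ\pi_Y)=\KX x_0$ because $\overline{\q}$ is injective, $\aut_{\Psi}$ is an algebra automorphism fixing $x_0$ so it maps $\KX x_0$ onto itself, and $\ell_{\Psi}$ does too since grouplike elements are units. The equality (rather than mere inclusion) $S_{\Psi}(\KX x_0)=\KX x_0$ is what makes the descended map injective as well as surjective, and you correctly isolate that point; the conjugation formula $S_{\Psi}^Y=\overline{\q}\circ\overline{S}_{\Psi}\circ\overline{\q}^{-1}$ also checks out against the defining square.
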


\begin{corollary}
    \label{group_morph_SY}
    There is a group action of $(\G(\KX), \circledast)$ on $\KX / \KX x_0$ by topological $\K$-module automorphisms
    \begin{equation}
        (\G(\KX), \circledast) \longrightarrow \Aut_{\K-\Mod}\left(\KX / \KX x_0 \right), \, \Psi \longmapsto S_{\Psi}^Y
    \end{equation}
\end{corollary}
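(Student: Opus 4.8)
The plan is to derive everything formally from the defining commutative square of Proposition-Definition \ref{S_Y} together with the multiplicativity of $S$ already proved in Lemma \ref{group_morphs}. Write $p := \overline{\q} \circ \pi_Y : \KX \to \KX / \KX x_0$ for the common vertical map of that square, so that the defining property of $S_{\Psi}^Y$ reads $S_{\Psi}^Y \circ p = p \circ S_{\Psi}$ for every $\Psi \in \G(\KX)$. The two features of $p$ that I would use are that it is surjective --- since $\pi_Y$ is surjective and $\overline{\q}$ is a $\K$-module automorphism, hence bijective --- and that a surjection is right-cancellable among $\K$-module morphisms out of $\KX / \KX x_0$.

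First I would establish the homomorphism property. For $\Psi, \Phi \in \G(\KX)$, chaining the defining relation with Identity (\ref{S_group_morph}) gives
\[
    S_{\Psi \circledast \Phi}^Y \circ p = p \circ S_{\Psi \circledast \Phi} = p \circ S_{\Psi} \circ S_{\Phi} = S_{\Psi}^Y \circ p \circ S_{\Phi} = S_{\Psi}^Y \circ S_{\Phi}^Y \circ p,
\]
where the last two equalities apply the defining square for $\Phi$ and then for $\Psi$. Since $p$ is surjective it can be cancelled on the right, yielding $S_{\Psi \circledast \Phi}^Y = S_{\Psi}^Y \circ S_{\Phi}^Y$.

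Next I would treat the neutral element $e$ of $(\G(\KX), \circledast)$. By Corollary \ref{group_morph}(b) the assignment $\Psi \mapsto S_{\Psi}$ is a group action, so $S_{e} = \mathrm{id}_{\KX}$; the defining square then gives $S_{e}^Y \circ p = p \circ \mathrm{id}_{\KX} = p$, whence $S_{e}^Y = \mathrm{id}$ by cancelling $p$. Combining this with the homomorphism property shows that, if $\Phi$ denotes the $\circledast$-inverse of $\Psi$, then $S_{\Psi}^Y \circ S_{\Phi}^Y = S_{\Psi \circledast \Phi}^Y = S_{e}^Y = \mathrm{id}$ and likewise on the other side, so each $S_{\Psi}^Y$ is a $\K$-module automorphism (a fact already recorded in Proposition-Definition \ref{S_Y}) whose inverse is realized by the group inverse. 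Hence $\Psi \mapsto S_{\Psi}^Y$ is a group morphism into $\Aut_{\K-\Mod}(\KX / \KX x_0)$, which is the assertion.

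The only genuinely delicate point is the right-cancellation of $p$, which rests solely on its surjectivity; everything else is a purely formal consequence of the already established multiplicativity of $S$ and the universal property defining $S_{\Psi}^Y$. In particular no new verification on generators is needed, that generator-level work having been absorbed into Lemma \ref{group_morphs}.
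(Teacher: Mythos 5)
Your proposal is correct and follows essentially the same route as the paper: both establish $S_{\Psi}^Y \circ S_{\Phi}^Y \circ \overline{\q} \circ \pi_Y = \overline{\q} \circ \pi_Y \circ S_{\Psi \circledast \Phi}$ via Identity (\ref{S_group_morph}) and the defining square of Proposition-Definition \ref{S_Y}, then conclude by cancelling the surjection $\overline{\q} \circ \pi_Y$ (which is exactly the uniqueness clause the paper invokes). Your explicit check of the neutral element is a harmless addition that the paper leaves implicit.
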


\begin{proof}
    We have
    \[
        S_{\Psi}^Y \circ S_{\Phi}^Y \circ \overline{\q} \circ \pi_Y =  S_{\Psi}^Y \circ \overline{\q} \circ \pi_Y \circ S_{\Phi}
        = \overline{\q} \circ \pi_Y \circ S_{\Psi} \circ S_{\Phi} =  \overline{\q} \circ \pi_Y \circ S_{\Psi \circledast \Phi},
    \]
    and, by uniqueness of the $\K$-module automorphism $S_{\Psi \circledast \Phi}^Y$, we obtain
    \[
        S_{\Psi}^Y \circ S_{\Phi}^Y = S_{\Psi \circledast \Phi}^Y.
    \]
\end{proof}

Let $\Gamma : \KX \to \K[[x]]^{\times}, \Psi \mapsto \Gamma_{\Psi}$ the function given by (\cite{Rac}, (3.2.1.2))
\begin{equation}
    \Gamma_{\Psi}(x) := \exp\left( \sum_{n \geq 2} \frac{(-1)^{n-1}}{n} (\Psi | x_0^{n-1} x_1) x^n \right).
    \label{Gamma_function}
\end{equation}
It satisfies the following property:
\begin{lemma}
    For $\Psi, \Phi \in \G(\KX)$, we have
    \(
        \Gamma_{\Psi \circledast \Phi} = \Gamma_{\Psi} \Gamma_{\Phi}.
    \)
    \label{Gamma_aut}
\end{lemma}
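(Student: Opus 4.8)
The plan is to deduce the multiplicative relation $\Gamma_{\Psi\circledast\Phi}=\Gamma_\Psi\Gamma_\Phi$ from the additive relation
\[
(\Psi\circledast\Phi \,|\, x_0^{n-1}x_1) = (\Psi \,|\, x_0^{n-1}x_1) + (\Phi \,|\, x_0^{n-1}x_1), \qquad n\geq 2,
\]
between the coefficients entering the definition (\ref{Gamma_function}): once this is known, applying $\exp$ to the corresponding identity of exponents in the commutative ring $\K[[x]]$ gives the claim immediately. To prove the additive relation I would introduce an auxiliary two-dimensional representation in which the whole family $\big((\Psi \,|\, x_0^a x_1)\big)_{a\geq 0}$ is packaged into a single matrix entry.

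First I would consider the continuous $\K$-algebra morphism into the algebra of $2\times 2$ matrices
\[
\rho : \KX \longrightarrow \mathrm{M}_2(\K[[x]]), \qquad x_0 \mapsto \begin{pmatrix} x & 0\\ 0 & 0\end{pmatrix},\quad x_1 \mapsto \begin{pmatrix} 0 & 1\\ 0 & 0\end{pmatrix},\quad x_g \mapsto 0\ \ (g\in G\setminus\{1\}).
\]
It is well defined on the completed algebra since a word $w$ of degree $d$ has nonzero image only when $w=x_0^d$ or $w=x_0^{d-1}x_1$, whose images have $x$-order $d$ and $d-1$; hence each matrix entry of $\rho(\Xi)$ is a convergent power series for every $\Xi\in\KX$. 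A short bookkeeping on these two families of words then shows that any $\Xi$ with constant term $(\Xi\,|\,\emptyset)=1$ satisfies
\[
\rho(\Xi) = \begin{pmatrix} P_\Xi & B_\Xi\\ 0 & 1\end{pmatrix}, \qquad B_\Xi(x) := \sum_{a\geq 0}(\Xi\,|\, x_0^a x_1)\,x^a,
\]
with $P_\Xi \in \K[[x]]^{\times}$, so that the data governing $\Gamma_\Xi$ sit entirely in the top-right entry $B_\Xi$.

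The structural point that makes the argument work is that $x_1$ is attached to the neutral element of $G$, so that $t_1=\mathrm{id}_{\KX}$ and therefore $\aut_\Psi(x_1)=\Ad_{t_1(\Psi^{-1})}(x_1)=\Psi^{-1}x_1\Psi$. Applying $\rho$ and using the shape of $\rho(\Psi)$, this conjugation collapses to a scalar factor:
\[
\rho\big(\aut_\Psi(x_1)\big) = \rho(\Psi)^{-1}\,\rho(x_1)\,\rho(\Psi) = P_\Psi^{-1}\,\rho(x_1),
\]
while $\rho(\aut_\Psi(x_0))=\rho(x_0)$ and $\rho(\aut_\Psi(x_g))=0$ for $g\neq 1$ since $\rho(x_g)=0$. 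Hence $\rho\circ\aut_\Psi$ has the same nilpotent shape as $\rho$, and the same bookkeeping gives $\rho\big(\aut_\Psi(\Phi)\big)=\begin{pmatrix} P_\Phi & P_\Psi^{-1}B_\Phi\\ 0 & 1\end{pmatrix}$.

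Finally I would combine these computations through $\Psi\circledast\Phi = S_\Psi(\Phi) = \Psi\,\aut_\Psi(\Phi)$. Multiplying the two matrices,
\[
\rho(\Psi\circledast\Phi) = \rho(\Psi)\,\rho\big(\aut_\Psi(\Phi)\big) = \begin{pmatrix} P_\Psi & B_\Psi\\ 0 & 1\end{pmatrix}\begin{pmatrix} P_\Phi & P_\Psi^{-1}B_\Phi\\ 0 & 1\end{pmatrix} = \begin{pmatrix} P_\Psi P_\Phi & B_\Psi+B_\Phi\\ 0 & 1\end{pmatrix},
\]
so that $B_{\Psi\circledast\Phi}=B_\Psi+B_\Phi$. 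Reading off the coefficient of $x^{n-1}$ yields the additive relation above for every $n\geq 1$, and exponentiating the resulting identity of $\Gamma$-exponents concludes the proof. I expect the only delicate step to be the cancellation of the conjugation factor $P_\Psi^{\pm 1}$ in the top-right entry: it is precisely this cancellation that makes the plain coefficients $(\Psi\,|\, x_0^{n-1}x_1)$ — rather than some $P_\Psi$-twisted variant — additive under $\circledast$, and it rests essentially on $t_1$ being the identity.
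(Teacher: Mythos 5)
Your proof is correct, and its overall skeleton is the same as the paper's: reduce the multiplicative identity $\Gamma_{\Psi\circledast\Phi}=\Gamma_\Psi\Gamma_\Phi$ to the additivity of the coefficient functionals $(-\,|\,x_0^{n-1}x_1)$ under $\circledast$, then exponentiate in the commutative ring $\K[[x]]$. The difference is in how that additivity is established. The paper simply cites Lemma 4.12 of \cite{EF0}, which states that $(-\,|\,x_0^{n-1}x_1):(\G(\KX),\circledast)\to(\K,+)$ is a group morphism, and calls the rest a straightforward computation. You instead prove the additivity from scratch via the two-dimensional representation $\rho$, and your argument checks out: the only words with nonzero image under $\rho$ are $x_0^a$ and $x_0^ax_1$, so $\rho(\Xi)$ is upper triangular with the relevant coefficients packaged in $B_\Xi$; since $t_{1_G}=\mathrm{id}$ one gets $\aut_\Psi(x_1)=\Psi^{-1}x_1\Psi$, whose image under $\rho$ collapses to $P_\Psi^{-1}\rho(x_1)$ by the nilpotency of $\rho(x_1)$; and the factor $P_\Psi^{-1}$ in the top-right entry of $\rho(\aut_\Psi(\Phi))$ is exactly cancelled by the factor $P_\Psi$ coming from $\ell_\Psi$ in $S_\Psi$, yielding $B_{\Psi\circledast\Phi}=B_\Psi+B_\Phi$. (The two implicit hypotheses you use --- that $\Psi$, $\Phi$ and $\aut_\Psi(\Phi)$ all have constant term $1$, and that $\rho$ converges on the completed algebra --- both hold: grouplike invertible elements have constant term $1$, $\aut_\Psi$ preserves the augmentation, and a word of degree $d$ maps to a matrix with entries of $x$-adic order at least $d-1$.) What your route buys is a self-contained proof that does not lean on the external lemma; what the paper's route buys is brevity, since the additivity statement is already available in the literature it builds on.
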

\begin{proof}
    Lemma 4.12 in \cite{EF0} says that the map $(- | x_0^{n-1}x_1) : (\G(\KX), \circledast) \to (\K, +)$ is a group morphism, for any $n \in \N^{\ast}$. The result is then obtained by straightforward computations.  
\end{proof}

\noindent We then define the following topological $\K$-module automorphism of $\KX / \KX x_0$:
\begin{equation}
    ^{\Gamma}S_{\Psi}^Y := \ell_{\Gamma^{-1}_{\Psi}(x_1)} \circ S_{\Psi}^Y.
\end{equation}

\begin{corollary}
    \label{Gamma_SY}
    There is a group action of $(\G(\KX), \circledast)$ on $\KX / \KX x_0$ by topological $\K$-module automorphisms
    \begin{equation}
        (\G(\KX), \circledast) \longrightarrow  \Aut_{\K-\Mod}\left(\KX / \KX x_0\right), \,\, \Psi \longmapsto \,^{\Gamma}S_{\Psi}^Y
    \end{equation}
\end{corollary}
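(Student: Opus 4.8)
The plan is to check that $\Psi \mapsto {}^{\Gamma}S_{\Psi}^Y$ is a homomorphism for $\circledast$, i.e. that ${}^{\Gamma}S_{\Psi}^Y \circ {}^{\Gamma}S_{\Phi}^Y = {}^{\Gamma}S_{\Psi\circledast\Phi}^Y$ for all $\Psi,\Phi \in \G(\KX)$; each ${}^{\Gamma}S_{\Psi}^Y$ is automatically a $\K$-module automorphism, being the composite of the automorphism $S_{\Psi}^Y$ of Proposition-Definition~\ref{S_Y} with $\ell_{\Gamma_{\Psi}^{-1}(x_1)}$, which is invertible because $\Gamma_{\Psi}^{-1}(x_1)$ has constant term $1$. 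Expanding the definition, substituting the two already-known morphism properties $S_{\Psi}^Y \circ S_{\Phi}^Y = S_{\Psi\circledast\Phi}^Y$ (Corollary~\ref{group_morph_SY}) and $\Gamma_{\Psi\circledast\Phi} = \Gamma_{\Psi}\Gamma_{\Phi}$ (Lemma~\ref{Gamma_aut}), and cancelling the invertible operators $\ell_{\Gamma_{\Psi}^{-1}(x_1)}$ and $S_{\Phi}^Y$, the identity to be proved collapses to the single commutation relation
\[
    S_{\Psi}^Y \circ \ell_{f(x_1)} = \ell_{f(x_1)} \circ S_{\Psi}^Y \quad \text{on } \KX/\KX x_0,
\]
which I will establish for every power series $f$ in the single variable $x_1$ (the case $f = \Gamma_{\Phi}^{-1}$ being the one needed). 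Here I also use that power series in the single variable $x_1$ commute, so that $\ell_{\Gamma_{\Psi}^{-1}(x_1)}\circ\ell_{\Gamma_{\Phi}^{-1}(x_1)} = \ell_{(\Gamma_{\Psi}\Gamma_{\Phi})^{-1}(x_1)}$.

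First I would prove the analogous relation on $\KX$ itself. Since $S_{\Psi} = \ell_{\Psi}\circ\aut_{\Psi}$ with $\aut_{\Psi}$ an algebra automorphism, a direct computation gives $S_{\Psi}\circ\ell_b = \ell_{\Ad_{\Psi}(\aut_{\Psi}(b))}\circ S_{\Psi}$ for every $b \in \KX$. The point is that this prefactor is trivial when $b = f(x_1)$: because $1$ is the neutral element of $G$ one has $t_1 = \mathrm{id}$, so \eqref{aut_Psi} gives $\aut_{\Psi}(x_1) = \Ad_{\Psi^{-1}}(x_1)$, hence $\aut_{\Psi}(f(x_1)) = \Psi^{-1}f(x_1)\Psi$ and $\Ad_{\Psi}(\aut_{\Psi}(f(x_1))) = f(x_1)$. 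Thus $S_{\Psi}$ commutes with $\ell_{f(x_1)}$ on $\KX$.

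Next I would transfer this to the quotient using the defining relation $S_{\Psi}^Y \circ \theta = \theta \circ S_{\Psi}$ of Proposition-Definition~\ref{S_Y}, where $\theta := \overline{\q}\circ\pi_Y$ is surjective. It therefore suffices to verify the desired commutation after precomposition with $\theta$, which works as soon as $\ell_{f(x_1)}$ commutes with $\theta$. The map $\pi_Y$ is a morphism of left $\KX$-modules (the quotient being by the left ideal $\KX x_0$), so it commutes with $\ell_{f(x_1)}$; and $\overline{\q}$ commutes with $\ell_{x_1}$ since, under the identification $\KY \simeq \KX/\KX x_0$ intertwining $\overline{\q}$ with $\q_Y$, this is the combinatorial identity $\q_Y(y_{1,1}\,w) = y_{1,1}\,\q_Y(w)$, valid because prepending the letter $y_{1,1}$ (whose group label is the identity) leaves every successive difference $g_i g_{i-1}^{-1}$ in the formula for $\q_Y$ unchanged. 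Writing $\ell_{f(x_1)} = f(\ell_{x_1})$ then gives $\theta\circ\ell_{f(x_1)} = \ell_{f(x_1)}\circ\theta$, and combining with the $\KX$-relation yields $S_{\Psi}^Y\circ\ell_{f(x_1)}\circ\theta = \theta\circ S_{\Psi}\circ\ell_{f(x_1)} = \theta\circ\ell_{f(x_1)}\circ S_{\Psi} = \ell_{f(x_1)}\circ S_{\Psi}^Y\circ\theta$, i.e. the commutation relation.

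Assembling the pieces, ${}^{\Gamma}S_{\Psi}^Y\circ{}^{\Gamma}S_{\Phi}^Y = \ell_{\Gamma_{\Psi}^{-1}(x_1)}\circ S_{\Psi}^Y\circ\ell_{\Gamma_{\Phi}^{-1}(x_1)}\circ S_{\Phi}^Y$ equals, by the commutation relation, $\ell_{\Gamma_{\Psi}^{-1}(x_1)}\circ\ell_{\Gamma_{\Phi}^{-1}(x_1)}\circ S_{\Psi}^Y\circ S_{\Phi}^Y$; by $\ell_a\circ\ell_b = \ell_{ab}$ with Corollary~\ref{group_morph_SY} this is $\ell_{(\Gamma_{\Psi}\Gamma_{\Phi})^{-1}(x_1)}\circ S_{\Psi\circledast\Phi}^Y$, which by Lemma~\ref{Gamma_aut} is precisely ${}^{\Gamma}S_{\Psi\circledast\Phi}^Y$. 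I expect the only genuinely delicate step to be the commutation of $\overline{\q}$ with $\ell_{x_1}$, as it is exactly where the distinguished role of the neutral element $1 \in G$, and hence of the letter $x_1$ singled out in the definition of $\Gamma$, enters; the rest is formal manipulation of the morphism properties already recorded for $S^Y$ and $\Gamma$.
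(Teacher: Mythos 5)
Your proof is correct, and it follows the same overall strategy as the paper, whose proof of Corollary~\ref{Gamma_SY} consists of the single sentence ``Follows from Corollary~\ref{group_morph_SY} and Lemma~\ref{Gamma_aut}.'' The value of your write-up is that you correctly identify that those two cited facts do \emph{not} suffice on their own: one must also commute $S_{\Psi}^Y$ past $\ell_{\Gamma_{\Phi}^{-1}(x_1)}$, i.e. establish $S_{\Psi}^Y \circ \ell_{f(x_1)} = \ell_{f(x_1)} \circ S_{\Psi}^Y$ on $\KX/\KX x_0$, and you prove this cleanly by first checking it on $\KX$ (using $t_1 = \mathrm{id}$, so that $\Ad_{\Psi}\circ\aut_{\Psi}$ fixes $x_1$) and then descending along $\overline{\q}\circ\pi_Y$, which requires the combinatorial identity $\q(x_1 a) = x_1\q(a)$. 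The paper does use exactly these ingredients, but only later and in disguised form: the identity $\q(x_1 a)=x_1\q(a)$ appears in the proof of Proposition~\ref{link_GammaSY_GammaautM10}, and the analogous commutation in the crossed-product picture (that $\ell_{\Gamma_{\Phi}^{-1}(-e_1)}$ commutes with $\aut_{\Psi}^{\M,(10)}$ because $e_1$ is fixed by $\aut_{\Psi}^{\V,(1)}$) is carried out explicitly in the proof of Proposition~\ref{Gamma_aut_M_10}. Your assessment that this commutation is the only genuinely delicate step is accurate; the rest is the formal bookkeeping the paper's one-line proof alludes to.
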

\begin{proof}
    Follows from Corollary \ref{group_morph_SY} and Lemma \ref{Gamma_aut}.
\end{proof}

\noindent The above automorphism is related to an automorphism introduced in \cite{EF0}.
\begin{proposition}
    \label{link_ef0_yad}
    For any $\Psi \in \G(\KX)$, the $\K$-module automorphism $^{\Gamma}S_{\Psi}^Y$ is equal to the $\K$-module automorphism $S_{\Theta(\Psi)}^Y$ where \(\Theta : (\G(\KX), \circledast) \to ((\KX)^{\times}, \circledast)\footnote{The product $\circledast$ extends to a product on $\KX^{\times}$. See \cite{EF0}, Lemma 4.1 and \cite{Rac}, §3.1.2.}\) is the group morphism given by (\cite{EF0}, Proposition 4.13)
    \begin{equation}
        \Theta(\Psi):= \Gamma_{\Psi}^{-1}(x_1) \Psi \exp(-(\Psi|x_0)x_0).
    \end{equation}
\end{proposition}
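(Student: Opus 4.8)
The plan is to compare the two $\K$-module endomorphisms of $\KX/\KX x_0$ after precomposing with the surjection $\overline{\q}\circ\pi_Y : \KX \to \KX/\KX x_0$. As recorded in the footnote, $\aut$ and $S$ extend from $\G(\KX)$ to $\KX^{\times}$, and the diagram of Proposition-Definition \ref{S_Y} still characterizes $S_{\Phi}^Y$ for $\Phi\in\KX^{\times}$: indeed $S_{\Phi}=\ell_{\Phi}\circ\aut_{\Phi}$ preserves $\KX x_0$ because $\aut_{\Phi}(x_0)=x_0$, so that $S_{\Phi}^Y\circ\overline{\q}\circ\pi_Y=\overline{\q}\circ\pi_Y\circ S_{\Phi}$ determines $S_{\Phi}^Y$ uniquely by surjectivity of $\overline{\q}\circ\pi_Y$. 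Writing $\gamma:=\Gamma^{-1}_{\Psi}(x_1)$ and $u:=\exp(-(\Psi|x_0)x_0)$, so that $\Theta(\Psi)=\gamma\,\Psi\,u$, and recalling the definition ${}^{\Gamma}S_{\Psi}^{Y}=\ell_{\gamma}\circ S_{\Psi}^Y$, it then suffices to establish $\overline{\q}\circ\pi_Y\circ S_{\Theta(\Psi)}={}^{\Gamma}S_{\Psi}^{Y}\circ\overline{\q}\circ\pi_Y$.

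The central computation is to factor $S_{\Theta(\Psi)}$. First I would show on generators that $\aut_{\Theta(\Psi)}=\Ad_{u^{-1}}\circ\aut_{\Psi}$. On $x_0$ both sides fix $x_0$. On $x_g$ one writes $t_g(\Theta(\Psi)^{-1})=t_g(u^{-1})\,t_g(\Psi^{-1})\,t_g(\gamma)$; here $t_g(u)=u$ since $u$ is a series in $x_0$, while $t_g(\gamma)=\Gamma^{-1}_{\Psi}(x_g)$ is a series in $x_g$ and hence commutes with $x_g$, so that $\Ad_{t_g(\gamma)}(x_g)=x_g$ and the $\Gamma$-correction disappears, leaving $\aut_{\Theta(\Psi)}(x_g)=\Ad_{u^{-1}}\big(\Ad_{t_g(\Psi^{-1})}(x_g)\big)=\Ad_{u^{-1}}(\aut_{\Psi}(x_g))$. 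Consequently $S_{\Theta(\Psi)}=\ell_{\gamma\Psi u}\circ\Ad_{u^{-1}}\circ\aut_{\Psi}=\ell_{\gamma}\circ\ell_{\Psi}\circ r_u\circ\aut_{\Psi}$, and since $\aut_{\Psi}(u)=u$ gives $r_u\circ\aut_{\Psi}=\aut_{\Psi}\circ r_u$, this simplifies to $S_{\Theta(\Psi)}=\ell_{\gamma}\circ S_{\Psi}\circ r_u$.

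It remains to descend this factorization to $\KX/\KX x_0$. Since $u-1\in\KX x_0$, right multiplication by $u$ becomes trivial after $\pi_Y$, i.e. $\pi_Y\circ r_u=\pi_Y$; combining this with the commutation of $\ell_{\gamma}$ and $r_u$, with $S_{\Psi}\circ r_u=r_u\circ S_{\Psi}$, and with $\pi_Y$ being a module map, one gets $\pi_Y\circ S_{\Theta(\Psi)}=\ell_{\gamma}\circ\pi_Y\circ S_{\Psi}$, the $\ell_{\gamma}$ on the right now denoting the $\KY$-module action on $\KX/\KX x_0$. Applying $\overline{\q}$ and then the defining diagram of $S_{\Psi}^Y$, the proof reduces to the commutation $\overline{\q}\circ\ell_{\gamma}=\ell_{\gamma}\circ\overline{\q}$. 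Under the identification $\KY\simeq\KX/\KX x_0$ this amounts to $\q_Y\circ\ell_{x_1^k}=\ell_{x_1^k}\circ\q_Y$ for all $k$, which holds because $x_1=y_{1,1}$ carries the neutral element of $G$: prepending copies of $y_{1,1}$ leaves every twist $g_ig_{i-1}^{-1}$ in the definition of $\q_Y$ unchanged. Chaining these identities yields $\overline{\q}\circ\pi_Y\circ S_{\Theta(\Psi)}=\ell_{\gamma}\circ S_{\Psi}^Y\circ\overline{\q}\circ\pi_Y={}^{\Gamma}S_{\Psi}^{Y}\circ\overline{\q}\circ\pi_Y$, and surjectivity of $\overline{\q}\circ\pi_Y$ concludes.

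The main obstacle is the generator identity $\aut_{\Theta(\Psi)}=\Ad_{u^{-1}}\circ\aut_{\Psi}$: the point is to verify that the two correcting factors of $\Theta(\Psi)$ behave as expected under $t_g$ and $\Ad$ — the $\Gamma$-factor $\Gamma^{-1}_{\Psi}(x_g)$ being killed because it commutes with $x_g$, and the exponential factor contributing a global conjugation $\Ad_{u^{-1}}$ that is precisely reabsorbed by $\ell_{\Theta(\Psi)}$ into the right multiplication $r_u$. The second, more structural subtlety is the commutation of $\overline{\q}$ with left multiplication by $\gamma$, which genuinely relies on the index of $x_1$ being the identity of $G$.
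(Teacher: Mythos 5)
Your proof is correct and follows essentially the same route as the paper's: both rest on the generator identity $\aut_{\Theta(\Psi)}=\Ad_{\exp((\Psi|x_0)x_0)}\circ\aut_{\Psi}$, the resulting factorization $S_{\Theta(\Psi)}(v)=\Gamma_{\Psi}^{-1}(x_1)\,S_{\Psi}(v)\,\exp(-(\Psi|x_0)x_0)$, and then descent along $\overline{\q}\circ\pi_Y$ using that the right factor dies in $\KX/\KX x_0$ and that $\overline{\q}\circ\pi_Y$ commutes with $\ell_{\Gamma_{\Psi}^{-1}(x_1)}$. You merely make explicit two points the paper leaves implicit (the extension of Proposition-Definition \ref{S_Y} to $\KX^{\times}$ and the identity $\q(x_1 a)=x_1\q(a)$); the only blemish is the harmless slip writing $t_g(\gamma)$ where $t_g(\gamma^{-1})$ is meant, which does not affect the argument since that factor commutes with $x_g$ either way.
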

\begin{proof}
    Let $\Psi \in \G(\KX)$ and $v \in \KX$. First, we have
    \[
        S_{\Theta(\Psi)}(v) = \Theta(\Psi) \aut_{\Theta(\Psi)}(v) = \left(\Gamma_{\Psi}^{-1}(x_1) \Psi \exp(-(\Psi|x_0)x_0)\right) \aut_{\Theta(\Psi)}(v)
    \]
    Moreover, one can check on generators that
    \[
        \aut_{\Theta(\Psi)} = \mathrm{Ad}_{\exp((\Psi | x_0) x_0)} \circ \aut_{\Psi}.
    \]
    Therefore, one obtains
    \[
        S_{\Theta(\Psi)}(v)
        = \Gamma_{\Psi}^{-1}(x_1) \Psi \aut_{\Psi}(v) \exp(-(\Psi | x_0) x_0)
        = \Gamma_{\Psi}^{-1}(x_1) S_{\Psi}(v) \exp(-(\Psi | x_0) x_0)
    \]
    Consequently,
    \begin{align*}
        ^{\Gamma}S_{\Psi}^Y\big(\overline{\q} \circ \pi_Y(v)\big)
        = & \Gamma_{\Psi}^{-1}(x_1) S_{\Psi}^Y \Big(\overline{\q} \circ \pi_Y(v) \Big)
        = \Gamma_{\Psi}^{-1}(x_1) \Big(\overline{\q} \circ \pi_Y \left(S_{\Psi}(v) \right)\Big) \\
        = & \overline{\q} \circ \pi_Y \left(\Gamma_{\Psi}^{-1}(x_1) S_{\Psi}(v)\right)
        = \overline{\q} \circ \pi_Y \left(S_{\Theta(\Psi)}(v) \right)
    \end{align*}
    This establishes the identity $^{\Gamma}S_{\Psi}^Y = S_{\Theta(\Psi)}^Y$, thanks to Proposition-Definition \ref{S_Y}.
\end{proof}

\subsubsection{The group $(\DMR_0^G(\K), \circledast)$} \label{subsection_DMR0}
Let $\K$ be a commutative $\Q$-algebra. For $\Psi \in \G(\KX)$, set $\Psi_{\star} := \overline{\q} \circ \pi_Y \left(\Gamma^{-1}_{\Psi}(x_1) \Psi\right) \in \KX/\KX x_0$.
\begin{propdef}[\cite{Rac}, Definition 3.2.1 and Theorem I]
    \label{DMR}
    If $G$ is a cyclic group, we define\footnote{The notation $\DMR$ is for "Double Mélange et Régularisation" which is French for "Double Shuffle and Regularisation".} $\DMR_0^G(\K)$\footnote{In \cite{Rac}, Definition 3.2.1 gives sets $\DMR_{\lambda}^{\iota}(\K)$ where $\lambda \in \K$ and $\iota : G \to \C^{\ast}$ a group embedding (therefore $G$ is cyclic). If $|G| \in \{1, 2\}$, the embedding $\iota$ is unique; and if $|G| \geq 3$, for $\lambda=0$, condition (iv) does not depend of the choice of $\iota$. For this reason, the embedding $\iota$ does not appear in our notation.} to be the set of $\Psi \in \G(\KX)$ such that :
    \begin{enumerate}[label=\roman*.]
        \begin{multicols}{2}
        \item $(\Psi | x_0) = (\Psi | x_1) = 0$;
        \item $\hat{\Delta}_{\star}^{\Mod}(\Psi_{\star}) = \Psi_{\star} \otimes \Psi_{\star}$;
        \item If $|G| \in \{1, 2\}, (\Psi | x_0 x_1) = 0$;
        \item If $|G| \geq 3, \forall g \in G, \, \left(\Psi | x_g - x_{g^{-1}}\right) = 0$.
        \end{multicols}
    \end{enumerate}
    The pair $(\DMR_0^G(\K), \circledast)$ is a subgroup of $(\G(\KX), \circledast)$.
\end{propdef}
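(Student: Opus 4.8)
The plan is to verify the three subgroup axioms for $(\DMR_0^G(\K), \circledast)$ inside $(\G(\KX), \circledast)$: that the unit $1$ lies in $\DMR_0^G(\K)$, and that the set is stable under $\circledast$ and under $\circledast$-inversion. The unit is immediate, since $(1 \mid w) = 0$ for every nonempty word $w$ forces $\Gamma_1 = 1$, so that $1_{\star} := \overline{\q} \circ \pi_Y(1)$ is the image of the unit, which is grouplike for $\hat{\Delta}_{\star}^{\Mod}$ by the diagram \eqref{harmonic_coproduct_M}, while conditions (i), (iii) and (iv) hold trivially. For the other two axioms the engine will be the identity $\Psi_{\star} = {}^{\Gamma}S_{\Psi}^Y(1_{\star})$, which follows from Proposition \ref{link_ef0_yad} evaluated at $v = 1$ together with the fact that $\pi_Y$ annihilates right multiples of $x_0$. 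Combining it with the group-action property of Corollary \ref{Gamma_SY} yields the structural formula
\[
    (\Psi \circledast \Phi)_{\star} = {}^{\Gamma}S_{\Psi}^Y(\Phi_{\star}),
\]
which transports the defining conditions along the group law.

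The conditions (i), (iii) and (iv) will be handled by a low-degree analysis of $\Psi \circledast \Phi = \Psi\, \aut_{\Psi}(\Phi)$. Since $\aut_{\Psi}(x_0) = x_0$ and $\aut_{\Psi}(x_g) = \Ad_{t_g(\Psi^{-1})}(x_g)$ equals $x_g$ plus terms of degree $\geq 2$, and since grouplike elements have constant term $1$, the degree-one coefficients are additive: $(\Psi \circledast \Phi \mid x_g) = (\Psi \mid x_g) + (\Phi \mid x_g)$ for all $g \in G \sqcup \{0\}$, which gives the stability of (i) and (iv) at once. For (iii) one assumes (i) for both factors; then the degree-two correction of $\aut_{\Psi}$ contributes to the coefficient on $x_0 x_1$ only through $(\Phi \mid x_1)$, which vanishes, leaving $(\Psi \circledast \Phi \mid x_0 x_1) = (\Psi \mid x_0 x_1) + (\Phi \mid x_0 x_1) = 0$. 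Read for $\circledast$-inverses, the same additivity closes all three conditions under inversion.

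The crux is condition (ii). The plan is to prove the following lemma: if $\Psi \in \G(\KX)$ is such that $\Psi_{\star}$ is grouplike for $\hat{\Delta}_{\star}^{\Mod}$, then ${}^{\Gamma}S_{\Psi}^Y$ is an automorphism of the coalgebra $(\KX/\KX x_0, \hat{\Delta}_{\star}^{\Mod})$. Granting this, stability of (ii) under $\circledast$ is immediate from the structural formula, since a coalgebra automorphism carries the grouplike $\Phi_{\star}$ to the grouplike $(\Psi \circledast \Phi)_{\star}$; and stability under inversion follows because the inverse of a coalgebra automorphism is again one, so that $(\Psi^{-1})_{\star} = ({}^{\Gamma}S_{\Psi}^Y)^{-1}(1_{\star})$ is grouplike. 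To prove the lemma I would use the identification $\KX/\KX x_0 \simeq \KY$, under which $\hat{\Delta}_{\star}^{\Mod}$ becomes the bialgebra coproduct $\hat{\Delta}_{\star}^{\alg}$, and factor ${}^{\Gamma}S_{\Psi}^Y$ as $\ell_{\Psi_{\star}} \circ c_0$ with $c_0$ a coalgebra automorphism fixing the unit; multiplicativity of $\hat{\Delta}_{\star}^{\alg}$ together with $\hat{\Delta}_{\star}^{\alg}(\Psi_{\star}) = \Psi_{\star} \otimes \Psi_{\star}$ then yields $\hat{\Delta}_{\star}^{\Mod} \circ {}^{\Gamma}S_{\Psi}^Y = ({}^{\Gamma}S_{\Psi}^Y)^{\otimes 2} \circ \hat{\Delta}_{\star}^{\Mod}$. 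I expect the production of this factorization to be the main obstacle: the automorphism $\aut_{\Psi}$ is a bialgebra automorphism only for the shuffle coproduct $\hat{\Delta}$, not for $\hat{\Delta}_{\star}^{\Mod}$, and the passage between the two is mediated by the regularization factor $\Gamma_{\Psi}$ and by the twist $\overline{\q}$, whose interaction with the left $\KY$-module structure is the delicate point to control.
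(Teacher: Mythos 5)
The paper does not prove this statement itself: the subgroup property is quoted from \cite{Rac}, Theorem I (and is essentially re-obtained later via Proposition \ref{DMRsubsetStab}, i.e.\ \cite{EF0}, Theorem 1.2, since the degree-one conditions visibly cut out a subgroup and $\Stab(\hat{\Delta}_{\star}^{\Mod})(\K)$ is a stabilizer, hence a subgroup). Measured against that, your treatment of the unit and of conditions (i), (iii), (iv) is correct and self-contained: the identity $\Psi_{\star} = {}^{\Gamma}S_{\Psi}^Y(1_{\star})$ does follow from Proposition \ref{link_ef0_yad} together with $\overline{\q}\circ\pi_Y\circ\ell_{\Gamma_{\Psi}^{-1}(x_1)} = \ell_{\Gamma_{\Psi}^{-1}(x_1)}\circ\overline{\q}\circ\pi_Y$ and the vanishing of $\pi_Y$ on right multiples of $x_0$; the structural formula $(\Psi\circledast\Phi)_{\star} = {}^{\Gamma}S_{\Psi}^Y(\Phi_{\star})$ then follows from Corollary \ref{Gamma_SY}; and the low-degree additivity arguments for (i), (iii), (iv) are sound (for (iii) the only degree-$2$ correction of $\aut_{\Psi}$ landing on $x_0x_1$ indeed comes from the $g=1$ generator and is weighted by $(\Phi\mid x_1)=0$).

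The genuine gap is exactly where you locate it, but it is larger than an ``obstacle to be controlled'': your key lemma --- that $\hat{\Delta}_{\star}^{\Mod}(\Psi_{\star}) = \Psi_{\star}\otimes\Psi_{\star}$ forces ${}^{\Gamma}S_{\Psi}^Y$ to be a coalgebra automorphism of $(\KX/\KX x_0, \hat{\Delta}_{\star}^{\Mod})$ --- is precisely the inclusion $\DMR_0^G(\K)\subset\Stab(\hat{\Delta}_{\star}^{\Mod})(\K)$ of Proposition \ref{DMRsubsetStab}, i.e.\ the hard half of \cite{EF0}, Theorem 1.2, which is itself of the same depth as Racinet's Theorem I. The factorization ${}^{\Gamma}S_{\Psi}^Y = \ell_{\Psi_{\star}}\circ c_0$ is easy to write down ($c_0$ fixes $1_{\star}$ by construction, and $\ell_{\Psi_{\star}}$ is a coalgebra map by Proposition-Definition \ref{DeltaW_DeltaM}\ref{compat_DeltaW_DeltaM} transported through $\varpi$ and $\kappa$), but nothing in the hypotheses gives you that $c_0$ respects $\hat{\Delta}_{\star}^{\Mod}$: the map $c_0$ mixes $\aut_{\Psi}$, the twist $\overline{\q}$ and the $\Gamma$-correction, and it is \emph{not} induced by an algebra automorphism of $\KY$, so coalgebra-compatibility cannot be checked on generators. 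Establishing it is the actual content of the theorem and requires the comparison of the two Hopf structures ($\hat{\Delta}$ versus $\hat{\Delta}_{\star}$) carried out in \cite{Rac} and \cite{EF0}. As written, your argument therefore proves the subgroup property only modulo an unproven lemma that is equivalent to the result being cited; if you are allowed to invoke Proposition \ref{DMRsubsetStab}, the whole statement follows in two lines, and if you are not, the lemma needs a proof that your sketch does not supply.
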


Thanks to Corollary \ref{Gamma_SY}, there is a group action of $(\G(\KX), \circledast)$ on the $\K$-module $\Mor_{\K-\Mod}\left(\KX / \KX x_0, \left(\KX / \KX x_0\right)^{\Hat{\otimes} 2}\right)$ via :
\begin{equation}
    \label{act_on_Delta*}
    \Psi \cdot D := \left(\left({^{\Gamma}S_{\Psi}^Y}\right)^{\otimes 2}\right) \circ D \circ (^{\Gamma}S_{\Psi}^Y)^{-1}.
\end{equation}
In particular, the stabilizer of $D = \hat{\Delta}_{\star}^{\Mod}$ is the subgroup (\cite{EF0}, §5.4)
\begin{equation}
    \mathsf{Stab}(\hat{\Delta}_{\star}^{\Mod})(\K) := \left\{ \Psi \in \G(\KX) \, | \, \left( {^{\Gamma}S_{\Psi}^Y} \right)^{\otimes 2} \circ \hat{\Delta}_{\star}^{\Mod} = \hat{\Delta}_{\star}^{\Mod} \circ {^{\Gamma}S_{\Psi}^Y} \right\}.
\end{equation}

\begin{proposition}[\cite{EF0}, Theorem 1.2]
    \label{DMRsubsetStab}
    If $G$ is a cylic group, we have
    \begin{equation}
        \DMR_0^G(\K) = \{ \Psi \in \mathsf{Stab}(\hat{\Delta}_{\star}^{\Mod})(\K) \, | \, (\Psi | x_0) = (\Psi | x_1) = 0 \}.
    \end{equation}
\end{proposition}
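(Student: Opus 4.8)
The plan is to fix $\Psi \in \G(\KX)$ with $(\Psi|x_0) = (\Psi|x_1) = 0$, the condition common to both sides, and to establish the equivalence
\[
\Psi \in \Stab(\hat{\Delta}_{\star}^{\Mod})(\K) \iff \Psi \text{ satisfies (ii), (iii) and (iv)}.
\]
The hinge of the whole argument is the image of the unit under ${}^{\Gamma}S_{\Psi}^Y$. Under the normalisation $(\Psi|x_0)=0$, Proposition \ref{link_ef0_yad} gives $\Theta(\Psi) = \Gamma_{\Psi}^{-1}(x_1)\Psi$ and hence ${}^{\Gamma}S_{\Psi}^Y = S_{\Theta(\Psi)}^Y$. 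Evaluating the defining diagram of Proposition-Definition \ref{S_Y} on $v=1$ and using $S_{\Theta(\Psi)}(1) = \Theta(\Psi)$, I obtain the pivotal identity
\[
{}^{\Gamma}S_{\Psi}^Y\big(\overline{\q}\circ\pi_Y(1)\big) = \overline{\q}\circ\pi_Y\big(\Gamma_{\Psi}^{-1}(x_1)\Psi\big) = \Psi_{\star}.
\]
Moreover $\overline{\q}\circ\pi_Y(1) = \pi_Y(1)$ since $\q(1)=1$, and this class is grouplike for $\hat{\Delta}_{\star}^{\Mod}$ by the defining diagram (\ref{harmonic_coproduct_M}) together with $\hat{\Delta}_{\star}^{\alg}(1) = 1\otimes 1$.

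For the inclusion of the right-hand side into $\DMR_0^G(\K)$, suppose $\Psi \in \Stab(\hat{\Delta}_{\star}^{\Mod})(\K)$. The defining relation of the stabiliser says exactly that ${}^{\Gamma}S_{\Psi}^Y$ is an automorphism of the coalgebra $(\KX/\KX x_0, \hat{\Delta}_{\star}^{\Mod})$, hence carries grouplike elements to grouplike elements; applied to the grouplike element $\pi_Y(1)$ and combined with the pivotal identity, this gives that $\Psi_{\star}$ is grouplike, which is condition (ii). Conditions (iii) and (iv) then come from the same identity $({}^{\Gamma}S_{\Psi}^Y)^{\otimes 2}\circ\hat{\Delta}_{\star}^{\Mod} = \hat{\Delta}_{\star}^{\Mod}\circ{}^{\Gamma}S_{\Psi}^Y$ restricted to low degrees, the relevant condition depending on $|G|$: the degree-one component yields the symmetry $(\Psi|x_g) = (\Psi|x_{g^{-1}})$ of (iv) when $|G| \geq 3$, while the degree-two component, into which the $\Gamma$-correction first injects a multiple of $(\Psi|x_0x_1)$, yields (iii) when $|G| \in \{1,2\}$.

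For the reverse inclusion, assume (i)--(iv) and transport the situation along the isomorphism $\KX/\KX x_0 \simeq \KY$, under which $\hat{\Delta}_{\star}^{\Mod}$ corresponds to the harmonic coproduct $\hat{\Delta}_{\star}^{\alg}$ and ${}^{\Gamma}S_{\Psi}^Y$ corresponds to a $\K$-module automorphism $\tau_{\Psi}$ of $\KY$. I would factor $\tau_{\Psi}$ as left multiplication by the grouplike element $\Psi_{\star}$ followed by the algebra automorphism $\alpha_{\Psi}$ of $\KY$ induced by $\aut_{\Psi}$, and verify that $\alpha_{\Psi}$ is compatible with $\hat{\Delta}_{\star}^{\alg}$. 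Granting this, grouplikeness of $\Psi_{\star}$ makes the twist harmless: from $\hat{\Delta}_{\star}^{\alg}(\Psi_{\star} z) = (\Psi_{\star}\otimes\Psi_{\star})\hat{\Delta}_{\star}^{\alg}(z)$ and the coalgebra-compatibility of $\alpha_{\Psi}$ one reads off $(\tau_{\Psi})^{\otimes 2}\circ\hat{\Delta}_{\star}^{\alg} = \hat{\Delta}_{\star}^{\alg}\circ\tau_{\Psi}$, that is $\Psi \in \Stab(\hat{\Delta}_{\star}^{\Mod})(\K)$.

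\textbf{The main obstacle} is precisely the coalgebra-compatibility of the linear part $\alpha_{\Psi}$: the automorphism $\aut_{\Psi}$ is a bialgebra automorphism for the \emph{shuffle} coproduct $\hat{\Delta}$ (Lemma \ref{aut_bialg}), not for the harmonic coproduct $\hat{\Delta}_{\star}^{\alg}$, so this compatibility is not formal and is exactly the point at which conditions (iii) and (iv) must enter to cancel the low-degree discrepancy. This graded, term-by-term verification is the technical heart of the statement; since it is quoted from \cite{EF0}, Theorem 1.2, I would either reproduce that argument or invoke it directly.
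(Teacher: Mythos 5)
The paper offers no proof of this proposition: it is imported as \cite{EF0}, Theorem 1.2, so there is no internal argument to measure your attempt against, and I assess the sketch on its own. The forward half is sound in outline. With $(\Psi|x_0)=0$ one indeed has $\Theta(\Psi)=\Gamma_{\Psi}^{-1}(x_1)\Psi$, whence ${}^{\Gamma}S_{\Psi}^{Y}\big(\pi_Y(1)\big)=\overline{\q}\circ\pi_Y\big(\Gamma_{\Psi}^{-1}(x_1)\Psi\big)=\Psi_{\star}$; since $\pi_Y(1)$ is grouplike for $\hat{\Delta}_{\star}^{\Mod}$ and the stabilizer condition makes ${}^{\Gamma}S_{\Psi}^{Y}$ a coalgebra automorphism, condition (ii) follows. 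Your extraction of (iii) and (iv) from the low-degree components of the stabilizer equation is the right kind of claim but is asserted rather than performed.

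The reverse inclusion contains a genuine gap, which you flag yourself but do not close. First, the factorization you propose does not exist as stated: for $G\neq\{1\}$ the automorphism $\aut_{\Psi}$ of $\KX$ does not restrict to $\KY$ --- this is precisely the obstruction recorded in the introduction that motivates the crossed-product construction of \S\ref{crossed_product} --- so there is no ``algebra automorphism $\alpha_{\Psi}$ of $\KY$ induced by $\aut_{\Psi}$''. The automorphism that actually occurs in the decomposition of ${}^{\Gamma}S_{\Psi}^{Y}$ is ${}^{\Gamma}\aut_{\Psi}^{Y}$ of Proposition \ref{explicit_autY}, given by $y_{n,g}\mapsto\q_Y\big(\Gamma_{\Psi}^{-1}(x_1)\Psi x_0^{n-1}t_g(\Psi^{-1}\Gamma_{\Psi}(x_1))x_g\big)$, which is not a restriction of $\aut_{\Psi}$. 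Second, and more seriously, the compatibility of that automorphism with $\hat{\Delta}_{\star}^{\alg}$ for $\Psi\in\DMR_0^G(\K)$ is not a low-degree bookkeeping matter: it is essentially the entire content of the inclusion $\DMR_0^G(\K)\subset\Stab(\hat{\Delta}_{\star}^{\Mod})(\K)$, and in \cite{EF0} it is obtained by exploiting Racinet's Theorem I (the torsor structure of $\DMR^{\iota}_{\lambda}(\K)$ over $\DMR_0^G(\K)$) rather than by the direct graded verification you envisage. Deferring that step to the very theorem being proved makes the argument circular as a self-contained proof; as written it establishes only (modulo the low-degree checks) that the right-hand side is contained in $\DMR_0^G(\K)$.
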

\noindent Since the condition $(\Psi | x_0) = (\Psi | x_1) = 0$ defines a subgroup of $(\G(\KX), \circledast)$, Theorem \ref{DMRsubsetStab} then identifies $\DMR_0^G(\K)$ with the intersection of two subgroups of $(\G(\KX), \circledast)$.

\subsubsection{An affine \texorpdfstring{$\Q$}{Q}-group scheme structure}
Recall that an affine $\Q$-group scheme is a functor $\mathsf{G}$ from the category of commutative $\Q$-algebras to the category of groups for which is representable by a Hopf $\Q$-algebra (see, for example, \cite{Wat}, §1.2).
\begin{proposition}
    \label{group_schemes}
    The following assignments are affine $\Q$-group schemes:
    \begin{enumerate}[label=(\alph*), leftmargin=*]
        \item $\K \mapsto (\mathcal{G}(\KX), \circledast)$;
        \item $\mathsf{DMR}_0^G : \K \mapsto (\mathsf{DMR}_0^G(\K), \circledast)$;
        \item $\mathsf{Stab}(\hat{\Delta}^{\mathrm{mod}}_{\star}) : \K \mapsto \mathsf{Stab}(\hat{\Delta}^{\mathrm{mod}}_{\star})(\K)$. 
    \end{enumerate}
\end{proposition}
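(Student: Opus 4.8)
The plan is to treat the three functors uniformly: first realise $\K \mapsto (\G(\KX),\circledast)$ as a representable set-valued functor whose representing algebra carries a Hopf structure, and then exhibit $\DMR_0^G$ and $\Stab(\hat{\Delta}_{\star}^{\Mod})$ as closed subgroup functors, for which representability by a Hopf algebra becomes automatic. Recall (e.g. \cite{Wat}, §1.2) that a functor $\mathrm{Alg}_{\Q}\to\mathrm{Grp}$ is an affine $\Q$-group scheme as soon as its underlying set-valued functor is representable and the group operations are natural transformations, since by Yoneda the latter then correspond to the Hopf structure maps of the representing algebra.

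For (a), I would introduce for every word $w$ in $X$ the coordinate function $a_w\colon \Psi\mapsto(\Psi|w)$. Since a power series in $\KX$ is exactly an arbitrary assignment of coefficients to words, the set-valued functor $\K\mapsto\KX$ is represented by the polynomial algebra $\Q[a_w : w \text{ a word in } X]$. The grouplike locus $\G(\KX)=\{\Psi : (\Psi|\emptyset)=1,\ \hat{\Delta}(\Psi)=\Psi\otimes\Psi\}$ is then cut out by $a_\emptyset-1$ together with, for each pair of words $(u,v)$, the quadratic relation $a_u a_v=\sum_w (\hat{\Delta}(w)|u\otimes v)\,a_w$; the sum is finite since $\hat{\Delta}$ is graded, so these lie in the polynomial algebra. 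Thus $\G(\KX)$ is represented, as a set-valued functor, by the quotient algebra $A$ by the ideal they generate (invertibility of $\Psi$ is automatic once $(\Psi|\emptyset)=1$, as $\KX$ is complete graded). It remains to upgrade $A$ to a Hopf algebra representing the group functor. Since $\G$ is group-valued by Proposition-Definition \ref{group_law}, it suffices to check that $\circledast$, the unit $\Psi=1$, and inversion are natural transformations; naturality is clear because all constructions are given by the same universal $\Q$-coefficient formulas in every $\K$. The one point to verify is that the group law is polynomial, i.e. that each $(\Psi\circledast\Phi|w)=(\Psi\,\aut_\Psi(\Phi)|w)$ is a polynomial in finitely many of the $(\Psi|u)$ and $(\Phi|v)$. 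This is where the graded-complete structure is essential: the degree-$d$ coefficients of $\Psi^{-1}$ are universal polynomials in the coefficients of $\Psi$ of degree at most $d$, $\aut_\Psi$ is built from the $t_g$ and conjugation by $t_g(\Psi^{-1})$, and $\ell_\Psi$ is left multiplication, so in each fixed degree only finitely many input coefficients intervene. Hence $\circledast$ is represented by an algebra map $A\to A\otimes A$, and (a) follows.

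For (b) and (c) I would argue that each is a closed subgroup functor of $\G(\KX)$, after which the statement is automatic: a subfunctor of an affine group scheme that is closed (represented by a quotient of $A$) and subgroup-valued is a closed subgroup scheme, its defining ideal being a Hopf ideal by Yoneda (\cite{Wat}, §1.2). Both are subgroup-valued already: $\DMR_0^G(\K)$ by Proposition-Definition \ref{DMR}, and $\Stab(\hat{\Delta}_{\star}^{\Mod})(\K)$ as the stabiliser of a point for the action (\ref{act_on_Delta*}). So the only thing to check is that their defining conditions are closed, i.e. given by the vanishing of elements of $A$. For $\DMR_0^G$, conditions (i), (iii), (iv) are linear in the coordinates, while condition (ii), $\hat{\Delta}_{\star}^{\Mod}(\Psi_\star)=\Psi_\star\otimes\Psi_\star$, is quadratic once one notes that the coefficients of $\Psi_\star=\overline{\q}\circ\pi_Y(\Gamma_\Psi^{-1}(x_1)\Psi)$ are polynomials in the $(\Psi|w)$: indeed by (\ref{Gamma_function}) the coefficients of $\Gamma_\Psi^{-1}(x_1)$ are universal polynomials in the $(\Psi|x_0^{n-1}x_1)$, and $\overline{\q}\circ\pi_Y$ is $\K$-linear. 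For $\Stab(\hat{\Delta}_{\star}^{\Mod})$, the entries of the operator ${}^{\Gamma}S_{\Psi}^{Y}$ in the monomial basis are polynomials in the $(\Psi|w)$ (again since $S_\Psi=\ell_\Psi\circ\aut_\Psi$ and $\ell_{\Gamma_\Psi^{-1}(x_1)}$ are polynomial degree by degree), so the operator identity $({}^{\Gamma}S_{\Psi}^{Y})^{\otimes 2}\circ\hat{\Delta}_{\star}^{\Mod}=\hat{\Delta}_{\star}^{\Mod}\circ{}^{\Gamma}S_{\Psi}^{Y}$ amounts to a family of polynomial equations in the coordinates. This exhibits both functors as closed subgroup schemes and completes (b) and (c).

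The main obstacle I anticipate is not any single deep step but the uniform bookkeeping needed to make the word "polynomial" precise: one must check, degree by degree, that inversion in $\KX$, the automorphisms $\aut_\Psi$ and $S_\Psi$, the series $\Gamma_\Psi$, and the operator ${}^{\Gamma}S_{\Psi}^{Y}$ all have coefficients, respectively matrix entries, depending on only finitely many of the $(\Psi|w)$, so that everything genuinely lives in the algebra $A$ and its tensor powers. The fact that $A$ is infinitely generated is harmless here; what underlies the whole argument is that $\G(\KX)$ is pro-unipotent, that is, a projective limit over the degree filtration of affine group schemes of finite type, whence all representing objects are bona fide Hopf $\Q$-algebras.
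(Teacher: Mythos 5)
Your argument is correct, but it is genuinely more self-contained than what the paper does: the paper's proof of this proposition consists of three citations ((a) to \cite{EF0}, Lemma 4.6; (b) to \cite{Rac}, Theorem I; (c) to \cite{EF0}, Lemma 5.1), whereas you reconstruct the underlying representability argument from scratch. Your route --- realising $\K\mapsto\KX$ as an infinite-dimensional affine space with coordinates $a_w$, cutting out $\G(\KX)$ by $a_{\emptyset}-1$ and the quadratic grouplikeness relations (finiteness of the sums being guaranteed by gradedness of $\hat{\Delta}$), checking degree-by-degree polynomiality of inversion, $\aut_{\Psi}$, $S_{\Psi}$, $\Gamma_{\Psi}$ and ${}^{\Gamma}S_{\Psi}^{Y}$, and then invoking Yoneda plus the fact that a closed, subgroup-valued subfunctor of an affine group scheme is a closed subgroup scheme --- is essentially the content of the cited lemmas, and in particular your treatment of (c) amounts to a special case of \cite{EF0}, Lemma 5.1, which is the general statement that the stabilizer of a vector under a polynomial action of an affine group scheme is a closed subgroup scheme. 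What your version buys is transparency and independence from the references (useful, since the reader otherwise has to unwind three different sources); what the paper's version buys is brevity and the reuse of Lemma 5.1 of \cite{EF0} as a black box, which it applies again verbatim in \S 4 for $\Stab(\hat{\Delta}^{\M}_G)$, $\Stab(\hat{\Delta}^{\W}_G)$ and $\Stab(\hat{\Delta}^{\alg}_{\star})$. The one place where I would ask you to be slightly more careful is the final bookkeeping you yourself flag: for (c) the identity $({}^{\Gamma}S_{\Psi}^{Y})^{\otimes 2}\circ\hat{\Delta}_{\star}^{\Mod}=\hat{\Delta}_{\star}^{\Mod}\circ{}^{\Gamma}S_{\Psi}^{Y}$ must be tested against a topological basis of $\KX/\KX x_0$, and one should say explicitly that each resulting coefficient equation involves only finitely many matrix entries of ${}^{\Gamma}S_{\Psi}^{Y}$ because all operators involved are continuous and filtration-preserving up to the finitely many degrees relevant to a fixed basis element; this is true and routine, but it is the step that makes the locus genuinely closed rather than merely pro-closed.
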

\begin{proof}
    \begin{enumerate*}[label=(\alph*), leftmargin=*]
        \item See \cite{EF0}, Lemma 4.6;
        \item See \cite{Rac}, Theorem I;
        \item See \cite{EF0}, Lemma 5.1.
    \end{enumerate*}
\end{proof}

\noindent Therefore, Proposition \ref{DMRsubsetStab} provides an inclusion of affine $\Q$-group schemes
\begin{equation}
    \label{schemes_inclusion}
    \mathsf{DMR}_0^G \subset \mathsf{Stab}(\hat{\Delta}^{\mathrm{mod}}_{\star}) \subset \Bigg(\K \mapsto (\mathcal{G}(\KX), \circledast)\Bigg)
\end{equation}

\subsection{The double shuffle Lie algebra \texorpdfstring{$\dmr_0^G$}{dmr0G}} \label{double_shuffle_LA}
Recall from Theorem 12.2 in \cite{Wat} that there exists a functor $\mathbf{Lie}$ from the category of affine $\Q$-group schemes to the category of $\Q$-Lie algebras such that $\mathbf{Lie}(\mathsf{G}) = \ker\Big(\mathsf{G}\left(\Q[\epsilon]/(\epsilon^2)\right) \to \mathsf{G}(\Q)\Big)$.
In this section, we provide an explicit formulation of the Lie algebras obtained by applying the functor $\mathbf{Lie}$ to the inclusions (\ref{schemes_inclusion}).

\subsubsection{The Lie algebra \texorpdfstring{$\left(\LibX, \langle \cdot, \cdot \rangle\right)$}{LibX}}
Let $\LibX$ be the free complete graded $\Q$-Lie algebra over the alphabet $X$. One can identify the $\Q$-algebra $\QX$ with the enveloping algebra of $\LibX$ (\cite{Rac}, §2.2.3).
Therefore, $\LibX$ is identified with the Lie subalgebra of primitive elements in $\QX$ for the coproduct $\hat{\Delta}$. Namely,
\begin{equation}
    \LibX \simeq \{ \psi \in \QX \, | \, \hat{\Delta}(\psi) = \psi \otimes 1 + 1 \otimes \psi \}.
\end{equation}

For $\psi \in \LibX$, let $d_{\psi}$ be the derivation of $\QX$ given by (\cite{Rac}, §3.1.12.2)
\begin{equation}
    \label{dpsi}
    d_{\psi}(x_0) = 0, \text{ \quad and for } g \in G, \, d_{\psi}(x_g) = [x_g, t_g(\psi)],
\end{equation}
and let $s_{\psi}$ be the $\Q$-linear endomorphism of $\QX$ given by (\cite{Rac}, §3.1.12.1)
\begin{equation}
    \label{eq_s_psi}
    s_{\psi} := \ell_{\psi} + d_{\psi}.
\end{equation}
We then define a Lie algebra bracket on $\LibX$ as follows (\cite{Rac}, §3.1.10.2):
\begin{equation}
    \forall \psi_1, \psi_2 \in \LibX, \quad \langle \psi_1, \psi_2 \rangle := s_{\psi_1}(\psi_2) - s_{\psi_2}(\psi_1).
    \label{new_bracket}
\end{equation}

\subsubsection{The Lie algebra $\left(\dmr_0^G, \langle \cdot, \cdot \rangle \right)$}
Let us define $\gamma : \QX \to \Q[[x]]$; $\psi \mapsto \gamma_{\psi}$, where
\begin{equation}
    \label{gamma_function}
    \gamma_{\psi}(x) := \sum_{n \in \N^{\ast}} \frac{(-1)^{n+1}}{n}\left(\psi | x_0^{n-1}x_1\right) x^n,
\end{equation}
and for $\psi \in \QX$, set $\psi_{\star} := \overline{\q} \circ \pi_Y (-\gamma_{\psi}(x_1) + \psi) \in \QX / \QX x_0$.
\begin{propdef}[\cite{Rac}, Definitions 3.3.1, 3.3.8 and Proposition 4.A.i)]
The set $\dmr_0^G$ of elements $\psi \in \LibX$ such that
\begin{enumerate}[leftmargin=*, label=\roman*.]
    \begin{multicols}{2}
    \item $(\psi | x_0) = (\psi | x_1) = 0$;
    \item $\hat{\Delta}_{\star}^{\Mod}(\psi_{\star}) = \psi_{\star} \otimes 1 + 1 \otimes \psi_{\star}$; \end{multicols}
    \item $\left(\psi_{\star} | x_0^{n-1}x_{g}\right) = (-1)^{n-1} \left(\psi_{\star} | x_0^{n-1}x_{g^{-1}}\right)$ for $(n, g) \in \N^{\ast} \times G$;
\end{enumerate}
is a complete graded Lie subalgebra of $\left(\LibX, \langle \cdot, \cdot \rangle\right)$. 
\end{propdef}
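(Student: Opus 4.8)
The plan is to recognize $\dmr_0^G$ as the Lie algebra of the affine $\Q$-group scheme $\DMR_0^G$ and to read off the Lie subalgebra structure from functoriality of $\mathbf{Lie}$. That $\dmr_0^G$ is a complete graded $\Q$-submodule is elementary: the assignment $\psi \mapsto \psi_\star = \overline{\q}\circ\pi_Y(-\gamma_\psi(x_1)+\psi)$ is $\Q$-linear and degree-preserving (by (\ref{gamma_function})), and $\hat{\Delta}_{\star}^{\Mod}$ is graded, so conditions (i)--(iii) are $\Q$-linear and homogeneous and hence cut out a complete graded submodule of $\LibX$. The substance of the statement is closure under $\langle\cdot,\cdot\rangle$, which I would obtain by transporting the subgroup structure of Proposition-Definition \ref{DMR} through $\mathbf{Lie}$.

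First I would identify the ambient Lie algebra $\mathbf{Lie}(\G(\KX),\circledast)$ with $(\LibX,\langle\cdot,\cdot\rangle)$. Working over $\Q[\epsilon]/(\epsilon^2)$, an element $1+\epsilon\psi$ is grouplike for $\hat{\Delta}$ if and only if $\psi$ is primitive, i.e. $\psi\in\LibX$. Linearizing (\ref{eq_S_Psi}) via $\aut_{1+\epsilon\psi}=\mathrm{id}+\epsilon\, d_\psi$ and $\ell_{1+\epsilon\psi}=\mathrm{id}+\epsilon\,\ell_\psi$ yields $S_{1+\epsilon\psi}=\mathrm{id}+\epsilon\, s_\psi$, so that the group homomorphism $\Psi\mapsto S_\Psi$ of Corollary \ref{group_morph} differentiates to a Lie algebra homomorphism $\psi\mapsto s_\psi$ into $(\End(\QX),[\cdot,\cdot])$. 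Since $s_\psi(1)=\psi$, applying $s_{[\psi_1,\psi_2]_{\mathbf{Lie}}}=[s_{\psi_1},s_{\psi_2}]$ to $1$ gives $[\psi_1,\psi_2]_{\mathbf{Lie}}=s_{\psi_1}(\psi_2)-s_{\psi_2}(\psi_1)=\langle\psi_1,\psi_2\rangle$. In particular $\langle\cdot,\cdot\rangle$ is a genuine complete graded Lie bracket on $\LibX$.

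By Proposition \ref{group_schemes} and the inclusion (\ref{schemes_inclusion}), $\DMR_0^G$ is an affine $\Q$-group subscheme of $(\G(\KX),\circledast)$; since $\mathbf{Lie}$ carries subgroup schemes to Lie subalgebras, $\mathbf{Lie}(\DMR_0^G)$ is automatically a Lie subalgebra of $(\LibX,\langle\cdot,\cdot\rangle)$. It then remains to identify $\mathbf{Lie}(\DMR_0^G)$ with the explicit set defined by (i)--(iii), which I would do by substituting $\Psi=1+\epsilon\psi$ into the defining conditions of $\DMR_0^G(\Q[\epsilon]/(\epsilon^2))$ and keeping the $\epsilon$-linear part. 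Condition (i) of Proposition-Definition \ref{DMR} linearizes verbatim to (i). For (ii), one checks from (\ref{Gamma_function}) and (\ref{gamma_function}) that $\Gamma_{1+\epsilon\psi}=1+\epsilon\,\gamma_\psi$ once $(\psi|x_1)=0$, whence $\Psi_\star=\overline{\q}\circ\pi_Y(1)+\epsilon\,\psi_\star$ and grouplikeness $\hat{\Delta}_{\star}^{\Mod}(\Psi_\star)=\Psi_\star\otimes\Psi_\star$ linearizes exactly to the primitivity $\hat{\Delta}_{\star}^{\Mod}(\psi_\star)=\psi_\star\otimes1+1\otimes\psi_\star$ of (ii).

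The main obstacle is the parity condition. At the group level it is imposed only in low degree (conditions (iii)/(iv) of Proposition-Definition \ref{DMR}), so its naive linearization is a single low-degree identity, whereas condition (iii) of $\dmr_0^G$ is an all-degree statement. The resolution is Proposition \ref{DMRsubsetStab}, which expresses $\DMR_0^G$ as $\Stab(\hat{\Delta}_{\star}^{\Mod})\cap\{(\Psi|x_0)=(\Psi|x_1)=0\}$; thus the full all-degree parity is already forced by membership in the stabilizer. Passing to Lie algebras (and using that $\mathbf{Lie}$ preserves such intersections) gives $\mathbf{Lie}(\DMR_0^G)=\mathbf{Lie}(\Stab(\hat{\Delta}_{\star}^{\Mod}))\cap\{(\psi|x_0)=(\psi|x_1)=0\}$, and I expect the delicate point to be checking that the infinitesimal stabilizer---the condition that the linearization of ${}^{\Gamma}S_{\Psi}^Y$ be a coderivation of $\hat{\Delta}_{\star}^{\Mod}$---is precisely the conjunction of the primitivity (ii) and the all-degree parity (iii). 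This is the infinitesimal counterpart of the group computation of \cite{EF0} underlying Proposition \ref{DMRsubsetStab}, and once it is established the identification $\dmr_0^G=\mathbf{Lie}(\DMR_0^G)$, and hence the proposition, follows.
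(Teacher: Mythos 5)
The paper does not actually prove this statement: it is quoted from Racinet (Definitions 3.3.1, 3.3.8 and Proposition 4.A.i of \cite{Rac}), and the only internal traces of your strategy are Proposition \ref{group_schemes}(b) and Proposition \ref{LA_of_Rac_groups}(b), both themselves cited to \cite{Rac}. Your overall plan --- realize $\dmr_0^G$ as $\mathbf{Lie}(\DMR_0^G)$ and transport the subgroup-scheme structure through $\mathbf{Lie}$ --- is the natural one, and your first two paragraphs (gradedness of the conditions, the computation $S_{1+\epsilon\psi}=\mathrm{id}+\epsilon s_\psi$ and the identification of the $\mathbf{Lie}$-bracket with $\langle\cdot,\cdot\rangle$ by evaluating at $1$) are sound. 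But the argument does not close, for two concrete reasons. First, a scope mismatch: the statement you are proving carries no cyclicity hypothesis on $G$, whereas every ingredient you invoke --- Proposition-Definition \ref{DMR}, Proposition \ref{DMRsubsetStab}, and the group scheme $\DMR_0^G$ itself --- is stated in the paper only for cyclic $G$. Your route therefore cannot, as written, cover the general finite abelian case the statement addresses.

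Second, and more seriously, the identification of $\mathbf{Lie}(\DMR_0^G)$ with the explicit set cut out by (i)--(iii) is exactly where the mathematical content lies, and you defer it. You correctly observe that the naive linearization of the group-level conditions produces only the low-degree instances of (iii), and you propose to recover the all-degree parity from $\mathbf{Lie}(\DMR_0^G)=\stab(\hat{\Delta}_{\star}^{\Mod})\cap\{(\psi|x_0)=(\psi|x_1)=0\}$; but the resulting equality of that intersection with the set (i)--(iii) is the Lie-algebra form of \cite{EF0} Theorem 1.2, a theorem in its own right of which the paper quotes only one inclusion (Proposition \ref{stab_subset_stab*}, $\dmr_0^G\subset\stab(\hat{\Delta}_{\star}^{\Mod})$) --- you need the reverse inclusion, which is nowhere established and is not a routine verification. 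The alternative (and more economical) way to bridge the same gap is the Remark following the statement: by \cite{Rac} Propositions 3.3.3 and 3.3.7, condition (iii) is automatic in all but the listed low-degree cases for $\psi$ satisfying (i) and (ii), so the linearized group conditions do match (i)--(iii); but that input is likewise a nontrivial quoted result, not something your argument supplies. As it stands, the proposal reduces the claim to an unproven statement of comparable depth, so the closure of $\dmr_0^G$ under $\langle\cdot,\cdot\rangle$ is not actually established. You should also verify that Racinet's proof of Theorem I (that $\DMR_0^G$ is a subgroup) does not itself pass through Proposition 4.A.i, since otherwise deriving the Lie statement from the group statement would be circular.
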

\noindent \textbf{Remark.} According to \cite{Rac}, Propositions 3.3.3 and 3.3.7, it is enough to have (iii) in these cases:
\[
    \begin{cases}
        \text{for } (n, g) = (2, 1) & \text{if } |G| = 2 \\ 
        \text{for } n = 1 \text{ and any } g \in G & \text{if } |G| \geq 3 
    \end{cases}
\]
since this identity is always true for all the other cases.

\subsubsection{Relation of $\dmr_0^G$ with a stabilizer Lie algebra}
\begin{proposition}[\cite{Rac}, (3.1.9.2)]
    There exists a Lie algebra action of $(\LibX, \langle \cdot, \cdot \rangle)$ by $\Q$-linear endomorphisms on $\QX$ given by
    \begin{equation}
        (\LibX, \langle \cdot, \cdot \rangle) \longrightarrow \mathrm{End}_{\Q}(\QX), \quad \psi \longmapsto s_{\psi}.
    \end{equation}
\end{proposition}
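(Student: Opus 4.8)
The plan is to show that the assignment $\psi \mapsto s_\psi$ is a Lie algebra morphism from $(\LibX, \langle \cdot, \cdot \rangle)$ to $\End_\Q(\QX)$ equipped with its commutator bracket. Concretely, I must establish that for all $\psi_1, \psi_2 \in \LibX$,
\begin{equation}
    s_{\langle \psi_1, \psi_2 \rangle} = [s_{\psi_1}, s_{\psi_2}] = s_{\psi_1} \circ s_{\psi_2} - s_{\psi_2} \circ s_{\psi_1}.
\end{equation}
Since this is an identity between $\Q$-linear endomorphisms of $\QX$, and since $\QX$ is topologically generated as an algebra by $X$, it suffices to verify it on the generators $x_0$ and $(x_g)_{g \in G}$, provided I also control how both sides interact with products. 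The natural route is to first record the derivation property and then reduce everything to the action on generators.

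First I would unwind $s_\psi = \ell_\psi + d_\psi$ and expand the composite $s_{\psi_1} \circ s_{\psi_2} = (\ell_{\psi_1} + d_{\psi_1}) \circ (\ell_{\psi_2} + d_{\psi_2})$. The key auxiliary facts I expect to need are: (1) the commutator of two left-multiplications $[\ell_{\psi_1}, \ell_{\psi_2}] = \ell_{[\psi_1, \psi_2]}$; (2) that $d_\psi$ is a derivation, so $d_{\psi_1} \circ d_{\psi_2} - d_{\psi_2} \circ d_{\psi_1} = d_{[\psi_1, \psi_2]_{\der}}$ where $[\cdot,\cdot]_{\der}$ is the bracket of derivations, which I must compare with $d$ of the new bracket $\langle \cdot, \cdot \rangle$; and (3) the cross terms $d_{\psi_1} \circ \ell_{\psi_2} - \ell_{\psi_2} \circ d_{\psi_1}$, which by the Leibniz rule reduce to $\ell_{d_{\psi_1}(\psi_2)}$. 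Collecting these and using the definition $\langle \psi_1, \psi_2 \rangle = s_{\psi_1}(\psi_2) - s_{\psi_2}(\psi_1) = \psi_1\psi_2 - \psi_2\psi_1 + d_{\psi_1}(\psi_2) - d_{\psi_2}(\psi_1)$ should let the $\ell$-parts and $d$-parts of the commutator reassemble exactly into $s_{\langle \psi_1, \psi_2 \rangle}$.

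The delicate point is the derivation part, since $d$ is defined on generators by $d_\psi(x_g) = [x_g, t_g(\psi)]$ with the twist $t_g$, so the bracket of the derivations $d_{\psi_1}$ and $d_{\psi_2}$ will produce terms involving $t_g$ applied to $d_{\psi_1}(\psi_2)$ and to the commutators $[t_g(\psi_1), t_g(\psi_2)]$. I would use that $t_g$ is a $\Q$-algebra morphism (so it commutes with taking commutators) together with Lemma~\ref{commut_t_aut}'s infinitesimal analogue, namely that $d_\psi$ commutes appropriately with $t_g$, to match the twisted terms. Verifying this compatibility on the generator $x_g$ is where the $t_g$-twist makes the computation least routine, and it is the main obstacle; once the twisted terms align, evaluating both sides on $x_0$ is immediate since $d_\psi(x_0) = 0$ forces both sides to vanish there, and the general identity follows by the derivation/Leibniz extension to all of $\QX$.

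Finally, having checked the morphism identity on generators and invoked that both $s_{\langle \psi_1, \psi_2 \rangle}$ and $[s_{\psi_1}, s_{\psi_2}]$ are determined by their values on $X$ through the Leibniz-type structure they share, I would conclude that $\psi \mapsto s_\psi$ is a Lie algebra action, which is exactly the assertion. This mirrors, at the Lie algebra level, the group-level computation already carried out in the proof of Lemma~\ref{group_morphs}, so I expect the structure of the argument to run parallel to that one, with $\ell$ and $\ad$/$d$ in the roles played there by $\ell$ and $\Ad$/$\aut$.
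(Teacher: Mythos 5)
Your proposal is correct in substance; note that the paper itself gives no proof of this proposition (it is quoted from Racinet, (3.1.9.2)), so you are supplying an argument the paper only cites. The computation you outline does work: expanding $[s_{\psi_1},s_{\psi_2}]$ with $s_\psi=\ell_\psi+d_\psi$, the terms $\ell_{\psi_1}\circ d_{\psi_2}$ and $\ell_{\psi_2}\circ d_{\psi_1}$ cancel, $[\ell_{\psi_1},\ell_{\psi_2}]=\ell_{[\psi_1,\psi_2]}$, the cross terms contribute $\ell_{d_{\psi_1}(\psi_2)-d_{\psi_2}(\psi_1)}$, and one is left with the single identity $[d_{\psi_1},d_{\psi_2}]=d_{\langle\psi_1,\psi_2\rangle}$, which, both sides being derivations, is legitimately checked on $x_0$ (trivially) and on $x_g$ using $d_\psi\circ t_g=t_g\circ d_\psi$ (the infinitesimal analogue of Lemma \ref{commut_t_aut}, itself verified on generators) together with the Jacobi identity applied to $x_g$, $t_g(\psi_1)$, $t_g(\psi_2)$. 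Your framing parallels the paper's proof of Lemma \ref{group_morphs} exactly as you say, with $\ell,\ad,d$ replacing $\ell,\Ad,\aut$. One phrase deserves tightening: it does \emph{not} suffice to verify $s_{\langle\psi_1,\psi_2\rangle}=[s_{\psi_1},s_{\psi_2}]$ ``on generators,'' since neither side is a derivation or an algebra morphism; the reduction to generators is valid only after you have algebraically isolated the derivation component $[d_{\psi_1},d_{\psi_2}]-d_{\langle\psi_1,\psi_2\rangle}$, which is what your expansion accomplishes. With that clarification the argument is complete.
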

\begin{propdef}[\cite{Rac}, §4.1.1 and \cite{EF0}, Lemma 2.2]
    \label{s_Y_psi}
    For $\psi \in \LibX$, there exists a unique $\Q$-linear endomorphism\footnote{Racinet defined this $\Q$-linear endomorphism on $\Q\langle\langle Y \rangle\rangle$. Even if we proceeded differently, we chose to keep the notation for consistency.} $s^Y_{\psi}$ of $\QX / \QX x_0$ such that the following diagram
    \[
        \begin{tikzcd}
            \QX \ar[r, "s_{\psi}"] \ar[d, "\overline{\q} \circ \pi_Y"']& \QX \ar[d, "\overline{\q} \circ \pi_Y"] \\
            \QX / \QX x_0 \ar[r, "s^Y_{\psi}"'] & \QX / \QX x_0
        \end{tikzcd}
    \]
    commutes. Moreover, there is a Lie algebra action of $(\LibX, \langle \cdot, \cdot \rangle)$ by $\Q$-linear endormorphisms on $\QX / \QX x_0$ given by
    \begin{equation}
        \left(\LibX, \langle \cdot, \cdot \rangle\right) \longrightarrow \mathrm{End}_{\Q}\left(\QX / \QX x_0\right), \quad \psi \longmapsto s^Y_{\psi}.
    \end{equation}
\end{propdef}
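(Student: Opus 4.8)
The plan is to first establish the existence and uniqueness of $s^Y_\psi$ by a descent argument along the surjection $P := \overline{\q} \circ \pi_Y$, and then to deduce the action property by transporting the already-known action $\psi \mapsto s_\psi$ on $\QX$ along $P$.

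First I would record that $P$ is surjective, since $\pi_Y$ is surjective and $\overline{\q}$ is an automorphism, and that $\ker P = \ker \pi_Y = \QX x_0$ because $\overline{\q}$ is injective. Uniqueness is then immediate: commutativity of the square forces $s^Y_\psi \circ P = P \circ s_\psi$, and a surjection is right-cancellable, so $s^Y_\psi$ is determined. For existence it suffices to show that $P \circ s_\psi$ annihilates $\ker P = \QX x_0$, equivalently that the left ideal $\QX x_0$ is stable under $s_\psi = \ell_\psi + d_\psi$; then $P \circ s_\psi$ factors through $P$ and defines $s^Y_\psi$, characterised by the intertwining relation $P \circ s_\psi = s^Y_\psi \circ P$.

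The one computation the argument needs is this stability, which I would check on the two summands separately. For $\ell_\psi$, left multiplication by $\psi$ preserves $\QX x_0$ since $\psi \cdot (f x_0) = (\psi f) x_0$. For $d_\psi$, using that it is a continuous derivation with $d_\psi(x_0) = 0$ by \eqref{dpsi}, one gets $d_\psi(f x_0) = d_\psi(f) x_0 + f\, d_\psi(x_0) = d_\psi(f) x_0 \in \QX x_0$ for every $f \in \QX$. Hence $s_\psi(\QX x_0) \subseteq \QX x_0$, as needed. This containment is the only genuine, if mild, obstacle; everything else is formal.

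For the moreover part, I would use that $\psi \mapsto s_\psi$ is a Lie algebra action on $\QX$, namely $[s_{\psi_1}, s_{\psi_2}] = s_{\langle \psi_1, \psi_2 \rangle}$, together with the $\Q$-linearity of $\psi \mapsto s^Y_\psi$ (which follows from the linearity of $\psi \mapsto s_\psi$, the intertwining relation, and the surjectivity of $P$). Composing intertwining relations gives $s^Y_{\psi_1} \circ s^Y_{\psi_2} \circ P = P \circ s_{\psi_1} \circ s_{\psi_2}$; subtracting the index-swapped identity yields $[s^Y_{\psi_1}, s^Y_{\psi_2}] \circ P = P \circ [s_{\psi_1}, s_{\psi_2}] = s^Y_{\langle \psi_1, \psi_2 \rangle} \circ P$, and cancelling the surjection $P$ on the right gives $[s^Y_{\psi_1}, s^Y_{\psi_2}] = s^Y_{\langle \psi_1, \psi_2 \rangle}$, which is the required action property.
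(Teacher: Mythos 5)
Your proof is correct and complete. The paper does not actually prove Proposition-Definition \ref{s_Y_psi} itself (it only cites \cite{Rac}, §4.1.1 and \cite{EF0}, Lemma 2.2), but your argument is exactly the descent pattern the paper uses for the parallel statements: the key containment $s_{\psi}(\QX x_0) \subseteq \QX x_0$, which you verify correctly via $\ell_{\psi}(fx_0)=(\psi f)x_0$ and $d_{\psi}(fx_0)=d_{\psi}(f)x_0$ using $d_{\psi}(x_0)=0$, is the infinitesimal analogue of Proposition \ref{aut_V10_preserves}, and your transport of the bracket identity along the surjection $\overline{\q}\circ\pi_Y$ mirrors the proof of Corollary \ref{group_morph_SY}. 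Nothing is missing.
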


\noindent For $\psi \in \LibX$, we consider the following $\Q$-linear endomorphism on $\QX / \QX x_0$
\begin{equation}
    ^{\gamma}s^Y_{\psi} := \ell_{-\gamma_{\psi}(x_1)} + s^Y_{\psi}.
\end{equation}
\begin{lemma}
    \label{gamma_s_s_theta}
    For any $\psi \in \LibX$, the $\Q$-linear endomorphism $^{\gamma}s^Y_{\psi}$ is equal to $s_{\theta(\psi)}^Y$ where \(\theta : (\LibX, \langle \cdot, \cdot \rangle) \to (\QX, \langle \cdot, \cdot \rangle)\) is the Lie algebra morphism\footnote{One can equip $\QX$ with the bracket $\langle \cdot, \cdot \rangle$ as described in (\ref{new_bracket}).} given by (\cite{EF0}, Proposition 2.5)
    \begin{equation}
        \theta(\psi) := -\gamma_{\psi}(x_1) + \psi - (\psi | x_0) x_0
    \end{equation}
\end{lemma}
\begin{proof}
    Let $\psi \in \LibX$ and $a \in \QX$. First, we have
    \[
        s_{\theta(\psi)}(a) = \theta(\psi) a + d_{\theta(\psi)}(a) = (-\gamma_{\psi}(x_1)+\psi-(\psi | x_0) x_0) a + d_{\theta(\psi)}(a) 
    \]
    Moreover, one can check on generators that
    \[
        d_{\theta(\psi)} = \mathrm{ad}_{(\psi | x_0) x_0} + d_{\psi},
    \]
    Therefore, one obtains
    \begin{align*}
        s_{\theta(\psi)}(a) = & (-\gamma_{\psi}(x_1) + \psi-(\psi | x_0) x_0) a + \mathrm{ad}_{(\psi | x_0) x_0}(a) + d_{\psi}(a) \\
        = &-\gamma_{\psi}(x_1) a + s_{\psi}(a) - (\psi | x_0) a x_0.    
    \end{align*}
    Consequently, \newline
        \(\begin{aligned}
        ^{\gamma}s^Y_{\psi}(\overline{\q} \circ \pi_Y(a)) = & -\gamma_{\psi}(x_1) \, \Big(\overline{\q} \circ \pi_Y(a)\Big) + s^Y_{\psi}\Big(\overline{\q} \circ \pi_Y(a)\Big) \\ 
        = & \overline{\q} \circ \pi_Y \Big(-\gamma_{\psi}(x_1) a \Big) + \overline{\q} \circ \pi_Y \Big(s_{\psi}(a)\Big) \\
        = & \overline{\q} \circ \pi_Y \big(-\gamma_{\psi}(x_1) a + s_{\psi}(a)\big) = \overline{\q} \circ \pi_Y \Big( s_{\theta(\psi)}(a) \Big).  
    \end{aligned}\) \newline
    This establishes the identity $^{\gamma}s^Y_{\psi} = s^Y_{\theta(\psi)}$, thanks to Proposition-Definition \ref{s_Y_psi}. 
\end{proof}

\begin{proposition}
    \label{gamma_sY}
    There is a Lie algebra action of $(\LibX, \langle \cdot, \cdot \rangle)$ by $\Q$-linear endomorphisms on $\QX / \QX x_0$ by
    \begin{equation}
        \left(\LibX, \langle \cdot, \cdot \rangle\right) \longrightarrow \mathrm{End}_{\Q}\left(\QX / \QX x_0\right), \quad \psi \longmapsto \, ^{\gamma}s_{\psi}.
    \end{equation}
\end{proposition}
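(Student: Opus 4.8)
The plan is to read the statement off Lemma \ref{gamma_s_s_theta}, which gives the identity ${}^{\gamma}s^Y_{\psi} = s^Y_{\theta(\psi)}$ with $\theta : (\LibX, \langle\cdot,\cdot\rangle) \to (\QX, \langle\cdot,\cdot\rangle)$ a Lie algebra morphism. Precomposing a Lie algebra morphism with a Lie algebra action is again a Lie algebra action, so it suffices to prove that
\[
    s^Y : (\QX, \langle\cdot,\cdot\rangle) \longrightarrow \left(\End_{\Q}(\QX/\QX x_0), [\cdot,\cdot]\right), \quad a \longmapsto s^Y_a
\]
is a morphism of Lie algebras on all of $\QX$. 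The point to keep in mind is that $\theta(\psi)$ is not primitive, so Proposition-Definition \ref{s_Y_psi}, which only asserts the morphism property on the primitive part $\LibX$, does not apply directly; the whole content of the proof is the extension of that property from $\LibX$ to $\QX$.

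First I would note that $s^Y_a$ is defined for every $a \in \QX$ by the very same commuting square as in Proposition-Definition \ref{s_Y_psi} (this is already the object appearing in the proof of Lemma \ref{gamma_s_s_theta}). Indeed, from $s_a = \ell_a + d_a$ as in (\ref{eq_s_psi}), the vanishing $d_a(x_0) = 0$ of (\ref{dpsi}) and the Leibniz rule give $s_a(b\,x_0) = s_a(b)\,x_0$ for all $b$, so $s_a$ preserves $\QX x_0 = \ker \pi_Y$; since $\overline{\q}$ is an automorphism of $\QX/\QX x_0$, the endomorphism $s_a$ descends to a unique $s^Y_a$ with $s^Y_a \circ (\overline{\q} \circ \pi_Y) = (\overline{\q} \circ \pi_Y) \circ s_a$.

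The heart of the argument is the identity $s_{\langle a,b\rangle} = [s_a, s_b]$ in $\End_{\Q}(\QX)$ for arbitrary $a, b \in \QX$. Expanding $[s_a, s_b]$ with $s_a = \ell_a + d_a$ and using $\langle a,b\rangle = [a,b] + d_a(b) - d_b(a)$ coming from (\ref{new_bracket}), the purely multiplicative terms reorganise so that this identity is equivalent to $d_{\langle a,b\rangle} = [d_a, d_b]$. Both sides are derivations of $\QX$, so it is enough to compare them on generators. On $x_0$ both vanish. On $x_g$ I would use two ingredients: the commutation $d_a \circ t_g = t_g \circ d_a$, valid for every $a \in \QX$ (the infinitesimal counterpart of Lemma \ref{commut_t_aut}, checked on generators from $t_g t_h = t_{gh}$ and the fact that $t_g$ is an algebra automorphism), and the elementary identity $[[x_g, A], B] - [[x_g, B], A] = [x_g, [A,B]]$ furnished by the Jacobi identity, applied with $A = t_g(a)$, $B = t_g(b)$. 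Combining these with (\ref{dpsi}) yields the equality on $x_g$, hence $d_{\langle a,b\rangle} = [d_a, d_b]$, and therefore $s_{\langle a,b\rangle} = [s_a, s_b]$ on $\QX$.

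Finally I would transport this identity to the quotient: applying the surjection $\overline{\q} \circ \pi_Y$ to $s_{\langle a,b\rangle} = [s_a, s_b]$ and using the intertwining relation defining $s^Y_{-}$ gives $s^Y_{\langle a,b\rangle} = [s^Y_a, s^Y_b]$ by surjectivity of $\overline{\q} \circ \pi_Y$. This shows that $a \mapsto s^Y_a$ is a Lie algebra morphism on $(\QX, \langle\cdot,\cdot\rangle)$; precomposing with $\theta$ and invoking Lemma \ref{gamma_s_s_theta} then yields that $\psi \mapsto {}^{\gamma}s^Y_{\psi}$ is a Lie algebra action, as claimed. The main obstacle is the generator computation proving $d_{\langle a,b\rangle} = [d_a, d_b]$ for all (not necessarily primitive) $a, b$; once that is in place, everything else is formal.
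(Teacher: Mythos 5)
Your proof is correct, and its top-level reduction is the same as the paper's: both use Lemma \ref{gamma_s_s_theta} to rewrite ${}^{\gamma}s^Y_{\psi}$ as $s^Y_{\theta(\psi)}$ and then invoke the fact that $\theta$ is a Lie algebra morphism into $(\QX,\langle\cdot,\cdot\rangle)$. The difference is in how the remaining input is handled: the paper simply cites \cite{EF0}, \S 2.5 for the statement that $\psi\mapsto s^Y_{\theta(\psi)}$ is a Lie algebra action, whereas you prove the stronger, self-contained fact that $a\mapsto s^Y_a$ is a Lie algebra morphism on all of $(\QX,\langle\cdot,\cdot\rangle)$, by reducing $s_{\langle a,b\rangle}=[s_a,s_b]$ to $d_{\langle a,b\rangle}=[d_a,d_b]$ and checking the latter on generators via $d_a\circ t_g=t_g\circ d_a$ and the Jacobi identity, then descending through $\overline{\q}\circ\pi_Y$. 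Your computation is sound (the cross terms $[\ell_a,d_b]+[d_a,\ell_b]=\ell_{d_a(b)-d_b(a)}$ do reorganise exactly as claimed, and $s_a$ does preserve $\QX x_0$ since $d_a(x_0)=0$), and you correctly identify the one point that could trip a reader up, namely that Proposition-Definition \ref{s_Y_psi} only covers primitive elements while $\theta(\psi)$ is not primitive. What the paper's route buys is brevity; what yours buys is a proof that does not lean on the external reference and makes the extension from $\LibX$ to $\QX$ explicit.
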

\begin{proof}
    Thanks to \cite{EF0}, §2.5, the map $\psi \mapsto s_{\theta(\psi)}$ is a Lie algebra action of $(\LibX, \langle \cdot, \cdot \rangle)$ on $\QX / \QX x_0$. The result then follows from Lemma \ref{gamma_s_s_theta}.
\end{proof}

The space $\mathrm{Mor}_{\Q}\left(\QX / \QX x_0, (\QX / \QX x_0)^{\Hat{\otimes} 2}\right)$ is then equipped with an action of the Lie algebra $(\LibX, \langle \cdot, \cdot\rangle)$ given by (\cite{EF0}, §2.5) 
\begin{equation}
    \label{LA_act_on_Delta*}
    \psi \cdot D := \left(^{\gamma}s^Y_{\psi} \otimes \mathrm{id} + \mathrm{id} \otimes \, ^{\gamma}s^Y_{\psi}\right) \circ D - D \circ \, ^{\gamma}s^Y_{\psi},   
\end{equation}
where $\psi \in \LibX$ and $D \in \mathrm{Mor}_{\Q}\left(\QX / \QX x_0, \left(\QX / \QX x_0\right)^{\Hat{\otimes} 2}\right)$.

The stabilizer Lie algebra $\mathfrak{stab}(\hat{\Delta}_{\star}^{\Mod})$ of $D = \hat{\Delta}_{\star}^{\Mod}$ is then the Lie subalgebra of $(\LibX, \langle \cdot, \cdot \rangle)$ given by (\cite{EF0}, §2.5)
\begin{equation}
    \label{stab_Delta*}
    \mathfrak{stab}(\hat{\Delta}_{\star}^{\Mod}) := \{ \psi \in \LibX \, | \, (^{\gamma}s^Y_{\psi} \otimes \mathrm{id} + \mathrm{id} \otimes \, ^{\gamma}s^Y_{\psi}) \circ \hat{\Delta}_{\star}^{\Mod} = \hat{\Delta}_{\star}^{\Mod} \circ \, ^{\gamma}s^Y_{\psi} \}.
\end{equation}

\noindent It is related to the Lie algebra $\dmr_0^G$ as follows:
\begin{proposition}
    \label{stab_subset_stab*}
    $\dmr_0^G \subset \mathfrak{stab}(\hat{\Delta}_{\star}^{\Mod})$ (as Lie subalgebras of $(\LibX, \langle \cdot, \cdot \rangle)$).
\end{proposition}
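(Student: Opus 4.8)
Our task is to prove that every $\psi \in \dmr_0^G$ satisfies the defining condition (\ref{stab_Delta*}) of $\stab(\hat{\Delta}_{\star}^{\Mod})$, equivalently that the $\Q$-linear endomorphism $^{\gamma}s_{\psi}^Y$ is a coderivation of the coalgebra $\left(\QX/\QX x_0, \hat{\Delta}_{\star}^{\Mod}\right)$, equivalently again that $\psi$ annihilates $\hat{\Delta}_{\star}^{\Mod}$ for the action (\ref{LA_act_on_Delta*}). The guiding principle is that this statement is the infinitesimal counterpart of Proposition \ref{DMRsubsetStab}, and I would prove it by differentiating that group-level inclusion.

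Concretely, I would work over the ring of dual numbers $R := \Q[\epsilon]/(\epsilon^2)$ and set $\Psi := \exp(\epsilon \psi) = 1 + \epsilon \psi$, which is grouplike since $\psi$ is primitive. The first step is to check that $\Psi \in \DMR_0^G(R)$ by testing the conditions of Proposition-Definition \ref{DMR} to first order in $\epsilon$. Condition (i) is immediate from the condition $(\psi | x_0) = (\psi | x_1) = 0$ on $\psi$; the latter vanishing also forces $\Gamma_{\Psi}(x) \equiv 1 + \epsilon\, \gamma_{\psi}(x) \pmod{\epsilon^2}$, whence a short computation gives $\Psi_{\star} \equiv \overline{\q} \circ \pi_Y(1) + \epsilon\, \psi_{\star} \pmod{\epsilon^2}$. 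Grouplikeness of $\Psi_{\star}$ then reduces exactly to primitivity of $\psi_{\star}$, i.e.\ to condition (ii), while the symmetry condition (iii) transfers under the same linearisation. The second step invokes Proposition \ref{DMRsubsetStab} to conclude $\Psi \in \Stab(\hat{\Delta}_{\star}^{\Mod})(R)$.

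The third step extracts the infinitesimal information. Linearising the definitions gives $\aut_{\Psi} \equiv \mathrm{id} + \epsilon\, d_{\psi}$, hence $S_{\Psi} \equiv \mathrm{id} + \epsilon\, s_{\psi}$ and finally $^{\Gamma}S_{\Psi}^Y \equiv \mathrm{id} + \epsilon\, {}^{\gamma}s_{\psi}^Y \pmod{\epsilon^2}$; in other words the Lie action (\ref{LA_act_on_Delta*}) is precisely the derivative of the group action (\ref{act_on_Delta*}). Reading off the coefficient of $\epsilon$ in the group stabilizer identity $\left({}^{\Gamma}S_{\Psi}^Y\right)^{\otimes 2} \circ \hat{\Delta}_{\star}^{\Mod} = \hat{\Delta}_{\star}^{\Mod} \circ {}^{\Gamma}S_{\Psi}^Y$ then yields exactly (\ref{stab_Delta*}), which finishes the argument.

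A useful internal check, and the cornerstone of a more self-contained variant, is the identity $^{\gamma}s_{\psi}^Y\left(\overline{\q} \circ \pi_Y(1)\right) = \psi_{\star}$: it follows from $s_{\psi}(1) = \psi$ together with the fact that $\q$ fixes every power $x_1^n$, so that the left-multiplication term of $^{\gamma}s_{\psi}^Y$ contributes $\overline{\q} \circ \pi_Y(-\gamma_{\psi}(x_1))$. Thus condition (ii) says precisely that $^{\gamma}s_{\psi}^Y$ sends the cyclic generator $\overline{\q} \circ \pi_Y(1)$ to a primitive element, which already establishes the coderivation property at that generator. The main obstacle, should one wish to avoid the dual-numbers shortcut and argue directly for a general finite abelian $G$, is to propagate this from the generator to all of $\QX/\QX x_0$: since $s_{\psi} = \ell_{\psi} + d_{\psi}$ carries a left-multiplication part, $^{\gamma}s_{\psi}^Y$ is not an algebra derivation, and the identification $\QX/\QX x_0 \simeq \KY$ underlying (\ref{harmonic_coproduct_M}) is twisted by $\overline{\q}$; controlling the interaction of $\ell_{\psi}$, $d_{\psi}$, $\overline{\q}$ and $\hat{\Delta}_{\star}^{\alg}$ is exactly what conditions (i)--(iii) are calibrated to achieve. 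For this reason I expect the differentiation route above to be the cleanest to carry out in full.
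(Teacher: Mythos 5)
Your argument is correct, but it follows a genuinely different route from the paper's. The paper's proof is essentially a citation: via Lemma \ref{gamma_s_s_theta} it identifies $^{\gamma}s^Y_{\psi}$ with $s^Y_{\theta(\psi)}$, hence $\stab(\hat{\Delta}^{\Mod}_{\star})$ with the stabilizer Lie algebra of \cite{EF0}, and then quotes the Lie-algebra-level inclusion $\dmr_0^G \subset \stab(\hat{\Delta}^{\Mod}_{\star})$ directly from \cite{EF0}, Corollary 3.11. You instead differentiate the group-level statement: working over $\Q[\epsilon]/(\epsilon^2)$ you show $1+\epsilon\psi \in \DMR_0^G(R)$, invoke Proposition \ref{DMRsubsetStab}, and read off the $\epsilon$-coefficient; your linearisations ($\Gamma_{1+\epsilon\psi} = 1+\epsilon\gamma_{\psi}$ using $(\psi|x_1)=0$, $\Psi_{\star} = \overline{\q}\circ\pi_Y(1) + \epsilon\psi_{\star}$, and $^{\Gamma}S^Y_{1+\epsilon\psi} = \mathrm{id} + \epsilon\,{}^{\gamma}s^Y_{\psi}$) are all correct, and the closing identity $^{\gamma}s^Y_{\psi}(\overline{\q}\circ\pi_Y(1)) = \psi_{\star}$ checks out since $\q$ fixes the powers of $x_1$. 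What your route buys is self-containment relative to this paper and an explicit demonstration that (\ref{LA_act_on_Delta*}) is the infinitesimal version of (\ref{act_on_Delta*}); in fact your first and third steps reproduce, respectively, one direction of Proposition \ref{LA_of_Rac_groups}(b) and of Proposition \ref{LA_of_Rac_groups}(c), so you could shorten the whole argument to ``apply the functor $\mathbf{Lie}$ to Proposition \ref{DMRsubsetStab} and use Proposition \ref{LA_of_Rac_groups}'' --- exactly the pattern the paper itself uses for Corollaries \ref{stabDeltaM_stabDeltamod} and \ref{stab_inclusion}. Two costs of your route are worth noting. First, scope: $\DMR_0^G(\K)$ and Proposition \ref{DMRsubsetStab} are only stated (here and in the sources) for $G$ cyclic, whereas $\dmr_0^G$ and Proposition \ref{stab_subset_stab*} carry no such hypothesis, so your derivation only covers the cyclic case while the paper's citation of \cite{EF0}, Corollary 3.11 does not have this restriction. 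Second, your claim that condition (iii) of $\dmr_0^G$ ``transfers under the same linearisation'' to conditions (iii)/(iv) of Proposition-Definition \ref{DMR} is the one step you leave genuinely unverified; it is true, but it is precisely the content of $\mathbf{Lie}(\DMR_0^G)=\dmr_0^G$ (\cite{Rac}, §3.3.8), and citing that would be cleaner than asserting it.
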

\begin{proof}
    Thanks to Lemma \ref{gamma_s_s_theta}, the stabilizer Lie algebra $\stab(\hat{\Delta}^{\Mod}_{\star})$ is identified with the stabilizer Lie algebra given in \cite{EF0}. Therefore the wanted inclusion is stated in Corollary 3.11 of \cite{EF0} ($\dmr_0^G$ being denoted $\dmr_0$ in \cite{EF0}).
\end{proof}

\subsubsection{Exponential maps} \label{exp_map}
Recall the affine $\Q$-group schemes from Proposition \ref{group_schemes}. We have :
\begin{proposition}
    \label{LA_of_Rac_groups}
    \begin{enumerate}[label=(\alph*), leftmargin=*]
        \item $\mathbf{Lie}\Big(\K \mapsto (\G(\KX), \circledast)\Big) = \left(\LibX, \langle \cdot, \cdot \rangle\right)$;
        \item $\mathbf{Lie}(\DMR_0^G, \circledast) = \left(\dmr_0^G, \langle \cdot, \cdot \rangle\right)$, where $G$ is a cyclic group;
        \item \label{LA_stab_Stab} $\mathbf{Lie}(\Stab(\hat{\Delta}^{\Mod}_{\star}), \circledast) = \left(\stab(\hat{\Delta}^{\Mod}_{\star}), \langle \cdot, \cdot \rangle\right)$. 
    \end{enumerate}
\end{proposition}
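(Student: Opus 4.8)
The plan is to compute each Lie algebra directly from the defining formula $\mathbf{Lie}(\mathsf{G}) = \ker\big(\mathsf{G}(\Q[\epsilon]/(\epsilon^2)) \to \mathsf{G}(\Q)\big)$, i.e.\ by describing the elements of $\mathsf{G}$ over the dual numbers that reduce to the identity. An element of $(\G(\KX),\circledast)$ over $\Q[\epsilon]/(\epsilon^2)$ reducing to $1$ has the form $\Psi = 1 + \epsilon\psi$ with $\psi\in\QX$, and the grouplikeness $\hat{\Delta}(\Psi) = \Psi\otimes\Psi$ becomes, modulo $\epsilon^2$, the primitivity $\hat{\Delta}(\psi) = \psi\otimes 1 + 1\otimes\psi$; hence the underlying set of $\mathbf{Lie}(\K\mapsto(\G(\KX),\circledast))$ is exactly $\LibX$. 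The computation I would isolate first is that, for $\Psi = 1+\epsilon\psi$, one has $\aut_{\Psi} = \mathrm{id} + \epsilon\, d_{\psi}$ and $\ell_{\Psi} = \mathrm{id} + \epsilon\,\ell_{\psi}$ modulo $\epsilon^2$ (both checked on generators, using $t_g(\Psi^{-1}) = 1 - \epsilon\, t_g(\psi)$), so that by (\ref{eq_S_Psi}) and (\ref{eq_s_psi}),
\[
    S_{\Psi} = \mathrm{id} + \epsilon\, s_{\psi} \pmod{\epsilon^2}.
\]
Thus $s_{\psi}$ is the infinitesimal generator of $S_{\Psi}$; the same first-order computation is valid for $\Psi\in\KX^{\times}$ and arbitrary $\psi\in\QX$, which I use below.

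For part (a) it then remains to match the brackets. Writing the group law as $\Psi\circledast\Phi = S_{\Psi}(\Phi)$ and expanding with $\Psi = 1 + u\psi_1$, $\Phi = 1 + v\psi_2$, using $s_{\psi_1}(1) = \psi_1$, gives
\[
    \Psi\circledast\Phi = 1 + u\psi_1 + v\psi_2 + uv\, s_{\psi_1}(\psi_2) + \cdots,
\]
so the bilinear term of the group multiplication is $(\psi_1,\psi_2)\mapsto s_{\psi_1}(\psi_2)$. The Lie bracket of the group scheme, being the antisymmetrization of this bilinear term (equivalently, read off the commutator $\Psi\circledast\Phi\circledast\Psi^{-1}\circledast\Phi^{-1}$ in bidegree $(1,1)$), is therefore $s_{\psi_1}(\psi_2) - s_{\psi_2}(\psi_1) = \langle\psi_1,\psi_2\rangle$ by (\ref{new_bracket}). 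This proves (a).

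For part (c), $\Stab(\hat{\Delta}_{\star}^{\Mod})$ is cut out inside $(\G(\KX),\circledast)$ by the equation defining the stabilizer of the action (\ref{act_on_Delta*}), so its Lie algebra is obtained by linearizing that equation at $\Psi = 1+\epsilon\psi$. The key input is the first-order expansion of $^{\Gamma}S_{\Psi}^Y$. Using Proposition \ref{link_ef0_yad} to write $^{\Gamma}S_{\Psi}^Y = S_{\Theta(\Psi)}^Y$, a short computation (the term arising because $\log\Gamma_{\Psi}$ starts in degree $2$ whereas $\gamma_{\psi}$ has a degree-$1$ part) gives $\Theta(1+\epsilon\psi) = 1 + \epsilon\big(\theta(\psi) + (\psi|x_1)x_1\big)$, whence by the displayed first-order identity (applied to $\Theta(\Psi)\in\KX^{\times}$) and Lemma \ref{gamma_s_s_theta},
\[
    {}^{\Gamma}S_{\Psi}^Y = \mathrm{id} + \epsilon\big(\,{}^{\gamma}s_{\psi}^Y + (\psi|x_1)\, s_{x_1}^Y\big) \pmod{\epsilon^2}.
\]
Linearizing the stabilizer equation thus yields the operator $A := {}^{\gamma}s_{\psi}^Y + (\psi|x_1)\,s_{x_1}^Y$ in place of $^{\gamma}s_{\psi}^Y$ in condition (\ref{stab_Delta*}). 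The crux --- and the step I expect to be the main obstacle --- is to show this discrepancy is harmless, i.e.\ that $s_{x_1}^Y$ is a coderivation of $\hat{\Delta}_{\star}^{\Mod}$: since (\ref{stab_Delta*}) is linear in the operator, adding a coderivation does not alter it, so the linearized condition on $A$ is exactly (\ref{stab_Delta*}). To prove the coderivation property I would transport everything to $\KY$ along $\pi_Y$ (under which $\hat{\Delta}_{\star}^{\Mod}$ becomes $\hat{\Delta}_{\star}^{\alg}$ and $\overline{\q}$ becomes $\q_Y$). Since $d_{x_1}=0$ (because $t_g(x_1)=x_g$ forces $d_{x_1}(x_g)=[x_g,x_g]=0$), one has $s_{x_1}=\ell_{x_1}$, so $s_{x_1}^Y$ transports to $\q_Y\circ\ell_{x_1}\circ\q_Y^{-1}$; a direct check on the basis $y_{n_1,g_1}\cdots y_{n_r,g_r}$ shows this operator equals $\ell_{y_{1,1}}$. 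As $y_{1,1}=x_1$ is primitive for $\hat{\Delta}_{\star}^{\alg}$ by (\ref{harmonic_coproduct}), left multiplication by it is a coderivation, which is the desired statement. This gives $\mathbf{Lie}(\Stab(\hat{\Delta}_{\star}^{\Mod})) = \stab(\hat{\Delta}_{\star}^{\Mod})$, proving (c).

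Finally, for part (b) I would avoid linearizing the arithmetic conditions (iii)--(iv) directly and instead use Proposition \ref{DMRsubsetStab}, which exhibits $\DMR_0^G$ as the scheme-theoretic intersection of $\Stab(\hat{\Delta}_{\star}^{\Mod})$ with the subgroup scheme $\mathsf{L}:\K\mapsto\{\Psi\in\G(\KX)\mid(\Psi|x_0)=(\Psi|x_1)=0\}$. Because $\mathbf{Lie}$ is computed as a kernel on $\Q[\epsilon]/(\epsilon^2)$-points and $(\Stab\cap\mathsf{L})(\K)=\Stab(\K)\cap\mathsf{L}(\K)$ for every $\K$, it carries this intersection to the intersection of Lie subalgebras; combining with (c) and the evident identity $\mathbf{Lie}(\mathsf{L}) = \{\psi\in\LibX\mid(\psi|x_0)=(\psi|x_1)=0\}$ gives
\[
    \mathbf{Lie}(\DMR_0^G) = \{\psi\in\stab(\hat{\Delta}_{\star}^{\Mod})\mid(\psi|x_0)=(\psi|x_1)=0\}.
\]
It then remains to identify the right-hand side with $\dmr_0^G$. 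The inclusion $\supseteq$ is immediate from Proposition \ref{stab_subset_stab*} together with condition (i) of $\dmr_0^G$; the reverse inclusion is the infinitesimal counterpart of Proposition \ref{DMRsubsetStab} and is supplied by \cite{Rac} and \cite{EF0} (the same source from which Proposition \ref{stab_subset_stab*} is extracted). This establishes (b) and completes the proof.
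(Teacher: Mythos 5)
Your proposal is correct, but it takes a genuinely different route from the paper: the paper's proof of Proposition \ref{LA_of_Rac_groups} consists entirely of citations ((a) to \cite{EF0} §4.1.4, (b) to \cite{Rac} §3.3.8, (c) to \cite{EF0} (5.12)), whereas you carry out the linearization over $\Q[\epsilon]/(\epsilon^2)$ explicitly. Your computation $S_{1+\epsilon\psi}=\mathrm{id}+\epsilon s_{\psi}$ and the identification of the bracket as the antisymmetrization of the bilinear term of $\circledast$ give a self-contained proof of (a). For (c), you correctly isolate the one genuinely delicate point: since $\Gamma_{\Psi}$ is defined by a sum starting at $n=2$ while $\gamma_{\psi}$ starts at $n=1$, one has $\log\Gamma_{1+\epsilon\psi}(x)=\epsilon\bigl(\gamma_{\psi}(x)-(\psi|x_1)x\bigr)$, hence ${}^{\Gamma}S^Y_{1+\epsilon\psi}=\mathrm{id}+\epsilon\bigl({}^{\gamma}s^Y_{\psi}+(\psi|x_1)s^Y_{x_1}\bigr)$ rather than $\mathrm{id}+\epsilon\,{}^{\gamma}s^Y_{\psi}$. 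Your resolution — $d_{x_1}=0$ so $s^Y_{x_1}$ transports to $\q_Y\circ\ell_{y_{1,1}}\circ\q_Y^{-1}=\ell_{y_{1,1}}$ on $\KY$, and $y_{1,1}$ is primitive for $\hat{\Delta}^{\alg}_{\star}$, so the extra term is a coderivation and drops out of the (linear) stabilizer condition — is exactly the content hidden behind the reference to \cite{EF0}, and all the intermediate identities check out against the paper's formulas. What your approach buys is transparency and a verification that the paper's normalizations of $\Gamma_{\Psi}$ and $\gamma_{\psi}$ are mutually consistent; what it costs is length, and in part (b) you still defer the inclusion $\{\psi\in\stab(\hat{\Delta}^{\Mod}_{\star})\mid(\psi|x_0)=(\psi|x_1)=0\}\subset\dmr_0^G$ to \cite{Rac} and \cite{EF0}, exactly as the paper does, so neither argument is fully self-contained at that point.
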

\begin{proof}
    \begin{enumerate*}[label=(\alph*), leftmargin=*]
        \item See \cite{EF0}, §4.1.4;
        \item See \cite{Rac}, §3.3.8;
        \item See \cite{EF0}, (5.12).
    \end{enumerate*}
\end{proof}

Let $\K$ be a commutative $\Q$-algebra. Let us denote $\widehat{\Lib}_{\K}(X) := \widehat{\Lib}(X) \hat{\otimes} \K$.
Let $\mathrm{cbh}_{\langle\cdot, \cdot\rangle} : \widehat{\Lib}_{\K}(X) \times \widehat{\Lib}_{\K}(X) \to \widehat{\Lib}_{\K}(X)$ be the map defined by $\mathrm{cbh}_{\langle\cdot, \cdot\rangle}(\psi, \phi) := \mathrm{mor}_{\psi, \phi}(\mathrm{cbh})$, where $\mathrm{cbh}$ in $\widehat{\Lib}_{\Q}(a,b)$ is the Campbell-Baker-Hausdorff series (\cite{EF0}, §4.1.2) $\mathrm{cbh} = \log(\exp(a)\exp(b))$ with $\log : 1 + \Q\langle\langle a, b \rangle\rangle \to \Q\langle\langle a, b \rangle\rangle_0$ and $\mathrm{mor}_{\psi, \phi}$ is the Lie algebra morphism $\widehat{\Lib}_{\Q}(a, b) \to (\widehat{\Lib}_{\K}(X), \langle \cdot, \cdot \rangle), a \mapsto \psi, b \mapsto \phi$.
We then define $\exp_{\circledast}^{\K} : \widehat{\Lib}_{\K}(X) \to \G(\KX)$ to be the exponential map; it intertwines $\mathrm{cbh}_{\langle\cdot, \cdot\rangle}$ and $\circledast$. The following proposition recalls from \cite{Rac}, §3.1.8 and \cite{DeGo}, Remark 5.14, the explicit form of $\exp_{\circledast}^{\K}$ as well as gives a proof of this statement.

\begin{proposition}
    For a commutative $\Q$-algebra $\K$ and $\psi \in \widehat{\Lib}_{\K}(X)$, we have
    \begin{enumerate}[label=(\alph*), leftmargin=*]
        \item The exponential map $\exp_{\circledast}^{\K} : \widehat{\Lib}_{\K}(X) \to \G(\KX)$ is a bijection;
        \item $S_{\exp_{\circledast}^{\K}(\psi)} = \exp(s_{\psi})$; where $\psi \mapsto s_{\psi}$ is the map $\widehat{\Lib}_{\K}(X) \to \mathrm{End}_{\K-\Mod}(\KX)$ obtained from the map $\widehat{\Lib}(X) \to \mathrm{End}_{\Q}(\QX)$ in (\ref{eq_s_psi}) by tensoring with $\K$ and $\exp$ is the usual exponential of an endomorphism;    
        \item $\exp_{\circledast}^{\K}(\psi) = \exp(s_{\psi})(1)$. 
    \end{enumerate}
\end{proposition}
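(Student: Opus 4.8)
The plan is to establish the three claims in the natural order (b) $\Rightarrow$ (c) $\Rightarrow$ (a), since the exponential identity for $S$ is the structural heart of the statement and the bijectivity is a formal consequence of grading.

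For part (b), I would argue that the assignment $\psi \mapsto s_\psi$ is the infinitesimal (Lie algebra) version of the group action $\Psi \mapsto S_\Psi$ from Corollary \ref{group_morph}(b), so that passing to the exponential intertwines them. Concretely, I would recall that $\exp_\circledast^\K$ intertwines $\mathrm{cbh}_{\langle\cdot,\cdot\rangle}$ and $\circledast$ by construction, and that $S$ is a group morphism by Identity (\ref{S_group_morph}). The key computation is to verify on the one-parameter family $t \mapsto \exp_\circledast^\K(t\psi)$ that
\[
    \frac{d}{dt}\Big|_{t=0} S_{\exp_\circledast^\K(t\psi)} = s_\psi,
\]
using that $S_\Psi = \ell_\Psi \circ \aut_\Psi$ with $\ell$ and $\aut$ differentiating to $\ell_\psi$ and $d_\psi$ respectively (compare the definition $s_\psi = \ell_\psi + d_\psi$ in (\ref{eq_s_psi}) with the definition $S_\Psi = \ell_\Psi \circ \aut_\Psi$ in (\ref{eq_S_Psi})). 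Since $t \mapsto S_{\exp_\circledast^\K(t\psi)}$ is a one-parameter subgroup of $\Aut_{\K-\Mod}(\KX)$ with derivative $s_\psi$ at the origin, it must equal $t \mapsto \exp(t\,s_\psi)$; evaluating at $t=1$ gives $S_{\exp_\circledast^\K(\psi)} = \exp(s_\psi)$. I would carry this out carefully by checking the differentiation on the algebra generators $x_0$ and $x_g$, where $\aut_\Psi$ acts by the explicit formula (\ref{aut_Psi}) and its derivative is governed by (\ref{dpsi}).

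For part (c), I would apply the identity of (b) to the unit element $1 \in \KX$. Since $\aut_\Psi$ is a $\K$-algebra automorphism it fixes $1$, so $S_\Psi(1) = \Psi \cdot \aut_\Psi(1) = \Psi$; in particular $S_{\exp_\circledast^\K(\psi)}(1) = \exp_\circledast^\K(\psi)$. Combining this with (b) yields $\exp_\circledast^\K(\psi) = \exp(s_\psi)(1)$ directly.

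For part (a), bijectivity follows from the completeness and grading of both sides. The element $\exp_\circledast^\K(\psi)$ has constant term $1$ and its degree-$n$ part depends on $\psi$ only through its components in degrees $\leq n$, with the top-degree dependence being the identity (the higher-order corrections from $\mathrm{cbh}$ and the exponential involve only strictly lower degrees). This triangular structure with respect to the grading lets me invert the map degree by degree, using a standard successive-approximation argument on the complete graded Lie algebra $\widehat{\Lib}_\K(X)$; since $\K$ is a $\Q$-algebra, the relevant coefficients are invertible. \textbf{The main obstacle} I anticipate is part (b): one must justify the interchange of differentiation with the infinite sums defining $\exp_\circledast^\K$ and the exponential of $s_\psi$, and confirm that $t \mapsto S_{\exp_\circledast^\K(t\psi)}$ is genuinely a one-parameter subgroup whose generator is $s_\psi$ rather than merely that their first-order terms agree. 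The completeness of the grading makes all these series converge in the relevant topologies, so the analytic subtleties are controlled, but the bookkeeping relating $\mathrm{cbh}_{\langle\cdot,\cdot\rangle}$ to $\circledast$ through the nonstandard bracket $\langle\cdot,\cdot\rangle$ requires care.
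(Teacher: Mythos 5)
Your proposal is correct and, for parts (b) and (c), rests on the same idea as the paper: the map $\Psi \mapsto S_{\Psi}$ is a morphism of affine $\Q$-group schemes from $\K \mapsto (\G(\KX), \circledast)$ to $\K \mapsto \Aut_{\K-\Mod}(\KX)$ whose associated Lie algebra morphism is $\psi \mapsto s_{\psi}$, whence $S_{\exp_{\circledast}^{\K}(\psi)} = \exp(s_{\psi})$, and (c) follows by evaluating at $1$ via $S_{\Psi}(1) = \Psi$. The difference is one of packaging: the paper invokes the functoriality of $\mathbf{Lie}$ and of the exponential for affine group scheme morphisms and simply asserts that the associated Lie algebra morphism is $\psi \mapsto s_{\psi}$, whereas you realize the same correspondence through the one-parameter subgroup $t \mapsto S_{\exp_{\circledast}^{\K}(t\psi)}$ (a genuine homomorphism by Identity (\ref{S_group_morph}) since $\mathrm{cbh}_{\langle\cdot,\cdot\rangle}(t\psi, s\psi) = (t+s)\psi$) and verify its generator on the generators $x_0, x_g$; this makes explicit the first-order computation $\aut_{1+\epsilon\psi}(x_g) = x_g + \epsilon[x_g, t_g(\psi)]$ and $\ell_{1+\epsilon\psi} = \mathrm{id} + \epsilon\ell_{\psi}$ that the paper leaves implicit, at the cost of the convergence bookkeeping you flag (which is indeed harmless in the complete graded setting). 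For part (a) the paper only cites \cite{EF0}, §4.1.4--4.1.5, while you supply a self-contained degree-by-degree inversion using the triangularity of $\exp_{\circledast}^{\K}$ with respect to the grading; that is a standard and valid substitute. One small point to keep in mind: your differentiation of $S_{\exp_{\circledast}^{\K}(t\psi)}$ uses that $\exp_{\circledast}^{\K}(t\psi) = 1 + t\psi + O(t^2)$, which must be taken as part of the characterization of the exponential map (its differential at the origin is the identity) rather than deduced from part (c), to avoid circularity; with that understanding the argument is sound.
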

\begin{proof}
    \begin{enumerate}[label=(\alph*), leftmargin=*]
        \item See \cite{EF0} §4.1.4 and §4.1.5;
        \item The assignment $\K \mapsto \mathrm{Aut}_{\K-\Mod}(\KX)$ is an affine $\Q$-group scheme and the map $\G(\KX) \to \mathrm{Aut}_{\K-\Mod}(\KX), \Psi \mapsto S_{\Psi}$ defines an affine $\Q$-groupe scheme morphism from $\K \mapsto \G(\KX)$ to $\K \mapsto \mathrm{Aut}_{\K-\Mod}(\KX)$. The associated $\Q$-Lie algebra morphism is $\LibX \to \mathrm{End}_{\Q}(\QX), \psi \mapsto s_{\psi}$. As a consequence, for any $\psi \in \widehat{\Lib}_{\K}(X)$, $S_{\exp_{\circledast}^{\K}(\psi)} = \exp(s_{\psi})$.
        \item Follows by applying the latter equality to $1$, using the identity $S_{\Psi}(1) = \Psi$ for any $\Psi \in \G(\KX)$.
    \end{enumerate}
\end{proof}

To conclude this part, let us notice that the bijection of the map $\exp_{\circledast} : \widehat{\Lib}_{\K}(X) \to \G(\KX)$ implies that we have an identification between the group actions defined in §\ref{double_shuffle_group} with the exponential of the Lie algebra actions of the current section.
	\section{A crossed product formulation of the double shuffle theory} \label{crossed_product}
We construct a crossed product version of the double shuffle formalism.
The relevant algebras and modules are introduced in §\ref{algebra_VG} : (a) an algebra $\hat{\V}_G$ defined by generators and relations, which is then identified with a crossed product algebra involving Racinet's formal series algebra $\KX$; (b) a bialgebra $(\hat{\W}_G, \hat{\Delta}^{\W}_G)$ isomorphic to the bialgebra $(\KY, \hat{\Delta}^{\alg}_{\star})$, where $\hat{\W}_G$ is a subalgebra of $\hat{\V}_G$; (c) a coalgebra $(\hat{\M}_G, \hat{\Delta}^{\M}_G)$ isomorphic to the coalgebra $(\KX / \KX x_0, \hat{\Delta}^{\Mod}_{\star})$, where $\hat{\M}_G$ has a $\hat{\V}_G$-module structure inducing a free rank one $\hat{\W}_G$-module structure on it, compatible with the coproducts $\hat{\Delta}^{\W}_G$ and $\hat{\Delta}^{\M}_G$.   
In §\ref{actions_on_crossed_product} and §\ref{gamma_actions_on_crossed_product}, we construct actions of the group $(\G(\KX), \circledast)$ on these objects by algebra and module automorphisms. This leads us in §\ref{Stabs} to define the stabilizer groups of the coproducts $\hat{\Delta}^{\W}_G$ and $\hat{\Delta}^{\M}_G$ and show in Theorem \ref{Stab_Inclusion} that the stabilizer of the latter is included in the stabilizer of the former.   
\subsection{The algebra \texorpdfstring{$\hat{\V}_G$}{VG}, the bialgebra \texorpdfstring{$(\hat{\W}_G, \hat{\Delta}^{\W}_G)$}{WGDWG} and the coalgebra \texorpdfstring{$(\hat{\M}_G, \hat{\Delta}^{\M}_G)$}{MGDMG}} \label{algebra_VG}
\subsubsection{The algebras $\hat{\V}_G$ and $\hat{\W}_G$ and the module $\hat{\M}_G$}
Let $\hat{\V}_G^{\K}$ (or simply $\hat{\V}_G$ if there is ambiguity) the complete graded topological $\K$-algebra generated by\footnote{The notation $e_0$ and $e_1$ is inspired by \cite{EF1} which in turn is inspired by \cite{DT}.} $\{e_0, e_1\} \sqcup G$ where $e_0$ and $e_1$ are of degree $1$ and elements $g \in G$ are of degree $0$ satisfying the relations:
\begin{multicols}{3}
    \begin{enumerate}[label=(\roman*)]
        \item $g \times h = g h$;
        \item $1 = 1_G$;
        \item $g \times e_0 = e_0 \times g$;
    \end{enumerate}
\end{multicols}
\noindent for any $g, h \in G$; where “$\times$" is the algebra multiplication which we will no longer denote if there is no ambiguity.

Set $\hat{\W}_G^{\K} := \K \oplus \hat{\V}_G e_1$ (or simply $\hat{\W}_G$ if there is ambiguity). It is a graded topological $\K$-subalgebra of $\hat{\V}_G$. Next, the quotient $\hat{\M}_G^{\K} := \hat{\V}_G \Big/ \left(\hat{\V}_G e_0 + \sum_{g \in G \backslash \{1\}} \hat{\V}_G (g-1)\right)$ (or simply $\hat{\M}_G$ if there is ambiguity) is a topological $\K$-module. It is also a topological $\hat{\V}_G$-module and, by restriction, a topological $\hat{\W}_G$-module. Let $1_{\M}$ be the class of $1 \in \hat{\V}_G$ in $\hat{\M}_G$. The map $- \cdot 1_{\M} : \hat{\V}_G \to \hat{\M}_G$ is a surjective topological $\K$-module morphism whose kernel is $\hat{\V}_G e_0 + \sum_{g \in G \backslash \{1\}} \hat{\V}_G (g-1)$.

\subsubsection{The algebra $\hat{\V}_G$ as a crossed product}
First, let us introduce the basic material about the crossed product of an algebra by a group acting by algebra automorphisms.
\begin{definition}
    \label{Def_Crossed_Product}
    Let $A$ be a $\K$-algebra such that the group $G$ acts on $A$ by $\K$-algebra automorphisms. Let us denote $G \times A \ni (g, a) \mapsto a^g \in A$ this action. The \emph{crossed product algebra} of the $\K$-algebra $A$ by the group $G$ denoted $A \rtimes G$ is the $\K$-algebra $(A \otimes \K G, \ast)$ where $\ast$ is the product given by
    \begin{equation}
        \sum_{g \in G} (a_g \otimes g) \ast \sum_{h \in G} (b_h \otimes h) := \sum_{k \in G} \left( \sum_{g,h \in G | gh = k} a_g \, b_h^g \right) \otimes k,
    \end{equation}
    for $a_g, b_g \in A$ with $g \in G$ (\cite{Bou}, Chapter 3, Page 180, Exercise 11).
\end{definition}

\begin{proposition}[Universal property of the crossed product algebra]
    For any $\K$-algebra $B$, there is a natural bijection between the set $\Mor_{\K-\alg}(A \rtimes G, B)$ and the set of pairs $(f, \tau) \in \Mor_{\K-\alg}(A, B) \times \Mor_{\mathrm{grp}}(G, B^{\times})$ such that $f(a^g) = \tau(g) f(a) \tau(g)^{-1}$. 
\end{proposition}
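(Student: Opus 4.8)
The plan is to prove the universal property by exhibiting the two maps of the claimed bijection and checking they are mutually inverse, with the bulk of the work consisting in verifying that the crossed product $A \rtimes G$ itself carries a canonical pair $(\iota_A, \iota_G)$ satisfying the compatibility relation and that it is universal among all such pairs. I first fix notation: inside $A \rtimes G = (A \otimes \K G, \ast)$ I write $\iota_A : A \to A \rtimes G$, $a \mapsto a \otimes 1$, and $\iota_G : G \to (A \rtimes G)^\times$, $g \mapsto 1 \otimes g$. Using the product formula of Definition \ref{Def_Crossed_Product}, one checks directly that $\iota_A$ is a $\K$-algebra morphism and that $\iota_G$ is a group morphism into the units (with $(1 \otimes g)^{-1} = 1 \otimes g^{-1}$, which follows since $(1\otimes g)\ast(1\otimes g^{-1}) = 1\otimes 1$). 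The key computation is that these satisfy the compatibility relation
\[
    \iota_A(a^g) = \iota_G(g) \, \iota_A(a) \, \iota_G(g)^{-1},
\]
which again reduces to the definition of $\ast$ applied to $(1\otimes g)\ast(a\otimes 1)\ast(1\otimes g^{-1})$.

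Next I would construct the bijection. Given a pair $(f, \tau) \in \Mor_{\K-\alg}(A, B) \times \Mor_{\mathrm{grp}}(G, B^\times)$ satisfying $f(a^g) = \tau(g) f(a) \tau(g)^{-1}$, I define $\Phi_{f,\tau} : A \rtimes G \to B$ on the topological $\K$-module basis $\{a \otimes g\}$ by
\[
    \Phi_{f,\tau}\Big(\sum_{g \in G} a_g \otimes g\Big) := \sum_{g \in G} f(a_g)\, \tau(g),
\]
extended by continuity. The map in the reverse direction sends a $\K$-algebra morphism $F : A \rtimes G \to B$ to the pair $(F \circ \iota_A, \, F \circ \iota_G)$; the compatibility relation for this pair is then inherited from the compatibility relation already established for $(\iota_A, \iota_G)$ by applying the algebra morphism $F$.

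The heart of the proof is verifying that $\Phi_{f,\tau}$ is a $\K$-algebra morphism, and this is exactly where the compatibility hypothesis on $(f, \tau)$ is used. Unwinding the product formula, I must check
\[
    \Phi_{f,\tau}\big((a\otimes g)\ast(b\otimes h)\big) = f(a)\,\tau(g)\,f(b)\,\tau(h),
\]
while by the definition of $\ast$ the left-hand side equals $f(a\, b^g)\,\tau(gh) = f(a)\,f(b^g)\,\tau(g)\tau(h)$; the two agree precisely because $f(b^g) = \tau(g) f(b) \tau(g)^{-1}$, i.e. $\tau(g) f(b) = f(b^g)\tau(g)$. Once $\Phi_{f,\tau}$ is known to be multiplicative on basis elements it extends to all of $A \rtimes G$ by bilinearity and continuity.

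Finally I would establish that the two constructions are mutually inverse. One direction, $\big(\Phi_{f,\tau} \circ \iota_A, \Phi_{f,\tau} \circ \iota_G\big) = (f, \tau)$, is immediate from the definition of $\Phi_{f,\tau}$ on $a \otimes 1$ and $1 \otimes g$. The other direction, $\Phi_{F\circ\iota_A,\, F\circ\iota_G} = F$, follows because any element decomposes as $\sum_g (a_g\otimes 1)\ast(1\otimes g)$, so both sides agree after applying the (continuous, multiplicative) map $F$ to this expansion. Naturality in $B$ is then a formal consequence: post-composition with a $\K$-algebra morphism $B \to B'$ commutes with both constructions. The main obstacle, as indicated above, is purely the bookkeeping in the multiplicativity check, where the twisting $b \mapsto b^g$ in the crossed-product multiplication has to be matched exactly against the conjugation compatibility relation satisfied by the pair $(f,\tau)$; every other step is routine.
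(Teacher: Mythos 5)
Your proposal is correct and follows essentially the same route as the paper's proof: the same canonical pair $(a \mapsto a\otimes 1,\ g\mapsto 1\otimes g)$ extracted from a morphism on $A\rtimes G$, the same formula $a\otimes g\mapsto f(a)\tau(g)$ in the reverse direction, and the same use of the compatibility relation $f(b^g)=\tau(g)f(b)\tau(g)^{-1}$ to absorb the twist $b\mapsto b^g$ in the multiplicativity check. The only differences are cosmetic: you isolate the canonical pair $(\iota_A,\iota_G)$ up front and deduce the compatibility of $(F\circ\iota_A, F\circ\iota_G)$ from it, and you spell out the mutual-inverse and naturality checks that the paper leaves as routine.
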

\begin{proof}
    Indeed, given a $\K$-algebra morphism $\beta : A \rtimes G \to B$ we consider:
    \begin{itemize}
        \item The $\K$-algebra morphism $f : A \to B$ given for any $a \in A$ by $f(a)= \beta(a \otimes 1)$;
        \item The group morphism $\tau : G \to B^{\times}$ given for any $g \in G$ by $\tau(g) = \beta(1 \otimes g)$.
    \end{itemize}
    These morphisms verify:
    \begin{align*}
        \tau(g)f(a)\tau(g^{-1}) = & \beta(1\otimes g) \beta(a \otimes 1) \beta(1 \otimes g^{-1})
        = \beta((1\otimes g) \ast (a \otimes 1) \ast (1 \otimes g^{-1})) \\
        = & \beta((a^g \otimes g) \ast (1 \otimes g^{-1}))
        = \beta(a^g \otimes 1)
        = f(a^g).
    \end{align*}
    This shows that the map $\beta \mapsto (f, \tau)$ is well defined. Now let us define a converse map in order to get a bijection. Given any pair $(f, \tau)$ of morphisms satisfying the conditions of the proposition, we set $\beta : a \otimes g \mapsto f(a) \tau(g)$ for any $a \otimes g \in A \rtimes G$. This is a $\K$-algebra morphism. Indeed, for any $a \otimes g$ and $b \otimes h \in A \rtimes G$
    \begin{align*}
        \beta((a \otimes g) \ast (b \otimes h)) =& \beta(a b^g \otimes gh) = f(ab^g) \tau(gh) = f(a) f(b^g) \tau(g) \tau(h) \\
        = & f(a) \tau(g) f(b) \tau(g)^{-1} \tau(g) \tau(h) = f(a) \tau(g) f(b) \tau(h) \\
        = & \beta(a \otimes g) \beta(b \otimes h).
    \end{align*}
    Thus the map $(f, \tau) \to \beta$ is also well defined.
    Finally, one can easily check that the composition of the two maps on both sides gives the identity.
\end{proof}

Now, recall that $g \mapsto t_g$ defines an action of $G$ on $\KX$ by $\K$-algebra automorphisms (\cite{Rac}, §3.1.1). We can then consider the crossed product algebra $\KX \rtimes G$ for this action.

\begin{proposition} \ 
    \begin{enumerate}[label=(\alph*), leftmargin=*]
        \item There is a unique $\K$-algebra morphism $\hat{\V}_G \overset{\alpha}{\to} \KX \rtimes G$ such that $e_0 \mapsto x_0 \otimes 1$, $e_1 \mapsto -x_1 \otimes 1$ and $g \mapsto 1 \otimes g$. \label{VGtoKXG}
        \item There is a unique $\K$-algebra morphism $\KX \rtimes G \overset{\beta}{\to} \hat{\V}_G$ such that $x_0 \otimes 1 \mapsto e_0$ and for $g \in G$, $x_g \otimes 1 \mapsto -g e_1 g^{-1}$ and $1 \otimes g  \mapsto g$. \label{KXGtoVG}
        \item The morphisms $\alpha$ and $\beta$ given respectively in \ref{VGtoKXG} and \ref{KXGtoVG} are isomorphisms which are inverse of one another.
    \end{enumerate}
    \label{isoVGetKXG}
\end{proposition}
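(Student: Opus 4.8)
The plan is to use the universal property of the crossed product algebra proved just above, together with the fact that $\hat{\V}_G$ is defined by generators and relations, to construct the two morphisms $\alpha$ and $\beta$, and then to check they are mutually inverse by evaluating the two composites on the generators of each algebra.

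For part \ref{VGtoKXG}, I would construct $\alpha$ by invoking the presentation of $\hat{\V}_G$: since $\hat{\V}_G$ is the complete graded $\K$-algebra on $\{e_0,e_1\}\sqcup G$ subject to relations (i)--(iii), a $\K$-algebra morphism out of $\hat{\V}_G$ is the same datum as images of $e_0,e_1$ and of the elements $g\in G$ that respect those relations. So I would set $\alpha(e_0)=x_0\otimes 1$, $\alpha(e_1)=-x_1\otimes 1$, $\alpha(g)=1\otimes g$, and verify that the images satisfy (i) $\alpha(g)\ast\alpha(h)=\alpha(gh)$, (ii) $\alpha(1)=1\otimes 1$, and (iii) $\alpha(g)\ast\alpha(e_0)=\alpha(e_0)\ast\alpha(g)$. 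These follow directly from the definition of $\ast$: for (i) one has $(1\otimes g)\ast(1\otimes h)=1\otimes gh$; for (iii) one uses $t_g(x_0)=x_0$, so that $x_0\otimes 1$ commutes with $1\otimes g$. Completeness and grading are respected since $e_0,e_1$ map to degree-$1$ elements and each $g$ to a degree-$0$ element. Uniqueness is automatic because the $e_0,e_1,g$ generate $\hat{\V}_G$.

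For part \ref{KXGtoVG}, I would use the universal property of $\KX\rtimes G$ established in the preceding proposition: giving a $\K$-algebra morphism $\beta:\KX\rtimes G\to\hat{\V}_G$ amounts to giving a pair $(f,\tau)$ with $f\in\Mor_{\K\text{-}\alg}(\KX,\hat{\V}_G)$ and $\tau\in\Mor_{\mathrm{grp}}(G,\hat{\V}_G^{\times})$ satisfying $f(t_g(v))=\tau(g)f(v)\tau(g)^{-1}$. I would take $\tau(g)=g$ (an invertible element of $\hat{\V}_G$ by relations (i)--(ii), with inverse $g^{-1}$) and define $f$ on the free generators of $\KX$ by $f(x_0)=e_0$ and $f(x_g)=-ge_1g^{-1}$. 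The compatibility condition need only be checked on the generators of $X$: for $x_0$ it reads $f(t_g(x_0))=f(x_0)=e_0=\Ad_g(e_0)$, which holds by relation (iii); for $x_h$ it reads $f(t_g(x_h))=f(x_{gh})=-(gh)e_1(gh)^{-1}$, which equals $\Ad_g(-he_1h^{-1})=\tau(g)f(x_h)\tau(g)^{-1}$ using associativity of the group law in $G$. The universal property then yields $\beta$ with $\beta(x_0\otimes 1)=e_0$, $\beta(x_g\otimes 1)=-ge_1g^{-1}$ and $\beta(1\otimes g)=g$.

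For part (c), I would verify $\beta\circ\alpha=\mathrm{id}_{\hat{\V}_G}$ and $\alpha\circ\beta=\mathrm{id}_{\KX\rtimes G}$ by checking on generators. For $\beta\circ\alpha$: one computes $\beta(\alpha(e_0))=\beta(x_0\otimes 1)=e_0$, $\beta(\alpha(g))=\beta(1\otimes g)=g$, and $\beta(\alpha(e_1))=\beta(-x_1\otimes 1)=-(-1\cdot e_1\cdot 1^{-1})=e_1$, using $1=1_G$ from relation (ii). For $\alpha\circ\beta$: one checks $\alpha(\beta(x_0\otimes 1))=\alpha(e_0)=x_0\otimes 1$, $\alpha(\beta(1\otimes g))=\alpha(g)=1\otimes g$, and $\alpha(\beta(x_g\otimes 1))=\alpha(-ge_1g^{-1})=-(1\otimes g)\ast(-x_1\otimes 1)\ast(1\otimes g^{-1})$, which by the definition of $\ast$ and the automorphism $t_g$ equals $(t_g(x_1))\otimes 1=x_g\otimes 1$. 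Since each composite fixes a generating set of a $\K$-algebra and both maps are $\K$-algebra morphisms, the composites are the respective identities. The only mildly delicate point, and the one I would be most careful about, is the bookkeeping of the sign and the conjugation in $\alpha(\beta(x_g\otimes 1))$: one must correctly apply the crossed-product multiplication $(1\otimes g)\ast(x_1\otimes 1)=t_g(x_1)\otimes g=x_g\otimes g$ and then multiply by $1\otimes g^{-1}$ to land back in $\KX\otimes 1$, tracking that the two minus signs cancel.
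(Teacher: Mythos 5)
Your proposal is correct and follows essentially the same route as the paper: checking the relations of $\hat{\V}_G$ for $\alpha$, invoking the universal property of the crossed product for $\beta$, and verifying both composites on generators (the paper phrases the $\alpha\circ\beta$ check as matching the pair $(f,\tau)$ through the universal-property bijection, which amounts to the same generator computation you perform). Your sign and conjugation bookkeeping in $\alpha(\beta(x_g\otimes 1))$ matches the paper's computation exactly.
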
  
\begin{proof} \ 
    \begin{enumerate}[label=(\alph*), leftmargin=*]
        \item We verify that the images by the morphism $\alpha$ of the generators of $\hat{\V}_G$ satisfy the relations of $\hat{\V}_G$.
        \begin{itemize}[leftmargin=*]
            \item $\alpha(1_G) = 1 \otimes 1_G = \alpha(1)$;
            \item For $g, h \in G, \, \alpha(g) \ast \alpha(h) = (1 \otimes g) \ast (1 \otimes h) = 1 \, t_g(1) \otimes gh = 1 \otimes gh = \alpha(gh)$.
            \item For $g \in G, \alpha(g) \ast \alpha(e_0) = (1 \otimes g) \ast (x_0 \otimes 1) = 1 \, t_g(x_0) \otimes g = x_0 \otimes g$. On the other hand, we have $\alpha(e_0) \ast \alpha(g) = (x_0 \otimes 1) \ast (1 \otimes g) = x_0 \, t_1(1) \otimes g = x_0 \otimes g$. Thus $\alpha(g) \ast \alpha(e_0) = \alpha(e_0) \ast \alpha(g)$.
        \end{itemize}
        \item First, since for any $g \in G$, the element $-g e_1 g^{-1}$ is of degree $1$, there is a unique $\K$-algebra morphism $f : \KX \to \hat{\V}_G$ such that $x_0 \mapsto e_0, \, x_g \mapsto -g e_1 g^{-1}$. Second, there is a unique group morphism $\tau : G \to \hat{\V}_G^{\times}$ given by $g \mapsto g$. Next, for any $g \in G$, the maps $\KX \to \hat{\V}_G$ defined by $a \mapsto f(t_g(a))$ and $a \mapsto \tau(g) f(a) \tau(g)^{-1}$ are $\K$-algebra morphisms that are equal by restriction on generators $x_h$ ($h \in \{0\} \sqcup G$) of $\KX$. Indeed,
        \[
            \tau(g) f(x_0) \tau(g)^{-1} = g e_0 g^{-1} = e_0 g g^{-1} = e_0 = f(x_0) = f(t_g(x_0))
        \]
        and for $h \in G$,
        \[
            \tau(g) f(x_h) \tau(g)^{-1} = g (-h e_1 h^{-1}) g^{-1} = gh e_1 (gh)^{-1} = f(x_{gh}) = f(t_g(x_h)).
        \]
        We then have for any $g \in G$ and any $a \in \KX$, $f(t_g(a)) = \tau(g) f(a) \tau(g)^{-1}$.
        Finally, according to the universal property of crossed products, the pair $(f, \tau)$ gives a unique $\K$-algebra morphism $\beta : \KX \rtimes G \to \hat{\V}_G, a \otimes g \mapsto f(a) \tau(g)$ which verifies $\beta(x_0 \otimes 1) = f(x_0) \tau(1) = e_0$, $\beta(x_g \otimes 1) = f(x_g) \tau(1) = - g e_1 g^{-1}$ and $\beta(1 \otimes g) = f(1) \tau(g) = g$, for $g \in G$.
        \item It is enough to show that the compositions of $\alpha$ and $\beta$ gives the identity. First, since $\beta \circ \alpha : \hat{\V}_G \to \hat{\V}_G$, it is enough to compute it on generators. We have $e_0 \mapsto x_0 \otimes 1 \mapsto e_0$, $e_1 \mapsto -x_1 \otimes 1 \mapsto e_1$ and $g \mapsto 1 \otimes g \mapsto g$. Thus $\beta \circ \alpha = \mathrm{id}_{\hat{\V}_G}$. \newline
        For the converse, we show that $\alpha \circ \beta \in \mathrm{Mor}_{\K-\alg}\left(\KX \rtimes G, \KX \rtimes G\right)$ and the identity of $\KX \rtimes G$ have the same image via the bijection of the universal property of crossed products.
        The image of the identity is the pair
        \[
            f_{\mathrm{id}} : a \mapsto a \otimes 1 \text{ and } \tau_{\mathrm{id}}(g) = 1 \otimes g
        \]
        Next, let us compute the image of $\alpha \circ \beta$. The $\K$-algebra morphism $f$ is given for any $a \in \KX$ by
        \[
            f(a) = \alpha \circ \beta(a \otimes 1)
        \]
        Since it is a $\K$-algebra morphism, it is enough to determine it on $x_g$, $g \in \{0\} \sqcup G$. We have
        \[
            f(x_0) = \alpha \circ \beta(x_0 \otimes 1) = \alpha(e_0) = x_0 \otimes 1     
        \]
        and for $g \in G$,
        \begin{align*}
            \hspace{1cm} f(x_g) = & \alpha \circ \beta(x_g \otimes 1) = \alpha(-ge_1g^{-1}) = - \alpha(g) \ast \alpha(e_1) \ast \alpha(g^{-1}) \\
            \hspace{1cm} = & - (1 \otimes g) \ast (-x_1 \otimes 1) \ast (1 \otimes g^{-1}) = (t_g(x_1) \otimes g) \ast (1 \otimes g^{-1}) = x_g \otimes 1.
        \end{align*}
        We then deduce that for any $a \in \KX$, $f(a) = a \otimes 1$. Next, the group morphism $\tau : G \to (\KX \rtimes G)^{\times}$ is given for any $g \in G$ by
        \[
            \tau(g) = \alpha \circ \beta(1 \otimes g) = \alpha(g) = 1 \otimes g.
        \]
        Finally, by uniqueness of the images we conclude that $\alpha \circ \beta = \mathrm{id}_{\KX \rtimes G}$.
    \end{enumerate}
\end{proof}

\subsubsection{The bialgebra $(\hat{\W}_G, \hat{\Delta}^{\W}_G)$ and the coalgebra $(\hat{\M}_G, \hat{\Delta}^{\M}_G)$}

\begin{proposition}
    The family $\left(e_0^{n_1-1}g_1e_1 \cdots e_0^{n_r-1}g_re_1 e_0^{n_{r+1}-1}g_{r+1}\right)_{\substack{r \in \N, n_1, \dots, n_{r+1}\in \N^{\ast}, \\ g_1, \dots, g_{r+1} \in G}}$ is a basis of the $\K$-module $\hat{\V}_G$.
    \label{basis}
\end{proposition}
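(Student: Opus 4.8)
The plan is to transport a known basis of the crossed product $\KX \rtimes G$ to $\hat{\V}_G$ through the isomorphism $\beta$ of Proposition \ref{isoVGetKXG}, thereby reducing the statement to a bookkeeping computation. As a $\K$-module, $\KX \rtimes G = \KX \otimes \K G$, and the words $x_0^{m_1}x_{g_1}x_0^{m_2}x_{g_2}\cdots x_0^{m_r}x_{g_r}x_0^{m_{r+1}}$ (with $r \in \N$, $m_1, \dots, m_{r+1} \in \N$ and $g_1, \dots, g_r \in G$) form a topological $\K$-module basis of $\KX$. Hence the elements $w \otimes h$, where $w$ ranges over these words and $h$ over $G$, form a topological $\K$-module basis of $\KX \rtimes G$. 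Since $\beta$ is an isomorphism of complete graded topological $\K$-modules (being a $\K$-algebra isomorphism), it carries this basis to a topological $\K$-module basis of $\hat{\V}_G$; it thus only remains to compute the images $\beta(w \otimes h)$ and to identify them with the announced family.

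First I would write $w \otimes h = (w \otimes 1) \ast (1 \otimes h)$, so that $\beta(w \otimes h) = f(w)\,h$, where $f : \KX \to \hat{\V}_G$ is the algebra morphism of Proposition \ref{isoVGetKXG}\ref{KXGtoVG} with $f(x_0) = e_0$ and $f(x_g) = -g e_1 g^{-1}$. Expanding $f$ multiplicatively along $w$ yields
\[
    \beta(w \otimes h) = (-1)^r\, e_0^{m_1} g_1 e_1 g_1^{-1} e_0^{m_2} g_2 e_1 g_2^{-1} \cdots e_0^{m_r} g_r e_1 g_r^{-1} e_0^{m_{r+1}} h.
\]
Then, using the relation $e_0 g = g e_0$ for $g \in G$, I would push each factor $g_i^{-1}$ rightward past $e_0^{m_{i+1}}$ and merge it with the next group element, which transforms the product into
\[
    \beta(w \otimes h) = (-1)^r\, e_0^{m_1} h_1 e_1\, e_0^{m_2} h_2 e_1 \cdots e_0^{m_r} h_r e_1\, e_0^{m_{r+1}} h_{r+1},
\]
where $h_1 = g_1$, $h_i = g_{i-1}^{-1} g_i$ for $2 \leq i \leq r$, and $h_{r+1} = g_r^{-1} h$. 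Setting $n_i = m_i + 1 \in \N^{\ast}$, this is precisely $(-1)^r$ times the announced basis element.

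Finally I would observe that, for fixed $r$, the assignment $(g_1, \dots, g_r, h) \mapsto (h_1, \dots, h_{r+1})$ is a bijection of $G^{r+1}$ onto itself, with inverse given by $g_i = h_1 \cdots h_i$ and $h = h_1 \cdots h_{r+1}$. Consequently, for each fixed $r$ and each fixed tuple of exponents, $\beta$ sends the corresponding subfamily of the basis of $\KX \rtimes G$ bijectively onto the corresponding subfamily of the announced family, each element being scaled by the unit $(-1)^r$. Since rescaling basis elements by units and reindexing preserve the property of being a basis, and $\beta$ is a topological $\K$-module isomorphism, the announced family is a topological $\K$-module basis of $\hat{\V}_G$. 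I expect no structural obstacle here: the existence of the isomorphism $\beta$ does all the real work, and the only delicate points are verifying that the commutation step produces exactly the stated reindexing and that this reindexing is a bijection on the $G$-tuples.
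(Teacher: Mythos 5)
Your proposal is correct and follows essentially the same route as the paper: transport a $\K$-module basis of $\KX \rtimes G = \KX \otimes \K G$ through the isomorphism $\beta$ and compute the images using $f(x_g) = -g e_1 g^{-1}$ and the relation $e_0 g = g e_0$. The only (cosmetic) difference is that the paper performs the change of variables on the group indices up front, by choosing the basis of $\KX$ with subscripts $x_{g_1}, x_{g_1 g_2}, \dots, x_{g_1\cdots g_r}$ and the sign $(-1)^r$ built in, so that $\beta$ lands directly on the announced family, whereas you start from the plain word basis and carry out the reindexing bijection and unit rescaling at the end.
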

\begin{proof}
    Since the family $\left( (-1)^r x_0^{n_1-1} x_{g_1} \cdots x_0^{n_r-1} x_{g_1 \cdots g_r} x_0^{n_{r+1}-1} \right)_{\substack{r \in \N, n_1, \dots, n_{r+1} \in \N^{\ast} \\ g_1, \dots, g_r \in G}}$ is a basis of the $\K$-module $\KX$, it follows that the family
    \[
        \left( (-1)^r x_0^{n_1-1} x_{g_1} \cdots x_0^{n_r-1} x_{g_1 \cdots g_r} x_0^{n_{r+1}-1} \otimes g_1 \cdots g_r g_{r+1} \right)_{\substack{r, n_1, \dots, n_{r+1} \in \N \\ g_1, \dots, g_{r+1} \in G}}
    \]
    is a basis of the $\K$-module $\KX \otimes \K G$.
    Thus, its image by the bijection $\beta$ (given in Proposition \ref{isoVGetKXG} \ref{KXGtoVG}) is a basis of $\hat{\V}_G$. Moreover, for $r \in \N, n_1, \dots, n_{r+1} \in \N^{\ast}$ and $g_1, \dots, g_{r+1} \in G$, we have
    \begin{align*}
        &x_0^{n_1-1} x_{g_1} \cdots x_0^{n_r-1} x_{g_1 \cdots g_r} x_0^{n_{r+1}-1} \otimes g_1 \cdots g_r g_{r+1} = \\
        &(x_0^{n_1-1} \otimes 1) \ast (x_{g_1} \otimes 1) \ast \cdots \ast (x_0^{n_r-1} \otimes 1) \ast (x_{g_1 \cdots g_r} \otimes 1) \ast \\ & (x_0^{n_{r+1}-1} \otimes 1) \ast (1 \otimes g_1) \ast \cdots \ast (1 \otimes g_r) \ast (1 \otimes g_{r+1})
    \end{align*}
    Then
    \begin{align}
        \label{beta_of_basis}
        &\beta((-1)^rx_0^{n_1-1} x_{g_1} \cdots x_0^{n_r-1} x_{g_1 \cdots g_r} x_0^{n_{r+1}-1} \otimes g_1 \cdots g_r g_{r+1}) \\ \notag
        & = (-1)^r \beta(x_0^{n_1-1} \otimes 1) \beta(x_{g_1} \otimes 1) \cdots \beta(x_0^{n_r-1} \otimes 1) \beta(x_{g_1 \cdots g_r} \otimes 1) \\ \notag & \quad \, \, \beta(x_0^{n_{r+1}-1} \otimes 1) \beta(1 \otimes g_1) \cdots \beta(1 \otimes g_r) \beta(1 \otimes g_{r+1}) \\ \notag
        & = e_0^{n_1-1} g_1 e_1 g_1^{-1} \cdots e_0^{n_r-1} g_1 \cdots g_r e_1 g_1^{-1} \cdots g_r^{-1} e_0^{n_{r+1}-1} g_1 \cdots g_r g_{r+1} \\ \notag
        & = e_0^{n_1-1} g_1 e_1 \cdots e_0^{n_r-1} g_1^{-1} \cdots g_{r-1}^{-1}g_1 \cdots g_r e_1 e_0^{n_{r+1}-1} g_1^{-1} \cdots g_r^{-1} g_1 \cdots g_r g_{r+1} \\ \notag
        & = e_0^{n_1-1} g_1 e_1 \cdots e_0^{n_r-1} g_r e_1 e_0^{n_{r+1}-1} g_{r+1}
    \end{align}
    This gives us the wanted result.
\end{proof}

\begin{proposition} \ 
    \label{WG_free_algebra}
    \begin{enumerate}[label=(\alph*), leftmargin=*]
        \item The family $\{1\} \cup \left(e_0^{n_1-1}g_1e_1 \cdots e_0^{n_r-1}g_re_1 e_0^{n_{r+1}-1}g_{r+1}e_1\right)_{\substack{r \in \N, n_1, \dots, n_r, n_{r+1} \in \N^{\ast}, \\ g_1, \dots, g_r, g_{r+1} \in G}}$ is a basis of the $\K$-module $\hat{\W}_G$. \label{basisW}
        \item \label{iso_WG_KZ} The $\K$-subalgebra $\hat{\W}_G$ is freely generated by the family $$Z=\{z_{n,g}:= -e_0^{n-1}ge_1 \, | \, (n,g) \in \N^{\ast} \times G\}.$$
    \end{enumerate}
\end{proposition}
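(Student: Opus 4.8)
The plan is to deduce both statements directly from the explicit $\K$-module basis of $\hat{\V}_G$ established in Proposition \ref{basis}, the only nontrivial input being the bookkeeping of how right multiplication by $e_1$ acts on that basis.

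For part \ref{basisW}, I would start from the decomposition $\hat{\W}_G = \K \oplus \hat{\V}_G e_1$ and analyse the right-multiplication map $r_{e_1} : \hat{\V}_G \to \hat{\V}_G$. Applying $r_{e_1}$ to a basis element $w = e_0^{n_1-1}g_1 e_1 \cdots e_0^{n_r-1}g_r e_1 e_0^{n_{r+1}-1}g_{r+1}$ of $\hat{\V}_G$ yields $e_0^{n_1-1}g_1 e_1 \cdots e_0^{n_r-1}g_r e_1 e_0^{n_{r+1}-1}g_{r+1} e_1$, which is again one of the basis vectors listed in Proposition \ref{basis}, now with $r+1$ blocks ending in $e_1$ and trivial tail (i.e. with parameters $n_{r+2}=1$ and $g_{r+2}=1$). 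Since distinct $w$ produce distinct parameter tuples, the images $w e_1$ are pairwise distinct elements of the basis of $\hat{\V}_G$; hence they are topologically linearly independent and their closed span is $\hat{\V}_G e_1$. As $1$ is the basis vector with $r=0$, $n_1=1$, $g_1=1$ and is not of the form $w e_1$, the family $\{1\}\cup\{w e_1\}_w$ is a subfamily of the basis of $\hat{\V}_G$ and therefore a topological basis of $\K \oplus \hat{\V}_G e_1 = \hat{\W}_G$. This is exactly the asserted family.

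For part \ref{iso_WG_KZ}, I would compute an arbitrary monomial in the proposed generators. Since $z_{n,g} = -e_0^{n-1}g e_1$ already ends in $e_1$ and begins with a power of $e_0$ followed by a group element, no commutation relation is needed to concatenate them, so that
\[
    z_{n_1,g_1}\cdots z_{n_r,g_r} = (-1)^r\, e_0^{n_1-1}g_1 e_1 \cdots e_0^{n_r-1}g_r e_1,
\]
which is $(-1)^r$ times a basis vector of $\hat{\W}_G$ from part \ref{basisW}. Thus the assignment sending the length-$r$ word $z_{n_1,g_1}\cdots z_{n_r,g_r}$ (and the empty word to $1$) to the corresponding basis vector is, up to the unit $(-1)^r$, a bijection from the set of monomials in $Z$ onto the basis of part \ref{basisW}. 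Consequently the monomials in $Z$ form a topological $\K$-basis of $\hat{\W}_G$; since $z_{n,g}$ has degree $n$ and the bijection respects the grading, this is precisely the statement that $\hat{\W}_G$ is the free complete graded $\K$-algebra on $Z$.

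The computations are routine; the only point requiring care is the distinctness claim in part \ref{basisW} — that right multiplication by $e_1$ carries the basis of $\hat{\V}_G$ bijectively onto the sub-basis of words ending in $e_1$ — together with the attendant reindexing, whereby a word with $r$ blocks becomes one with $r+1$ blocks and empty tail. Once this identification is in place, part \ref{iso_WG_KZ} follows immediately from the sign-matched bijection above, so I expect no genuine obstacle beyond this bookkeeping.
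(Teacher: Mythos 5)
Your proposal is correct and follows essentially the same route as the paper: both parts are deduced from the basis of $\hat{\V}_G$ in Proposition \ref{basis}, part \ref{basisW} by observing that right multiplication by $e_1$ sends that basis bijectively onto the sub-basis of words ending in $e_1$ (the paper phrases this via the map $(\lambda,v)\mapsto \lambda+ve_1$, you via $r_{e_1}$, which is the same bookkeeping), and part \ref{iso_WG_KZ} by matching monomials in $Z$ with the basis of part \ref{basisW} up to the sign $(-1)^r$. Your explicit tracking of that sign is in fact slightly more careful than the paper's wording, but there is no substantive difference.
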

\begin{proof} \ 
    \begin{enumerate}[label=(\alph*), leftmargin=*]
        \item First, $\hat{\W}_G$ is the image of the $\K$-module morphism $\K \oplus \hat{\V}_G \to \hat{\V}_G, (\lambda,v) \mapsto \lambda + v e_1$. Second, according to Proposition \ref{basis}, the family
        \[
            (1, 0), \left(0, e_0^{n_1-1}g_1e_1 \cdots e_0^{n_r-1}g_re_1 e_0^{n_{r+1}-1}g_{r+1} \right)_{\substack{r \in \N, n_1, \dots, n_r, n_{r+1} \in \N^{\ast}, \\ g_1, \dots, g_r, g_{r+1} \in G}}
        \]
        is a basis of the $\K$-module $\K \oplus \hat{\V}_G$. Moreover, the image of this basis by this $\K$-module morphism is the family
        \[
            \{1\} \cup \left(e_0^{n_1-1}g_1e_1 \cdots e_0^{n_r-1}g_re_1 e_0^{n_{r+1}-1}g_{r+1}e_1\right)_{\substack{r \in \N, n_1, \dots, n_r, n_{r+1} \in \N^{\ast}, \\ g_1, \dots, g_r, g_{r+1} \in G}}
        \]
        which is free since it is contained in a basis of the target. This implies that this family is a basis of the image of the previous morphism which is $\hat{\W}_G$.
        \item Let $\K\langle\langle Z \rangle\rangle$ be the free algebra over the letters $z_{n,g}$, $(n \in \N^{\ast}, g \in G)$, which we view as free variables. Then there is a unique $\K$-algebra morphism $\K \langle\langle Z \rangle \rangle \to \hat{\W}_G$
        given by $z_{n,g} \mapsto -e_0^{n-1}ge_1$. Let us show that it is an isomorphism: \newline
        The free $\K$-module $\K \langle \langle Z \rangle \rangle$ has basis $\{1\} \cup (z_{n_1, g_1} \cdots z_{n_{r+1},g_{r+1}})_{\substack{r \in \N, n_1, \dots, n_{r+1} \in \N^{\ast} \\ g_1, \dots, g_{r+1} \in G}}$ and, as a $\K$-module, $\hat{\W}_G$ has basis $\{1\} \cup \left(e_0^{n_1-1}g_1e_1 \cdots e_0^{n_{r+1}-1}g_{r+1}e_1 \right)_{\substack{r \in \N, n_1, \dots, n_{r+1} \in \N^{\ast}, \\ g_1, \dots, g_{r+1} \in G}}$ according to \ref{basisW}.
        One computes the image by $z_{n,g} \mapsto -e_0^{n-1}ge_1$ of the latter basis and finds it to be equal to the former basis. Therefore, $z_{n,g} \mapsto -e_0^{n-1}ge_1$ induces a bijection between the two basis and then a bijection between $\K \langle \langle Z \rangle \rangle$ and $\hat{\W}_G$. Hence, $z_{n,g} \mapsto -e_0^{n-1}ge_1$ is a $\K$-algebra isomorphism between $\K \langle \langle Z \rangle \rangle$ and $\hat{\W}_G$.  
    \end{enumerate}
\end{proof}
\noindent So, from now on, by abuse of notation, we will identify elements of $\hat{\W}_G$ with elements of $\KZ$ by the $\K$-algebra isomorphism $z_{n,g} \mapsto -e_0^{n-1}ge_1$.

\begin{proposition}
    There exists a $\K$-module isomorphism $\KX/\KX x_0 \overset{\kappa}{\to} \hat{\M}_G$ uniquely determined by the condition that the diagram
    \begin{equation}
        \label{diag_iso_MG}
        \begin{tikzcd}
            \KX \ar["\beta \circ (- \otimes 1)"]{rr} \ar["\pi_Y"']{d} & & \hat{\V}_G \ar["- \cdot 1_{\M}"]{d} \\
            \KX/\KX x_0 \ar["\kappa"']{rr} & & \hat{\M}_G
        \end{tikzcd}
    \end{equation}
    commutes.
    \label{isoMG}
\end{proposition}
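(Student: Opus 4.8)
The plan is to obtain $\kappa$ as the map induced on the quotient by the composite $\phi := (-\cdot 1_{\M}) \circ \beta \circ (-\otimes 1) \colon \KX \to \hat{\M}_G$, and then to prove it is bijective by transporting $\hat{\M}_G$ to the crossed product side via the isomorphism $\alpha$ of Proposition \ref{isoVGetKXG}. First I would check that $\phi$ factors through $\pi_Y$, for which it suffices to see $\KX x_0 \subseteq \ker \phi$: for $a \in \KX$ one has $(ax_0)\otimes 1 = (a\otimes 1)\ast(x_0\otimes 1)$, hence $\beta((ax_0)\otimes 1) = \beta(a\otimes 1)\,e_0 \in \hat{\V}_G e_0$, which lies in the kernel of $-\cdot 1_{\M}$, so that $\phi(ax_0)=0$. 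Since $\pi_Y$ is a surjective topological $\K$-module morphism with kernel $\KX x_0$, the universal property of the quotient produces a unique continuous $\K$-module morphism $\kappa$ with $\kappa \circ \pi_Y = \phi$; this is precisely the commutativity of diagram \eqref{diag_iso_MG}, and uniqueness is forced by surjectivity of $\pi_Y$.

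The substantive part is bijectivity, and here I would exploit the crossed product description. Transporting along $\alpha \colon \hat{\V}_G \to \KX \rtimes G$, the kernel $I := \hat{\V}_G e_0 + \sum_{g \in G \setminus \{1\}} \hat{\V}_G (g-1)$ of $-\cdot 1_{\M}$ is carried to $\alpha(I) = (\KX \rtimes G)(x_0 \otimes 1) + \sum_{g \neq 1}(\KX \rtimes G)(1 \otimes (g-1))$, so $\hat{\M}_G \cong (\KX \rtimes G)/\alpha(I)$. The next step is to identify these two left ideals explicitly from the crossed product multiplication. Using $t_g(x_0) = x_0$ one computes $(\sum_h a_h \otimes h) \ast (x_0 \otimes 1) = \sum_h a_h x_0 \otimes h$, so the first ideal is $\KX x_0 \otimes \K G$; and from $(a \otimes h)\ast(1 \otimes (g-1)) = a \otimes hg - a \otimes h$ one gets that the second ideal is $\KX \otimes I_{\mathrm{aug}}$, where $I_{\mathrm{aug}} = \ker(\K G \to \K,\ g \mapsto 1)$ is the augmentation ideal.

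It then remains to identify the quotient of the tensor product. Since $\KX \rtimes G = \KX \hat{\otimes} \K G$ as a $\K$-module and $\K G$ is concentrated in degree $0$ and of finite rank, one has $(\KX \hat{\otimes} \K G)/(\KX x_0 \hat{\otimes} \K G + \KX \hat{\otimes} I_{\mathrm{aug}}) \cong (\KX/\KX x_0) \hat{\otimes} (\K G/I_{\mathrm{aug}}) \cong \KX/\KX x_0$, the final step using the augmentation isomorphism $\K G/I_{\mathrm{aug}} \cong \K$. Tracing $\phi(a) = [\beta(a\otimes 1)]$ through these identifications, using $\alpha \circ \beta = \mathrm{id}$, shows that the composite isomorphism $\hat{\M}_G \xrightarrow{\sim} \KX/\KX x_0$ sends $\phi(a)$ to $\overline{a} \hat{\otimes} \overline{1} \mapsto \pi_Y(a)$; as $\phi = \kappa \circ \pi_Y$ and $\pi_Y$ is surjective, this composite is a two-sided inverse of $\kappa$, whence $\kappa$ is a topological $\K$-module isomorphism.

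The step I expect to need the most care is the explicit identification of the two pieces of $\alpha(I)$ (in particular verifying that each is a genuine left ideal, so their sum really exhausts $\alpha(I)$) together with the justification of the tensor-quotient formula in the completed graded setting; the finiteness of $G$ makes the completion on the $\K G$ factor harmless and allows the decomposition to be checked degree by degree. As a concrete alternative staying within Racinet's bookkeeping, one may instead compute $\phi$ on the monomial basis $(y_{n_1,g_1}\cdots y_{n_r,g_r})$ of $\KY \cong \KX/\KX x_0$ directly from $\beta(x_0 \otimes 1) = e_0$, $\beta(x_g \otimes 1) = -ge_1 g^{-1}$ and $g e_0 = e_0 g$, obtaining $(-1)^r$ times the class $[e_0^{n_1-1}g_1 e_1 \cdots e_0^{n_r-1}(g_{r-1}^{-1}g_r)e_1]$; since these classes form a $\K$-basis of $\hat{\M}_G$ (read off from Proposition \ref{basis} in the spirit of \eqref{beta_of_basis}) and $(g_1,\dots,g_r) \mapsto (g_1, g_1^{-1}g_2, \dots, g_{r-1}^{-1}g_r)$ is a bijection of $G^r$, this exhibits $\kappa$ as a bijection as well.
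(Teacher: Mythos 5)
Your proof is correct, but it is organized differently from the paper's. The paper first isolates an abstract quotient lemma (Lemma \ref{abstract_lemma}): it checks the double inclusion $\beta(\KX x_0\otimes 1)\subset\hat{\V}_G e_0\subset\beta(\KX x_0\otimes 1)+\sum_{g\neq 1}\hat{\V}_G(g-1)$ by a basis computation, and separately shows that $\KX\to\hat{\V}_G/\sum_{g\neq 1}\hat{\V}_G(g-1)$ is an isomorphism via a cokernel diagram whose essential content is $\K G\big/\sum_{g\neq 1}\K G(g-1)\simeq\K$; injectivity and surjectivity of the induced map are then deduced formally. You instead transport the entire kernel of $-\cdot 1_{\M}$ to $\KX\rtimes G$ via $\alpha$, identify it exactly as the sum of the two rectangular submodules $\KX x_0\otimes\K G$ and $\KX\otimes I_{\mathrm{aug}}$ using the crossed-product multiplication (the computation $(a\otimes h)\ast(x_0\otimes 1)=ax_0\otimes h$ is precisely what underlies the paper's second inclusion, since $ax_0\otimes g=ax_0\otimes 1+ax_0\otimes(g-1)$), and then invoke the standard right-exactness isomorphism $(A\otimes B)/(A'\otimes B+A\otimes B')\simeq(A/A')\otimes(B/B')$ together with the same augmentation computation. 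Your route buys a more self-contained and arguably more transparent argument — the inverse of $\kappa$ is exhibited explicitly rather than obtained from an abstract injectivity/surjectivity check — at the cost of having to justify the tensor-quotient formula in the completed graded setting, which you correctly note is harmless because $\K G$ is finite free in degree $0$. One caveat: your closing alternative, which reads off a basis of $\hat{\M}_G$ from Proposition \ref{basis}, is under-justified as stated — knowing that those classes span and are independent in the quotient requires identifying the kernel of $-\cdot 1_{\M}$ in terms of that basis, which is essentially the main argument again (and in the paper this basis statement appears only in Corollary \ref{iso_WG_MG}, \emph{after} Proposition \ref{isoMG}); but since this is offered only as an aside, your main proof stands.
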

\noindent We will prove this proposition by using the following general lemma. In this lemma, for any $\K$-module $M$ and any $\K$-submodule $M'$, let us denote $\mathrm{can}_{M, M'} : M \to M/M'$ the canonical projection.  
\begin{lemma}
    Let $f : M \to N$ a $\K$-module morphism. Let $M'$ a submodule of $M$ and $N', N''$ two submodules of $N$ such that
    \begin{enumerate}[label=(\alph*), leftmargin=*]
        \item \label{double_inclusion} $f(M') \subset N' \subset f(M') + N''$ and,
        \item \label{iso_compo} $\mathrm{can}_{N, N''} \circ f$ is an isomorphism,
    \end{enumerate}
    Then, there is a unique $\K$-module morphism $\bar{f} : M/M' \to N/(N'+N'')$ such that the diagram
    \begin{equation}
        \begin{tikzcd}
            M \ar["f"]{r} \ar["\mathrm{can}_{M,M'}"']{d} & N \ar["\mathrm{can}_{N,N'+N''}"]{d} \\
            M/M' \ar["\bar{f}"']{r} & N/(N'+N'')
        \end{tikzcd}
        \label{diag_iso}
    \end{equation}
    commutes. Moreover, $\bar{f}$ is a $\K$-module isomorphism.
    \label{abstract_lemma}
\end{lemma}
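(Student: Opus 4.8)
The plan is to construct $\bar{f}$ directly and then verify it is an isomorphism using the two hypotheses. First I would establish the well-definedness of $\bar{f}$. Since we want the square \eqref{diag_iso} to commute, $\bar{f}$ is forced on the image of $\mathrm{can}_{M,M'}$, which is all of $M/M'$; so the only issue is whether the assignment $\bar{f}(\overline{m}) := \mathrm{can}_{N,N'+N''}(f(m))$ is independent of the representative $m$. For this it suffices to check that $f(M') \subset N'+N''$, which is immediate from hypothesis \ref{double_inclusion} since $f(M') \subset N'$. This yields a unique $\K$-module morphism $\bar{f}$ making the diagram commute, by the universal property of the quotient $M/M'$.

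Next I would prove surjectivity. The composite $\mathrm{can}_{N,N'+N''} \circ f$ equals $\bar{f} \circ \mathrm{can}_{M,M'}$ by commutativity, and its image is all of $N/(N'+N'')$: indeed $\mathrm{can}_{N,N''} \circ f$ is onto by hypothesis \ref{iso_compo}, and $\mathrm{can}_{N,N'+N''}$ factors through $\mathrm{can}_{N,N''}$ via the further quotient by the image of $N'$, so $\mathrm{can}_{N,N'+N''} \circ f$ is onto as well. Since this composite factors through $\bar{f}$, the map $\bar{f}$ is surjective.

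The main obstacle will be injectivity, and this is precisely where the full strength of the double inclusion $f(M') \subset N' \subset f(M') + N''$ is needed (surjectivity and well-definedness only used the two outer inclusions). Suppose $\bar{f}(\overline{m}) = 0$, i.e. $f(m) \in N'+N''$. I would then use the right-hand inclusion $N' \subset f(M') + N''$ to write $f(m) = f(m') + n''$ for some $m' \in M'$ and $n'' \in N''$, so that $f(m - m') \in N''$, meaning $\mathrm{can}_{N,N''}(f(m-m')) = 0$. Since $\mathrm{can}_{N,N''} \circ f$ is injective by hypothesis \ref{iso_compo}, we get $m - m' \in \ker(\mathrm{can}_{N,N''}\circ f) = 0$, hence $m = m' \in M'$ and $\overline{m} = 0$ in $M/M'$. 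Thus $\bar{f}$ is injective.

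Combining the last two paragraphs, $\bar{f}$ is a bijective $\K$-module morphism, hence a $\K$-module isomorphism, which completes the proof. The verification that the relevant maps are topological (continuous) morphisms in the completed setting is routine and can be handled by noting that all maps in sight are induced by the graded structure; I would only remark on it rather than belabor it.
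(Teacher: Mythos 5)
Your proof is correct and follows essentially the same route as the paper: well-definedness from $f(M')\subset N'\subset N'+N''$, injectivity from the inclusion $N'\subset f(M')+N''$ combined with the injectivity of $\mathrm{can}_{N,N''}\circ f$, and surjectivity from the surjectivity of $\mathrm{can}_{N,N''}\circ f$. Your surjectivity step is a streamlined version of the paper's (which spells out the factorization $N\to N/N''\to N/(N'+N'')$ via an explicit cokernel diagram and the third isomorphism theorem), but the underlying argument is the same.
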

\begin{proof}
    First, $\bar{f} : M/M' \to N/(N'+N'')$ is well defined since $f(M') \subset N' \subset N'+N''$. Next, let us show that this $\K$-module morphism is an isomorphism.
    \begin{description}[leftmargin=0pt]
        \item[Injectivity]
        Let $\mu \in M/M'$ be such that $\bar{f}(\mu)=0$. Let $m \in M$ be such that $\mu = \mathrm{can}_{M, M'}(m)$. By the commutativity of the diagram (\ref{diag_iso}), the assumption on $\mu$ implies that $f(m) \in N' + N''$. But, since $N' \subset f(M') + N''$ we get $f(m) \in f(M') + N''$. Therefore there exists $m' \in M'$ such that $f(m) \in f(m')+N''$ then $f(m-m') \in N''$. This means that $\mathrm{can}_{N, N''} \circ f (m-m') = 0$. Finally, since $\mathrm{can}_{N, N''} \circ f$ is an isomorphism, this implies that $m=m'$ (elements of $M$). Since $m' \in M'$, this implies that $m \in M'$. Therefore, $\mu = \mathrm{can}_{M,M'}(m) = 0 \in M/M'$.
        \item[Surjectivity]
        Diagram (\ref{diag_iso}) can be extended to the commutative diagram
        \[\begin{tikzcd}
            M \ar["f"]{r} \ar["\mathrm{can}_{M,M'}"']{d} & N \ar["\mathrm{can}_{N,N'+N''}"]{d} \ar["\mathrm{can}_{N, N''}"]{rr} & & N/N'' \ar["\mathrm{can}_{N/N'', N'/(N'\cap N'')}"]{d}\\
            M/M' \ar["\bar{f}"']{r} & N/(N'+N'') \ar["\mathrm{iso}_{N, N', N''}"']{rr} & & (N/N'')/(N'/(N'\cap N''))
        \end{tikzcd}\]
        where $\mathrm{iso}_{N, N', N''} : N/(N'+N'') \to (N/N'')/(N'/(N'\cap N''))$ is the canonical isomorphism. We then obtain
        \begin{align*}
            N/(N'+N'') = & \mathrm{iso}_{N, N', N''}^{-1}((N/N'')/(N'/(N'\cap N''))) \\
            = & \mathrm{iso}_{N, N', N''}^{-1} \circ \mathrm{can}_{N/N'', N'/(N' \cap N'')}(N/N'') \\   
            = & \mathrm{iso}_{N, N', N''}^{-1} \circ \mathrm{can}_{N/N'', N'/(N' \cap N'')} \circ \mathrm{can}_{N, N''} \circ f (M) \\
            = & \mathrm{iso}_{N, N', N''}^{-1} \circ \mathrm{iso}_{N, N', N''} \circ \bar{f} \circ \mathrm{can}_{M, M'}(M) = \bar{f}(M/M')
        \end{align*}
        where the first equality comes from the fact that $\mathrm{iso}_{N, N', N''}$ is $\K$-module isomorphism; the second one from the fact that $\mathrm{can}_{N/N'', N'/(N' \cap N'')}$ is a surjective $\K$-module morphism; the third one from the fact that $\mathrm{can}_{N, N''} \circ f$ is a $\K$-module isomorphism; the fourth one from the commutativity of the external square; and the fifth one from the fact that $\mathrm{can}_{M, M''}$ is a surjective $\K$-module morphism.  
    \end{description}
\end{proof}

\begin{proof}[Proof of Proposition \ref{isoMG}]
    This is done by application of Lemma \ref{abstract_lemma} for $M=\KX$, $N=\hat{\V}_G$, $M'=\KX x_0$, $N'=\hat{\V}_G e_0$, $N''=\sum_{g \in G \backslash \{1\}} \hat{\V}_G (g-1)$ and $f=\beta \circ (- \otimes 1)$. It, therefore, suffices to prove that criteria \ref{double_inclusion} and \ref{iso_compo} of Lemma \ref{abstract_lemma} are satisfied.  
    \begin{description}[leftmargin=0pt]
        \item[Criterion \ref{double_inclusion}] $\beta(\KX x_0 \otimes 1) \subset \hat{\V}_G e_0 \subset \beta(\KX x_0 \otimes 1) + \sum_{g \in G \backslash \{1\}} \hat{\V}_G (g-1)$. \newline
        For the first inclusion, we have for any $a \in \KX$
        \[
            \beta(a x_0 \otimes 1) = \beta(a \otimes 1) \beta(x_0 \otimes 1) = \beta(a \otimes 1) e_0 \in \hat{\V}_G e_0.  
        \] 
        Therefore, $\beta(\KX x_0 \otimes 1) \subset \hat{\V}_G e_0$. \newline
        For the second inclusion, by using the basis of $\hat{\V}_G$ described in Proposition \ref{basis}, we have for $r \in \N$, $n_1, \dots, n_{r+1} \in \N^{\ast}$ and $g_1, \dots, g_{r+1} \in G$,
        \begin{align*} 
            &\left(e_0^{n_1-1} g_1 e_1 \cdots e_0^{n_r-1} g_r e_1 e_0^{n_{r+1}-1} g_{r+1}\right) e_0 = e_0^{n_1-1} g_1 e_1 \cdots e_0^{n_r-1} g_r e_1 e_0^{n_{r+1}} g_{r+1} \\
            &= (-1)^r \beta(x_0^{n_1-1} x_{g_1} \cdots x_0^{n_r-1} x_{g_1 \cdots g_r} x_0^{n_{r+1}} \otimes g_1 \cdots g_{r+1}) \\
            &= (-1)^r \beta\left(\left(x_0^{n_1-1} x_{g_1} \cdots x_0^{n_r-1} x_{g_1 \cdots g_r} x_0^{n_{r+1}-1}\right) x_0 \otimes 1\right) g_1 \cdots g_{r+1} \\
            &= (-1)^r \beta\left(\left(x_0^{n_1-1} x_{g_1} \cdots x_0^{n_r-1} x_{g_1 \cdots g_r} x_0^{n_{r+1}-1}\right) x_0 \otimes 1\right) \\
            &+ (-1)^r \beta\left(\left(x_0^{n_1-1} x_{g_1} \cdots x_0^{n_r-1} x_{g_1 \cdots g_r} x_0^{n_{r+1}-1}\right) x_0 \otimes 1\right) (g_1 \cdots g_{r+1} - 1) 
        \end{align*}
        where the first equality comes from the relation $g e_0 = e_0 g$ for any $g \in G$; the second one from computation (\ref{beta_of_basis}) and the third one from the fact that $a x_0 \otimes g = (a x_0 \otimes 1) \ast (1 \otimes g)$ for any $a \in \KX$ and any $g \in G$. Finally, the last equality shows that we obtain an element of $\beta(\KX x_0 \otimes 1) + \sum_{g \in G \backslash \{1\}} \hat{\V}_G (g-1)$, thus proving the claimed inclusion.
        \item[Criterion \ref{iso_compo}] The map $\mathrm{can}_{\hat{\V}_G, \hspace{-0.2cm} \underset{g \in G \backslash \{1\}}{\sum} \hspace{-0.2cm} \hat{\V}_G (g-1)} \circ \beta \circ ( - \otimes 1) : \KX \to \hat{\V}_G \Big/ \left(\underset{g \in G \backslash \{1\}}{\sum} \hspace{-0.2cm} \hat{\V}_G (g-1) \right)$ is an isomorphism. \newline
        Let us consider the commutative diagram
        \begin{equation}
            \begin{tikzcd}
                \KX \otimes \bigoplus_{g \in G \backslash \{1\}} \K G \ar["\mathrm{id} \otimes \sigma"]{r} \ar[]{d} & \KX \otimes \K G \ar[equal]{d}\\
                \bigoplus_{g \in G \backslash \{1\}} \left(\KX \otimes \K G\right) \ar[]{r} \ar["\bigoplus_{g \in G \backslash \{1\}} \beta"']{d} & \KX \otimes \K G \ar["\beta"]{d} \\
                \bigoplus_{g \in G \backslash \{1\}} \hat{\V}_G \ar["\Sigma"']{r} & \hat{\V}_G
            \end{tikzcd}
        \end{equation}
        where in the horizontal arrows we have the $\K$-module morphisms
        \[
            \Sigma : \bigoplus_{g \in G \backslash \{1\}} \hat{\V}_G \longrightarrow \hat{\V}_G, \quad (v_g)_{g \in G \backslash \{1\}} \longmapsto \underset{g \in G \backslash \{1\}}{\sum} v_g (g-1)
        \]
        and
        \[
            \sigma : \bigoplus_{g \in G \backslash \{1\}} \K G \longrightarrow \K G, \quad (h_g)_{g \in G \backslash \{1\}} \longmapsto \underset{g \in G \backslash \{1\}}{\sum} h_g (g-1).
        \]
        Since the vertical arrows are isomorphisms, they induce an isomorphism beteween the cokernels of the top and bottom morphisms. We can then extend the above diagram in the following way
        \begin{equation}
            \begin{tikzcd}
                \KX \otimes \bigoplus_{g \in G \backslash \{1\}} \K G \ar["\mathrm{id} \otimes \sigma"]{r} \ar[]{d} & \KX \otimes \K G \ar[]{r} \ar[equal]{d} & \mathrm{coker}(\mathrm{id} \otimes \sigma) \ar[]{dd} \\
                \bigoplus_{g \in G \backslash \{1\}} \left(\KX \otimes \K G\right) \ar[]{r} \ar["\bigoplus_{g \in G \backslash \{1\}} \beta"']{d} & \KX \otimes \K G \ar["\beta"]{d} & \\
                \bigoplus_{g \in G \backslash \{1\}} \hat{\V}_G \ar["\Sigma"']{r} & \hat{\V}_G \ar[]{r} & \mathrm{coker}(\Sigma)
            \end{tikzcd}
            \label{diag_with_coker}
        \end{equation}
        On the other hand, we have
        \[
            \mathrm{coker}(\Sigma) = \hat{\V}_G \Bigg/ \left(\sum_{g \in G \backslash \{1\}} \hat{\V}_G (g-1) \right).
        \]
        and
        \[
            \mathrm{coker}(\sigma) =  \K G \Bigg/ \left(\sum_{g \in G \backslash \{1\}} \K G (g-1) \right) \simeq \K.
        \]
        Then
        \[
            \mathrm{coker}(\mathrm{id} \otimes \sigma) \simeq \KX \otimes \mathrm{coker}(\sigma) \simeq \KX \otimes \K \simeq \KX.
        \]
        Thus, the isomorphism between cokernels establishes that $\KX$ is isomorphic to $\hat{\V}_G \Big/ \left(\sum_{g \in G \backslash \{1\}} \hat{\V}_G (g-1) \right)$. Moreover, thanks to the commutativity of diagram (\ref{diag_with_coker}), this isomorphism is exactly $\mathrm{can}_{\hat{\V}_G, \hspace{-0.2cm} \underset{g \in G \backslash \{1\}}{\sum} \hspace{-0.2cm} \hat{\V}_G (g-1)} \circ \beta \circ ( - \otimes 1)$.
    \end{description}
\end{proof}

\begin{corollary} \ 
    \label{iso_WG_MG}
    \begin{enumerate}[label=(\alph*), leftmargin=*]
        \item \label{item_diag_projections} The following diagram
        \begin{equation}
            \label{diag_projections}
            \begin{tikzcd}
                \K\langle\langle Y \rangle\rangle \ar["\varpi"]{rr} \ar["\pi_Y"']{d} && \hat{\W}_G \ar["- \cdot 1_{\M}"]{d} \\
                \KX/\KX \, x_0 \ar["\kappa \circ \overline{\q}^{-1}"']{rr} & & \hat{\M}_G
            \end{tikzcd}
        \end{equation}
        commutes, where $\varpi : \KY \to \hat{\W}_G$ is the $\K$-algebra isomorphism uniquely defined by $y_{n, g} \mapsto z_{n, g}$ and $\overline{\q}$ is the $\K$-module automorphism of $\KX/\KX x_0$ given in §\ref{basic_objects}.
        \item \label{item_iso_WG_MG} The map $- \cdot 1_{\M} : \hat{\W}_G \to \hat{\M}_G$ is a $\K$-module isomorphism and $\hat{\M}_G$ is free of rank $1$ as a $\hat{\W}_G$-module.
    \end{enumerate}
\end{corollary}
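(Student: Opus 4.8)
The plan is to prove part (a) by a direct computation on a $\K$-module basis of $\KY$, and then to deduce part (b) formally from the commutativity of \eqref{diag_projections} together with the fact that the three remaining arrows are already known to be isomorphisms. For part (a), I would first rewrite the claimed identity $(-\cdot 1_{\M}) \circ \varpi = \kappa \circ \overline{\q}^{-1} \circ \pi_Y$ on $\KY$ into a more tractable form. Since $\overline{\q}$ is intertwined with $\q_Y$ under the identification $\pi_Y \colon \KY \xrightarrow{\sim} \KX/\KX x_0$ (see \S\ref{basic_objects}), one has $\overline{\q}^{-1} \circ \pi_Y = \pi_Y \circ \q_Y^{-1}$ on $\KY$; combined with the defining property $\kappa \circ \pi_Y = (-\cdot 1_{\M}) \circ \beta \circ (-\otimes 1)$ of $\kappa$ from Proposition \ref{isoMG}, the right-hand side becomes $(-\cdot 1_{\M}) \circ \beta \circ (-\otimes 1) \circ \q_Y^{-1}$. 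It then suffices to compare $\varpi$ and $\beta \circ (-\otimes 1)\circ \q_Y^{-1}$ after post-composition with $-\cdot 1_{\M}$, which I would check on the basis words $w = y_{n_1,g_1}\cdots y_{n_r,g_r}$ of $\KY$.

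On such a word the left-hand side gives $\varpi(w) \cdot 1_{\M} = (-1)^r\, e_0^{n_1-1} g_1 e_1 \cdots e_0^{n_r-1} g_r e_1 \cdot 1_{\M}$, since $\varpi$ is an algebra morphism sending $y_{n,g}$ to $z_{n,g} = -e_0^{n-1}ge_1$. For the right-hand side, applying $\q_Y^{-1}$ replaces the group subscripts by their partial products $\gamma_k := g_1\cdots g_k$ (using that $G$ is abelian), so that $\q_Y^{-1}(w) = x_0^{n_1-1} x_{\gamma_1}\cdots x_0^{n_r-1} x_{\gamma_r}$; feeding this word (tensored with $1$) into the explicit formula \eqref{beta_of_basis} for $\beta$ produces $(-1)^r e_0^{n_1-1} g_1 e_1 \cdots e_0^{n_r-1} g_r e_1\, \gamma_r^{-1}$, with a trailing group element $\gamma_r^{-1}$. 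The two expressions differ only by this trailing factor, and the verification works precisely because right multiplication by any element of $G$ is trivial in $\hat{\M}_G$: one has $v g \cdot 1_{\M} = v \cdot 1_{\M}$ since $v(g-1) \in \sum_{h \in G\backslash\{1\}}\hat{\V}_G(h-1) \subseteq \ker(-\cdot 1_{\M})$. Hence both sides agree on $w$, and by $\K$-linearity the diagram commutes.

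For part (b), I would argue that in \eqref{diag_projections} the map $\varpi$ is a $\K$-algebra isomorphism by Proposition \ref{WG_free_algebra}, the restriction $\pi_Y|_{\KY}$ is bijective (\S\ref{basic_objects}), $\kappa$ is a $\K$-module isomorphism by Proposition \ref{isoMG}, and $\overline{\q}$ is a $\K$-module automorphism; therefore $\kappa \circ \overline{\q}^{-1}\circ \pi_Y|_{\KY}$ is a $\K$-module isomorphism. By the commutativity established in (a), $(-\cdot 1_{\M}) \circ \varpi$ equals this isomorphism, and cancelling the isomorphism $\varpi$ shows that $-\cdot 1_{\M}\colon \hat{\W}_G \to \hat{\M}_G$ is a $\K$-module isomorphism. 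Finally, since the module action is associative, $-\cdot 1_{\M}$ is in fact a morphism of $\hat{\W}_G$-modules sending $1 \mapsto 1_{\M}$; being a bijective $\hat{\W}_G$-linear map it is a $\hat{\W}_G$-module isomorphism, so $\hat{\M}_G$ is free of rank $1$ over $\hat{\W}_G$ with generator $1_{\M}$.

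The main obstacle is the bookkeeping in part (a): correctly tracking how $\q_Y^{-1}$ reshuffles the group-element subscripts into partial products and how the formula \eqref{beta_of_basis} reintroduces a spurious trailing group element, and then recognising that this trailing element is annihilated upon passing to $\hat{\M}_G$. Everything else reduces to formal diagram chasing.
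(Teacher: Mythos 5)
Your proposal is correct and follows essentially the same route as the paper: part (a) is checked on the monomial basis of $\KY$, reducing the right-hand side to $\beta \circ (-\otimes 1) \circ \q_Y^{-1}$ via the defining square of $\kappa$ and the intertwining of $\overline{\q}$ with $\q_Y$, then invoking the computation \eqref{beta_of_basis} and the fact that trailing group elements act trivially on $1_{\M}$; part (b) is the same formal deduction from the three other arrows being isomorphisms. The only (harmless) difference is that you spell out explicitly why bijectivity of the $\hat{\W}_G$-linear map $-\cdot 1_{\M}$ yields freeness of rank one, which the paper merely asserts.
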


\begin{proof} \ 
    \begin{enumerate}[label=(\alph*), leftmargin=*]
        \item One needs to show the equality of two maps from $\KY$ to $\hat{\M}_G$. Since these maps are both $\K$-module morphisms, it is enough to show the equality of the images of the elements of a basis of the source module. Such a basis is $\left(y_{n_1, g_1} \cdots y_{n_r g_r}\right)_{\substack{n_1, \dots, n_r \in \N^{\ast} \\ g_1, \dots, g_r \in G}}$ (\cite{Rac}, §2.2.7.) \newline
        For $r \in \N$, $n_1, \dots, n_r \in \N^{\ast}$ and $g_1, \dots, g_r \in G$ we have
        \[
            (- \cdot 1_{\M}) \circ \varpi (y_{n_1, g_1} \cdots y_{n_r, g_r}) = z_{n_1, g_1} \cdots z_{n_r, g_r} \cdot 1_{\M} 
        \]
        On the other hand,
        \begin{align*}
            &\kappa \circ \overline{\q}^{-1} \circ \pi_Y(y_{n_1, g_1} \cdots y_{n_r, g_r}) = \kappa(x_0^{n_1-1} x_{g_1} \cdots x_0^{n_1-1} x_{g_1 \cdots g_r}) \\
            &= \beta(x_0^{n_1-1} x_{g_1} \cdots x_0^{n_r-1} x_{g_1 \cdots g_r} \otimes 1) \cdot 1_{\M} \\
            &= (-1)^r e_0^{n_1-1} g_1 e_1 \cdots e_0^{n_r-1} g_r e_1 g_1^{-1} \cdots g_r^{-1} \cdot 1_{\M} \\
            &= (-e_0^{n_1-1} g_1 e_1) \cdots (-e_0^{n_r-1} g_r e_1) \cdot 1_{\M} \\
            &= z_{n_1, g_1} \cdots z_{n_r, g_r} \cdot 1_{\M}
        \end{align*}
        where the first equality comes from \cite{Rac}, §2.2.7; the second one from the commutative diagram (\ref{diag_iso_MG}); the third one from computation (\ref{beta_of_basis}) with $n_{r+1}=1$ and $g_{r+1}=(g_1 \cdots g_r)^{-1}$; and the fourth one from the fact that for any $v \in \hat{\V}_G$ and any $g \in G$, we have $v g \cdot 1_{\M} = v \cdot 1_{\M}$.
        \item First, the following maps are $\K$-module isomorphisms:
        \begin{itemize}
            \item $\varpi : \KY \to \hat{\W}_G$ : it sends the basis $(y_{n_1, g_1} \cdots y_{n_r, g_r})_{\substack{r \in \N, n_1, \dots, n_r \in \N^{\ast} \\ g_1, \dots, g_r \in G}}$ of the $\K$-module $\KY$ to the basis\footnote{such a family is a basis of $\hat{\W}_G$ thanks to Proposition \ref{WG_free_algebra}.} $(z_{n_1, g_1} \cdots z_{n_r, g_r})_{\substack{r \in \N, n_1, \dots, n_r \in \N^{\ast} \\ g_1, \dots, g_r \in G}}$ of $\hat{\W}_G$.
            \item $\pi_Y : \KY \to \KX/\KX x_0$ : see \cite{Rac}, §2.2.5.
            \item $\kappa \circ \overline{\q}^{-1} : \KX / \KX x_0 \to \hat{\M}_G$ : see Proposition \ref{isoMG} and \cite{Rac}, §2.2.7. 
        \end{itemize}
        Next, the diagram (\ref{diag_projections}) commutes, thanks to \ref{item_diag_projections}. This allows us to conclude that the map $- \cdot 1_{\M} : \hat{\W}_G \to \hat{\M}_G$ is a $\K$-module isomorphism and that $\hat{\M}_G$ is a free $\hat{\W}_G$-module of rank $1$.
    \end{enumerate}
\end{proof}

\noindent \textbf{Remark.} The composed algebra morphisms $\KY \overset{\varpi}{\longrightarrow} \hat{\W}_G \hookrightarrow \hat{\V}_G$ and $\KY \hookrightarrow \KX \overset{\beta \circ ( - \otimes 1)}{\longrightarrow} \hat{\V}_G$ do not coincide when $G \neq \{1\}$.

Now, we are able to put more structure on $\hat{\W}_G$ and $\hat{\M}_G$. More precisely, we are going to define a coproduct $\hat{\W}_G$ and a coproduct on $\hat{\M}_G$.

\begin{propdef} \ 
    \label{DeltaW_DeltaM}
    \begin{enumerate}[label=(\alph*), leftmargin=*]
        \item \label{item_DeltaW} There exists a unique topological $\K$-algebra morphism $\hat{\Delta}^{\W}_{G} : \hat{\W}_G \to (\hat{\W}_G)^{\otimes 2}$ such that for any $(n, g) \in \N^{\ast} \times G$
        \begin{equation}
            \label{DeltaW}
            \hat{\Delta}^{\W}_{G}(z_{n, g}) = z_{n, g} \otimes 1 + 1 \otimes z_{n, g} + \sum_{\substack{k = 1 \\ h \in G}}^{n-1} z_{k, h} \otimes z_{n-k, hg^{-1}}. 
        \end{equation}
        The pair $(\hat{\W}_G, \hat{\Delta}^{\W}_G)$ is then a topological bialgebra.
        \item \label{item_DeltaM} There exists a unique topological $\K$-module morphism $\hat{\Delta}^{\M}_G : \hat{\M}_G \to (\hat{\M}_G)^{\otimes 2}$ such that the following diagram
        \begin{equation}
            \label{diag_DeltaW_DeltaM}
            \begin{tikzcd}
                \hat{\W}_G \ar["\hat{\Delta}^{\W}_G"]{rrr} \ar["- \cdot 1_{\M}"']{d} & & & (\hat{\W}_G)^{\hat{\otimes} 2} \ar["- \cdot 1_{\M}^{\otimes 2}"]{d} \\
                \hat{\M}_G \ar["\hat{\Delta}^{\M}_G"']{rrr} & & & (\hat{\M}_G)^{\hat{\otimes} 2}
            \end{tikzcd}
        \end{equation}
        commutes. The pair $(\hat{\M}_G, \hat{\Delta}^{\M}_G)$ is then a cocommutative coassociative coalgebra.
        \item \label{compat_DeltaW_DeltaM} For any $w \in \hat{\W}_G$ and any $m \in \hat{\M}_G$ we have
        \begin{equation}
            \hat{\Delta}^{\M}_G(w \cdot m) = \hat{\Delta}^{\W}_G(w) \cdot \hat{\Delta}^{\M}_G(m). 
        \end{equation}
    \end{enumerate}
\end{propdef}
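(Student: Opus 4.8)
All three parts are instances of \emph{transport of structure} along the isomorphisms produced in Proposition~\ref{WG_free_algebra} and Corollary~\ref{iso_WG_MG}, and I would organise the argument accordingly. For \ref{item_DeltaW}, the plan is to read off existence and uniqueness of $\hat{\Delta}^{\W}_G$ from freeness: by Proposition~\ref{WG_free_algebra}\ref{iso_WG_KZ} the algebra $\hat{\W}_G$ is freely generated by $Z=\{z_{n,g}\}$, so there is exactly one continuous $\K$-algebra morphism $\hat{\W}_G \to (\hat{\W}_G)^{\hat{\otimes}2}$ sending each $z_{n,g}$ to the right-hand side of (\ref{DeltaW}); one only checks that this assignment is homogeneous of degree $n$ (each summand $z_{k,h}\otimes z_{n-k,hg^{-1}}$ has degree $k+(n-k)=n$), which guarantees compatibility with the completion. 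For the bialgebra property I would compare (\ref{DeltaW}) with the defining formula (\ref{harmonic_coproduct}) of $\hat{\Delta}^{\alg}_{\star}$: since the $\K$-algebra isomorphism $\varpi:\KY\to\hat{\W}_G$ of Corollary~\ref{iso_WG_MG} sends $y_{n,g}$ to $z_{n,g}$, both $\hat{\Delta}^{\W}_G\circ\varpi$ and $\varpi^{\otimes 2}\circ\hat{\Delta}^{\alg}_{\star}$ are algebra morphisms agreeing on the generators, hence are equal. Thus $\varpi$ intertwines the two coproducts, and because $(\KY,\hat{\Delta}^{\alg}_{\star})$ is a bialgebra, the coassociativity and counit axioms transport verbatim.

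For \ref{item_DeltaM}, I would first note that $-\cdot 1_{\M}$ is a $\K$-module isomorphism by Corollary~\ref{iso_WG_MG}\ref{item_iso_WG_MG}, so $(-\cdot 1_{\M})^{\otimes 2}$ is one as well; the commutativity of (\ref{diag_DeltaW_DeltaM}) then forces the unique definition $\hat{\Delta}^{\M}_G := (-\cdot 1_{\M})^{\otimes 2}\circ\hat{\Delta}^{\W}_G\circ(-\cdot 1_{\M})^{-1}$. To recover the coalgebra axioms I would identify this with the transport of $\hat{\Delta}^{\Mod}_{\star}$: chaining the commutative squares (\ref{diag_projections}) and (\ref{harmonic_coproduct_M}) with the intertwining relation established in \ref{item_DeltaW} shows that $\kappa\circ\overline{\q}^{-1}$ is a coalgebra isomorphism from $(\KX/\KX x_0,\hat{\Delta}^{\Mod}_{\star})$ onto $(\hat{\M}_G,\hat{\Delta}^{\M}_G)$. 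Coassociativity and cocommutativity of $\hat{\Delta}^{\M}_G$ then descend from the corresponding properties of $\hat{\Delta}^{\Mod}_{\star}$ recalled in §\ref{basic_objects}.

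For \ref{compat_DeltaW_DeltaM}, I would exploit that $\hat{\M}_G$ is free of rank one over $\hat{\W}_G$ with basis $1_{\M}$ (Corollary~\ref{iso_WG_MG}\ref{item_iso_WG_MG}). Writing $m=w'\cdot 1_{\M}$, the commutativity of (\ref{diag_DeltaW_DeltaM}) gives $\hat{\Delta}^{\M}_G(u\cdot 1_{\M})=\hat{\Delta}^{\W}_G(u)\cdot 1_{\M}^{\otimes 2}$ for every $u\in\hat{\W}_G$. Applying this to $u=ww'$, using that $\hat{\Delta}^{\W}_G$ is an algebra morphism, and invoking the associativity of the $(\hat{\W}_G)^{\otimes 2}$-action on $(\hat{\M}_G)^{\otimes 2}$ rewrites $\hat{\Delta}^{\M}_G(w\cdot m)$ as $\hat{\Delta}^{\W}_G(w)\cdot\big(\hat{\Delta}^{\W}_G(w')\cdot 1_{\M}^{\otimes 2}\big)=\hat{\Delta}^{\W}_G(w)\cdot\hat{\Delta}^{\M}_G(m)$, which is the claim.

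I expect the only genuinely delicate step to be the coalgebra identification in \ref{item_DeltaM}: one must keep track of the $\overline{\q}$-twist present in the module isomorphism $\kappa\circ\overline{\q}^{-1}$ but absent from the algebra isomorphism $\varpi$, and verify that (\ref{diag_projections}) and (\ref{harmonic_coproduct_M}) compose consistently, so that the coproduct obtained through $-\cdot 1_{\M}$ genuinely coincides with the transport of $\hat{\Delta}^{\Mod}_{\star}$. Everything else reduces to formal transport of structure and the one-line module computation above.
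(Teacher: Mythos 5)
Your proposal is correct, and its skeleton coincides with the paper's: part \ref{item_DeltaW} from the freeness of $\hat{\W}_G$ on $Z$ (Proposition \ref{WG_free_algebra}\ref{iso_WG_KZ}), part \ref{item_DeltaM} by conjugating $\hat{\Delta}^{\W}_G$ by the $\K$-module isomorphism $-\cdot 1_{\M}$ of Corollary \ref{iso_WG_MG}\ref{item_iso_WG_MG}, and part \ref{compat_DeltaW_DeltaM} by exactly the same rank-one computation $m=w'\cdot 1_{\M}$, $\hat{\Delta}^{\M}_G(ww'\cdot 1_{\M})=\hat{\Delta}^{\W}_G(w)\hat{\Delta}^{\W}_G(w')\cdot 1_{\M}^{\otimes 2}$ that the paper writes out.

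Where you diverge is in how the bialgebra and coalgebra \emph{axioms} are justified. The paper's proof of \ref{item_DeltaW} and \ref{item_DeltaM} is terse and implicitly reads these axioms off the constructions themselves, whereas you transport them from Racinet's side: coassociativity and counit of $\hat{\Delta}^{\W}_G$ from $(\KY,\hat{\Delta}^{\alg}_{\star})$ via $\varpi$, and the coalgebra axioms of $\hat{\Delta}^{\M}_G$ from $(\KX/\KX x_0,\hat{\Delta}^{\Mod}_{\star})$ via $\kappa\circ\overline{\q}^{-1}$. In doing so you are essentially proving, ahead of time, the intertwining statements that the paper only establishes later as Lemma \ref{link_DeltaW_Delta*} and Lemma \ref{link_DeltaM_Delta*} (the cube argument); there is no circularity, since those lemmas use only the defining diagram (\ref{diag_DeltaW_DeltaM}) and not the axioms you are deriving. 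Your route costs the extra bookkeeping of the $\overline{\q}$-twist that you correctly flag as the delicate point, but it buys something: cocommutativity of $\hat{\Delta}^{\M}_G$ cannot be obtained by transporting along $-\cdot 1_{\M}$ from $\hat{\Delta}^{\W}_G$ alone (formula (\ref{DeltaW}) is not manifestly cocommutative for general $G$), so deducing it from the cocommutativity of $\hat{\Delta}^{\Mod}_{\star}$ recalled in \S\ref{basic_objects}, as you do, is the cleaner justification of that particular clause.
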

\begin{proof} \ 
    \begin{enumerate}[label=(\alph*), leftmargin=*]
        \item This is a consequence of Proposition \ref{WG_free_algebra}\ref{iso_WG_KZ}. 
        \item This is a consequence of \ref{item_DeltaW} and Corollary \ref{iso_WG_MG}\ref{item_iso_WG_MG}.
        \item Since $- \cdot 1_{\M} : \hat{\W}_G \to \hat{\M}_G$ is a $\K$-module ismorphism, for $m \in \hat{\M}_G$ there exists a unique $w' \in \hat{\W}_G$ such that $m = w' \cdot 1_{\M}$. We then have
        \begin{align*}
            & \hat{\Delta}^{\M}_G(w \cdot m) = \hat{\Delta}^{\M}_G(ww' \cdot 1_{\M}) = \hat{\Delta}^{\W}_G(ww') \cdot 1_{\M}^{\otimes 2} = \hat{\Delta}^{\W}_G(w) \hat{\Delta}^{\W}_G(w') \cdot 1_{\M}^{\otimes 2} \\
            & = \hat{\Delta}^{\W}_G(w) \cdot \left(\hat{\Delta}^{\W}_G(w') \cdot 1_{\M}^{\otimes 2}\right) = \hat{\Delta}^{\W}_G(w) \cdot \hat{\Delta}^{\M}_G(w' \cdot 1_{\M}) = \hat{\Delta}^{\W}_G(w) \cdot \hat{\Delta}^{\M}_G(m)
        \end{align*}
        where the first and the fourth equalities come from the fact that $- \cdot 1_{\M} : \hat{\W}_G \to \hat{\M}_G$ is $\hat{\W}_G$-module morphism; the second and the fifth one from the commutative diagram (\ref{diag_DeltaW_DeltaM}) and the third one from the fact that $\hat{\Delta}^{\W}_G$ is a $\K$-algebra morphism. 
    \end{enumerate}
\end{proof}

\subsection{Actions of the group \texorpdfstring{$(\G(\KX), \circledast)$}{GKX*} by automorphisms} \label{actions_on_crossed_product}
We recall that the map $\beta : \KX \rtimes G \to \hat{\V}_G$ is the $\K$-algebra isomorphism given in Proposition \ref{isoVGetKXG} \ref{KXGtoVG}.
\subsubsection{Actions of \texorpdfstring{$(\G(\KX), \circledast)$}{GKX*} by algebra automorphisms}
\begin{propdef}
    \label{aut0}
    Let $\Psi \in \G(\KX)$. There exists a unique topological $\K$-algebra automorphism $\aut^{\V, (0)}_{\Psi}$ of $\hat{\V}_G$ such that
    \begin{equation}
        e_0 \mapsto e_0; \quad e_1 \mapsto \beta(\Psi^{-1} \otimes 1) e_1 \beta(\Psi \otimes 1); \quad g \mapsto g, \text{ for } g \in G, 
    \end{equation}
\end{propdef}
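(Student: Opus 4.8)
The plan is to establish existence and uniqueness of the automorphism $\aut^{\V,(0)}_\Psi$ by exhibiting it as the image, under the crossed-product isomorphism $\beta$, of a suitable automorphism of $\KX \rtimes G$ built from the already-understood automorphism $\aut_\Psi$ of $\KX$. Concretely, I would first observe that uniqueness is immediate: since $\hat{\V}_G$ is generated as a topological $\K$-algebra by $\{e_0,e_1\}\sqcup G$, any $\K$-algebra endomorphism is determined by its values on these generators, so there is at most one such $\aut^{\V,(0)}_\Psi$.

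For existence, the natural route is to transport the problem to $\KX \rtimes G$ via the isomorphism $\beta$ of Proposition \ref{isoVGetKXG}, and to define the candidate automorphism directly on $\hat{\V}_G$ by checking that the proposed images of the generators satisfy the defining relations (i), (ii), (iii) of $\hat{\V}_G$. Thus I would set $\aut^{\V,(0)}_\Psi$ by the stated formulas and verify: relation (ii) $1\mapsto 1_G$ is clear since $1_G$ maps to itself; relation (i) $g\times h\mapsto gh$ holds because each $g\in G$ is fixed; and relation (iii) $g\times e_0 = e_0\times g$ is preserved because both $e_0$ and $g$ are fixed, so their images still commute. The only substantive point is checking that the assignment respects no further hidden relation — but since $\hat{\V}_G$ is \emph{defined} by generators and these three families of relations, verifying that the images satisfy (i)--(iii) suffices, by the universal property of a presented algebra, to produce a well-defined $\K$-algebra endomorphism.

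It remains to confirm that this endomorphism is an automorphism. The cleanest argument is to note that the element $\beta(\Psi\otimes 1)$ is invertible in $\hat{\V}_G$ (its inverse being $\beta(\Psi^{-1}\otimes 1)$, since $\beta$ is an algebra isomorphism and $\Psi\in\G(\KX)\subset\KX^\times$ with $\Psi\otimes 1$ invertible in $\KX\rtimes G$), so that $e_1\mapsto \beta(\Psi^{-1}\otimes 1)\,e_1\,\beta(\Psi\otimes 1) = \Ad_{\beta(\Psi^{-1}\otimes 1)}(e_1)$ is a genuine conjugation. One then constructs the inverse endomorphism by the analogous formulas with $\Psi$ and $\Psi^{-1}$ interchanged, namely $e_1\mapsto \beta(\Psi\otimes 1)\,e_1\,\beta(\Psi^{-1}\otimes 1)$ and $e_0,g$ fixed; composing the two in either order returns each generator to itself, whence both composites equal $\mathrm{id}_{\hat{\V}_G}$ by the uniqueness already established.

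The main obstacle I anticipate is bookkeeping rather than conceptual: one must be careful that the conjugating element $\beta(\Psi^{\pm 1}\otimes 1)$ really lies in $\hat{\V}_G^\times$ and that conjugation by it is a well-defined topological (degree-preserving, continuous) automorphism — this uses that $\Psi = 1 + (\text{higher degree})$ so that $\beta(\Psi\otimes 1)$ is a unit of the form $1+(\text{positive degree})$ in the complete graded algebra $\hat{\V}_G$, guaranteeing convergence of the geometric-series inverse and compatibility with the grading. Once continuity and invertibility of this inner part are secured, the verification of the presentation relations is routine, and the proposition follows.
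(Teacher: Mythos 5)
Your treatment of existence and uniqueness of the \emph{endomorphism} is fine and coincides with the paper's: one checks that the proposed images of the generators satisfy relations (i)--(iii) of the presentation of $\hat{\V}_G$, and uniqueness is automatic since the generators generate topologically. The gap is in your argument that this endomorphism is an \emph{automorphism}. You propose as inverse the endomorphism $F'$ determined by $e_0 \mapsto e_0$, $g \mapsto g$, $e_1 \mapsto \beta(\Psi \otimes 1)\, e_1\, \beta(\Psi^{-1} \otimes 1)$, and claim the composites fix all generators. This fails on $e_1$: writing $u = \beta(\Psi \otimes 1)$ and $F = \aut^{\V,(0)}_{\Psi}$, one has
\begin{equation*}
    F' \circ F(e_1) \;=\; F'\bigl(u^{-1} e_1 u\bigr) \;=\; F'(u)^{-1}\,\bigl(u\, e_1\, u^{-1}\bigr)\, F'(u),
\end{equation*}
and this equals $e_1$ only if $u^{-1}F'(u)$ centralizes $e_1$. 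But $u$ is \emph{not} fixed by $F'$ (nor by $F$): since $\beta(x_g \otimes 1) = -g e_1 g^{-1}$, the element $u = \beta(\Psi \otimes 1)$ involves $e_1$ as soon as $\Psi$ has terms in the $x_g$'s, and these occurrences of $e_1$ get conjugated. Concretely, for $G=\{1\}$ and $\Psi = \exp([x_0,x_1])$ one computes $u^{-1}F'(u) = 1 + [e_0,[[e_0,e_1],e_1]] + (\text{higher degree})$, which does not commute with $e_1$ in the free algebra, so $F' \circ F \neq \mathrm{id}$. The underlying reason is that the map $\Psi \mapsto \aut^{\V,(0)}_{\Psi}$ is a morphism for the product $\circledast$, not for the multiplicative product of $\KX$; the genuine inverse is $\aut^{\V,(0)}_{\Psi^{\circledast -1}}$ with $\Psi^{\circledast -1} = \aut_{\Psi}^{-1}(\Psi^{-1}) \neq \Psi^{-1}$ in general (this is Proposition \ref{group_morph_V}, which is proved \emph{after} the present statement and cannot be invoked here anyway).

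The paper avoids this by a different mechanism: it proves that the square
\begin{equation*}
    \aut^{\V, (0)}_{\Psi} \circ \beta \;=\; \beta \circ \bigl(\aut_{\Psi} \otimes \mathrm{id}_{\K G}\bigr)
\end{equation*}
commutes (checked on the generators $x_0 \otimes 1$, $x_g \otimes 1$, $1 \otimes g$ of $\KX \rtimes G$), so that $\aut^{\V,(0)}_{\Psi} = \beta \circ (\aut_{\Psi} \otimes \mathrm{id}_{\K G}) \circ \beta^{-1}$ is a composite of three bijections, $\aut_{\Psi}$ being already known to be an automorphism of $\KX$. You gesture at exactly this transport via $\beta$ in your opening paragraph but then do not carry it out; doing so is what actually closes the argument. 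Your remarks on invertibility of $\beta(\Psi \otimes 1)$ and continuity are correct but do not substitute for it, since $\aut^{\V,(0)}_{\Psi}$ is not the inner automorphism $\Ad_{\beta(\Psi^{-1}\otimes 1)}$ of $\hat{\V}_G$ --- it acts by that conjugation only on the single generator $e_1$.
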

\begin{proof}
    First, let us verify that the images by the morphism $\aut^{\V, (0)}_{\Psi}$ of the generators of $\hat{\V}_G$ satisfy the relations of $\hat{\V}_G$. Indeed, for $g, h \in G$ we have:
    \begin{itemize}[leftmargin=*]
        \item $\aut^{\V, (0)}_{\Psi}(1_G) = 1_G = 1 = \aut^{\V, (0)}_{\Psi}(1)$;
        \item $\aut^{\V, (0)}_{\Psi}(g) \times \aut^{\V, (0)}_{\Psi}(h) = g \times h = gh = \aut^{\V, (0)}_{\Psi}(gh)$;
        \item $\aut^{\V, (0)}_{\Psi}(g) \times \aut^{\V, (0)}_{\Psi}(e_0) = g \times e_0 = e_0 \times g = \aut^{\V, (0)}_{\Psi}(e_0) \times \aut^{\V, (0)}_{\Psi}(g)$. 
    \end{itemize}
    This proves the existence and uniqueness of the algebra endomorphism $\aut^{\V, (0)}_{\Psi}$.
    Next, in order to prove that $\aut^{\V, (0)}_{\Psi}$ is an automorphism, we show that the diagram
    \begin{equation}
        \label{beta_auts}
        \begin{tikzcd}
            \KX \rtimes G \ar["\beta"]{rrr} \ar["\aut_{\Psi} \otimes \mathrm{id}_{\K G}"']{d} & & & \hat{\V}_G \ar["\aut^{\V, (0)}_{\Psi}"]{d} \\
            \KX \rtimes G \ar["\beta"']{rrr} & & & \hat{\V}_G
        \end{tikzcd}
    \end{equation}
    commutes, where $\aut_{\Psi}$ is the $\K$-algebra automorphism in (\ref{aut_Psi}). Since all arrows of Diagram (\ref{beta_auts}) are $\K$-algebra morphisms, it is enough to check the commutativity on generators:
    \begin{itemize}[leftmargin=*]
        \item $\aut^{\V, (0)}_{\Psi} \circ \beta(x_0 \otimes 1) = \aut^{\V, (0)}_{\Psi}(e_0) = e_0$ and \newline $\beta \circ (\aut_{\Psi} \otimes \mathrm{id}_{\K G})(x_0 \otimes 1) = \beta(\aut_{\Psi}(x_0) \otimes 1) = \beta(x_0 \otimes 1) = e_0$.
        \item For $g \in G$, $\aut^{\V, (0)}_{\Psi} \circ \beta(1 \otimes g) = \aut^{\V, (0)}_{\Psi}(g) = g$ and \newline $\beta \circ (\aut_{\Psi} \otimes \mathrm{id}_{\K G})(1 \otimes g) = \beta(\aut_{\Psi}(1) \otimes g) = \beta(1 \otimes g) = g$.
        \item For $g \in G$, $\aut^{\V, (0)}_{\Psi} \circ \beta(x_g \otimes 1) = \aut^{\V, (0)}_{\Psi}(-ge_1g^{-1}) = -g \beta(\Psi^{-1}\otimes 1) e_1 \beta(\Psi \otimes 1) g^{-1}$ and
        \[\begin{aligned}
            & \beta \circ (\aut_{\Psi} \otimes \mathrm{id}_{\K G})(x_g \otimes 1) = \beta(\aut_{\Psi}(x_g) \otimes 1) = \beta(t_g(\Psi^{-1}) x_g t_g(\Psi) \otimes 1) \\
            = & \beta((1\otimes g) \ast (\Psi^{-1} \otimes 1) \ast (1 \otimes g^{-1}) \ast (x_g \otimes 1) \ast (1 \otimes g) \ast (\Psi \otimes 1) \ast (1 \otimes g^{-1})) \\
            = & \beta(1\otimes g) \beta(\Psi^{-1} \otimes 1) \beta(1 \otimes g^{-1}) \beta(x_g \otimes 1) \beta(1 \otimes g) \beta(\Psi \otimes 1) \beta(1 \otimes g^{-1})) \\
            =& g \beta(\Psi^{-1} \otimes 1) g^{-1} (-ge_1 g^{-1}) g \beta(\Psi \otimes 1) g^{-1} \\
            = & - g \beta(\Psi^{-1} \otimes 1) e_1 \beta(\Psi \otimes 1) g^{-1}.
        \end{aligned}\]
    \end{itemize}
    Finally, $\aut^{\V, (0)}_{\Psi}$ is an automorphism thanks to the commutativity of diagram (\ref{beta_auts}) and the fact that $\beta : \KX \rtimes G \to \hat{\V}_G$ and $\aut_{\Psi} \otimes \mathrm{id}_{\K G} : \KX \rtimes G \to \KX \rtimes G$ are $\K$-algebra isomorphisms.
\end{proof}

\begin{definition}
    For $\Psi \in \G(\KX)$, we define $\aut^{\V, (1)}_{\Psi}$ to be the topological $\K$-algebra automorphism of $\hat{\V}_G$ given by
    \begin{equation}
        \aut^{\V, (1)}_{\Psi} := \Ad_{\beta(\Psi \otimes 1)} \circ \aut^{\V, (0)}_{\Psi},
    \end{equation}
\end{definition}

\begin{proposition}
    \label{group_morph_V} \ 
    \begin{enumerate}[label=(\alph*), leftmargin=*]
        \item \label{act_aut0} There is a group action of $(\G(\KX), \circledast)$ on $\hat{\V}_G$ by $\K$-algebra automorphisms
        \[
            (\G(\KX), \circledast) \longrightarrow \Aut_{\K-\alg}(\hat{\V}_G), \, \Psi \longmapsto \aut^{\V, (0)}_{\Psi}
        \]
        \item \label{act_aut1} There is a group action of $(\G(\KX), \circledast)$ on $\hat{\V}_G$ by $\K$-algebra automorphisms
        \[
            (\G(\KX), \circledast) \longrightarrow \Aut_{\K-\alg}(\hat{\V}_G), \, \Psi \longmapsto \aut^{\V, (1)}_{\Psi}
        \]
    \end{enumerate}
\end{proposition}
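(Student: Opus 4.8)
The plan is to reduce both statements to the conjugation formula
\[
\aut^{\V, (0)}_{\Psi} = \beta \circ (\aut_{\Psi} \otimes \mathrm{id}_{\K G}) \circ \beta^{-1},
\]
which is precisely the commutativity of diagram (\ref{beta_auts}) combined with the fact that $\beta$ is an isomorphism (Proposition \ref{isoVGetKXG}). For part \ref{act_aut0}, first I would observe that $\Psi \mapsto \aut_{\Psi} \otimes \mathrm{id}_{\K G}$ is a group morphism from $(\G(\KX), \circledast)$ to $\Aut_{\K-\alg}(\KX \rtimes G)$: by functoriality of $- \otimes \K G$ one has $(\aut_{\Psi} \circ \aut_{\Phi}) \otimes \mathrm{id}_{\K G} = (\aut_{\Psi} \otimes \mathrm{id}_{\K G}) \circ (\aut_{\Phi} \otimes \mathrm{id}_{\K G})$, and by Identity (\ref{aut_group_morph}) of Lemma \ref{group_morphs} the left-hand side equals $\aut_{\Psi \circledast \Phi} \otimes \mathrm{id}_{\K G}$. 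Conjugating by $\beta$ transports this group morphism to $\Aut_{\K-\alg}(\hat{\V}_G)$, giving $\aut^{\V, (0)}_{\Psi} \circ \aut^{\V, (0)}_{\Phi} = \aut^{\V, (0)}_{\Psi \circledast \Phi}$ at once; and since $\aut_{1} = \mathrm{id}$ one gets $\aut^{\V, (0)}_{1} = \mathrm{id}$, so $\Psi \mapsto \aut^{\V, (0)}_{\Psi}$ is indeed a group action.

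For part \ref{act_aut1}, write $b_{\Psi} := \beta(\Psi \otimes 1)$, which lies in $\hat{\V}_G^{\times}$ with inverse $\beta(\Psi^{-1} \otimes 1)$ because $\beta$ is a unital algebra morphism, so that $\aut^{\V, (1)}_{\Psi} = \Ad_{b_{\Psi}} \circ \aut^{\V, (0)}_{\Psi}$. I would then expand the composite using two elementary moves: the intertwining $F \circ \Ad_u = \Ad_{F(u)} \circ F$, valid for any $\K$-algebra automorphism $F$ and invertible $u$; and the identity $\aut^{\V, (0)}_{\Psi}(b_{\Phi}) = \beta(\aut_{\Psi}(\Phi) \otimes 1)$, obtained by applying the conjugation formula above to $\Phi \otimes 1$. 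Combining them yields
\[
\aut^{\V, (1)}_{\Psi} \circ \aut^{\V, (1)}_{\Phi} = \Ad_{b_{\Psi}} \circ \Ad_{\beta(\aut_{\Psi}(\Phi) \otimes 1)} \circ \aut^{\V, (0)}_{\Psi} \circ \aut^{\V, (0)}_{\Phi}.
\]
Now $\Ad_{b_{\Psi}} \circ \Ad_{\beta(\aut_{\Psi}(\Phi) \otimes 1)} = \Ad_{b_{\Psi}\,\beta(\aut_{\Psi}(\Phi) \otimes 1)}$, and since $\beta$ respects products,
\[
b_{\Psi}\,\beta(\aut_{\Psi}(\Phi) \otimes 1) = \beta\big((\Psi\,\aut_{\Psi}(\Phi)) \otimes 1\big) = \beta\big((\Psi \circledast \Phi) \otimes 1\big) = b_{\Psi \circledast \Phi},
\]
where the middle equality is the crucial recognition $\Psi \circledast \Phi = S_{\Psi}(\Phi) = \ell_{\Psi} \circ \aut_{\Psi}(\Phi) = \Psi\,\aut_{\Psi}(\Phi)$ furnished by (\ref{eq_S_Psi}) and (\ref{circledast}). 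Together with part \ref{act_aut0} this gives $\aut^{\V, (1)}_{\Psi} \circ \aut^{\V, (1)}_{\Phi} = \Ad_{b_{\Psi \circledast \Phi}} \circ \aut^{\V, (0)}_{\Psi \circledast \Phi} = \aut^{\V, (1)}_{\Psi \circledast \Phi}$, while $\aut^{\V, (1)}_{1} = \mathrm{id}$ since $b_1 = 1$.

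The only genuine subtlety, and the step I expect to require the most care, is the bookkeeping in part \ref{act_aut1}: one must correctly commute $\aut^{\V, (0)}_{\Psi}$ past the inner conjugation $\Ad_{b_{\Phi}}$ and keep track of the resulting conjugating element, the key point being that the ``cocycle'' $\aut^{\V, (0)}_{\Psi}(b_{\Phi})$ conspires with $b_{\Psi}$ to produce exactly $b_{\Psi \circledast \Phi}$. This is precisely where the definition of $\circledast$ through $S_{\Psi} = \ell_{\Psi} \circ \aut_{\Psi}$ enters, mirroring the computation of Identity (\ref{S_group_morph}) in Lemma \ref{group_morphs}.
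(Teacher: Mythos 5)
Your proposal is correct and follows essentially the same route as the paper: part (b) is verbatim the paper's computation, commuting $\aut^{\V,(0)}_{\Psi}$ past $\Ad_{\beta(\Phi\otimes 1)}$ and recognizing $\beta(\Psi\otimes 1)\,\beta(\aut_{\Psi}(\Phi)\otimes 1)=\beta((\Psi\circledast\Phi)\otimes 1)$. For part (a) the paper instead verifies $\aut^{\V,(0)}_{\Psi\circledast\Phi}=\aut^{\V,(0)}_{\Psi}\circ\aut^{\V,(0)}_{\Phi}$ directly on the generators $e_0$, $e_1$ and $g\in G$, but your transport of Lemma \ref{group_morphs} through the isomorphism $\beta$ via diagram (\ref{beta_auts}) rests on exactly the same ingredients and is, if anything, slightly cleaner.
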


\begin{proof} \, 
    \begin{enumerate}[label=(\alph*), leftmargin=*]
        \item Let us show that for any $\Psi, \Phi \in \G(\KX)$, we have
        \[
            \aut^{\V, (0)}_{\Psi \circledast \Phi} = \aut^{\V, (0)}_{\Psi} \circ \aut^{\V, (0)}_{\Phi}.
        \]
        It suffices to prove this identity on generators. Since for $\Psi \in \G(\KX)$ we have $\aut^{\V, (0)}_{\Psi}(e_0)=e_0$ and $\aut^{\V, (0)}_{\Psi}(g)=g$, this is immediately true for $e_0$ and $g \in G$. Next,
        \[\begin{aligned}
            \aut^{\V, (0)}_{\Psi \circledast \Phi}(e_1) = & \beta((\Psi \circledast \Phi)^{-1} \otimes 1) e_1 \beta((\Psi \circledast \Phi) \otimes 1) \\
            = & \beta(\aut_{\Psi}(\Phi^{-1}) \Psi^{-1} \otimes 1) e_1 \beta(\Psi \aut_{\Psi}(\Phi) \otimes 1) \\
            = & \beta(\aut_{\Psi}(\Phi^{-1}) \otimes 1) \beta(\Psi^{-1} \otimes 1) e_1 \beta(\Psi \otimes 1) \beta(\aut_{\Psi}(\Phi) \otimes 1) \\
            = & \aut^{\V, (0)}_{\Psi}(\beta(\Phi^{-1} \otimes 1)) \aut^{\V, (0)}_{\Psi}(e_1) \aut^{\V, (0)}_{\Psi}(\beta(\Phi \otimes 1)) \\
            = & \aut^{\V, (0)}_{\Psi}(\beta(\Phi^{-1} \otimes 1) e_1 \beta(\Phi \otimes 1)) \\
            = & \aut^{\V, (0)}_{\Psi} \circ \aut^{\V, (0)}_{\Phi}(e_1)
        \end{aligned}\]
        where the fourth equality comes from the commutativity of Diagram (\ref{beta_auts}).
        \item Using Identity \ref{act_aut0}, we get
        \[\begin{aligned}
            \aut^{\V, (1)}_{\Psi} \circ \aut^{\V, (1)}_{\Phi} = & \Ad_{\beta(\Psi \otimes 1)} \circ \aut^{\V, (0)}_{\Psi} \circ \Ad_{\beta(\Phi \otimes 1)} \circ \aut^{\V, (0)}_{\Phi} \\
            = & \Ad_{\beta(\Psi \otimes 1)} \circ \Ad_{\aut^{\V, (0)}_{\Psi}(\beta(\Phi \otimes 1))} \circ \aut^{\V, (0)}_{\Psi} \circ \aut^{\V, (0)}_{\Phi} \\
            = & \Ad_{\beta(\Psi \otimes 1)\beta(\aut_{\Psi}(\Phi) \otimes 1)} \circ \aut^{\V, (0)}_{\Psi} \circ \aut^{\V, (0)}_{\Phi} \\
            = & \Ad_{\beta((\Psi \circledast \Phi) \otimes 1)} \circ \aut^{\V, (0)}_{\Psi \circledast \Phi} = \aut^{\V, (1)}_{\Psi \circledast \Phi},
        \end{aligned}\]
    \end{enumerate}

\end{proof}

\begin{propdef}
    \label{group_morph_W}
    For $\Psi \in \G(\KX)$, the automorphism $\aut^{\V, (1)}_{\Psi} : \hat{\V}_G \to \hat{\V}_G$ restricts to a topological $\K$-algebra automorphism on the $\K$-subalgebra $\hat{\W}_G$ which will be denoted $\aut^{\W, (1)}_{\Psi}$.
    Moreover, there is a group action of $(\G(\KX), \circledast)$ on $\hat{\W}_G$ by $\K$-algebra automorphisms
    \begin{equation}
        (\G(\KX), \circledast) \longrightarrow \Aut_{\K-\alg}(\hat{\W}_G), \, \Psi \longmapsto \aut^{\W, (1)}_{\Psi}.
    \end{equation}
\end{propdef}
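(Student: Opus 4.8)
The plan is to reduce the whole statement to the single observation that $e_1$ is a fixed point of $\aut^{\V, (1)}_{\Psi}$. First I would compute the image of the generators under $\aut^{\V, (1)}_{\Psi} = \Ad_{\beta(\Psi \otimes 1)} \circ \aut^{\V, (0)}_{\Psi}$, the only one that matters being $e_1$. Using the defining formula $\aut^{\V, (0)}_{\Psi}(e_1) = \beta(\Psi^{-1} \otimes 1)\, e_1\, \beta(\Psi \otimes 1)$ from Proposition-Definition \ref{aut0} and the fact that $\beta$ is an algebra morphism, so that $\beta(\Psi^{-1} \otimes 1) = \beta(\Psi \otimes 1)^{-1}$, conjugation by $\beta(\Psi \otimes 1)$ makes the two outer factors collapse:
\[
    \aut^{\V, (1)}_{\Psi}(e_1) = \beta(\Psi \otimes 1)\beta(\Psi^{-1} \otimes 1)\, e_1\, \beta(\Psi \otimes 1)\beta(\Psi \otimes 1)^{-1} = e_1.
\]
This cancellation is the heart of the statement and is precisely the reason for passing from $\aut^{\V, (0)}$ to $\aut^{\V, (1)}$.

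Next I would transport the direct-sum decomposition $\hat{\W}_G = \K \oplus \hat{\V}_G e_1$ through $\aut^{\V, (1)}_{\Psi}$, using that the latter is a topological $\K$-algebra automorphism of $\hat{\V}_G$ (established in the preceding Definition together with Proposition \ref{group_morph_V}). Since $\aut^{\V, (1)}_{\Psi}$ fixes the unit it preserves $\K$, and since it maps $\hat{\V}_G$ bijectively onto itself while fixing $e_1$ one has
\[
    \aut^{\V, (1)}_{\Psi}(\hat{\V}_G e_1) = \aut^{\V, (1)}_{\Psi}(\hat{\V}_G)\, \aut^{\V, (1)}_{\Psi}(e_1) = \hat{\V}_G e_1.
\]
Hence $\aut^{\V, (1)}_{\Psi}(\hat{\W}_G) = \hat{\W}_G$. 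As the restriction of an injective map is injective and we have just shown surjectivity onto $\hat{\W}_G$, the restriction $\aut^{\W, (1)}_{\Psi}$ is a topological $\K$-algebra automorphism of $\hat{\W}_G$, which is the first assertion.

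For the group action I would merely restrict the identity already available from Proposition \ref{group_morph_V}\ref{act_aut1}. Indeed $\aut^{\V, (1)}_{\Psi \circledast \Phi} = \aut^{\V, (1)}_{\Psi} \circ \aut^{\V, (1)}_{\Phi}$ holds in $\Aut_{\K-\alg}(\hat{\V}_G)$; restricting both sides to the stable subalgebra $\hat{\W}_G$ yields $\aut^{\W, (1)}_{\Psi \circledast \Phi} = \aut^{\W, (1)}_{\Psi} \circ \aut^{\W, (1)}_{\Phi}$, so that $\Psi \mapsto \aut^{\W, (1)}_{\Psi}$ is a group morphism and defines the desired action.

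The computation is elementary throughout and there is no genuine obstacle; the only point requiring care is the verification $\aut^{\V, (1)}_{\Psi}(e_1) = e_1$. It is worth emphasising, to explain the design, that $\aut^{\V, (0)}_{\Psi}$ does \emph{not} in general preserve $\hat{\W}_G$, precisely because it does not fix $e_1$: its image $\beta(\Psi^{-1} \otimes 1) e_1 \beta(\Psi \otimes 1)$ contains, via the basis of Proposition \ref{basis}, terms not ending in $e_1$ as soon as $\Psi$ has nontrivial higher-degree part.
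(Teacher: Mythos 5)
Your proposal is correct and follows essentially the same route as the paper: the paper's proof also rests on the cancellation $\beta(\Psi\otimes 1)\beta(\Psi^{-1}\otimes 1)\,e_1\,\beta(\Psi\otimes 1)\beta(\Psi^{-1}\otimes 1)=e_1$ (i.e. $\aut^{\V,(1)}_{\Psi}(e_1)=e_1$), writes $w=\lambda+ve_1$ to get $\aut^{\V,(1)}_{\Psi}(w)=\lambda+\aut^{\V,(1)}_{\Psi}(v)e_1$, deduces bijectivity of the restriction from that of $\aut^{\V,(1)}_{\Psi}$ via the decomposition $\hat{\W}_G=\K\oplus\hat{\V}_G e_1$, and obtains the group action by restricting Proposition \ref{group_morph_V}\ref{act_aut1}. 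Your direct argument $\aut^{\V,(1)}_{\Psi}(\hat{\V}_G e_1)=\hat{\V}_G e_1$ is just a slightly more compact packaging of the same surjectivity step.
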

\begin{proof}
    For $w=\lambda + v e_1 \in \hat{\W}_G$, we have
    \[\begin{aligned}
        \aut^{\V, (1)}_{\Psi}(w) = & \lambda + \aut^{\V, (1)}_{\Psi}(v) \beta(\Psi \otimes 1) \beta(\Psi^{-1} \otimes 1) e_1 \beta(\Psi \otimes 1) \beta(\Psi^{-1} \otimes 1) \\
        = & \lambda + \aut^{\V, (1)}_{\Psi}(v) e_1 \in \hat{\W}_G. 
    \end{aligned}\]
    This implies that $\aut^{\V, (1)}_{\Psi}$ induces a $\K$-algebra endomorphism of $\hat{\W}_G$. Moreover, the preimage of this endomorphism under the $\K$-module isomorphism $\K \times \hat{\V}_G \to \hat{\W}_G$, $(\lambda, v) \mapsto \lambda + v e_1$ is the pair $(\mathrm{id}, \aut_{\Psi}^{\V,(1)})$, which is a $\K$-module automorphism. This implies that $\aut_{\Psi}^{\W,(1)}$ is a $\K$-module automorphism, and therefore a $\K$-algebra automorphism. Thanks to this, the second part of this result can be deduced from Proposition \ref{group_morph_V}\ref{act_aut1}, by restriction on $\hat{\W}_G$. 
\end{proof}

\subsubsection{Action of \texorpdfstring{$(\G(\KX), \circledast)$}{GKX*} by module automorphisms}
\begin{definition}
    For $\Psi \in \G(\KX)$, we define $\aut^{\V, (10)}_{\Psi}$ to be the topological $\K$-module automorphism of $\hat{\V}_G$ given by
    \begin{equation}
        \label{eq_aut10}
        \aut^{\V, (10)}_{\Psi} := \ell_{\beta(\Psi \otimes 1)} \circ \aut^{\V, (0)}_{\Psi},
    \end{equation}
\end{definition}
\noindent \textbf{Remark}. Let us notice that, for any $\Psi \in \G(\KX)$, we also have
\[
    \aut^{\V, (10)}_{\Psi} = \ell_{\beta(\Psi \otimes 1)} \circ \aut^{\V, (0)}_{\Psi} = \ell_{\beta(\Psi \otimes 1)} \circ \Ad_{\beta(\Psi^{-1} \otimes 1)} \circ \aut^{\V, (1)}_{\Psi} = r_{\beta(\Psi \otimes 1)} \circ \aut^{\V, (1)}_{\Psi}.
\]
\begin{proposition}
    \label{act_aut10}
    There is a group action of $(\G(\KX), \circledast)$ on $\hat{\V}_G$ by $\K$-module automorphisms
        \[
            (\G(\KX), \circledast) \longrightarrow \Aut_{\K-\Mod}(\hat{\V}_G), \, \Psi \longmapsto \aut^{\V, (10)}_{\Psi}
        \]
\end{proposition}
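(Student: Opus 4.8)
The plan is to prove two things: that each $\aut^{\V, (10)}_{\Psi}$ is genuinely a $\K$-module automorphism, and that $\Psi \mapsto \aut^{\V, (10)}_{\Psi}$ carries $\circledast$ to composition. The first point is immediate: since $\Psi \in \G(\KX)$ is invertible and $\beta$ is a $\K$-algebra isomorphism, $\beta(\Psi \otimes 1)$ is invertible in $\hat{\V}_G$, so $\ell_{\beta(\Psi \otimes 1)}$ is a $\K$-module automorphism (by the convention on $\ell_x$ in the Notation section); composing it with the $\K$-algebra, hence $\K$-module, automorphism $\aut^{\V, (0)}_{\Psi}$ from Proposition-Definition \ref{aut0} yields a $\K$-module automorphism. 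Taking $\Psi = 1$ and using $\beta(1 \otimes 1) = 1$ together with $\aut^{\V, (0)}_{1} = \mathrm{id}$ also gives $\aut^{\V, (10)}_{1} = \mathrm{id}$.

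For the multiplicativity, the key observation is that $\aut^{\V, (10)}_{\Psi} = \ell_{\beta(\Psi \otimes 1)} \circ \aut^{\V, (0)}_{\Psi}$ is the exact structural analog of $S_{\Psi} = \ell_{\Psi} \circ \aut_{\Psi}$, so the computation mirrors that of Lemma \ref{group_morphs}. Concretely, I would start from $\aut^{\V, (10)}_{\Psi} \circ \aut^{\V, (10)}_{\Phi} = \ell_{\beta(\Psi \otimes 1)} \circ \aut^{\V, (0)}_{\Psi} \circ \ell_{\beta(\Phi \otimes 1)} \circ \aut^{\V, (0)}_{\Phi}$ and push $\aut^{\V, (0)}_{\Psi}$ past the left multiplication: because $\aut^{\V, (0)}_{\Psi}$ is a $\K$-algebra morphism, one has $\aut^{\V, (0)}_{\Psi} \circ \ell_b = \ell_{\aut^{\V, (0)}_{\Psi}(b)} \circ \aut^{\V, (0)}_{\Psi}$ for any $b \in \hat{\V}_G$. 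Taking $b = \beta(\Phi \otimes 1)$ and using the commutativity of Diagram (\ref{beta_auts}) to rewrite $\aut^{\V, (0)}_{\Psi}(\beta(\Phi \otimes 1)) = \beta(\aut_{\Psi}(\Phi) \otimes 1)$, the two left multiplications merge, since $\beta$ is an algebra morphism, into $\ell_{\beta(\Psi \otimes 1)\beta(\aut_{\Psi}(\Phi) \otimes 1)} = \ell_{\beta((\Psi \circledast \Phi) \otimes 1)}$.

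The identity $\beta(\Psi \otimes 1)\beta(\aut_{\Psi}(\Phi) \otimes 1) = \beta((\Psi \circledast \Phi) \otimes 1)$ is precisely the one already isolated in the proof of Proposition \ref{group_morph_V}\ref{act_aut1}; it rests on $\Psi \circledast \Phi = S_{\Psi}(\Phi) = \Psi\,\aut_{\Psi}(\Phi)$ (from (\ref{eq_S_Psi}) and (\ref{circledast})) together with the crossed-product product $(\Psi \otimes 1) \ast (\aut_{\Psi}(\Phi) \otimes 1) = \Psi\aut_{\Psi}(\Phi) \otimes 1$. Combining this with the already-established multiplicativity $\aut^{\V, (0)}_{\Psi} \circ \aut^{\V, (0)}_{\Phi} = \aut^{\V, (0)}_{\Psi \circledast \Phi}$ from Proposition \ref{group_morph_V}\ref{act_aut0} gives $\aut^{\V, (10)}_{\Psi} \circ \aut^{\V, (10)}_{\Phi} = \ell_{\beta((\Psi \circledast \Phi) \otimes 1)} \circ \aut^{\V, (0)}_{\Psi \circledast \Phi} = \aut^{\V, (10)}_{\Psi \circledast \Phi}$, which is the desired group action. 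I do not expect a genuine obstacle here: the whole argument is a routine chaining of the algebra-morphism property of $\aut^{\V, (0)}_{\Psi}$, Diagram (\ref{beta_auts}), and the multiplicativity of $\aut^{\V, (0)}$. The only point requiring care is keeping the order of the factors correct when merging the left multiplications, which is exactly where the $S$-analogy of Lemma \ref{group_morphs} serves as a reliable guide.
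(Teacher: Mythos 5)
Your proposal is correct and follows essentially the same route as the paper: the multiplicativity computation (pushing $\aut^{\V,(0)}_{\Psi}$ past $\ell_{\beta(\Phi\otimes 1)}$, invoking Diagram (\ref{beta_auts}) and the identity $\Psi\aut_{\Psi}(\Phi)=\Psi\circledast\Phi$, then applying Proposition \ref{group_morph_V}\ref{act_aut0}) is exactly the paper's chain of equalities. Your additional remarks on invertibility of each $\aut^{\V,(10)}_{\Psi}$ and on the unit are correct and merely make explicit what the paper leaves implicit.
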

\begin{proof}
        For $\Psi, \Phi \in \G(\KX)$, we have
        \[\begin{aligned}
            \aut^{\V, (10)}_{\Psi} \circ \aut^{\V, (10)}_{\Phi} = & \ell_{\beta(\Psi \otimes 1)} \circ \aut^{\V, (0)}_{\Psi} \circ \ell_{\beta(\Phi \otimes 1)} \circ \aut^{\V, (0)}_{\Phi} \\
            = & \ell_{\beta(\Psi \otimes 1)} \circ \ell_{\aut^{\V, (0)}_{\Psi}(\beta(\Phi \otimes 1))} \circ \aut^{\V, (0)}_{\Psi} \circ \aut^{\V, (0)}_{\Phi} \\
            =& \ell_{\beta(\Psi \otimes 1)\beta(\aut_{\Psi}(\Phi) \otimes 1)} \circ \aut^{\V, (0)}_{\Psi} \circ \aut^{\V, (0)}_{\Phi} \\
            = & \ell_{\beta((\Psi \circledast \Phi) \otimes 1)} \circ \aut^{\V, (0)}_{\Psi \circledast \Phi} = \aut^{\V, (10)}_{\Psi \circledast \Phi}.
        \end{aligned}\]
        where the last equality comes from the commutativity of Diagram (\ref{beta_auts}) and from Proposition \ref{group_morph_V}\ref{act_aut0}. 
\end{proof}

\begin{lemma}
    \label{rel_aut_alg_mod}
    For any $\Psi \in \G(\KX)$, we have the following identities:
    \begin{align}
        & \forall a, b \in \hat{\V}_G, \, \aut^{\V, (10)}_{\Psi}(ab) = \aut^{\V, (10)}_{\Psi}(a) \, \aut^{\V, (0)}_{\Psi}(b). \label{rel_aut_100} & \\
        & \forall a, b \in \hat{\V}_G, \, \aut^{\V, (10)}_{\Psi}(ab) = \aut^{\V, (1)}_{\Psi}(a) \, \aut^{\V, (10)}_{\Psi}(b). \label{rel_aut_101} &
    \end{align}
\end{lemma}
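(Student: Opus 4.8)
The plan is to derive both identities by a direct computation, exploiting the two available factorizations of the module automorphism $\aut^{\V, (10)}_{\Psi}$ together with the fact that $\aut^{\V, (0)}_{\Psi}$ and $\aut^{\V, (1)}_{\Psi}$ are $\K$-algebra automorphisms, hence multiplicative. Neither identity requires anything beyond unwinding definitions, so I do not anticipate a genuine obstacle; the only point requiring a moment's thought is selecting the correct factorization of $\aut^{\V, (10)}_{\Psi}$ for each of the two identities.

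For (\ref{rel_aut_100}) I would start from the defining expression $\aut^{\V, (10)}_{\Psi} = \ell_{\beta(\Psi \otimes 1)} \circ \aut^{\V, (0)}_{\Psi}$ recorded in (\ref{eq_aut10}). Applying it to a product $ab$ and using multiplicativity of $\aut^{\V, (0)}_{\Psi}$ gives $\aut^{\V, (10)}_{\Psi}(ab) = \beta(\Psi \otimes 1)\,\aut^{\V, (0)}_{\Psi}(a)\,\aut^{\V, (0)}_{\Psi}(b)$. Regrouping the first two factors as $\ell_{\beta(\Psi \otimes 1)}\bigl(\aut^{\V, (0)}_{\Psi}(a)\bigr) = \aut^{\V, (10)}_{\Psi}(a)$ and keeping the last factor as $\aut^{\V, (0)}_{\Psi}(b)$ produces exactly $\aut^{\V, (10)}_{\Psi}(a)\,\aut^{\V, (0)}_{\Psi}(b)$, which is the claim.

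For (\ref{rel_aut_101}) I would instead use the alternative factorization recorded in the Remark following the definition, namely $\aut^{\V, (10)}_{\Psi} = r_{\beta(\Psi \otimes 1)} \circ \aut^{\V, (1)}_{\Psi}$. Applying this to $ab$ and using multiplicativity of $\aut^{\V, (1)}_{\Psi}$ yields $\aut^{\V, (10)}_{\Psi}(ab) = \aut^{\V, (1)}_{\Psi}(a)\,\aut^{\V, (1)}_{\Psi}(b)\,\beta(\Psi \otimes 1)$. Regrouping the last two factors as $r_{\beta(\Psi \otimes 1)}\bigl(\aut^{\V, (1)}_{\Psi}(b)\bigr) = \aut^{\V, (10)}_{\Psi}(b)$ leaves $\aut^{\V, (1)}_{\Psi}(a)\,\aut^{\V, (10)}_{\Psi}(b)$, as required. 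The symmetry between the two identities simply reflects the two ways of positioning the factor $\beta(\Psi \otimes 1)$ — on the left through $\aut^{\V, (0)}_{\Psi}$, or on the right through $\aut^{\V, (1)}_{\Psi}$ — which is precisely the content of the Remark, so once that equality of the two factorizations is in hand both parts are immediate.
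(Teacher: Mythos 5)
Your proof is correct and follows exactly the paper's own argument: identity (\ref{rel_aut_100}) from the factorization $\aut^{\V, (10)}_{\Psi} = \ell_{\beta(\Psi \otimes 1)} \circ \aut^{\V, (0)}_{\Psi}$ and multiplicativity of $\aut^{\V, (0)}_{\Psi}$, and identity (\ref{rel_aut_101}) from the alternative factorization $\aut^{\V, (10)}_{\Psi} = r_{\beta(\Psi \otimes 1)} \circ \aut^{\V, (1)}_{\Psi}$ of the Remark together with multiplicativity of $\aut^{\V, (1)}_{\Psi}$. Nothing is missing.
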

\begin{proof}
    Let $a, b \in \hat{\V}_G$. We have
    \begin{align*}
        \aut^{\V, (10)}_{\Psi}(ab) = & \ell_{\beta(\Psi \otimes 1)} \circ \aut^{\V, (0)}_{\Psi}(ab) = \ell_{\beta(\Psi \otimes 1)} \left( \aut^{\V, (0)}_{\Psi}(a) \, \aut^{\V, (0)}_{\Psi}(b) \right) \\
        = & \left(\ell_{\beta(\Psi \otimes 1)} \circ \aut^{\V, (0)}_{\Psi}(a)\right) \aut^{\V, (0)}_{\Psi}(b) = \aut^{\V, (10)}_{\Psi}(a) \, \aut^{\V, (0)}_{\Psi}(b)   
    \end{align*}
    and
    \begin{align*}
        \aut^{\V, (10)}_{\Psi}(ab) = & r_{\beta(\Psi \otimes 1)} \circ \aut^{\V, (1)}_{\Psi}(ab) = r_{\beta(\Psi \otimes 1)} \left( \aut^{\V, (1)}_{\Psi}(a) \, \aut^{\V, (1)}_{\Psi}(b) \right) \\
        = & \aut^{\V, (1)}_{\Psi}(a) \left(r_{\beta(\Psi \otimes 1)} \circ \aut^{\V, (1)}_{\Psi}(b)\right) = \aut^{\V, (1)}_{\Psi}(a) \, \aut^{\V, (10)}_{\Psi}(b)    
    \end{align*}
\end{proof}

\begin{proposition}
    \label{aut_V10_preserves}
    For $\Psi \in \G(\KX)$, the $\K$-module automorphism $\aut^{\V, (10)}_{\Psi}$ preserves the submodule $\hat{\V}_G e_0 + \sum_{g \in G \, \backslash \, \{1\}} \hat{\V}_G (g-1)$.
\end{proposition}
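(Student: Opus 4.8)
The plan is to reduce the statement to the behaviour of $\aut^{\V, (10)}_{\Psi}$ on the two families of generators of the submodule $N := \hat{\V}_G e_0 + \sum_{g \in G \backslash \{1\}} \hat{\V}_G (g-1)$, exploiting the semilinearity identity (\ref{rel_aut_100}) of Lemma \ref{rel_aut_alg_mod}. First I would write an arbitrary element of $N$ as $n = a\, e_0 + \sum_{g \neq 1} b_g (g-1)$ with $a, b_g \in \hat{\V}_G$, the sum over $g$ being finite since $G$ is finite. Since $\aut^{\V, (10)}_{\Psi}$ is a $\K$-module morphism, it then suffices to show that each summand is sent back into $N$.

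For a summand $a\, e_0$, identity (\ref{rel_aut_100}) gives $\aut^{\V, (10)}_{\Psi}(a\, e_0) = \aut^{\V, (10)}_{\Psi}(a)\, \aut^{\V, (0)}_{\Psi}(e_0)$, and since $\aut^{\V, (0)}_{\Psi}(e_0) = e_0$ by Proposition-Definition \ref{aut0}, this lies in $\hat{\V}_G e_0 \subset N$. For a summand $b_g(g-1) = b_g\, g - b_g$, I would combine $\K$-linearity with (\ref{rel_aut_100}) and the equality $\aut^{\V, (0)}_{\Psi}(g) = g$ to obtain $\aut^{\V, (10)}_{\Psi}(b_g\, g) = \aut^{\V, (10)}_{\Psi}(b_g)\, g$; subtracting $\aut^{\V, (10)}_{\Psi}(b_g)$ yields $\aut^{\V, (10)}_{\Psi}(b_g(g-1)) = \aut^{\V, (10)}_{\Psi}(b_g)(g-1) \in \hat{\V}_G(g-1) \subset N$. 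Summing over the two types of terms gives $\aut^{\V, (10)}_{\Psi}(n) \in N$, which is the claim.

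The conceptual content is that $\aut^{\V, (0)}_{\Psi}$ fixes precisely the right factors $e_0$, $1$ and $g$ that cut out $N$, so that (\ref{rel_aut_100}), which makes $\aut^{\V, (10)}_{\Psi}$ semilinear over $\aut^{\V, (0)}_{\Psi}$ in its right argument, carries each of the generating left ideals into itself. Accordingly I do not expect a genuine obstacle: the real work has already been done in Lemma \ref{rel_aut_alg_mod}, and this proposition is the payoff that makes the crossed product formalism succeed where the action on $\KY \subset \KX$ failed. The only point worth a word is topological, namely that the description $n = a\, e_0 + \sum_{g\neq 1} b_g(g-1)$ already exhausts the closed submodule $N$ — each $\hat{\V}_G e_0$ and $\hat{\V}_G(g-1)$ being closed and $G$ finite — while (\ref{rel_aut_100}) holds for all elements of $\hat{\V}_G$, so invoking the lemma suffices and no separate continuity argument is needed.
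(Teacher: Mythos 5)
Your proof is correct and follows essentially the same route as the paper's: both decompose a general element of the submodule as $a e_0 + \sum_{g \neq 1} b_g(g-1)$, apply the semilinearity identity (\ref{rel_aut_100}) of Lemma \ref{rel_aut_alg_mod}, and conclude from $\aut^{\V,(0)}_{\Psi}(e_0)=e_0$ and $\aut^{\V,(0)}_{\Psi}(g)=g$. The only cosmetic difference is that you expand $b_g(g-1)$ as $b_g g - b_g$ before applying the identity, whereas the paper applies it directly to the factor $g-1$ using linearity of $\aut^{\V,(0)}_{\Psi}$.
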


\begin{proof}
    Using Lemma \ref{rel_aut_alg_mod}\ref{rel_aut_100}, we obtain for any $a \in \hat{\V}_G$ and $(b_g)_{g \in G} \in (\hat{\V}_G)^G$ :
    \begin{align*}
        &\aut^{\V, (10)}_{\Psi}\left(ae_0 + \sum_{g \in G \backslash \{1\}} b_g(g-1)\right) = \aut^{\V, (10)}_{\Psi}(a) \aut^{\V, (0)}_{\Psi}(e_0) + \sum_{g \in G} \aut^{\V, (10)}_{\Psi}(b_g) \aut^{\V, (0)}_{\Psi}(g-1) \\
        &= \aut^{\V, (10)}_{\Psi}(a) e_0 + \sum_{g \in G \backslash \{1\}} \aut^{\V, (10)}_{\Psi}(b_g) (g-1) \in \hat{\V}_G e_0 + \sum_{g \in G \, \backslash \, \{1\}} \hat{\V}_G (g-1).  
    \end{align*}
\end{proof}

\begin{propdef}
    \label{aut_M10}
    For $\Psi \in \G(\KX)$, there is a unique $\K$-module automorphism $\aut^{\M, (10)}_{\Psi}$ of $\hat{\M}_G$ such that the following diagram
    \[
        \begin{tikzcd}
            \hat{\V}_G \ar[rrr, "\aut^{\V, (10)}_{\Psi}"] \ar[d, "- \cdot 1_{\M}"'] &&& \hat{\V}_G \ar[d, "- \cdot 1_{\M}"] \\
            \hat{\M}_G \ar[rrr, "\aut^{\M, (10)}_{\Psi}"'] &&& \hat{\M}_G
        \end{tikzcd}
    \]
    commutes.
\end{propdef}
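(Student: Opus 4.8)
The map $- \cdot 1_{\M} : \hat{\V}_G \to \hat{\M}_G$ is, by construction, the canonical projection onto the quotient $\hat{\M}_G = \hat{\V}_G / N$, where $N := \hat{\V}_G e_0 + \sum_{g \in G \backslash \{1\}} \hat{\V}_G(g-1)$ is its kernel. The plan is therefore to obtain $\aut^{\M, (10)}_{\Psi}$ by the universal property of this quotient, and then to establish bijectivity by exhibiting an inverse that also arises from the group action of Proposition \ref{act_aut10}.

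First I would observe that, by Proposition \ref{aut_V10_preserves}, the automorphism $\aut^{\V, (10)}_{\Psi}$ maps $N$ into $N$. Consequently the composite $(- \cdot 1_{\M}) \circ \aut^{\V, (10)}_{\Psi} : \hat{\V}_G \to \hat{\M}_G$ vanishes on $N$, so it factors through the projection $- \cdot 1_{\M}$: there is a unique $\K$-module morphism $\aut^{\M, (10)}_{\Psi} : \hat{\M}_G \to \hat{\M}_G$ making the square commute. This yields at once both the existence and the uniqueness asserted in the statement.

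It then remains to check that $\aut^{\M, (10)}_{\Psi}$ is invertible. Here I would use that $\Psi \mapsto \aut^{\V, (10)}_{\Psi}$ is a group action by $\K$-module automorphisms (Proposition \ref{act_aut10}): writing $\Phi$ for the inverse of $\Psi$ in $(\G(\KX), \circledast)$, one has $\aut^{\V, (10)}_{\Psi} \circ \aut^{\V, (10)}_{\Phi} = \aut^{\V, (10)}_{\Phi} \circ \aut^{\V, (10)}_{\Psi} = \mathrm{id}_{\hat{\V}_G}$. Since $\aut^{\V, (10)}_{\Phi}$ also preserves $N$ by Proposition \ref{aut_V10_preserves}, it likewise descends to a $\K$-module endomorphism of $\hat{\M}_G$ by the previous paragraph. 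Passing both relations to the quotient and invoking the uniqueness of the induced map, the descent of $\aut^{\V, (10)}_{\Phi}$ is a two-sided inverse of $\aut^{\M, (10)}_{\Psi}$, so the latter is a $\K$-module automorphism.

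The argument is essentially a formal diagram chase; the only substantive input is Proposition \ref{aut_V10_preserves}, guaranteeing that the submodule $N$ is stable, and this stability must be invoked for both $\Psi$ and its $\circledast$-inverse in order to descend the inverse map. I do not expect any genuine obstacle beyond keeping track of which module structure is in play at each step.
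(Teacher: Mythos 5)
Your proof is correct and follows the same route as the paper, whose proof consists of the single line ``Follows from Proposition \ref{aut_V10_preserves}''; you have simply made explicit the descent to the quotient and the verification of invertibility via the $\circledast$-inverse that the paper leaves implicit. In particular, your observation that the stability of the submodule must also be invoked for $\Psi^{-1}$ to obtain a two-sided inverse is exactly the point the paper's terse proof glosses over.
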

\begin{proof}
    Follows from Proposition \ref{aut_V10_preserves}.
\end{proof}

\begin{lemma}
    \label{compat_MV}
    For any $\Psi \in \G(\KX)$, we have
    \begin{align}
        & \forall (a, m) \in \hat{\V}_G \times \hat{\M}_G, \, \aut^{\M, (10)}_{\Psi}(a \cdot m) = \aut^{\V, (1)}_{\Psi}(a) \cdot \aut^{\M, (10)}_{\Psi}(m). \\
        & \forall (w, m) \in \hat{\W}_G \times \hat{\M}_G, \, \aut^{\M, (10)}_{\Psi}(w \cdot m) = \aut^{\W, (1)}_{\Psi}(w) \cdot \aut^{\M, (10)}_{\Psi}(m).
    \end{align}
\end{lemma}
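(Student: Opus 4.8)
The plan is to reduce both identities to corresponding statements on $\hat{\V}_G$, using the surjectivity of the module map $- \cdot 1_{\M} : \hat{\V}_G \to \hat{\M}_G$ together with the defining commutative diagram of $\aut^{\M, (10)}_{\Psi}$ from Proposition-Definition \ref{aut_M10}. The only genuinely structural input will be the second intertwining relation of Lemma \ref{rel_aut_alg_mod}.

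For the first identity, I would fix $a \in \hat{\V}_G$ and $m \in \hat{\M}_G$, and use the surjectivity of $- \cdot 1_{\M}$ to write $m = b \cdot 1_{\M}$ for some $b \in \hat{\V}_G$. By associativity of the $\hat{\V}_G$-module action one then has $a \cdot m = (ab) \cdot 1_{\M}$. Applying the diagram of Proposition-Definition \ref{aut_M10} gives $\aut^{\M, (10)}_{\Psi}\big((ab) \cdot 1_{\M}\big) = \aut^{\V, (10)}_{\Psi}(ab) \cdot 1_{\M}$. At this point I would invoke relation (\ref{rel_aut_101}) of Lemma \ref{rel_aut_alg_mod}, namely $\aut^{\V, (10)}_{\Psi}(ab) = \aut^{\V, (1)}_{\Psi}(a)\,\aut^{\V, (10)}_{\Psi}(b)$, rewrite the resulting product as a module action, and apply the diagram of Proposition-Definition \ref{aut_M10} once more to the factor $\aut^{\V, (10)}_{\Psi}(b) \cdot 1_{\M} = \aut^{\M, (10)}_{\Psi}(b \cdot 1_{\M}) = \aut^{\M, (10)}_{\Psi}(m)$. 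This chain yields exactly $\aut^{\M, (10)}_{\Psi}(a \cdot m) = \aut^{\V, (1)}_{\Psi}(a) \cdot \aut^{\M, (10)}_{\Psi}(m)$.

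For the second identity, I would specialise the first to $a = w \in \hat{\W}_G \subset \hat{\V}_G$ and then appeal to Proposition-Definition \ref{group_morph_W}, which asserts that $\aut^{\V, (1)}_{\Psi}$ restricts on the subalgebra $\hat{\W}_G$ to $\aut^{\W, (1)}_{\Psi}$. Hence $\aut^{\V, (1)}_{\Psi}(w) = \aut^{\W, (1)}_{\Psi}(w)$, and the second identity follows immediately from the first.

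The argument is essentially formal, so I do not expect a serious obstacle beyond bookkeeping. The one point demanding care is the choice between the two intertwining relations of Lemma \ref{rel_aut_alg_mod}: relation (\ref{rel_aut_100}) would place $\aut^{\V, (0)}_{\Psi}$ on the right factor, and this does not descend compatibly through $- \cdot 1_{\M}$, whereas relation (\ref{rel_aut_101}) is precisely tailored so that the left factor carries the algebra automorphism $\aut^{\V, (1)}_{\Psi}$ while the right factor carries the module automorphism $\aut^{\V, (10)}_{\Psi}$ that descends to $\aut^{\M, (10)}_{\Psi}$. Selecting (\ref{rel_aut_101}) is therefore what makes the two module factors in the statement match up correctly.
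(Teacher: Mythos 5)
Your argument is correct and follows exactly the route of the paper's own proof: the first identity is obtained by combining the defining diagram of Proposition-Definition \ref{aut_M10} with relation (\ref{rel_aut_101}) of Lemma \ref{rel_aut_alg_mod}, and the second follows by restriction to $\hat{\W}_G$ via Proposition-Definition \ref{group_morph_W}. Your remark on why (\ref{rel_aut_101}) rather than (\ref{rel_aut_100}) is the right intertwining relation is a useful clarification that the paper leaves implicit.
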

\begin{proof}
    The first identity is proved by using a combination of Proposition-Definition \ref{aut_M10} and Lemma \ref{rel_aut_alg_mod}(\ref{rel_aut_101}). The second identity can be deduced from the first by restriction on the subalgebra $\hat{\W}_G$.
\end{proof}

\begin{corollary}
    \label{group_morph_aut_M_10}
    There is a group action of $(\G(\KX), \circledast)$ on $\hat{\M}_G$ by topological $\K$-module automorphisms
    \begin{equation}
        (\G(\KX), \circledast) \longrightarrow \Aut_{\K-\Mod}\left(\hat{\M}_G \right), \, \Psi \longmapsto \aut^{\M, (10)}_{\Psi}
    \end{equation}
\end{corollary}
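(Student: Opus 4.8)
The plan is to follow the argument of Corollary \ref{group_morph_SY} almost verbatim, transferring the group-action identity already available on $\hat{\V}_G$ up to $\hat{\M}_G$ through the defining diagram of $\aut^{\M,(10)}_\Psi$. Each $\aut^{\M,(10)}_\Psi$ is already known to be a $\K$-module automorphism by Proposition-Definition \ref{aut_M10}, so the only point that remains is to verify that $\Psi \mapsto \aut^{\M,(10)}_\Psi$ is a homomorphism, i.e.\ that $\aut^{\M,(10)}_{\Psi \circledast \Phi} = \aut^{\M,(10)}_\Psi \circ \aut^{\M,(10)}_\Phi$ for all $\Psi, \Phi \in \G(\KX)$.

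First I would record the intertwining relation furnished by Proposition-Definition \ref{aut_M10}, namely $(- \cdot 1_\M) \circ \aut^{\V,(10)}_\Psi = \aut^{\M,(10)}_\Psi \circ (- \cdot 1_\M)$ for every $\Psi \in \G(\KX)$. Precomposing $\aut^{\M,(10)}_\Psi \circ \aut^{\M,(10)}_\Phi$ with the surjection $- \cdot 1_\M$, applying this relation twice, and then invoking Proposition \ref{act_aut10}, I would compute
\[
    \aut^{\M,(10)}_\Psi \circ \aut^{\M,(10)}_\Phi \circ (- \cdot 1_\M) = (- \cdot 1_\M) \circ \aut^{\V,(10)}_\Psi \circ \aut^{\V,(10)}_\Phi = (- \cdot 1_\M) \circ \aut^{\V,(10)}_{\Psi \circledast \Phi}.
\]
This exhibits $\aut^{\M,(10)}_\Psi \circ \aut^{\M,(10)}_\Phi$ as a $\K$-module automorphism making the diagram of Proposition-Definition \ref{aut_M10} commute for $\Psi \circledast \Phi$, so by the uniqueness clause there it must coincide with $\aut^{\M,(10)}_{\Psi \circledast \Phi}$, which is precisely the asserted identity. (Alternatively one may cancel $- \cdot 1_\M$ on the right directly, using its surjectivity.)

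I do not expect any genuine obstacle here: all the substance sits in Proposition \ref{act_aut10} (the analogous group-action statement on $\hat{\V}_G$) and in the well-definedness of $\aut^{\M,(10)}_\Psi$ established in Proposition-Definition \ref{aut_M10}. The present corollary is a purely formal diagram chase exploiting the surjectivity of $- \cdot 1_\M : \hat{\V}_G \to \hat{\M}_G$, entirely parallel to Corollary \ref{group_morph_SY}.
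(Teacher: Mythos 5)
Your argument is correct and is exactly what the paper's one-line proof means by "a combination of Proposition-Definition \ref{aut_M10} and Proposition \ref{act_aut10}": transfer the group-action identity on $\hat{\V}_G$ through the surjection $- \cdot 1_{\M}$ and conclude by the uniqueness clause, precisely as in Corollary \ref{group_morph_SY}. No differences worth noting.
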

\begin{proof}
    It is a combination of Proposition-Definition \ref{aut_M10} and Proposition \ref{act_aut10}.  
\end{proof}

\subsection{The cocycle \texorpdfstring{$\Gamma$}{Gamma} and twisted actions} \label{gamma_actions_on_crossed_product}
To an element $\Psi \in \G(\KX)$, one associates $\Gamma_{\Psi} \in \K[[x]]^{\times}$ (see (\ref{Gamma_function})). Then $\Gamma_{\Psi}(-e_1)$ is an invertible element of $\hat{\V}_G$. 
\begin{definition}
    For $\Psi \in \G(\KX)$, we define the topological $\K$-algebra automorphism of $\hat{\V}_G$:
    \begin{equation}
        \label{Def_Gamma_aut_V_1}
        ^{\Gamma}\aut^{\V, (1)}_{\Psi} := \Ad_{\Gamma^{-1}_{\Psi}(-e_1)} \circ \aut^{\V, (1)}_{\Psi}.
    \end{equation}    
\end{definition}

\begin{propdef}
    \label{restrict_Gammaaut}
    For $\Psi \in \G(\KX)$, the automorphism $^{\Gamma}\aut^{\V, (1)}_{\Psi}$ restricts to a topological $\K$-algebra automorphism of the subalgebra $\hat{\W}_G$ denoted $^{\Gamma}\aut^{\W, (1)}_{\Psi}$.
\end{propdef}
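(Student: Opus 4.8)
The plan is to reduce everything to a single observation: the conjugating element $\Gamma^{-1}_{\Psi}(-e_1)$, together with its inverse, already lies in the subalgebra $\hat{\W}_G$. Once this is known, the statement follows formally from Proposition-Definition \ref{group_morph_W}.

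First I would analyse $\Gamma^{\pm 1}_{\Psi}(-e_1)$. By the very definition (\ref{Gamma_function}) of $\Gamma_{\Psi}$, we have $\Gamma_{\Psi}(x) = \exp\big(\sum_{n \geq 2} c_n x^n\big)$ with $c_n = \frac{(-1)^{n-1}}{n}(\Psi | x_0^{n-1}x_1) \in \K$; crucially the sum starts at $n = 2$, so $\Gamma_{\Psi}$ has no linear term. Substituting $x = -e_1$ and expanding the exponential, every nonconstant monomial is a product of the elements $(-e_1)^n$ with $n \geq 2$, hence a power $e_1^m$ with $m \geq 2$. Since $e_1^m = e_1^{m-1} \cdot e_1 \in \hat{\V}_G e_1$, we conclude that $\Gamma_{\Psi}(-e_1) - 1 \in \hat{\V}_G e_1$, that is $\Gamma_{\Psi}(-e_1) \in \K \oplus \hat{\V}_G e_1 = \hat{\W}_G$. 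The same computation applied to $\Gamma^{-1}_{\Psi}(-e_1) = \exp\big(-\sum_{n\geq 2} c_n (-e_1)^n\big)$ shows $\Gamma^{-1}_{\Psi}(-e_1) \in \hat{\W}_G$ as well.

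Next I would deduce that $\Ad_{\Gamma^{-1}_{\Psi}(-e_1)}$ restricts to an automorphism of $\hat{\W}_G$. Indeed, $\hat{\W}_G$ is a subalgebra of $\hat{\V}_G$ containing both $\Gamma^{-1}_{\Psi}(-e_1)$ and its inverse $\Gamma_{\Psi}(-e_1)$; hence for any $w \in \hat{\W}_G$ the element $\Gamma^{-1}_{\Psi}(-e_1)\, w\, \Gamma_{\Psi}(-e_1)$ is a product of elements of $\hat{\W}_G$ and so lies in $\hat{\W}_G$. The same argument shows that the inverse map $\Ad_{\Gamma_{\Psi}(-e_1)}$ preserves $\hat{\W}_G$, so the restriction of $\Ad_{\Gamma^{-1}_{\Psi}(-e_1)}$ to $\hat{\W}_G$ is bijective.

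Finally, recall from Proposition-Definition \ref{group_morph_W} that $\aut^{\V, (1)}_{\Psi}$ restricts to an automorphism $\aut^{\W, (1)}_{\Psi}$ of $\hat{\W}_G$. Since $^{\Gamma}\aut^{\V, (1)}_{\Psi} = \Ad_{\Gamma^{-1}_{\Psi}(-e_1)} \circ \aut^{\V, (1)}_{\Psi}$ is a composition of two $\K$-algebra automorphisms of $\hat{\V}_G$, each of which preserves $\hat{\W}_G$ and restricts there to an automorphism, the composite restricts to a $\K$-algebra automorphism of $\hat{\W}_G$, which is the desired $^{\Gamma}\aut^{\W, (1)}_{\Psi}$. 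The only real content is the first step: the absence of a linear term in $\Gamma_{\Psi}$ is exactly what guarantees $\Gamma^{\pm 1}_{\Psi}(-e_1) \in \hat{\W}_G$, and hence the compatibility of the $\Gamma$-twist with $\hat{\W}_G$; everything else is formal.
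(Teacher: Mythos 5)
Your proof is correct and is essentially the paper's own (very terse) argument: the paper's proof reads ``Follows from Proposition-Definition \ref{group_morph_W} and the fact that $\Gamma_{\Psi}(-e_1)$ is an invertible element of $\hat{\W}_G$,'' and you have simply supplied the details of both ingredients. One small remark: the absence of a linear term in $\Gamma_{\Psi}$ is not actually what guarantees $\Gamma_{\Psi}^{\pm 1}(-e_1)\in\hat{\W}_G$ — since $e_1 = 1\cdot e_1$ already lies in $\hat{\V}_G e_1$, \emph{any} power series in $e_1$ with coefficients in $\K$ belongs to $\K\oplus\hat{\V}_G e_1=\hat{\W}_G$ — so that step is even more formal than you suggest.
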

\begin{proof}
    Follows from Proposition-Definition \ref{group_morph_W} and the fact that $\Gamma_{\Psi}(-e_1)$ is an invertible element of $\hat{\W}_G$.
\end{proof}

\begin{proposition} \ 
    \label{Gamma_aut_VW_1}
    \begin{enumerate}[label=(\alph*), leftmargin=*]
        \item \label{Gamma_aut_V_1} There is a group action of $(\G(\KX), \circledast)$ on $\hat{\V}_G$ by topological $\K$-algebra automorphisms
        \begin{equation}
            (\G(\KX), \circledast) \longrightarrow  \Aut_{\K-\Mod}\left(\hat{\V}_G\right), \,\, \Psi \longmapsto \,^{\Gamma}\aut^{\V, (1)}_{\Psi}
        \end{equation}
        \item \label{Gamma_aut_W_1} There is a group action of $(\G(\KX), \circledast)$ on $\hat{\W}_G$ by topological $\K$-module automorphisms
        \begin{equation}
            (\G(\KX), \circledast) \longrightarrow  \Aut_{\K-\Mod}\left(\hat{\W}_G\right), \,\, \Psi \longmapsto \,^{\Gamma}\aut^{\W, (1)}_{\Psi}
        \end{equation}
    \end{enumerate}
\end{proposition}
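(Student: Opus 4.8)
The plan is to establish part \ref{Gamma_aut_V_1}---the group-action identity ${}^{\Gamma}\aut^{\V,(1)}_{\Psi \circledast \Phi} = {}^{\Gamma}\aut^{\V,(1)}_{\Psi} \circ {}^{\Gamma}\aut^{\V,(1)}_{\Phi}$ for all $\Psi, \Phi \in \G(\KX)$---and then to deduce part \ref{Gamma_aut_W_1} by restriction to $\hat{\W}_G$. The one genuinely substantive input is the observation that $\aut^{\V,(1)}_{\Psi}$ fixes $e_1$: unwinding $\aut^{\V,(1)}_{\Psi} = \Ad_{\beta(\Psi \otimes 1)} \circ \aut^{\V,(0)}_{\Psi}$ on the generator $e_1$ and using $\beta(\Psi \otimes 1)\beta(\Psi^{-1} \otimes 1) = 1$ gives $\aut^{\V,(1)}_{\Psi}(e_1) = e_1$ (this computation already occurs inside the proof of Proposition-Definition \ref{group_morph_W}). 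Since $\aut^{\V,(1)}_{\Psi}$ is a continuous $\K$-algebra automorphism, it then fixes every power series in $e_1$; in particular $\aut^{\V,(1)}_{\Psi}\big(\Gamma^{-1}_{\Phi}(-e_1)\big) = \Gamma^{-1}_{\Phi}(-e_1)$.

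With this in hand, the proof of \ref{Gamma_aut_V_1} is a short manipulation of conjugations. Using the intertwining rule $f \circ \Ad_a = \Ad_{f(a)} \circ f$ valid for any algebra automorphism $f$, together with $\Ad_a \circ \Ad_b = \Ad_{ab}$ and Proposition \ref{group_morph_V}\ref{act_aut1}, I would compute
\begin{align*}
    {}^{\Gamma}\aut^{\V,(1)}_{\Psi} \circ {}^{\Gamma}\aut^{\V,(1)}_{\Phi}
    &= \Ad_{\Gamma^{-1}_{\Psi}(-e_1)} \circ \aut^{\V,(1)}_{\Psi} \circ \Ad_{\Gamma^{-1}_{\Phi}(-e_1)} \circ \aut^{\V,(1)}_{\Phi} \\
    &= \Ad_{\Gamma^{-1}_{\Psi}(-e_1)} \circ \Ad_{\aut^{\V,(1)}_{\Psi}(\Gamma^{-1}_{\Phi}(-e_1))} \circ \aut^{\V,(1)}_{\Psi \circledast \Phi} \\
    &= \Ad_{\Gamma^{-1}_{\Psi}(-e_1)\,\Gamma^{-1}_{\Phi}(-e_1)} \circ \aut^{\V,(1)}_{\Psi \circledast \Phi},
\end{align*}
where the final equality uses the fixing of $\Gamma^{-1}_{\Phi}(-e_1)$ just established. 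It then remains to identify the conjugating element. By Lemma \ref{Gamma_aut} one has $\Gamma_{\Psi \circledast \Phi} = \Gamma_{\Psi}\Gamma_{\Phi}$ in $\K[[x]]$; evaluating at $x = -e_1$ and inverting---legitimate because power series in the single element $e_1$ commute---yields $\Gamma^{-1}_{\Psi}(-e_1)\,\Gamma^{-1}_{\Phi}(-e_1) = \Gamma^{-1}_{\Psi \circledast \Phi}(-e_1)$. Hence the right-hand side is exactly ${}^{\Gamma}\aut^{\V,(1)}_{\Psi \circledast \Phi}$, which proves \ref{Gamma_aut_V_1}; the identity element maps to the identity automorphism automatically.

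For part \ref{Gamma_aut_W_1}, I would simply restrict. By Proposition-Definition \ref{restrict_Gammaaut}, each ${}^{\Gamma}\aut^{\V,(1)}_{\Psi}$ restricts to the automorphism ${}^{\Gamma}\aut^{\W,(1)}_{\Psi}$ of the subalgebra $\hat{\W}_G$, and restriction of a composition is the composition of the restrictions; hence the identity of \ref{Gamma_aut_V_1} descends to ${}^{\Gamma}\aut^{\W,(1)}_{\Psi \circledast \Phi} = {}^{\Gamma}\aut^{\W,(1)}_{\Psi} \circ {}^{\Gamma}\aut^{\W,(1)}_{\Phi}$. Since each ${}^{\Gamma}\aut^{\W,(1)}_{\Psi}$ is in particular a topological $\K$-module automorphism, this gives the asserted action. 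The only point requiring care---and the step I would treat as the main (if modest) obstacle---is the stability claim $\aut^{\V,(1)}_{\Psi}(e_1) = e_1$ and its consequence that the two $\Gamma$-twists may be moved past $\aut^{\V,(1)}_{\Psi}$ and merged via the cocycle identity of Lemma \ref{Gamma_aut}; everything else is formal.
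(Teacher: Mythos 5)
Your proposal is correct and follows essentially the same route as the paper, whose proof of part (a) simply cites the three ingredients you make explicit — the composition law of Proposition \ref{group_morph_V}\ref{act_aut1}, the cocycle identity $\Gamma_{\Psi\circledast\Phi}=\Gamma_{\Psi}\Gamma_{\Phi}$ of Lemma \ref{Gamma_aut}, and the invariance of $e_1$ under $\aut^{\V,(1)}_{\Psi}$ — and deduces part (b) by restriction exactly as you do. Your write-up just supplies the $\Ad$-manipulation that the paper leaves implicit.
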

\begin{proof} \ 
    \begin{enumerate}[label=(\alph*), leftmargin=*]
        \item It follows from Proposition \ref{group_morph_V}\ref{act_aut1}, Lemma \ref{Gamma_aut} and the fact that $e_1$ is invariant under $\aut_{\Psi}^{\V, (1)}$ for any $\Psi \in \G(\KX)$.
        \item It follows from \ref{Gamma_aut_V_1} thanks to Proposition-Definition \ref{group_morph_W}.
    \end{enumerate}
\end{proof}

\begin{definition}
    For $\Psi \in \G(\KX)$, we define the following topological $\K$-module automorphism of $\hat{\M}_G$:
    \begin{equation}
        \label{Def_Gamma_aut_M_10}
        ^{\Gamma}\aut^{\M, (10)}_{\Psi} := \ell_{\Gamma^{-1}_{\Psi}(-e_1)} \circ \aut^{\M, (10)}_{\Psi}.
    \end{equation}
\end{definition}

\begin{lemma}
    \label{compat_Gamma_MV}
    For any $\Psi \in \G(\KX)$, we have
    \begin{align}
        &\forall (a, m) \in \hat{\V}_G \times \hat{\M}_G, \, ^{\Gamma}\aut^{\M, (10)}_{\Psi}(a \cdot m) = \, ^{\Gamma}\aut^{\V, (1)}_{\Psi}(a) \cdot \, ^{\Gamma}\aut^{\M, (10)}_{\Psi}(m) \\
        &\forall (w, m) \in \hat{\W}_G \times \hat{\M}_G, \, ^{\Gamma}\aut^{\M, (10)}_{\Psi}(w \cdot m) = \, ^{\Gamma}\aut^{\W, (1)}_{\Psi}(w) \cdot \, ^{\Gamma}\aut^{\M, (10)}_{\Psi}(m)
    \end{align}
\end{lemma}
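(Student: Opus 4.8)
The plan is to derive both twisted identities from their untwisted counterparts in Lemma~\ref{compat_MV}, by unwinding the definitions~\eqref{Def_Gamma_aut_V_1} and~\eqref{Def_Gamma_aut_M_10}. Write $c := \Gamma^{-1}_{\Psi}(-e_1)$, which is an invertible element of $\hat{\V}_G$ (indeed of $\hat{\W}_G$). The structural point driving the argument is that the algebra-side twist is the conjugation $\Ad_c$ while the module-side twist is the left multiplication $\ell_c$; hence in a module product the inner factor $c^{-1}$ produced by $\Ad_c$ cancels against the $\ell_c$ carried by the module element, by the associativity $(xy)\cdot m = x\cdot(y\cdot m)$ of the $\hat{\V}_G$-action on $\hat{\M}_G$.

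For the first identity, fix $(a,m)\in\hat{\V}_G\times\hat{\M}_G$. On the left, \eqref{Def_Gamma_aut_M_10} gives ${}^{\Gamma}\aut^{\M,(10)}_{\Psi}(a\cdot m)=c\cdot\aut^{\M,(10)}_{\Psi}(a\cdot m)$, and Lemma~\ref{compat_MV} rewrites the last factor as $\aut^{\V,(1)}_{\Psi}(a)\cdot\aut^{\M,(10)}_{\Psi}(m)$; module associativity then yields $c\,\aut^{\V,(1)}_{\Psi}(a)\cdot\aut^{\M,(10)}_{\Psi}(m)$. On the right, \eqref{Def_Gamma_aut_V_1} gives ${}^{\Gamma}\aut^{\V,(1)}_{\Psi}(a)=c\,\aut^{\V,(1)}_{\Psi}(a)\,c^{-1}$ and \eqref{Def_Gamma_aut_M_10} gives ${}^{\Gamma}\aut^{\M,(10)}_{\Psi}(m)=c\cdot\aut^{\M,(10)}_{\Psi}(m)$, so their module product is $\big(c\,\aut^{\V,(1)}_{\Psi}(a)\,c^{-1}\big)\cdot\big(c\cdot\aut^{\M,(10)}_{\Psi}(m)\big)$; applying associativity the factor $c^{-1}c$ collapses, producing once more $c\,\aut^{\V,(1)}_{\Psi}(a)\cdot\aut^{\M,(10)}_{\Psi}(m)$. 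The two sides agree.

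For the second identity I would restrict to $\hat{\W}_G$. For $(w,m)\in\hat{\W}_G\times\hat{\M}_G$ one has $w\in\hat{\V}_G$, and Proposition-Definition~\ref{restrict_Gammaaut} gives ${}^{\Gamma}\aut^{\W,(1)}_{\Psi}(w)={}^{\Gamma}\aut^{\V,(1)}_{\Psi}(w)$; thus the second identity is simply the first one specialised to $a=w$.

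I do not expect a genuine obstacle here: once the definitions are unwound the computation is purely formal, exactly parallel to the proof of the untwisted Lemma~\ref{compat_MV}. The only point requiring care is to invoke the module associativity at the right moment so that the conjugating factors $c^{-1}$ and $c$ cancel — which is precisely why the twists were set up with $\Ad_c$ on the algebra side and $\ell_c$ on the module side.
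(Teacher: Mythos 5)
Your proof is correct and follows exactly the route the paper intends: the paper's own proof is just the one-line remark that the lemma ``follows by a computation involving Lemma \ref{compat_MV}'', and your unwinding of the twists $\Ad_c$ versus $\ell_c$ with $c=\Gamma^{-1}_{\Psi}(-e_1)$, using module associativity to cancel $c^{-1}c$, is precisely that computation. The reduction of the second identity to the first by restriction to $\hat{\W}_G$ also mirrors how the paper handles the untwisted Lemma \ref{compat_MV}.
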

\begin{proof}
    Follows by a computation involving Lemma \ref{compat_MV}.
\end{proof}

\begin{proposition}
    \label{Gamma_aut_M_10}
    There is a group action of $(\G(\KX), \circledast)$ on $\hat{\M}_G$ by topological $\K$-module automorphisms
    \begin{equation}
        (\G(\KX), \circledast) \longrightarrow  \Aut_{\K-\Mod}\left(\hat{\M}_G\right), \,\, \Psi \longmapsto \,^{\Gamma}\aut^{\M, (10)}_{\Psi}
    \end{equation}
\end{proposition}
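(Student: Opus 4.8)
The plan is to reduce the statement to the single multiplicativity identity
\[
    {}^{\Gamma}\aut^{\M, (10)}_{\Psi} \circ {}^{\Gamma}\aut^{\M, (10)}_{\Phi} = {}^{\Gamma}\aut^{\M, (10)}_{\Psi \circledast \Phi}, \qquad \Psi, \Phi \in \G(\KX).
\]
Each ${}^{\Gamma}\aut^{\M, (10)}_{\Psi}$ is already a $\K$-module automorphism of $\hat{\M}_G$, being the composite of the automorphism $\aut^{\M, (10)}_{\Psi}$ (Corollary \ref{group_morph_aut_M_10}) with left multiplication by the invertible element $\Gamma_{\Psi}^{-1}(-e_1)$; so once the above identity is known, the assignment $\Psi \mapsto {}^{\Gamma}\aut^{\M, (10)}_{\Psi}$ is automatically a group morphism into $\Aut_{\K-\Mod}(\hat{\M}_G)$ (the unit $1$ being sent to $\mathrm{id}$ follows formally from multiplicativity and invertibility).

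The core of the argument is a commutation step. Expanding the definition (\ref{Def_Gamma_aut_M_10}),
\[
    {}^{\Gamma}\aut^{\M, (10)}_{\Psi} \circ {}^{\Gamma}\aut^{\M, (10)}_{\Phi} = \ell_{\Gamma_{\Psi}^{-1}(-e_1)} \circ \aut^{\M, (10)}_{\Psi} \circ \ell_{\Gamma_{\Phi}^{-1}(-e_1)} \circ \aut^{\M, (10)}_{\Phi},
\]
so I must move $\aut^{\M, (10)}_{\Psi}$ past $\ell_{\Gamma_{\Phi}^{-1}(-e_1)}$. Since $\Gamma_{\Phi}^{-1}(-e_1) \in \hat{\W}_G$, the second identity of Lemma \ref{compat_MV} gives, for $m \in \hat{\M}_G$,
\[
    \aut^{\M, (10)}_{\Psi}\big(\Gamma_{\Phi}^{-1}(-e_1) \cdot m\big) = \aut^{\W, (1)}_{\Psi}\big(\Gamma_{\Phi}^{-1}(-e_1)\big) \cdot \aut^{\M, (10)}_{\Psi}(m).
\]
The key point is then that $\aut^{\W, (1)}_{\Psi}$ fixes $\Gamma_{\Phi}^{-1}(-e_1)$: indeed $\aut^{\V, (1)}_{\Psi}(e_1) = e_1$ (as already used in the proof of Proposition \ref{Gamma_aut_VW_1}), so, being a topological $\K$-algebra automorphism, $\aut^{\W, (1)}_{\Psi}$ fixes every power series in $e_1$, in particular $\Gamma_{\Phi}^{-1}(-e_1)$. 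Hence $\aut^{\M, (10)}_{\Psi} \circ \ell_{\Gamma_{\Phi}^{-1}(-e_1)} = \ell_{\Gamma_{\Phi}^{-1}(-e_1)} \circ \aut^{\M, (10)}_{\Psi}$.

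Finally I would assemble the pieces. The commutation lets me collect the two left multiplications, and since $\Gamma_{\Psi}^{-1}(-e_1)$ and $\Gamma_{\Phi}^{-1}(-e_1)$ are power series in the single variable $e_1$ they commute, giving $\ell_{\Gamma_{\Psi}^{-1}(-e_1)} \circ \ell_{\Gamma_{\Phi}^{-1}(-e_1)} = \ell_{\Gamma_{\Psi}^{-1}(-e_1)\Gamma_{\Phi}^{-1}(-e_1)}$. By Lemma \ref{Gamma_aut} one has $\Gamma_{\Psi \circledast \Phi} = \Gamma_{\Psi}\Gamma_{\Phi}$, whence $\Gamma_{\Psi \circledast \Phi}^{-1}(-e_1) = \Gamma_{\Psi}^{-1}(-e_1)\Gamma_{\Phi}^{-1}(-e_1)$; and Corollary \ref{group_morph_aut_M_10} gives $\aut^{\M, (10)}_{\Psi} \circ \aut^{\M, (10)}_{\Phi} = \aut^{\M, (10)}_{\Psi \circledast \Phi}$. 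Combining these identifies the right-hand side with $\ell_{\Gamma_{\Psi \circledast \Phi}^{-1}(-e_1)} \circ \aut^{\M, (10)}_{\Psi \circledast \Phi} = {}^{\Gamma}\aut^{\M, (10)}_{\Psi \circledast \Phi}$, as desired. I expect the commutation step to be the only real obstacle; everything else is bookkeeping that parallels Corollary \ref{Gamma_SY} on the Racinet side.
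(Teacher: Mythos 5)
Your proposal is correct and follows essentially the same route as the paper: both reduce to the multiplicativity identity, and both hinge on the same commutation $\aut^{\M,(10)}_{\Psi}\circ\ell_{\Gamma^{-1}_{\Phi}(-e_1)}=\ell_{\Gamma^{-1}_{\Phi}(-e_1)}\circ\aut^{\M,(10)}_{\Psi}$, established via Lemma \ref{compat_MV} together with the invariance of $e_1$ under $\aut^{\V,(1)}_{\Psi}$ (the paper uses the first identity of that lemma with $\Gamma^{-1}_{\Phi}(-e_1)$ viewed in $\hat{\V}_G$, you use the second with it viewed in $\hat{\W}_G$ — an immaterial difference). The remaining bookkeeping with Lemma \ref{Gamma_aut} and Corollary \ref{group_morph_aut_M_10} matches the paper's computation.
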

\begin{proof}
    For any $\Psi, \Phi \in \G(\KX)$, we have
    \begin{align*}
        ^{\Gamma}\aut^{\M, (10)}_{\Psi \circledast \Phi} = & \ell_{\Gamma^{-1}_{\Psi \circledast \Phi}(-e_1)} \circ \aut^{\M, (10)}_{\Psi \circledast \Phi} = \ell_{\Gamma^{-1}_{\Psi}(-e_1) \Gamma^{-1}_{\Phi}(-e_1)} \circ \aut^{\M, (10)}_{\Psi} \circ \aut^{\M, (10)}_{\Phi} \\
        = & \ell_{\Gamma^{-1}_{\Psi}(-e_1)} \circ \ell_{\Gamma^{-1}_{\Phi}(-e_1)} \circ \aut^{\M, (10)}_{\Psi} \circ \aut^{\M, (10)}_{\Phi} \\
        = & \ell_{\Gamma^{-1}_{\Psi}(-e_1)} \circ \aut^{\M, (10)}_{\Psi} \circ \ell_{\Gamma^{-1}_{\Phi}(-e_1)} \circ \aut^{\M, (10)}_{\Phi} \\
        = & ^{\Gamma}\aut^{\M, (10)}_{\Psi} \circ \, ^{\Gamma}\aut^{\M, (10)}_{\Phi} 
    \end{align*}
    where the seoncd equality uses Lemma \ref{Gamma_aut} and Corollary \ref{group_morph_aut_M_10}; and the fourth equality comes from the following computation:
    \begin{align*}
        \ell_{\Gamma^{-1}_{\Phi}(-e_1)} \circ \aut_{\Psi}^{\M, (10)}(m) = & \Gamma^{-1}_{\Phi}(-e_1) \, \aut_{\Psi}^{\M, (10)}(m) = \aut_{\Psi}^{\V, (1)}(\Gamma^{-1}_{\Phi}(-e_1)) \, \aut_{\Psi}^{\M,(10)}(m) \\
        = & \aut_{\Psi}^{\M,(10)} (\Gamma^{-1}_{\Phi}(-e_1)m) = \aut_{\Psi}^{\M, (10)} \circ \ell_{\Gamma^{-1}_{\Phi}(-e_1)}(m) 
    \end{align*}
    for any $m \in \hat{\M}_G$; where the second equality uses the fact $e_1$ is invariant under $\aut_{\Psi}^{\V, (1)}$ and the third equality comes from Lemma \ref{compat_MV}.  
\end{proof}

\subsection{The stabilizer groups \texorpdfstring{$\Stab(\hat{\Delta}^{\W}_{G})(\K)$}{StabW} and \texorpdfstring{$\Stab(\hat{\Delta}^{\M}_{G})(\K)$}{StabM}} \label{Stabs}
Using Proposition \ref{Gamma_aut_VW_1}, we define the following group action  of $(\G(\KX), \circledast)$ on $\Mor_{\K-\alg}\left(\hat{\W}_G, \left(\hat{\W}_G\right)^{\hat{\otimes} 2}\right)$:
\begin{equation}
    \label{act_on_DeltaW}
    \Psi \cdot D^{\W} := \left({^{\Gamma}\aut^{\W, (1)}_{\Psi}}\right)^{\otimes 2} \circ D^{\W} \circ \left(^{\Gamma}\aut^{\W, (1)}_{\Psi}\right)^{-1}.
\end{equation}
In particular, the stabilizer of $\hat{\Delta}^{\W}_{G}$ is the subgroup
\begin{equation}
    \label{Stab_DeltaW}
    \Stab(\hat{\Delta}^{\W}_{G})(\K) := \left\{ \Psi \in \G(\KX) \, | \, \left( {^{\Gamma}\aut^{\W, (1)}_{\Psi}} \right)^{\otimes 2} \circ \hat{\Delta}^{\W}_{G} = \hat{\Delta}^{\W}_{G} \circ {^{\Gamma}\aut^{\W, (1)}_{\Psi}} \right\}.
\end{equation}

Similarly, Proposition \ref{Gamma_aut_M_10} provides a group action of $(\G(\KX), \circledast)$ on the $\K$-module $\Mor_{\K-\Mod}\left(\hat{\M}_G, \left(\hat{\M}_G\right)^{\hat{\otimes} 2}\right)$:
\begin{equation}
    \label{act_on_DeltaM}
    \Psi \cdot D^{\M} := \left({^{\Gamma}\aut^{\M, (10)}_{\Psi}}\right)^{\otimes 2} \circ D^{\M} \circ \left(^{\Gamma}\aut^{\M, (10)}_{\Psi}\right)^{-1}.
\end{equation}
In particular, the stabilizer of $\hat{\Delta}^{\M}_{G}$ is the subgroup
\begin{equation}
    \label{Stab_DeltaM}
    \Stab(\hat{\Delta}^{\M}_{G})(\K) := \left\{ \Psi \in \G(\KX) \, | \, \left( {^{\Gamma}\aut^{\M, (10)}_{\Psi}} \right)^{\otimes 2} \circ \hat{\Delta}^{\M}_{G} = \hat{\Delta}^{\M}_{G} \circ {^{\Gamma}\aut^{\M, (10)}_{\Psi}} \right\}.
\end{equation}

\noindent We then have the following inclusion of subgroups
\begin{theorem}
    \label{Stab_Inclusion}
    $\Stab(\hat{\Delta}^{\M}_{G})(\K) \subset \Stab(\hat{\Delta}^{\W}_{G})(\K)$ (as subgroups of $(\G(\KX), \circledast)$).
\end{theorem}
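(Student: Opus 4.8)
The plan is to transport the entire problem from $\hat{\M}_G$ to $\hat{\W}_G$ along the $\K$-module isomorphism $- \cdot 1_{\M} : \hat{\W}_G \to \hat{\M}_G$ of Corollary \ref{iso_WG_MG}\ref{item_iso_WG_MG}, exploiting that this map intertwines both the coproducts and the two twisted actions. Write $F_\Psi := {^{\Gamma}\aut^{\W, (1)}_{\Psi}}$ and $H_\Psi := {^{\Gamma}\aut^{\M, (10)}_{\Psi}}$. First I would record two intertwining identities. The commutativity of Diagram (\ref{diag_DeltaW_DeltaM}) gives $\hat{\Delta}^{\M}_G \circ (- \cdot 1_{\M}) = (- \cdot 1_{\M})^{\otimes 2} \circ \hat{\Delta}^{\W}_G$. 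Setting $m = 1_{\M}$ in the second identity of Lemma \ref{compat_Gamma_MV}, and letting $w_\Psi \in \hat{\W}_G$ be the unique element (existing since $- \cdot 1_{\M}$ is an isomorphism) with $H_\Psi(1_{\M}) = w_\Psi \cdot 1_{\M}$, one obtains $H_\Psi(w \cdot 1_{\M}) = (F_\Psi(w)\, w_\Psi)\cdot 1_{\M}$ for all $w \in \hat{\W}_G$; that is, $H_\Psi \circ (- \cdot 1_{\M}) = (- \cdot 1_{\M}) \circ r_{w_\Psi} \circ F_\Psi$, where $r_{w_\Psi}$ denotes right multiplication by $w_\Psi$ in $\hat{\W}_G$.

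Next I would take $\Psi \in \Stab(\hat{\Delta}^{\M}_G)(\K)$ and precompose the defining identity $H_\Psi^{\otimes 2} \circ \hat{\Delta}^{\M}_G = \hat{\Delta}^{\M}_G \circ H_\Psi$ with $- \cdot 1_{\M}$. Substituting the two intertwining identities on each side and cancelling the isomorphism $(- \cdot 1_{\M})^{\otimes 2}$ yields the equivalent condition on $\hat{\W}_G$, namely $G_\Psi^{\otimes 2} \circ \hat{\Delta}^{\W}_G = \hat{\Delta}^{\W}_G \circ G_\Psi$ with $G_\Psi := r_{w_\Psi} \circ F_\Psi$. Evaluating this at $w = 1$, and using that $F_\Psi(1) = 1$ (since $F_\Psi$ is an algebra automorphism by Proposition-Definition \ref{restrict_Gammaaut}) together with $\hat{\Delta}^{\W}_G(1) = 1 \otimes 1$, gives $\hat{\Delta}^{\W}_G(w_\Psi) = w_\Psi \otimes w_\Psi$. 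Thus $w_\Psi$ is grouplike; applying the counit shows its constant term is $1$, so $w_\Psi$ is invertible in the complete algebra $\hat{\W}_G$ and $w_\Psi \otimes w_\Psi$ is invertible in $(\hat{\W}_G)^{\hat{\otimes} 2}$.

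Finally I would expand the transported condition in Sweedler notation. For $w \in \hat{\W}_G$ with $\hat{\Delta}^{\W}_G(w) = \sum w_{(1)} \otimes w_{(2)}$, the left-hand side reads $\sum F_\Psi(w_{(1)}) w_\Psi \otimes F_\Psi(w_{(2)}) w_\Psi = \big(F_\Psi^{\otimes 2}(\hat{\Delta}^{\W}_G(w))\big) \cdot (w_\Psi \otimes w_\Psi)$, whereas the right-hand side, using that $\hat{\Delta}^{\W}_G$ is an algebra morphism and that $w_\Psi$ is grouplike, reads $\hat{\Delta}^{\W}_G(F_\Psi(w)) \cdot \hat{\Delta}^{\W}_G(w_\Psi) = \hat{\Delta}^{\W}_G(F_\Psi(w)) \cdot (w_\Psi \otimes w_\Psi)$. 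Cancelling the invertible factor $w_\Psi \otimes w_\Psi$ on the right gives $F_\Psi^{\otimes 2} \circ \hat{\Delta}^{\W}_G = \hat{\Delta}^{\W}_G \circ F_\Psi$, which is precisely the condition $\Psi \in \Stab(\hat{\Delta}^{\W}_G)(\K)$.

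The crux, and the step I expect to require the most care, is the appearance of the correction element $w_\Psi$: because $H_\Psi$ does not fix $1_{\M}$, the operator transported to $\hat{\W}_G$ is the twist $r_{w_\Psi} \circ F_\Psi$ rather than $F_\Psi$ itself. The decisive input is that the hypothesis $\Psi \in \Stab(\hat{\Delta}^{\M}_G)(\K)$ forces $w_\Psi$ to be grouplike (equivalently, that $H_\Psi(1_{\M})$ is grouplike in $\hat{\M}_G$); this is exactly what permits stripping off the factors $w_\Psi \otimes w_\Psi$ and recovering the $\hat{\Delta}^{\W}_G$-stabilizer identity, and without it the inclusion would not follow. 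The remaining verifications — that $- \cdot 1_{\M}$ intertwines the structures and that $w_\Psi$ is invertible — are routine.
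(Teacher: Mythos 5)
Your proof is correct and is essentially the paper's own argument transported across the isomorphism $- \cdot 1_{\M} : \hat{\W}_G \to \hat{\M}_G$: your element $w_\Psi$ and its grouplikeness correspond exactly to the paper's identity (\ref{Image_of_1}), which states that $^{\Gamma}\aut^{\M,(10)}_{\Psi}(1_{\M}) = \Gamma^{-1}_{\Psi}(-e_1)\beta(\Psi \otimes 1)\cdot 1_{\M}$ is grouplike in $\hat{\M}_G$, and your cancellation of the invertible factor $w_\Psi \otimes w_\Psi$ matches the paper's cancellation of $\left(\Gamma^{-1}_{\Psi}(-e_1)\beta(\Psi\otimes 1)\cdot 1_{\M}\right)^{\otimes 2}$ via the left $\hat{\W}_G$-module isomorphism $w \mapsto w\,\Gamma^{-1}_{\Psi}(-e_1)\beta(\Psi\otimes 1)\cdot 1_{\M}$. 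The only difference is presentational (Sweedler notation on $\hat{\W}_G$ versus a chain of operator identities evaluated in $\hat{\M}_G^{\hat{\otimes}2}$), and your version makes the role of grouplikeness slightly more explicit.
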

\begin{proof}
    Let $\Psi \in \Stab(\hat{\Delta}^{\M}_{G})(\K)$. First, let us notice that
    \begin{align}
        & \left(\Gamma^{-1}_{\Psi}(-e_1) \beta(\Psi \otimes 1) \cdot 1_{\M}\right)^{\otimes 2} = \left(\,^{\Gamma}\aut^{\M, (10)}_{\Psi}(1_{\M})\right)^{\otimes 2} \label{Image_of_1} \\
        & = \left(\,^{\Gamma}\aut^{\M, (10)}_{\Psi}\right)^{\otimes 2} \circ \hat{\Delta}^{\M}_G(1_{\M}) = \hat{\Delta}^{\M}_G \circ \, ^{\Gamma}\aut^{\M, (10)}_{\Psi}(1_{\M}), \notag
    \end{align}
    where the last equality follows from the assumption on $\Psi$. Then for any $w \in \hat{\W}_G$,
    \begin{align}
        \label{mid_computation}
        &\hat{\Delta}^{\W}_G\left(\,^{\Gamma}\aut^{\W, (1)}_{\Psi}(w)\right) \cdot \left(\Gamma^{-1}_{\Psi}(-e_1) \beta(\Psi \otimes 1) \cdot 1_{\M}\right)^{\otimes 2} \\
        &= \hat{\Delta}^{\W}_G\left(\,^{\Gamma}\aut^{\W, (1)}_{\Psi}(w)\right) \cdot \hat{\Delta}^{\M}_G \left(\,^{\Gamma}\aut^{\M, (10)}_{\Psi}(1_{\M})\right) \notag \\ 
        &= \hat{\Delta}^{\M}_G \left(\,^{\Gamma}\aut^{\W, (1)}_{\Psi}(w) \cdot ^{\Gamma}\aut^{\M, (10)}_{\Psi}(1_{\M}) \right) = \hat{\Delta}^{\M}_G \left(\, ^{\Gamma}\aut^{\M, (10)}_{\Psi}(w \cdot 1_{\M}) \right) \notag \\
        &= \left(\, ^{\Gamma}\aut^{\M, (10)}_{\Psi} \right)^{\otimes 2} \circ \hat{\Delta}^{\M}_G (w \cdot 1_{\M}) = \left(\, ^{\Gamma}\aut^{\M, (10)}_{\Psi} \right)^{\otimes 2} \left(\hat{\Delta}^{\W}_G (w) \cdot \hat{\Delta}^{\M}_G(1_{\M})\right) \notag \\
        &= \left(\, ^{\Gamma}\aut^{\W, (1)}_{\Psi} \right)^{\otimes 2} \left(\hat{\Delta}^{\W}_G (w)\right) \cdot \left(\, ^{\Gamma}\aut^{\M, (10)}_{\Psi} \right)^{\otimes 2} \left(\hat{\Delta}^{\M}_G(1_{\M})\right) \notag \\
        &= \left(\, ^{\Gamma}\aut^{\W, (1)}_{\Psi} \right)^{\otimes 2} \left(\hat{\Delta}^{\W}_G (w)\right) \cdot \left(\Gamma^{-1}_{\Psi}(-e_1) \beta(\Psi \otimes 1) \cdot 1_{\M}\right)^{\otimes 2} \notag
    \end{align}
    where the first and seventh equality come from (\ref{Image_of_1}), the second and the fifth from Proposition-Definition \ref{DeltaW_DeltaM}\ref{compat_DeltaW_DeltaM}, the third and the sixth from Lemma \ref{compat_Gamma_MV} and the fourth from the fact that $\Psi \in \Stab(\hat{\Delta}^{\M}_G)(\K)$. Next, since $\Gamma^{-1}_{\Psi}(-e_1) \beta(\Psi \otimes 1)$ is invertible in $\hat{\V}_G$, the map $\hat{\W}_G \to \hat{\M}_G, \, w \mapsto w \, \Gamma^{-1}_{\Psi}(-e_1) \beta(\Psi \otimes 1) \cdot 1_{\M}$ is an isomorphism of left $\hat{\W}_G$-modules. Consequently, Identity (\ref{mid_computation}) implies that
    \begin{equation}
        \forall w \in \hat{\W}_G, \, \left(\, ^{\Gamma}\aut^{\W, (1)}_{\Psi} \right)^{\otimes 2} \left(\hat{\Delta}^{\W}_G (w)\right) = \hat{\Delta}^{\W}_G\left(\,^{\Gamma}\aut^{\W, (1)}_{\Psi}(w)\right),  
    \end{equation}
    thus establishing that $\Psi \in \Stab(\hat{\Delta}^{\W}_G)(\K)$.
\end{proof}
	\section{The stabilizer groups in terms of Racinet's formalism} \label{StabRac}
In this part, we translate the inclusion of stabilizers in Theorem \ref{Stab_Inclusion} into Racinet's formalism. In §\ref{sect_StabM_Stab*}, we relate the various $(\G(\KX), \circledast)$-actions we recalled from \cite{Rac} in §\ref{Racinet_formalism} and the ones we constructed in §\ref{crossed_product}.
This allows us to identify the group $\Stab(\hat{\Delta}^{\M}_G)$ from (\ref{Stab_DeltaM}) with the group $\Stab(\hat{\Delta}^{\Mod}_{\star})$ from \cite{EF0}. In §\ref{sect_StabW_Stab*}, we transport the action of the group $(\G(\KX), \circledast)$ on $\hat{\W}_G$ given in \ref{Gamma_aut_VW_1}\ref{Gamma_aut_W_1} into an action of the same group on the algebra $\KY$ and express the latter action in terms of Racinet's formalism. This enables us to identify the stabilizer group $\Stab(\hat{\Delta}^{\W}_G)$ given in \ref{Stab_DeltaW} with a group $\Stab(\hat{\Delta}^{\alg}_{\star})$ defined in the framework of Racinet's formalism.
The inclusion of stabilizers from Theorem \ref{Stab_Inclusion} is then expressed as the inclusion $\Stab(\hat{\Delta}^{\Mod}_{\star}) \subset \Stab(\hat{\Delta}^{\alg}_{\star})$ (see Corollary \ref{StabDeltaAlg_StabDeltaMod}).

\subsection{Identification of the subgroups \texorpdfstring{$\Stab(\hat{\Delta}^{\M}_G)$}{StabDeltaM} and \texorpdfstring{$\Stab(\hat{\Delta}^{\Mod}_{\star})$}{StabDelta*}} \label{sect_StabM_Stab*}
\subsubsection{A \texorpdfstring{$(\G(\KX), \circledast)$}{GKX}-module isomorphism}
Let us recall $\beta : \KX \rtimes G \to \hat{\V}_G$ the $\K$-algebra isomorphism given in Proposition \ref{isoVGetKXG} \ref{KXGtoVG}.
\begin{lemma}
    For $\Psi \in \G(\KX)$, the following diagram
    \begin{equation}
        \label{diag_link_aut_aut0}
        \begin{tikzcd}
            \KX \ar["\beta \circ (- \otimes 1)"]{rr} \ar["\aut_{\Psi}"']{d} && \hat{\V}_G \ar["\aut^{\V, (0)}_{\Psi}"]{d} \\
            \KX \ar["\beta \circ (- \otimes 1)"']{rr} && \hat{\V}_G
        \end{tikzcd}
    \end{equation}
    commutes; where $\aut_{\Psi}$ is the $\K$-algebra automorphism of $\KX$ given in (\ref{aut_Psi}) and $\aut^{\V, (0)}_{\Psi}$ is the $\K$-algebra automorphism of $\hat{\V}_G$ given in Proposition-Definition \ref{aut0}.
    \label{link_aut_aut0}
\end{lemma}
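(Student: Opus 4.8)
The plan is to deduce this lemma directly from the commutative square (\ref{beta_auts}) established in the proof of Proposition-Definition \ref{aut0}, which states that $\aut^{\V, (0)}_{\Psi} \circ \beta = \beta \circ (\aut_{\Psi} \otimes \mathrm{id}_{\K G})$ as morphisms $\KX \rtimes G \to \hat{\V}_G$. The key observation is that $\beta \circ (- \otimes 1)$ is the composite of $\beta$ with the canonical map $\iota : \KX \to \KX \rtimes G$, $a \mapsto a \otimes 1$. Since $t_1 = \mathrm{id}_{\KX}$, the product rule of Definition \ref{Def_Crossed_Product} gives $(a \otimes 1) \ast (b \otimes 1) = ab \otimes 1$, so $\iota$ is a $\K$-algebra morphism, and hence so is $\beta \circ \iota$.

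First I would record the intertwining relation $(\aut_{\Psi} \otimes \mathrm{id}_{\K G}) \circ \iota = \iota \circ \aut_{\Psi}$, which is immediate from $(\aut_{\Psi} \otimes \mathrm{id}_{\K G})(a \otimes 1) = \aut_{\Psi}(a) \otimes 1$. Pasting this onto (\ref{beta_auts}) then yields
\[
    \aut^{\V, (0)}_{\Psi} \circ (\beta \circ \iota) = \beta \circ (\aut_{\Psi} \otimes \mathrm{id}_{\K G}) \circ \iota = (\beta \circ \iota) \circ \aut_{\Psi},
\]
which is exactly the commutativity asserted by the lemma.

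Alternatively, one could argue directly: all three maps $\beta \circ (- \otimes 1)$, $\aut_{\Psi}$ and $\aut^{\V, (0)}_{\Psi}$ are $\K$-algebra morphisms, so it suffices to check equality on the generators $x_0$ and $x_g$ ($g \in G$) of $\KX$. For $x_0$ both composites return $e_0$, while for $x_g$ the required computation is precisely the one already carried out in the third bullet of the proof of Proposition-Definition \ref{aut0}. Either route is routine; there is no genuine obstacle, the only points to verify being that $\iota$ is an algebra morphism (which rests on $t_1 = \mathrm{id}$) and that it correctly intertwines $\aut_{\Psi}$ with $\aut_{\Psi} \otimes \mathrm{id}_{\K G}$.
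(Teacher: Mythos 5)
Your first route is exactly the paper's proof: the paper also obtains the lemma by pasting the square expressing that $- \otimes 1$ intertwines $\aut_{\Psi}$ with $\aut_{\Psi} \otimes \mathrm{id}_{\K G}$ onto Diagram (\ref{beta_auts}). The argument is correct and essentially identical to the one in the text.
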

\begin{proof}
    This is done by left composing Diagram (\ref{beta_auts}) with the following commutative diagram
    \begin{equation*}
        \begin{tikzcd}
            \KX \ar["- \otimes 1"]{r} \ar["\aut_{\Psi}"']{d} & \KX \rtimes G \ar["\aut_{\Psi} \otimes \mathrm{id}_{\K G}"]{d} \\
            \KX \ar["- \otimes 1"']{r} & \KX \rtimes G
        \end{tikzcd}
    \end{equation*}
\end{proof}

\begin{lemma}
    For $\Psi \in \G(\KX)$, the following diagram
    \begin{equation}
        \label{diag_link_S_aut10}
        \begin{tikzcd}
            \KX \ar["\beta \circ (- \otimes 1)"]{rr} \ar["S_{\Psi}"']{d} && \hat{\V}_G \ar["\aut^{\V, (10)}_{\Psi}"]{d} \\
            \KX \ar["\beta \circ (- \otimes 1)"']{rr} && \hat{\V}_G
        \end{tikzcd}
    \end{equation}
    commutes; where $S_{\Psi}$ is the $\K$-module automorphism of $\KX$ given in (\ref{eq_S_Psi}) and $\aut^{\V, (10)}_{\Psi}$ is the $\K$-module automorphism of $\hat{\V}_G$ given in (\ref{eq_aut10}).
    \label{link_S_aut10}
\end{lemma}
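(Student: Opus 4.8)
The plan is to reduce the statement to the already-established Lemma \ref{link_aut_aut0} by peeling off the left-multiplication factors that distinguish $S_\Psi$ from $\aut_\Psi$ and $\aut^{\V,(10)}_\Psi$ from $\aut^{\V,(0)}_\Psi$. Since $\beta \circ (-\otimes 1) : \KX \to \hat{\V}_G$ is a $\K$-algebra morphism and every map involved is built from $\K$-algebra morphisms and left multiplications, it suffices to verify the identity $\aut^{\V,(10)}_\Psi(\beta(v \otimes 1)) = \beta(S_\Psi(v) \otimes 1)$ for an arbitrary $v \in \KX$ by a direct computation.

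First I would unfold the definition $\aut^{\V,(10)}_\Psi = \ell_{\beta(\Psi \otimes 1)} \circ \aut^{\V,(0)}_\Psi$ from (\ref{eq_aut10}), which gives
\[
    \aut^{\V,(10)}_\Psi(\beta(v \otimes 1)) = \beta(\Psi \otimes 1)\, \aut^{\V,(0)}_\Psi(\beta(v \otimes 1)).
\]
Then I would apply Lemma \ref{link_aut_aut0}, which replaces $\aut^{\V,(0)}_\Psi(\beta(v \otimes 1))$ by $\beta(\aut_\Psi(v) \otimes 1)$, so that the right-hand side becomes $\beta(\Psi \otimes 1)\,\beta(\aut_\Psi(v) \otimes 1)$. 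Using that $\beta$ is a $\K$-algebra morphism, this equals $\beta\big((\Psi \otimes 1) \ast (\aut_\Psi(v) \otimes 1)\big)$.

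The last step is to evaluate the crossed product $\ast$: from Definition \ref{Def_Crossed_Product}, with both group components equal to $1$ and since $t_1 = \mathrm{id}_{\KX}$, one obtains $(\Psi \otimes 1) \ast (\aut_\Psi(v) \otimes 1) = \Psi\, \aut_\Psi(v) \otimes 1$. Recognising $\Psi\, \aut_\Psi(v) = \ell_\Psi \circ \aut_\Psi(v) = S_\Psi(v)$ from (\ref{eq_S_Psi}) then yields $\beta(S_\Psi(v) \otimes 1)$, which is exactly the clockwise composite of the diagram. There is no genuine obstacle here; the only point requiring minor care is that transporting left multiplication by $\Psi \otimes 1$ inside $\KX \rtimes G$ through $\beta$ matches left multiplication by $\Psi$ inside $\KX$ with \emph{no} twist, precisely because the relevant group component is $1$ and $t_1$ is the identity.
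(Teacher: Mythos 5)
Your proof is correct and follows essentially the same route as the paper: both decompose $S_{\Psi}$ and $\aut^{\V,(10)}_{\Psi}$ into their left-multiplication and automorphism parts via (\ref{eq_S_Psi}) and (\ref{eq_aut10}), invoke Lemma \ref{link_aut_aut0} for the automorphism part, and check that left multiplication by $\Psi$ is intertwined with left multiplication by $\beta(\Psi\otimes 1)$ through $\beta\circ(-\otimes 1)$. The paper states this last compatibility as a commutative square without detail, whereas you verify it explicitly via the crossed-product formula; the content is the same.
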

\begin{proof}
    Thanks to Identities (\ref{eq_S_Psi}) and (\ref{eq_aut10}), this is done by composing from the bottom Diagram (\ref{diag_link_aut_aut0}) with the following commutative diagram
    \begin{equation*}
        \begin{tikzcd}
            \KX \ar["\beta \circ (- \otimes 1)"]{rr} \ar["\ell_{\Psi}"']{d} && \hat{\V}_G \ar["\ell_{\beta(\Psi \otimes 1)}"]{d} \\
            \KX \ar["\beta \circ (- \otimes 1)"']{rr} && \hat{\V}_G
        \end{tikzcd}
    \end{equation*} 
\end{proof}

\begin{lemma}
    For $\Psi \in \G(\KX)$, the following diagram
    \begin{equation}
        \label{diag_link_SY_autM10}
        \begin{tikzcd}
            \KX/\KX x_0 \ar["\kappa \circ \overline{\q}^{-1}"]{rr} \ar["S_{\Psi}^Y"']{d} && \hat{\M}_G \ar["\aut^{\M, (10)}_{\Psi}"]{d} \\
            \KX/\KX x_0 \ar["\kappa \circ \overline{\q}^{-1}"']{rr} && \hat{\M}_G
        \end{tikzcd}
    \end{equation}
    commutes; where $S^Y_{\Psi}$ is the $\K$-module automorphism of $\KX / \KX x_0$ given in Proposition-Definition \ref{S_Y}, $\aut^{\M, (10)}_{\Psi}$ is the $\K$-module automorphism of $\hat{\M}_G$ given in Proposition-Definition \ref{aut_M10}, $\kappa : \KX / \KX x_0 \to \hat{\M}_G$ is the $\K$-module isomorphism in Proposition \ref{isoMG} and $\overline{\q}$ is the $\K$-module automorphism of $\KX / \KX x_0$ given in §\ref{basic_objects}. 
    \label{link_SY_autM10}
\end{lemma}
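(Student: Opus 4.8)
The plan is to reduce the claimed commutativity to the already-established squares by precomposing with the surjection $\overline{\q} \circ \pi_Y \colon \KX \to \KX/\KX x_0$. First I would observe that $\pi_Y$ is surjective and $\overline{\q}$ is a $\K$-module automorphism, so $\overline{\q}\circ\pi_Y$ is surjective; hence two $\K$-module morphisms out of $\KX/\KX x_0$ coincide as soon as they agree after precomposition with $\overline{\q}\circ\pi_Y$. This is precisely what lets the awkward factors $\overline{\q}$ and $\overline{\q}^{-1}$ sitting on the horizontal arrows cancel cleanly.

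Next I would chase the two composites. Going along the top and then down the right edge, precomposition with $\overline{\q}\circ\pi_Y$ cancels the $\overline{\q}^{-1}$, leaving $\aut^{\M,(10)}_{\Psi} \circ \kappa \circ \pi_Y$. Applying Proposition \ref{isoMG} (Diagram (\ref{diag_iso_MG})) rewrites $\kappa\circ\pi_Y$ as $(-\cdot 1_{\M})\circ\beta\circ(-\otimes 1)$; Proposition-Definition \ref{aut_M10} then moves $\aut^{\M,(10)}_{\Psi}$ across the quotient map, giving $(-\cdot 1_{\M})\circ\aut^{\V,(10)}_{\Psi}\circ\beta\circ(-\otimes 1)$; Lemma \ref{link_S_aut10} (Diagram (\ref{diag_link_S_aut10})) turns $\aut^{\V,(10)}_{\Psi}\circ\beta\circ(-\otimes 1)$ into $\beta\circ(-\otimes 1)\circ S_{\Psi}$; and a second use of Proposition \ref{isoMG} collapses the result to $\kappa\circ\pi_Y\circ S_{\Psi}$.

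For the other composite, going down the left edge and then along the bottom gives $\kappa\circ\overline{\q}^{-1}\circ S_{\Psi}^Y$, and precomposing with $\overline{\q}\circ\pi_Y$ lets me invoke the defining property of $S_{\Psi}^Y$ from Proposition-Definition \ref{S_Y}, namely $S_{\Psi}^Y\circ\overline{\q}\circ\pi_Y=\overline{\q}\circ\pi_Y\circ S_{\Psi}$; the two copies of $\overline{\q}$ then cancel, leaving $\kappa\circ\pi_Y\circ S_{\Psi}$ as well. Since both precompositions produce the same map $\KX\to\hat{\M}_G$ and $\overline{\q}\circ\pi_Y$ is surjective, the two morphisms $\KX/\KX x_0\to\hat{\M}_G$ are equal, which is the asserted commutativity.

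I expect the only real subtlety to be the bookkeeping of the $\overline{\q}$ and $\overline{\q}^{-1}$ factors: the whole point is that the definition of $S_{\Psi}^Y$ in Proposition-Definition \ref{S_Y} is phrased in terms of $\overline{\q}\circ\pi_Y$ rather than $\pi_Y$ alone, so precomposing by exactly this surjection is what aligns the two sides. Everything else is a purely formal paste of the four commutative squares coming from Propositions \ref{isoMG}, \ref{aut_M10}, \ref{S_Y} and Lemma \ref{link_S_aut10}, with no genuine computation required.
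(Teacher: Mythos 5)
Your proof is correct and is essentially the paper's own argument: the paper assembles the same four commuting squares (the defining squares of $S_{\Psi}^Y$ and $\aut^{\M,(10)}_{\Psi}$, Diagram (\ref{diag_iso_MG}), and Lemma \ref{link_S_aut10}) into a cube and concludes by the surjectivity of $\overline{\q}\circ\pi_Y$, which is exactly your precomposition-and-cancellation chase written diagrammatically. No gaps.
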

\begin{proof}
    Let us consider the following cube
    \begin{equation*}
        \begin{tikzcd}[row sep={40,between origins}, column sep={40,between origins}]
            &  & \mathbf{k}\langle\langle X \rangle\rangle/\mathbf{k}\langle\langle X \rangle\rangle x_0 \arrow[ddd, "S^Y_{\Psi}"] \arrow[rrrr, "\kappa \circ \overline{\q}^{-1}"] & & & & \hat{\mathcal{M}}_G \arrow[ddd, "{\mathrm{aut}^{\mathcal{M}, (10)}_{\Psi}}"] \\
            \mathbf{k}\langle\langle X \rangle\rangle \arrow[rrrr, "\hspace{1cm}\beta \circ (- \otimes 1)"] \arrow[ddd, "S_{\Psi}"'] \arrow[rru, "\overline{\q} \circ \pi_Y"] & & & & \hat{\mathcal{V}}_G \arrow[ddd, "{\mathrm{aut}^{\mathcal{V}, (10)}_{\Psi}}"] \arrow[rru, "- \cdot 1_{\mathcal{M}}"] & & \\
            & & & & & & \\
            & & \mathbf{k}\langle\langle X \rangle\rangle/\mathbf{k}\langle\langle X \rangle\rangle x_0 \arrow[rrrr, "\kappa \circ \overline{\q}^{-1}"'] & & & & \hat{\mathcal{M}}_G \\
            \mathbf{k}\langle\langle X \rangle\rangle \arrow[rrrr, "\beta \circ (- \otimes 1)"'] \arrow[rru, "\overline{\q} \circ \pi_Y"] & & & & \hat{\mathcal{V}}_G \arrow[rru, "- \cdot 1_{\mathcal{M}}"'] & &
        \end{tikzcd}
    \end{equation*}
    First, the left (resp. right) side commutes by definition of $S_{\Psi}^Y$ (resp. $\aut_{\Psi}^{\M, (10)}$). Then, the upper and lower sides are exactly the same square, which is commutative thanks to Proposition \ref{isoMG}. Finally, Lemma \ref{link_S_aut10} gives us the commutativity of the front side. This collection of commutativities together with the surjectivity of $\overline{\q} \circ \pi_Y$ implies that the back side of the cube commutes, which is exactly Diagram (\ref{diag_link_SY_autM10}).
\end{proof}

\begin{proposition}
    For $\Psi \in \G(\KX)$, the following diagram
    \begin{equation}
        \label{diag_link_GammaSY_GammaautM10}
        \begin{tikzcd}
            \KX/\KX x_0 \ar["\kappa \circ \overline{\q}^{-1}"]{rr} \ar["^{\Gamma}S_{\Psi}^Y"']{d} && \hat{\M}_G \ar["^{\Gamma}\aut^{\M, (10)}_{\Psi}"]{d} \\
            \KX/\KX x_0 \ar["\kappa \circ \overline{\q}^{-1}"']{rr} && \hat{\M}_G
        \end{tikzcd}
    \end{equation}
    commutes; where $^{\Gamma}S_{\Psi}^Y$ and $^{\Gamma}\aut^{\M, (10)}_{\Psi}$ are respectively $\K$-module automorphisms of $\KX/\KX x_0$ and $\hat{\M}_G$. It follows that $\kappa \circ \overline{\q}^{-1}$ is an isomorphism between the $(\G(\KX), \circledast)$-modules\footnote{see Corollary \ref{Gamma_SY} (resp. Proposition \ref{Gamma_aut_M_10}) for the $(\G(\KX), \circledast)$-module structure of $\KX/\KX x_0$ (resp. $\hat{\M}_G$)} $\KX/\KX x_0$ and $\hat{\M}_G$.
    \label{link_GammaSY_GammaautM10}
\end{proposition}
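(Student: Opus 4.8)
The plan is to reduce the statement to the untwisted commutativity of Lemma~\ref{link_SY_autM10} and to absorb the $\Gamma$-correction into a single intertwining identity for left multiplications. Writing $\rho := \kappa \circ \overline{\q}^{-1}$ for brevity, Lemma~\ref{link_SY_autM10} already gives $\aut^{\M, (10)}_{\Psi} \circ \rho = \rho \circ S_{\Psi}^Y$. Unfolding the definitions ${}^{\Gamma}S_{\Psi}^Y = \ell_{\Gamma^{-1}_{\Psi}(x_1)} \circ S_{\Psi}^Y$ and ${}^{\Gamma}\aut^{\M, (10)}_{\Psi} = \ell_{\Gamma^{-1}_{\Psi}(-e_1)} \circ \aut^{\M, (10)}_{\Psi}$, I would compute
\begin{equation*}
    {}^{\Gamma}\aut^{\M, (10)}_{\Psi} \circ \rho = \ell_{\Gamma^{-1}_{\Psi}(-e_1)} \circ \aut^{\M, (10)}_{\Psi} \circ \rho = \ell_{\Gamma^{-1}_{\Psi}(-e_1)} \circ \rho \circ S_{\Psi}^Y,
\end{equation*}
whereas $\rho \circ {}^{\Gamma}S_{\Psi}^Y = \rho \circ \ell_{\Gamma^{-1}_{\Psi}(x_1)} \circ S_{\Psi}^Y$. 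Since $S_{\Psi}^Y$ is an automorphism, the commutativity of Diagram~\eqref{diag_link_GammaSY_GammaautM10} reduces to showing the identity $\ell_{\Gamma^{-1}_{\Psi}(-e_1)} \circ \rho = \rho \circ \ell_{\Gamma^{-1}_{\Psi}(x_1)}$, i.e. to the assertion that $\rho$ intertwines left multiplication by $\Gamma^{-1}_{\Psi}(x_1)$ on $\KX/\KX x_0$ with left multiplication by $\Gamma^{-1}_{\Psi}(-e_1)$ on $\hat{\M}_G$.

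To establish this intertwining -- which is the one genuinely new point -- I would use the commutative square $\rho \circ \pi_Y = (-\cdot 1_{\M}) \circ \varpi$ of Corollary~\ref{iso_WG_MG}\ref{item_diag_projections}, together with the observation that $\varpi(x_1) = \varpi(y_{1,1}) = z_{1,1} = -e_1$. Because $\varpi$ is a continuous $\K$-algebra isomorphism and $\Gamma^{-1}_{\Psi}(x_1)$ is a power series in $x_1 = y_{1,1} \in \KY$, this yields $\varpi\!\left(\Gamma^{-1}_{\Psi}(x_1)\right) = \Gamma^{-1}_{\Psi}(-e_1)$. Every element of $\KX/\KX x_0$ has the form $\pi_Y(a)$ for a unique $a \in \KY$; applying both sides of the intertwining identity to such an element and using that $\pi_Y$ and $-\cdot 1_{\M}$ are left module morphisms, that $\KY$ is a subalgebra of $\KX$, and that $\varpi$ is multiplicative, I expect both sides to collapse to $\varpi\!\left(\Gamma^{-1}_{\Psi}(x_1)\,a\right) \cdot 1_{\M}$, which proves the identity.

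Finally, for the concluding assertion I would argue formally: $\rho = \kappa \circ \overline{\q}^{-1}$ is a $\K$-module isomorphism by Proposition~\ref{isoMG}, the assignments $\Psi \mapsto {}^{\Gamma}S_{\Psi}^Y$ and $\Psi \mapsto {}^{\Gamma}\aut^{\M, (10)}_{\Psi}$ are group actions of $(\G(\KX), \circledast)$ by Corollary~\ref{Gamma_SY} and Proposition~\ref{Gamma_aut_M_10}, and the diagram just proved expresses exactly the $(\G(\KX), \circledast)$-equivariance of $\rho$; hence $\rho$ is an isomorphism of $(\G(\KX), \circledast)$-modules. I do not anticipate a serious obstacle here: the only computation with content is the intertwining of the two left multiplications, and that reduces, via Corollary~\ref{iso_WG_MG}, to the single algebraic fact $\varpi(x_1) = -e_1$; the remainder is diagram-chasing with the module-morphism properties already recorded.
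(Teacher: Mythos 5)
Your proof is correct and follows essentially the same route as the paper: both reduce the twisted commutativity, via Lemma \ref{link_SY_autM10}, to the single intertwining identity $\ell_{\Gamma^{-1}_{\Psi}(-e_1)} \circ (\kappa\circ\overline{\q}^{-1}) = (\kappa\circ\overline{\q}^{-1})\circ\ell_{\Gamma^{-1}_{\Psi}(x_1)}$, and verify that identity by precomposing with a surjection onto $\KX/\KX x_0$. The only (harmless) difference is in that verification: the paper precomposes with $\overline{\q}\circ\pi_Y$ and uses Diagram (\ref{diag_iso_MG}) together with the identity $\q(x_1 a)=x_1\q(a)$, whereas you precompose with $\pi_Y|_{\KY}$ and use Corollary \ref{iso_WG_MG}\ref{item_diag_projections} together with $\varpi(x_1)=\varpi(y_{1,1})=z_{1,1}=-e_1$, which neatly sidesteps the explicit handling of $\overline{\q}$.
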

\begin{proof}
    This is done by composing from the bottom Diagram (\ref{diag_link_SY_autM10}) with the following diagram
    \begin{equation*}
        \begin{tikzcd}
            \KX/\KX x_0 \ar["\kappa \circ \overline{\q}^{-1}"]{rr} \ar["\ell_{\Gamma_{\Psi}^{-1}(x_1)}"']{d} && \hat{\M}_G \ar["\ell_{\Gamma_{\Psi}^{-1}(-e_1)}"]{d} \\
            \KX/\KX x_0 \ar["\kappa \circ \overline{\q}^{-1}"']{rr} && \hat{\M}_G
        \end{tikzcd}
    \end{equation*}
    which we show is commutative. Indeed, 
    \begin{align*}
        & \ell_{\Gamma_{\Psi}^{-1}(-e_1)} \circ \kappa \circ \overline{\q}^{-1} \circ \overline{\q} \circ \pi_Y = \ell_{\Gamma_{\Psi}^{-1}(-e_1)} \circ (- \cdot 1_{\M}) \circ \beta \circ (- \otimes 1) \\
        &= (- \cdot 1_{\M}) \circ \ell_{\beta(\Gamma_{\Psi}^{-1}(x_1) \otimes 1)} \circ \beta \circ (- \otimes 1) = (- \cdot 1_{\M}) \circ \beta \circ (- \otimes 1) \circ \ell_{\Gamma_{\Psi}^{-1}(x_1)} \\
        & = \kappa \circ \overline{\q}^{-1} \circ \overline{\q} \circ \pi_Y \circ \ell_{\Gamma_{\Psi}^{-1}(x_1)}
        = \kappa \circ \overline{\q}^{-1} \circ \ell_{\Gamma_{\Psi}^{-1}(x_1)} \circ \overline{\q} \circ \pi_Y 
    \end{align*}
    where the first and fourth equalities come from the commutativity of Diagram (\ref{diag_iso_MG}); the second one from the fact that $- \cdot 1_{\M} : \hat{\V}_G \to \hat{\M}_G$ is a $\hat{\V}_G$-module morphism; the third one from the fact that $\beta \circ (- \otimes 1) : \KX \to \hat{\V}_G$ is a $\K$-algebra morphism and the last one from the fact that $\pi_Y : \KX \to \KX / \KX x_0$ is $\KX$-module morphism and that for any $a \in \KX$, $\q(x_1 a) = x_1 \q(a)$. \newline
    Finally, since $\overline{\q} \circ \pi_Y$ is a surjective $\K$-module morphism, it follows that
    \[
        \ell_{\Gamma_{\Psi}^{-1}(-e_1)} \circ \kappa \circ \overline{\q}^{-1} = \kappa \circ \overline{\q}^{-1} \circ \ell_{\Gamma_{\Psi}^{-1}(x_1)}, 
    \]
    which is the wanted result.
\end{proof}

\subsubsection{An isomorphism of coalgebras}
Let us recall $\varpi : \KY \to \hat{\W}_G$ the $\K$-algebra isomorphism given in Corollary \ref{iso_WG_MG}\ref{item_diag_projections}.
\begin{lemma}
    The following diagram
    \begin{equation}
        \label{diag_link_DeltaW_Delta*}
        \begin{tikzcd}
            \KY \ar["\varpi"]{rr} \ar["\hat{\Delta}^{\alg}_{\star}"']{d} && \hat{\W}_G \ar["\hat{\Delta}^{\W}_G"]{d} \\
            \KY^{\otimes 2} \ar["\varpi^{\otimes 2}"']{rr} && \hat{\W}_G^{\otimes 2}
        \end{tikzcd}
    \end{equation}
    commutes; where $\hat{\Delta}^{\alg}_{\star}$ (resp. $\hat{\Delta}^{\W}_G$) is the coproduct of $\KY$ (resp. $\hat{\W}_G$) given in (\ref{harmonic_coproduct}) (resp. (\ref{DeltaW})). In other words, the map $\varpi$ is a bialgebra isomorphism.
    \label{link_DeltaW_Delta*}
\end{lemma}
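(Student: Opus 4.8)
The plan is to use that every map appearing in the square is a topological $\K$-algebra morphism, so that the asserted commutativity can be reduced to a verification on the algebra generators $y_{n,g}$ of $\KY$. Indeed, $\varpi$ is a $\K$-algebra isomorphism (Corollary \ref{iso_WG_MG}\ref{item_diag_projections}), $\hat{\Delta}^{\alg}_{\star}$ is a $\K$-algebra morphism by its defining property (\ref{harmonic_coproduct}), and $\hat{\Delta}^{\W}_G$ is a $\K$-algebra morphism by Proposition-Definition \ref{DeltaW_DeltaM}\ref{item_DeltaW}. Consequently both composites $\varpi^{\otimes 2} \circ \hat{\Delta}^{\alg}_{\star}$ and $\hat{\Delta}^{\W}_G \circ \varpi$ are topological $\K$-algebra morphisms from $\KY$ to $(\hat{\W}_G)^{\hat{\otimes} 2}$; since $\KY$ is the free topological $\K$-algebra on the family $Y = \{y_{n,g}\}$, two such morphisms agree as soon as they agree on every $y_{n,g}$.

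First I would compute the bottom-left path. Applying (\ref{harmonic_coproduct}) and then $\varpi^{\otimes 2}$, using that $\varpi^{\otimes 2}(a \otimes b) = \varpi(a) \otimes \varpi(b)$ together with $\varpi(y_{n,g}) = z_{n,g}$, yields
\[
    \varpi^{\otimes 2}\big(\hat{\Delta}^{\alg}_{\star}(y_{n,g})\big) = z_{n,g} \otimes 1 + 1 \otimes z_{n,g} + \sum_{\substack{k=1 \\ h \in G}}^{n-1} z_{k,h} \otimes z_{n-k, hg^{-1}}.
\]
Then I would compute the top-right path: since $\varpi(y_{n,g}) = z_{n,g}$, the defining formula (\ref{DeltaW}) gives
\[
    \hat{\Delta}^{\W}_G\big(\varpi(y_{n,g})\big) = \hat{\Delta}^{\W}_G(z_{n,g}) = z_{n,g} \otimes 1 + 1 \otimes z_{n,g} + \sum_{\substack{k=1 \\ h \in G}}^{n-1} z_{k,h} \otimes z_{n-k, hg^{-1}}.
\]
The two expressions coincide, so the square commutes on generators, hence everywhere.

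I do not expect any genuine obstacle: the lemma holds essentially by construction, because $\varpi$ was defined precisely so as to identify $y_{n,g}$ with $z_{n,g}$, and the defining formulas (\ref{harmonic_coproduct}) and (\ref{DeltaW}) are formally identical under this identification. The only point deserving minor attention is the reduction to generators, which relies on the freeness of $\KY$ as a topological $\K$-algebra and on all four maps of the square being algebra morphisms, so that agreement on the topological generating set $Y$ extends to all of $\KY$. The concluding assertion that $\varpi$ is a bialgebra isomorphism is then immediate, $\varpi$ being an algebra isomorphism intertwining the two coproducts.
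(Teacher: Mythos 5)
Your proof is correct and follows exactly the paper's argument: reduce to the generators $y_{n,g}$ using that all four maps are topological $\K$-algebra morphisms and that $\KY$ is freely generated by $Y$, then check that both composites send $y_{n,g}$ to the same expression in the $z$'s. No issues.
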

\begin{proof}
    Since all arrows on diagram (\ref{diag_link_DeltaW_Delta*}) are $\K$-algebra morphisms, it is enough to work on generators. For $(n, g) \in \N^{\ast} \times G$ we have
    \begin{align*}
        \varpi^{\otimes 2} \circ \hat{\Delta}^{\alg}_{\star}(y_{n, g}) = & \varpi^{\otimes 2}\left( y_{n,g} \otimes 1 + 1 \otimes y_{n,g} + \sum_{\substack{k=1 \\ h \in G}}^{n-1} y_{k,h} \otimes y_{n-k,hg^{-1}} \right) \\
        = & z_{n,g} \otimes 1 + 1 \otimes z_{n,g} + \sum_{\substack{k=1 \\ h \in G}}^{n-1} z_{k,h} \otimes z_{n-k,hg^{-1}}.
    \end{align*}
    On the other hand
    \[
        \hat{\Delta}^{\W}_G \circ \varpi (y_{n,g}) = \hat{\Delta}^{\W}_G(z_{n,g}) = z_{n,g} \otimes 1 + 1 \otimes z_{n,g} + \sum_{\substack{k=1 \\ h \in G}}^{n-1} z_{k,h} \otimes z_{n-k,hg^{-1}}.  
    \]
\end{proof}

\begin{lemma}
    The following diagram
    \begin{equation}
        \label{diag_link_DeltaM_Delta*}
        \begin{tikzcd}
            \KX/\KX x_0 \ar["\kappa \circ \overline{\q}^{-1}"]{rr} \ar["\hat{\Delta}^{\Mod}_{\star}"']{d} && \hat{\M}_G \ar["\hat{\Delta}^{\M}_G"]{d} \\
            (\KX/\KX x_0)^{\otimes 2} \ar["(\kappa \circ \overline{\q}^{-1})^{\otimes 2}"']{rr} && \hat{\M}_G^{\otimes 2}
        \end{tikzcd}
    \end{equation}
    commutes; where $\hat{\Delta}^{\Mod}_{\star}$ (resp. $\hat{\Delta}^{\M}_G$) is the coproduct of $\KX / \KX x_0$ (resp. $\hat{\M}_G$) given in (\ref{harmonic_coproduct_M}) (resp. (\ref{diag_DeltaW_DeltaM})).
    \label{link_DeltaM_Delta*}
\end{lemma}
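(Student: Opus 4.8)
The plan is to deduce the commutativity of diagram (\ref{diag_link_DeltaM_Delta*}) purely formally, by pasting together the four compatibility diagrams already established, rather than computing on generators. The two $\K$-module morphisms to be compared, namely $(\kappa \circ \overline{\q}^{-1})^{\hat{\otimes} 2} \circ \hat{\Delta}^{\Mod}_{\star}$ and $\hat{\Delta}^{\M}_G \circ (\kappa \circ \overline{\q}^{-1})$, both have source $\KX/\KX x_0$. Since the restriction $\pi_Y : \KY \to \KX/\KX x_0$ is a $\K$-module isomorphism (see §\ref{basic_objects}), it is enough to check that these two morphisms agree after precomposition with $\pi_Y$, i.e. to prove the equality of two morphisms $\KY \to (\hat{\M}_G)^{\hat{\otimes} 2}$.

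First I would use the defining diagram (\ref{harmonic_coproduct_M}) of $\hat{\Delta}^{\Mod}_{\star}$ to replace $\hat{\Delta}^{\Mod}_{\star} \circ \pi_Y$ by $(\pi_Y)^{\hat{\otimes} 2} \circ \hat{\Delta}^{\alg}_{\star}$, so that the left-hand composite becomes $\big((\kappa \circ \overline{\q}^{-1}) \circ \pi_Y\big)^{\hat{\otimes} 2} \circ \hat{\Delta}^{\alg}_{\star}$. Corollary \ref{iso_WG_MG}\ref{item_diag_projections} gives $(\kappa \circ \overline{\q}^{-1}) \circ \pi_Y = (- \cdot 1_{\M}) \circ \varpi$, turning this into $(- \cdot 1_{\M})^{\hat{\otimes} 2} \circ \varpi^{\hat{\otimes} 2} \circ \hat{\Delta}^{\alg}_{\star}$. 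Next, the bialgebra identity of Lemma \ref{link_DeltaW_Delta*}, namely $\varpi^{\hat{\otimes} 2} \circ \hat{\Delta}^{\alg}_{\star} = \hat{\Delta}^{\W}_G \circ \varpi$, produces $(- \cdot 1_{\M})^{\hat{\otimes} 2} \circ \hat{\Delta}^{\W}_G \circ \varpi$, and the defining diagram (\ref{diag_DeltaW_DeltaM}) of $\hat{\Delta}^{\M}_G$, namely $(- \cdot 1_{\M})^{\hat{\otimes} 2} \circ \hat{\Delta}^{\W}_G = \hat{\Delta}^{\M}_G \circ (- \cdot 1_{\M})$, rewrites this as $\hat{\Delta}^{\M}_G \circ (- \cdot 1_{\M}) \circ \varpi$. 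A final application of Corollary \ref{iso_WG_MG}\ref{item_diag_projections} in the opposite direction replaces $(- \cdot 1_{\M}) \circ \varpi$ by $(\kappa \circ \overline{\q}^{-1}) \circ \pi_Y$, yielding exactly $\hat{\Delta}^{\M}_G \circ (\kappa \circ \overline{\q}^{-1}) \circ \pi_Y$, which is the right-hand composite precomposed with $\pi_Y$.

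I do not expect a genuine obstacle here: the argument is a bookkeeping chase assembling the four known commuting squares (the two coproduct-defining diagrams, the bialgebra square for $\varpi$, and the projection square of Corollary \ref{iso_WG_MG}). The only points requiring care are the clean handling of the tensor-square functor $(-)^{\hat{\otimes} 2}$ applied to the identity of Corollary \ref{iso_WG_MG}\ref{item_diag_projections}, and the justification that testing after $\pi_Y$ suffices, which rests on $\pi_Y$ restricting to an isomorphism $\KY \to \KX/\KX x_0$. Alternatively, in the spirit of the cube used in Lemma \ref{link_SY_autM10}, one could organise the same data as a commutative prism over $\KY$ whose lateral faces are these squares and read off the base face as diagram (\ref{diag_link_DeltaM_Delta*}).
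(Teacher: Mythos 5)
Your proposal is correct and is essentially the paper's own argument: the paper assembles the same four commuting squares (the defining diagram of $\hat{\Delta}^{\Mod}_{\star}$, the projection square of Corollary \ref{iso_WG_MG}, the bialgebra square of Lemma \ref{link_DeltaW_Delta*}, and the defining diagram of $\hat{\Delta}^{\M}_G$) into a cube and concludes via the surjectivity of $\pi_Y$, which is exactly the prism/chase you describe. No gap.
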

\begin{proof}
    Let us consider the following cube
    \begin{equation*}
        \begin{tikzcd}[row sep={40,between origins}, column sep={40,between origins}]
            &  & \mathbf{k}\langle\langle X \rangle\rangle/\mathbf{k}\langle\langle X \rangle\rangle x_0 \arrow[ddd, "\hat{\Delta}_{\star}^{\mathrm{mod}}"] \arrow[rrrr, "\kappa \circ \overline{\q}^{-1}"] &  &  &  & \hat{\mathcal{M}}_G \arrow[ddd, "\hat{\Delta}^{\M}_G"] \\
            \mathbf{k}\langle\langle Y \rangle\rangle \arrow[rrrr, "\hspace{0.5cm} \varpi"] \arrow[ddd, "\hat{\Delta}_{\star}^{\mathrm{alg}}"'] \arrow[rru, "\pi_Y"] &  & &  & \hat{\mathcal{W}}_G \arrow[ddd, "\hat{\Delta}^{\mathcal{W}}_G"] \arrow[rru, "- \cdot 1_{\mathcal{M}}"] &  & \\
            &  &  &  &  &  &  \\
            &  & (\mathbf{k}\langle\langle X \rangle\rangle/\mathbf{k}\langle\langle X \rangle\rangle x_0)^{\otimes 2} \arrow[rrrr, "\hspace{0.5cm} (\kappa \circ \overline{\q}^{-1})^{\otimes 2}"'] &  &  &  & \hat{\mathcal{M}}_G^{\otimes 2} \\
            \mathbf{k}\langle\langle Y \rangle\rangle^{\otimes 2} \arrow[rrrr, "\varpi^{\otimes 2}"'] \arrow[rru, "\pi_Y^{\otimes 2}"] &  &  &  & \hat{\mathcal{W}}_G^{\otimes 2} \arrow[rru, "(- \cdot 1_{\mathcal{M}})^{\otimes 2}"'] &  &
        \end{tikzcd}
    \end{equation*}
    First, the left (resp. right) side commutes by definition of $\hat{\Delta}^{\Mod}_{\star}$ (resp. $\hat{\Delta}^{\M}_G$). Then, the upper side commutes thanks to Corollary \ref{iso_WG_MG} and the lower side is exactly the tensor square of the latter so is also commutative. Finally, Lemma \ref{link_DeltaW_Delta*} gives us the commutativity of the front side.
    This collection of commutativities together with the surjectivity of $\pi_Y$ implies that the back side of the cube commutes, which is exactly Diagram (\ref{diag_link_DeltaM_Delta*}).
\end{proof}

\subsubsection{Identification of stabilizer groups}
\begin{theorem}
    $\Stab(\hat{\Delta}^{\M}_G)(\K) = \Stab(\hat{\Delta}^{\Mod}_{\star})(\K)$ (as subgroups of $(\G(\KX), \circledast)$).
    \label{StabDeltaM_StabDelta*}
\end{theorem}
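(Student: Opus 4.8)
The plan is to deduce the equality of the two stabilizer groups formally from the two compatibility statements established just above, namely Proposition \ref{link_GammaSY_GammaautM10} and Lemma \ref{link_DeltaM_Delta*}. Write $\phi := \kappa \circ \overline{\q}^{-1} : \KX/\KX x_0 \to \hat{\M}_G$ for the $\K$-module isomorphism of Proposition \ref{isoMG}. The content I would use is exactly: (i) $\phi$ intertwines the two twisted module automorphisms, i.e. ${}^{\Gamma}\aut^{\M,(10)}_{\Psi} \circ \phi = \phi \circ {}^{\Gamma}S^{Y}_{\Psi}$ for every $\Psi \in \G(\KX)$ (commutativity of diagram (\ref{diag_link_GammaSY_GammaautM10})), and (ii) $\phi$ intertwines the two coproducts, i.e. $\hat{\Delta}^{\M}_G \circ \phi = \phi^{\otimes 2} \circ \hat{\Delta}^{\Mod}_{\star}$ (commutativity of diagram (\ref{diag_link_DeltaM_Delta*})).

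The cleanest way to organize the argument is to transport the defining actions. I would introduce the bijection
\[
    \Xi : \Mor_{\K-\Mod}\!\left(\KX/\KX x_0, (\KX/\KX x_0)^{\hat{\otimes} 2}\right) \longrightarrow \Mor_{\K-\Mod}\!\left(\hat{\M}_G, (\hat{\M}_G)^{\hat{\otimes} 2}\right), \quad D \longmapsto \phi^{\otimes 2} \circ D \circ \phi^{-1},
\]
which is a bijection since $\phi$ is an isomorphism. Using fact (i) together with $\phi^{\otimes 2} \circ \big({}^{\Gamma}S^{Y}_{\Psi}\big)^{\otimes 2} = \big({}^{\Gamma}\aut^{\M,(10)}_{\Psi}\big)^{\otimes 2} \circ \phi^{\otimes 2}$ and the analogous identity for the inverses, I would check that $\Xi$ is equivariant for the two actions (\ref{act_on_Delta*}) and (\ref{act_on_DeltaM}): for every $\Psi$ and every $D$ one has $\Xi(\Psi \cdot D) = \Psi \cdot \Xi(D)$. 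This is a short rearrangement that inserts $\phi^{-1}\phi$ in the middle and peels $\phi^{\otimes 2}$, $\phi^{-1}$ to the outside via fact (i). Then, using fact (ii), I would verify that $\Xi$ carries the distinguished element to the distinguished element, namely $\Xi(\hat{\Delta}^{\Mod}_{\star}) = \phi^{\otimes 2} \circ \hat{\Delta}^{\Mod}_{\star} \circ \phi^{-1} = \hat{\Delta}^{\M}_G \circ \phi \circ \phi^{-1} = \hat{\Delta}^{\M}_G$.

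With these two points in hand the conclusion is immediate: since $\Xi$ is an equivariant bijection, the stabilizer of any element equals the stabilizer of its image under $\Xi$; applying this to $D = \hat{\Delta}^{\Mod}_{\star}$ and invoking $\Xi(\hat{\Delta}^{\Mod}_{\star}) = \hat{\Delta}^{\M}_G$ gives $\Stab(\hat{\Delta}^{\Mod}_{\star})(\K) = \Stab(\hat{\Delta}^{\M}_{G})(\K)$ as subgroups of $(\G(\KX), \circledast)$. I do not expect a genuine obstacle here: the real work has already been carried out in building the isomorphism $\kappa$ (Proposition \ref{isoMG}) and in proving the two intertwining diagrams (Proposition \ref{link_GammaSY_GammaautM10} and Lemma \ref{link_DeltaM_Delta*}); the theorem is then a formal consequence of transporting a fixed-point condition along an equivariant isomorphism. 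The only mild care needed is to track the tensor-square factors $\phi^{\otimes 2}$ correctly and to use that $\phi$ being an isomorphism makes the conjugation reversible, which yields both inclusions at once rather than requiring a separate symmetric argument.
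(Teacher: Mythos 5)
Your proposal is correct and coincides with the paper's own argument: the paper likewise transports the action along the isomorphism $D \mapsto (\kappa \circ \overline{\q}^{-1})^{\otimes 2} \circ D \circ (\kappa \circ \overline{\q}^{-1})^{-1}$, using Proposition \ref{link_GammaSY_GammaautM10} for equivariance and Lemma \ref{link_DeltaM_Delta*} to match the two coproducts, then concludes that the stabilizers agree. No gaps.
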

\begin{proof}
    Thanks to proposition \ref{link_GammaSY_GammaautM10}, the map $\kappa \circ \overline{\q}^{-1} : \KX/\KX x_0 \to \hat{\M}_G$ is a $(\G(\KX), \circledast)$-module isomorphism. So, it induces a $(\G(\KX), \circledast)$-module isomorphism $\Mor_{\K-\Mod}(\KX/\KX x_0, (\KX/\KX x_0)^{\otimes 2}) \to \Mor_{\K-\Mod}(\hat{\M}_G, \hat{\M}_G^{\otimes 2})$ which is given by
    \[
        \Delta \mapsto (\kappa \circ \overline{\q}^{-1})^{\otimes 2} \circ \Delta \circ (\kappa \circ \overline{\q}^{-1})^{-1}
    \]
    (see (\ref{act_on_Delta*}) for the definition of the $(\G(\KX), \circledast)$-module structure on the $\K$-module $\Mor_{\K-\Mod}(\KX/\KX x_0, (\KX/\KX x_0)^{\otimes 2})$ and (\ref{act_on_DeltaM}) for the $(\G(\KX), \circledast)$-module structure on the $\K$-module $\Mor_{\K-\Mod}(\hat{\M}_G, \hat{\M}_G^{\otimes 2})$.)  
    Moreover, thanks to Lemma \ref{link_DeltaM_Delta*}, the coproduct $\hat{\Delta}^{\Mod}_{\star}$ is sent to the coproduct $\hat{\Delta}^{\M}_G$ via this isomorphism. Thus, they have the same stabilizer.
\end{proof}

\subsection{The stabilizer group \texorpdfstring{$\Stab(\hat{\Delta}^{\W}_G)$}{StabWG} in Racinet's formalism} \label{sect_StabW_Stab*}
\begin{propdef}
    \label{Gamma_aut_Y}
    For $\Psi \in \G(\KX)$, we denote $^{\Gamma}\aut^Y_{\Psi}$ the $\K$-algebra automorphism of $\KY$
    \begin{equation}
        \label{Gamma_autY}
        ^{\Gamma}\aut^Y_{\Psi} := \varpi^{-1} \circ \,^{\Gamma}\aut^{\W, (1)}_{\Psi} \circ \varpi
    \end{equation}
    where $^{\Gamma}\aut^{\W, (1)}_{\Psi}$ is as in Proposition-Definition \ref{restrict_Gammaaut}, $\varpi : \KY \to \hat{\W}_G$ as in Corollary \ref{iso_WG_MG}\ref{item_diag_projections}.
    Moreover, there is a group action of $(\G(\KX), \circledast)$ on $\KY$ by $\K$-algebra automorphisms given by
    \begin{equation}
        \G(\KX) \longrightarrow \Aut_{\K-\alg}(\KY), \, \Psi \longmapsto \,^{\Gamma}\aut^Y_{\Psi} 
    \end{equation}
\end{propdef}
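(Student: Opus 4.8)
The plan is to prove both assertions purely by transport of structure along the $\K$-algebra isomorphism $\varpi$, so that no computation on generators is needed and everything reduces to results already established for $\hat{\W}_G$. First I would settle the well-definedness claim in the "definition" part. Since $\varpi : \KY \to \hat{\W}_G$ is a $\K$-algebra isomorphism by Corollary \ref{iso_WG_MG}\ref{item_diag_projections}, and since $^{\Gamma}\aut^{\W, (1)}_{\Psi}$ is a $\K$-algebra automorphism of $\hat{\W}_G$ by Proposition-Definition \ref{restrict_Gammaaut}, the composite
\[
    ^{\Gamma}\aut^Y_{\Psi} = \varpi^{-1} \circ \,^{\Gamma}\aut^{\W, (1)}_{\Psi} \circ \varpi
\]
is a composition of $\K$-algebra isomorphisms, hence itself a $\K$-algebra automorphism of $\KY$. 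This is the content of the first statement.

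For the "moreover" part I would recall from Proposition \ref{Gamma_aut_VW_1}\ref{Gamma_aut_W_1} that $\Psi \mapsto \,^{\Gamma}\aut^{\W, (1)}_{\Psi}$ is a group action of $(\G(\KX), \circledast)$ on $\hat{\W}_G$, that is, $^{\Gamma}\aut^{\W, (1)}_{\Psi \circledast \Phi} = \,^{\Gamma}\aut^{\W, (1)}_{\Psi} \circ \,^{\Gamma}\aut^{\W, (1)}_{\Phi}$ for all $\Psi, \Phi \in \G(\KX)$. Then for any such $\Psi, \Phi$ I would compute directly, inserting the identity $\varpi \circ \varpi^{-1}$ between the two middle factors:
\[
    ^{\Gamma}\aut^Y_{\Psi} \circ \,^{\Gamma}\aut^Y_{\Phi} = \varpi^{-1} \circ \,^{\Gamma}\aut^{\W, (1)}_{\Psi} \circ \varpi \circ \varpi^{-1} \circ \,^{\Gamma}\aut^{\W, (1)}_{\Phi} \circ \varpi = \varpi^{-1} \circ \,^{\Gamma}\aut^{\W, (1)}_{\Psi \circledast \Phi} \circ \varpi = \,^{\Gamma}\aut^Y_{\Psi \circledast \Phi},
\]
where the middle cancellation uses $\varpi \circ \varpi^{-1} = \mathrm{id}_{\hat{\W}_G}$ and the second equality is the group-action property on $\hat{\W}_G$. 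This establishes that $\Psi \mapsto \,^{\Gamma}\aut^Y_{\Psi}$ is a group morphism into $\Aut_{\K-\alg}(\KY)$, which is exactly the asserted group action.

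Conceptually, the two steps together are nothing more than the observation that conjugation by the fixed isomorphism $\varpi$ is a group isomorphism $\Aut_{\K-\alg}(\hat{\W}_G) \xrightarrow{\sim} \Aut_{\K-\alg}(\KY)$, and that the composite of a group morphism with a group isomorphism is again a group morphism. I expect there to be no genuine obstacle here: all substantive work — the construction of $\varpi$ and, in particular, the verification that $\Psi \mapsto \,^{\Gamma}\aut^{\W, (1)}_{\Psi}$ genuinely defines a group action rather than merely a family of automorphisms — has already been carried out in Proposition \ref{Gamma_aut_VW_1}. The only point requiring a touch of care is bookkeeping: composing in the correct order and making the $\varpi \circ \varpi^{-1}$ cancellation explicit, so that the group-action identity on $\hat{\W}_G$ can be applied verbatim.
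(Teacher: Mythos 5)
Your proposal is correct and matches the paper's own proof, which likewise establishes the group-action identity by inserting $\varpi \circ \varpi^{-1}$ and invoking the action property of $\Psi \mapsto {}^{\Gamma}\aut^{\W,(1)}_{\Psi}$ on $\hat{\W}_G$. The only difference is that you spell out the (routine) well-definedness of the conjugated automorphism, which the paper leaves implicit.
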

\begin{proof}
    For $\Psi, \Phi \in \G(\KX)$ we have
    \begin{align*}
        ^{\Gamma}\aut^Y_{\Psi \circledast \Phi} = & \varpi^{-1} \circ \,^{\Gamma}\aut^{\W, (1)}_{\Psi \circledast \Phi} \circ \varpi
        = \varpi^{-1} \circ \,^{\Gamma}\aut^{\W, (1)}_{\Psi} \circ \,^{\Gamma}\aut^{\W, (1)}_{\Phi} \circ \varpi \\
        = & \varpi^{-1} \circ \,^{\Gamma}\aut^{\W, (1)}_{\Psi} \circ \varpi \circ \varpi^{-1} \circ \,^{\Gamma}\aut^{\W, (1)}_{\Phi} \circ \varpi
        = \,^{\Gamma}\aut^Y_{\Psi} \circ \,^{\Gamma}\aut^Y_{\Phi}  
    \end{align*}
\end{proof}

We aim to give an explicit formulation of the action $^{\Gamma}\aut^{Y}$ in terms of Racinet's objects. Recall from §\ref{basic_objects} that for any $g \in G$ and any $a \in \KX$, $a x_g \in \KY$. We then have the following Lemma:

\begin{lemma}
    \label{beta_and_q}
    Let $g \in G$. For any $a \in \KX$ we have $\beta(a x_g \otimes g) = \varpi \circ \q_Y(a x_g)$.
\end{lemma}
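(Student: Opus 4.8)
The plan is to reduce to basis elements and then compute both sides explicitly. First I would note that both assignments $a \mapsto \beta(a x_g \otimes g)$ and $a \mapsto \varpi \circ \q_Y(a x_g)$ are continuous $\K$-linear maps from $\KX$ to $\hat{\V}_G$: the first because $a \mapsto a x_g \otimes g$ is $\K$-linear and $\beta$ is a morphism, and the second because $a \mapsto a x_g$ is $\K$-linear into $\KY$ while $\q_Y$ and $\varpi$ are $\K$-linear. Hence it suffices to verify the identity when $a$ runs over the topological $\K$-module basis of words of $\KX$. For such a word, the product $a x_g$ is a word ending in $x_g$, so it admits a unique expression $w = y_{n_1, h_1} \cdots y_{n_r, h_r}$ with $n_i \in \N^{\ast}$, $h_i \in G$ and $h_r = g$.

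Next I would compute the left-hand side. Writing $a x_g \otimes g = (a x_g \otimes 1) \ast (1 \otimes g)$ gives $\beta(w \otimes g) = f(w)\, g$, where $f = \beta \circ (- \otimes 1)$ is the algebra morphism of Proposition \ref{isoVGetKXG} sending $x_0 \mapsto e_0$ and $x_h \mapsto -h e_1 h^{-1}$. Expanding $f(w) = (-1)^r e_0^{n_1-1} h_1 e_1 h_1^{-1} e_0^{n_2-1} h_2 e_1 h_2^{-1} \cdots e_0^{n_r-1} h_r e_1 h_r^{-1}$ and repeatedly using the relation $h e_0 = e_0 h$ to slide each conjugating factor $h_i^{-1}$ past the following block of powers of $e_0$, one collects the group elements into the form $(-1)^r e_0^{n_1-1} h_1 e_1 e_0^{n_2-1} (h_1^{-1} h_2) e_1 \cdots e_0^{n_r-1}(h_{r-1}^{-1} h_r) e_1\, h_r^{-1}$; this is essentially the bookkeeping already carried out in computation (\ref{beta_of_basis}). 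Multiplying on the right by $g = h_r$ cancels the trailing $h_r^{-1}$, and recognizing each factor $-e_0^{n_i-1} k e_1 = z_{n_i, k}$ yields $\beta(w \otimes g) = z_{n_1, h_1}\, z_{n_2, h_1^{-1} h_2} \cdots z_{n_r, h_{r-1}^{-1} h_r}$.

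Finally I would compute the right-hand side directly: from the definition $\q_Y(w) = y_{n_1, h_1} y_{n_2, h_2 h_1^{-1}} \cdots y_{n_r, h_r h_{r-1}^{-1}}$, and applying $\varpi$ gives $z_{n_1, h_1} z_{n_2, h_2 h_1^{-1}} \cdots z_{n_r, h_r h_{r-1}^{-1}}$. The two expressions agree term by term precisely because $G$ is abelian, so that $h_{i-1}^{-1} h_i = h_i h_{i-1}^{-1}$; this is the one place where commutativity of $G$ is genuinely used, and it is what reconciles the ``left difference'' pattern coming from the crossed product with the ``right difference'' pattern built into $\q_Y$. The main (and essentially the only) subtlety is the careful tracking of the group elements in the expansion of $f(w)$: one must follow how each $h_i^{-1}$ migrates through the powers of $e_0$ and check that the boundary term at the far right is exactly cancelled by the extra factor $g = h_r$ supplied by $1 \otimes g$.
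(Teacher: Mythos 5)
Your proof is correct and follows essentially the same route as the paper's: reduce by ($\K$-linearity and) continuity to basis words, expand $\beta(a x_g \otimes g)$ via the crossed-product relations as in computation (\ref{beta_of_basis}), slide the conjugating group elements past the powers of $e_0$, and match the result with $\varpi \circ \q_Y(a x_g)$. Your explicit remarks on the cancellation of the trailing $h_r^{-1}$ by the factor $1 \otimes g$ and on the role of commutativity of $G$ in reconciling $h_{i-1}^{-1}h_i$ with $h_i h_{i-1}^{-1}$ make precise two points the paper leaves implicit, but the argument is the same.
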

\begin{proof}
    It is enough to show this on a basis of the $\K$-module $\KX$. For $r \in \N$, $n_1, \dots, n_{r+1} \in \N^{\ast}$ and $g_1, \dots, g_r \in G$ we have
    \begin{align*}
        & \beta(x_0^{n_1-1} x_{g_1} \cdots x_0^{n_r-1} x_{g_r} x_0^{n_{r+1}-1} x_g \otimes g) = (-1)^r e_0^{n_1-1} g_1 e_1 g_1^{-1} \cdots e_0^{n_r-1} g_r e_1 g_r^{-1} e_0^{n_{r+1}-1} g e_1 \\
        & = (-1)^r e_0^{n_1-1} g_1 e_1 \cdots e_0^{n_r-1} g_{r-1}^{-1} g_r e_1 e_0^{n_{r+1}-1} g_r^{-1}g e_1 = z_{n_1, g_1} \cdots z_{n_r, g_{r-1}^{-1} g_r} z_{n_{r+1}, g_r^{-1}g},
    \end{align*}
    where the second equality comes from a computation similar to (\ref{beta_of_basis}) and the third one from the fact that for any $i \in \{2, \dots, r\}$, $g_i^{-1} e_0 = e_0g_i^{-1}$. On the other hand,
    \begin{align*}
        & \varpi \circ \q_Y(x_0^{n_1-1} x_{g_1} \cdots x_0^{n_r-1} x_{g_r} x_0^{n_{r+1}-1} x_g) = \varpi(y_{n_1, g_1} \cdots y_{n_r, g_{r-1}^{-1}g_r} y_{n_{r+1}, g_r^{-1}g}) \\
        & = z_{n_1, g_1} \cdots z_{n_r, g_{r-1}^{-1}g_r} z_{n_{r+1}, g_r^{-1}g}.
    \end{align*}
\end{proof}

\begin{proposition}
    \label{explicit_autY}
    For $\Psi \in \G(\KX)$ and $(n, g) \in \N^{\ast} \times G$ we have
    \begin{equation}
        ^{\Gamma}\aut_{\Psi}^Y(y_{n, g}) = \q_Y\left(\Gamma_{\Psi}^{-1}(x_1) \Psi x_0^{n-1} t_g\left(\Psi^{-1} \Gamma_{\Psi}(x_1)\right) x_g\right).
    \end{equation}
\end{proposition}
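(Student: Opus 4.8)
The plan is to reduce the claim to an explicit computation inside the algebra $\hat{\V}_G$ and then to recognize the answer using Lemma \ref{beta_and_q}. Since $\varpi(y_{n,g}) = z_{n,g}$ and ${}^{\Gamma}\aut_{\Psi}^Y = \varpi^{-1} \circ {}^{\Gamma}\aut^{\W, (1)}_{\Psi} \circ \varpi$ by Proposition-Definition \ref{Gamma_aut_Y}, applying $\varpi$ to both sides shows that the statement is equivalent to
\[
    {}^{\Gamma}\aut^{\W, (1)}_{\Psi}(z_{n,g}) = \varpi \circ \q_Y\left(\Gamma_{\Psi}^{-1}(x_1)\Psi x_0^{n-1} t_g(\Psi^{-1}\Gamma_{\Psi}(x_1)) x_g\right).
\]
I would then compute the two sides independently and check that they agree. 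Because $z_{n,g} \in \hat{\W}_G$, the left-hand side equals ${}^{\Gamma}\aut^{\V, (1)}_{\Psi}(z_{n,g})$, so the whole computation can be carried out in $\hat{\V}_G$.

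For the left-hand side, I would unwind the three layers of the definition on $z_{n,g} = -e_0^{n-1} g e_1$. Applying $\aut^{\V,(0)}_{\Psi}$ (which fixes $e_0$ and $g$ and sends $e_1 \mapsto \beta(\Psi^{-1}\otimes 1) e_1 \beta(\Psi\otimes 1)$) gives $-e_0^{n-1} g \beta(\Psi^{-1}\otimes1) e_1 \beta(\Psi\otimes1)$. Conjugating by $\beta(\Psi\otimes1)$ to pass to $\aut^{\V,(1)}_{\Psi}$, and using $\beta(\Psi\otimes1)^{-1} = \beta(\Psi^{-1}\otimes1)$, the trailing factor cancels, yielding $-\beta(\Psi\otimes1) e_0^{n-1} g \beta(\Psi^{-1}\otimes1) e_1$. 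Finally, conjugating by $\Gamma_{\Psi}^{-1}(-e_1)$ to pass to ${}^{\Gamma}\aut^{\V,(1)}_{\Psi}$ and using that $e_1$ commutes with the power series $\Gamma_{\Psi}(-e_1)$, I obtain
\[
    {}^{\Gamma}\aut^{\W,(1)}_{\Psi}(z_{n,g}) = -\Gamma_{\Psi}^{-1}(-e_1)\beta(\Psi\otimes1) e_0^{n-1} g \beta(\Psi^{-1}\otimes1)\Gamma_{\Psi}(-e_1) e_1.
\]

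For the right-hand side, writing $a := \Gamma_{\Psi}^{-1}(x_1)\Psi x_0^{n-1} t_g(\Psi^{-1}\Gamma_{\Psi}(x_1)) \in \KX$ so that the argument of $\q_Y$ is $a x_g$, Lemma \ref{beta_and_q} gives $\varpi \circ \q_Y(a x_g) = \beta(a x_g \otimes g)$. I would then expand $\beta(a x_g \otimes g) = \beta(a\otimes1)\,\beta(x_g\otimes1)\,\beta(1\otimes g) = -\beta(a\otimes1)\, g e_1$ using $\beta(x_g\otimes1) = -g e_1 g^{-1}$ and $\beta(1\otimes g) = g$. The factor $\beta(a\otimes1)$ is computed from the multiplicativity of $\beta(-\otimes1)$ together with two facts: $\beta(f(x_1)\otimes1) = f(-e_1)$ for any power series $f$ (since $\beta(x_1\otimes1) = -e_1$), which handles the $\Gamma_{\Psi}^{\pm1}(x_1)$ factors; and the intertwining relation $\beta(t_g(b)\otimes1) = g\,\beta(b\otimes1)\,g^{-1}$ established in the proof of Proposition \ref{isoVGetKXG}, which handles the $t_g(\cdots)$ factor. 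After substituting and cancelling $g^{-1} g = 1$, this produces exactly the same expression as the left-hand side.

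The computations are each routine, so the main point is organizational bookkeeping: keeping the noncommutative factors, the elements $g, g^{-1}$ of $G$, and the two power-series factors $\Gamma_{\Psi}^{\pm1}(-e_1)$ in their correct positions. The only genuinely delicate step is ensuring the $\Gamma$-conjugation appearing on the left matches the $\Gamma$-factors coming from $a$ on the right; this works precisely because $e_1$ commutes with $\Gamma_{\Psi}(-e_1)$, which lets the trailing $e_1$ produced by $\beta(x_g\otimes1)\beta(1\otimes g)$ sit to the right of $\Gamma_{\Psi}(-e_1)$ in both expressions.
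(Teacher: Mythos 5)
Your proposal is correct and follows essentially the same route as the paper: reduce via $\varpi$ to a computation of ${}^{\Gamma}\aut^{\W,(1)}_{\Psi}(z_{n,g})$ in $\hat{\V}_G$, unwind the three layers of the definition, and match the result with $\varpi\circ\q_Y(\cdot)$ through Lemma \ref{beta_and_q} and the multiplicativity of $\beta$. The only cosmetic difference is that the paper transforms the left-hand side step by step into $\beta\big(\Gamma_{\Psi}^{-1}(x_1)\Psi x_0^{n-1}t_g(\Psi^{-1}\Gamma_{\Psi}(x_1))x_g\otimes g\big)$ and then invokes the lemma, whereas you expand both sides and meet in the middle.
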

\begin{proof}
    Let us start with the following computation
    \begin{align*}
        &^{\Gamma}\aut^{\W, (1)}_{\Psi}(z_{n,g}) = - \Gamma_{\Psi}^{-1}(-e_1) \beta(\Psi \otimes 1) e_0^{n-1} g \beta(\Psi^{-1} \otimes 1) e_1 \Gamma_{\Psi}(-e_1) \\
        & = - \Gamma_{\Psi}^{-1}(-e_1) \beta(\Psi \otimes 1) e_0^{n-1} g \beta(\Psi^{-1} \otimes 1) \Gamma_{\Psi}(-e_1) e_1 \\
        & = \beta((\Gamma_{\Psi}^{-1}(x_1) \otimes 1) \ast (\Psi \otimes 1) \ast (x_0^{n-1} \otimes 1) \ast (1 \otimes g) \ast (\Psi^{-1} \otimes 1) \ast (\Gamma_{\Psi}(x_1) \otimes 1) \ast (x_1 \otimes 1)) \\
        & = \beta\left(\Gamma_{\Psi}^{-1}(x_1) \Psi x_0^{n-1} t_g\left(\Psi^{-1}\Gamma_{\Psi}(x_1)\right) x_g\right) \\
        & = \varpi \circ \q_Y \left( \Gamma_{\Psi}^{-1}(x_1) \Psi x_0^{n-1} t_g\left(\Psi^{-1}\Gamma_{\Psi}(x_1)\right) x_g\right)
    \end{align*}
    where $t_g$ is the $\K$-algebra automorphism given in §\ref{basic_objects}; and the last equality comes from Lemma \ref{beta_and_q}. Thanks to this, we have for any $(n, g) \in \N^{\ast} \times G$,
    \begin{align*}
        & ^{\Gamma}\aut^Y_{\Psi}(y_{n,g}) = \varpi^{-1} \circ ^{\Gamma}\aut^{\W, (1)}_{\Psi} \circ \varpi (y_{n,g}) = \varpi^{-1} \circ \,^{\Gamma}\aut^{\W, (1)}_{\Psi}(z_{n,g}) \\
        & = \varpi^{-1} \circ \varpi \circ \q_Y \left( \Gamma_{\Psi}^{-1}(x_1) \Psi x_0^{n-1} t_g\left(\Psi^{-1}\Gamma_{\Psi}(x_1)\right) x_g\right) \\
        & = \q_Y \left( \Gamma_{\Psi}^{-1}(x_1) \Psi x_0^{n-1} t_g\left(\Psi^{-1}\Gamma_{\Psi}(x_1)\right) x_g\right).
    \end{align*}
\end{proof}

Using Proposition \ref{Gamma_aut_Y}, we define the following group action of $(\G(\KX), \circledast)$ on $\Mor_{\K-\alg}\left(\KY, \KY^{\hat{\otimes} 2}\right)$:
\begin{equation}
    \label{act_on_Deltaalg}
    \Psi \cdot D := \left({^{\Gamma}\aut^{Y}_{\Psi}}\right)^{\otimes 2} \circ D \circ \left(^{\Gamma}\aut^{Y}_{\Psi}\right)^{-1}.
\end{equation}
In particular, the stabilizer of $\hat{\Delta}^{\alg}_{\star}$ is the subgroup
\begin{equation}
    \label{Stab_Deltaalg}
    \Stab(\hat{\Delta}^{\alg}_{\star})(\K) := \left\{ \Psi \in \G(\KX) \, | \, \left( {^{\Gamma}\aut^{Y}_{\Psi}} \right)^{\otimes 2} \circ \hat{\Delta}^{\alg}_{\star} = \hat{\Delta}^{\alg}_{\star} \circ {^{\Gamma}\aut^{Y}_{\Psi}} \right\}.
\end{equation}

\begin{theorem}
    $\Stab(\hat{\Delta}^{\alg}_{\star})(\K) = \Stab(\hat{\Delta}^{\W}_{G})(\K)$ (as subgroups of $(\G(\KX), \circledast)$).
    \label{StabDeltaW_StabDelta*}
\end{theorem}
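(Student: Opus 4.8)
The plan is to argue exactly as in the proof of Theorem \ref{StabDeltaM_StabDelta*}, replacing the coalgebra isomorphism $\kappa \circ \overline{\q}^{-1}$ by the bialgebra isomorphism $\varpi : \KY \to \hat{\W}_G$. First I would observe that, since $\varpi$ is a $\K$-algebra isomorphism, conjugation by it yields a bijection
\[
    \Xi : \Mor_{\K-\alg}\big(\KY, \KY^{\hat{\otimes} 2}\big) \longrightarrow \Mor_{\K-\alg}\big(\hat{\W}_G, (\hat{\W}_G)^{\hat{\otimes} 2}\big), \qquad D \longmapsto \varpi^{\otimes 2} \circ D \circ \varpi^{-1},
\]
which is well defined precisely because $\varpi$ is an algebra morphism, so that it carries algebra morphisms to algebra morphisms.

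The key step is to check that $\Xi$ intertwines the group action (\ref{act_on_Deltaalg}) on the source with the group action (\ref{act_on_DeltaW}) on the target. This is a formal consequence of the defining relation $^{\Gamma}\aut^Y_{\Psi} = \varpi^{-1} \circ {}^{\Gamma}\aut^{\W, (1)}_{\Psi} \circ \varpi$ from Proposition-Definition \ref{Gamma_aut_Y}: rewriting it as $\varpi \circ {}^{\Gamma}\aut^Y_{\Psi} = {}^{\Gamma}\aut^{\W, (1)}_{\Psi} \circ \varpi$, and applying the same identity to inverses and to tensor squares, one computes for every $D$
\[
    \Xi(\Psi \cdot D) = \big({}^{\Gamma}\aut^{\W, (1)}_{\Psi}\big)^{\otimes 2} \circ \Xi(D) \circ \big({}^{\Gamma}\aut^{\W, (1)}_{\Psi}\big)^{-1} = \Psi \cdot \Xi(D).
\]
Thus $\Xi$ is a $(\G(\KX), \circledast)$-equivariant bijection between the two morphism modules; this intertwining is the only place where the conjugation formula defining $^{\Gamma}\aut^Y_{\Psi}$ is used.

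Finally, Lemma \ref{link_DeltaW_Delta*} supplies the identity $\varpi^{\otimes 2} \circ \hat{\Delta}^{\alg}_{\star} = \hat{\Delta}^{\W}_G \circ \varpi$, which gives $\Xi(\hat{\Delta}^{\alg}_{\star}) = \hat{\Delta}^{\W}_G$. Since an equivariant bijection carries the stabilizer of a point to the stabilizer of its image, and $\Xi$ sends $\hat{\Delta}^{\alg}_{\star}$ to $\hat{\Delta}^{\W}_G$, the two elements have equal stabilizers in $(\G(\KX), \circledast)$, which is exactly the asserted equality $\Stab(\hat{\Delta}^{\alg}_{\star})(\K) = \Stab(\hat{\Delta}^{\W}_{G})(\K)$ of (\ref{Stab_Deltaalg}) and (\ref{Stab_DeltaW}). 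I do not expect a genuine obstacle: the whole argument is the algebra-morphism analogue of Theorem \ref{StabDeltaM_StabDelta*}, and the only point requiring attention is the bookkeeping that $\varpi$, $^{\Gamma}\aut^Y_{\Psi}$, and $^{\Gamma}\aut^{\W, (1)}_{\Psi}$ are all \emph{algebra} automorphisms, so that $\Xi$ genuinely acts on the spaces $\Mor_{\K-\alg}$ appearing in the definitions of the two stabilizers.
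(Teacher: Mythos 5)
Your proposal is correct and coincides with the paper's own proof: both conjugate by the bialgebra isomorphism $\varpi$ to get an equivariant bijection $D \mapsto \varpi^{\otimes 2} \circ D \circ \varpi^{-1}$ between the morphism spaces, invoke Proposition-Definition \ref{Gamma_aut_Y} for the intertwining of the two actions and Lemma \ref{link_DeltaW_Delta*} to see that $\hat{\Delta}^{\alg}_{\star}$ is sent to $\hat{\Delta}^{\W}_G$, and conclude that the stabilizers agree. No issues.
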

\begin{proof}
    Thanks to Proposition-Definition \ref{Gamma_aut_Y}, the map $\varpi : \KY \to \hat{\W}_G$ is an isomorphism of $(\G(\KX), \circledast)$-modules. So, it induces a $(\G(\KX), \circledast)$-module isomorphism $\Mor_{\K-\alg}(\KY, \KY^{\otimes 2}) \to \Mor_{\K-\alg}(\hat{\W}_G, \hat{\W}_G^{\otimes 2})$ which is given by
    \[
        \Delta \mapsto \varpi^{\otimes 2} \circ \Delta \circ \varpi^{-1}
    \]
    Moreover, thanks to Lemma \ref{link_DeltaW_Delta*}, the coproduct $\hat{\Delta}^{\alg}_{\star}$ is sent to the coproduct $\hat{\Delta}^{\W}_G$ via this isomorphism. Thus, they have the same stabilizer.
\end{proof}

\begin{corollary}
    \label{StabDeltaAlg_StabDeltaMod}
    $\Stab(\hat{\Delta}^{\Mod}_{\star})(\K) \subset \Stab(\hat{\Delta}^{\alg}_{\star})(\K)$ (as subgroups of $(\G(\KX), \circledast)$).
\end{corollary}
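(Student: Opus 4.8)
The plan is to obtain this inclusion as a purely formal consequence of the three results established just above, namely Theorem~\ref{Stab_Inclusion}, Theorem~\ref{StabDeltaM_StabDelta*} and Theorem~\ref{StabDeltaW_StabDelta*}. All the substantive work—the construction of the crossed-product objects, the compatibility of the twisted actions with the module structure, and the transport of the various coproducts through the isomorphisms $\kappa \circ \overline{\q}^{-1}$ and $\varpi$—has already been carried out, so at this stage there is no genuine obstacle left; the corollary is a one-line composition of an equality, an inclusion, and a second equality.

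Concretely, I would argue as follows. By Theorem~\ref{StabDeltaM_StabDelta*}, the subgroup $\Stab(\hat{\Delta}^{\Mod}_{\star})(\K)$ coincides with $\Stab(\hat{\Delta}^{\M}_{G})(\K)$ inside $(\G(\KX), \circledast)$. Theorem~\ref{Stab_Inclusion} then gives the inclusion $\Stab(\hat{\Delta}^{\M}_{G})(\K) \subset \Stab(\hat{\Delta}^{\W}_{G})(\K)$. Finally, Theorem~\ref{StabDeltaW_StabDelta*} identifies $\Stab(\hat{\Delta}^{\W}_{G})(\K)$ with $\Stab(\hat{\Delta}^{\alg}_{\star})(\K)$. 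Chaining these,
\[
    \Stab(\hat{\Delta}^{\Mod}_{\star})(\K)
    = \Stab(\hat{\Delta}^{\M}_{G})(\K)
    \subset \Stab(\hat{\Delta}^{\W}_{G})(\K)
    = \Stab(\hat{\Delta}^{\alg}_{\star})(\K),
\]
which is exactly the asserted inclusion, and all four groups are subgroups of the same ambient group $(\G(\KX), \circledast)$, so the inclusion makes sense verbatim. This completes the proof.

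The only point worth flagging, rather than a difficulty, is bookkeeping: one must make sure all four stabilizers are being regarded as subgroups of $(\G(\KX), \circledast)$ for the same action-based definitions, so that the equalities from the two identification theorems are genuine equalities of subsets of $\G(\KX)$ and the inclusion from Theorem~\ref{Stab_Inclusion} can be composed with them directly. Since each of the three cited statements is phrased precisely in this way, the composition is immediate and no further verification is required.
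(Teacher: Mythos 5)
Your proposal is correct and coincides exactly with the paper's own argument: the corollary is deduced by combining the equality $\Stab(\hat{\Delta}^{\Mod}_{\star})(\K) = \Stab(\hat{\Delta}^{\M}_{G})(\K)$ from Theorem~\ref{StabDeltaM_StabDelta*}, the inclusion of Theorem~\ref{Stab_Inclusion}, and the equality $\Stab(\hat{\Delta}^{\W}_{G})(\K) = \Stab(\hat{\Delta}^{\alg}_{\star})(\K)$ from Theorem~\ref{StabDeltaW_StabDelta*}. No gaps; your bookkeeping remark about all four groups being subgroups of the same ambient group $(\G(\KX), \circledast)$ is the only point that needed checking, and it holds.
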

\begin{proof}
    Follows immediately from Theorem \ref{Stab_Inclusion} thanks to Theorems \ref{StabDeltaM_StabDelta*} and \ref{StabDeltaW_StabDelta*}. 
\end{proof}
	\section{Affine group scheme and Lie algebraic aspects} \label{LA_Side}
In this part, we show that the results obtained in §\ref{crossed_product} and §\ref{StabRac} fit in the framework of affine $\Q$-group schemes and make explicit the associated Lie algebraic aspects.
More precisely, we use the result of \cite{EF0} Lemma 5.1 to show that the stabilizer group functors $\Stab(\hat{\Delta}^{\M}_G)$ and $\Stab(\hat{\Delta}^{\M}_G)$ are affine $\Q$-group schemes, whose Lie algebras are stabilizer Lie algebras which we make explicit. In order to carry out this program, in §\ref{LA_actions_on_crossed_product}, we define Lie algebra actions of $(\widehat{\Lib}(X), \langle \cdot, \cdot \rangle)$ on $\hat{\V}_G^{\Q}$ by derivations and by endomorphisms. From this, we derive in §\ref{sect_stabM} endomorphism actions on $\hat{\M}_G$ that leads us to an explicit form of the Lie algebra of $\Stab(\hat{\Delta}^{\M}_G)$ that we show to be equal to the Lie algebra $\stab(\hat{\Delta}^{\Mod}_{\star})$ of (\ref{stab_Delta*}). In §\ref{sect_stabW}, we define derivation actions on $\hat{\W}_G$ that make explicit the Lie algebra $\stab(\hat{\Delta}^{\W}_G)$ of $\Stab(\hat{\Delta}^{\W}_G)$ which we show to contain $\stab(\hat{\Delta}^{\M}_G)$. In §\ref{sect_stab*}, we identify $\stab(\hat{\Delta}^{\W}_{G})$ with a Lie algebra stabilizer $\stab(\hat{\Delta}^{\alg}_{\star})$ defined in Racinet's formalism by considering the infinitesimal version of the algebra automorphism given in §\ref{sect_StabW_Stab*}. We conclude by the inclusion $\stab(\hat{\Delta}^{\Mod}_{\star}) \subset \stab(\hat{\Delta}^{\alg}_{\star})$.

\subsection{Actions of the Lie algebra \texorpdfstring{$(\widehat{\Lib}(X), \langle \cdot, \cdot \rangle)$}{LibX} on \texorpdfstring{$\hat{\V}_G^{\Q}$}{VGQ}} \label{LA_actions_on_crossed_product}

\begin{propdef}
    \label{LA_act_der_0}
    Let $\psi \in \widehat{\Lib}(X)$. There exists a unique $\Q$-algebra derivation $\der^{\V, (0)}_{\psi}$ of $\hat{\V}_G^{\Q}$ given by
    \[
        e_0 \mapsto 0, \quad e_1 \mapsto [e_1, \beta(\psi \otimes 1)], \quad g \mapsto 0, \text{ for } g \in G.
    \]
    There is a Lie algebra action of $(\widehat{\Lib}(X), \langle \cdot, \cdot \rangle)$ on $\hat{\V}_G^{\Q}$ by $\Q$-algebra derivations
        \[
            (\widehat{\Lib}(X), \langle \cdot, \cdot \rangle) \longrightarrow \mathrm{Der}_{\Q-\alg}(\hat{\V}_G^{\Q}), \psi \longmapsto \der^{\V, (0)}_{\psi}.
        \]
\end{propdef}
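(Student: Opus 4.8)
The plan is to prove both assertions by transporting the construction through the $\Q$-algebra isomorphism $\beta : \QX \rtimes G \to \hat{\V}_G^{\Q}$ of Proposition \ref{isoVGetKXG}, exactly as the group-level statements in Proposition-Definition \ref{aut0} and Proposition \ref{group_morph_V} were obtained by conjugating $\aut_{\Psi} \otimes \mathrm{id}_{\Q G}$ through $\beta$. Uniqueness of $\der^{\V,(0)}_{\psi}$ is immediate, since a derivation of the complete graded algebra $\hat{\V}_G^{\Q}$ is determined by its values on the generating set $\{e_0, e_1\} \sqcup G$.

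For existence I would first extend the derivation $d_{\psi}$ of $\QX$ from (\ref{dpsi}) to the crossed product by setting $\delta_{\psi}(a \otimes g) := d_{\psi}(a) \otimes g$. The Leibniz rule for the product $\ast$ of $\QX \rtimes G$ reduces, after expanding $(a \otimes g)\ast(b \otimes h) = a\, t_g(b) \otimes gh$, to the single identity $d_{\psi} \circ t_g = t_g \circ d_{\psi}$; this is the infinitesimal analogue of Lemma \ref{commut_t_aut} and is checked on $x_0, x_h$ using $t_g \circ t_h = t_{gh}$. Setting $\der^{\V,(0)}_{\psi} := \beta \circ \delta_{\psi} \circ \beta^{-1}$ then yields a $\Q$-algebra derivation of $\hat{\V}_G^{\Q}$ satisfying the intertwining identity
\[
    \der^{\V,(0)}_{\psi} \circ \beta(- \otimes 1) = \beta\big(d_{\psi}(-) \otimes 1\big),
\]
the infinitesimal counterpart of diagram (\ref{beta_auts}). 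Reading off its values on generators confirms the prescription: $\der^{\V,(0)}_{\psi}(e_0) = \beta(d_{\psi}(x_0)\otimes 1) = 0$, $\der^{\V,(0)}_{\psi}(g) = \beta(\delta_{\psi}(1\otimes g)) = 0$, and, since $\beta(x_1 \otimes 1) = -e_1$ and $t_1 = \mathrm{id}$, $\der^{\V,(0)}_{\psi}(e_1) = -\beta([x_1,\psi]\otimes 1) = -[-e_1, \beta(\psi\otimes 1)] = [e_1, \beta(\psi\otimes 1)]$. Convergence in the complete graded topology is automatic because $\psi$ has components only in positive degrees. (Alternatively one checks directly that the prescribed generator images respect relations (i)--(iii), which for a derivation reduces to trivial Leibniz identities such as $D(g h) = D(g)h + g D(h) = 0$.)

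For the second assertion, $\Q$-linearity of $\psi \mapsto \der^{\V,(0)}_{\psi}$ is clear, so it remains to prove $\der^{\V,(0)}_{\langle \psi_1, \psi_2 \rangle} = [\der^{\V,(0)}_{\psi_1}, \der^{\V,(0)}_{\psi_2}]$, the right-hand side being the commutator of derivations. Through the intertwining identity and the vanishing on $1 \otimes G$, this reduces to the identity $d_{\langle \psi_1, \psi_2 \rangle} = [d_{\psi_1}, d_{\psi_2}]$ of derivations of $\QX$, which is the infinitesimal version of the cocycle relation $\aut^{\V,(0)}_{\Psi \circledast \Phi} = \aut^{\V,(0)}_{\Psi} \circ \aut^{\V,(0)}_{\Phi}$ of Proposition \ref{group_morph_V}\ref{act_aut0}. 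I would establish it on generators: it is trivial on $x_0$, and on $x_g$ both composites expand, via the Leibniz rule and $d_{\psi_1} \circ t_g = t_g \circ d_{\psi_1}$, into a nested-bracket term plus $[x_g, t_g(d_{\psi_1}(\psi_2) - d_{\psi_2}(\psi_1))]$. The two nested brackets combine by the Jacobi identity into $[x_g, t_g([\psi_1,\psi_2])]$, so the total equals $[x_g, t_g([\psi_1,\psi_2] + d_{\psi_1}(\psi_2) - d_{\psi_2}(\psi_1))]$. Comparing with (\ref{new_bracket}) and (\ref{eq_s_psi}), which unwind to $\langle \psi_1, \psi_2 \rangle = [\psi_1, \psi_2] + d_{\psi_1}(\psi_2) - d_{\psi_2}(\psi_1)$, identifies this with $d_{\langle \psi_1, \psi_2 \rangle}(x_g)$, completing the argument.

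The main obstacle is this last identity $d_{\langle \psi_1, \psi_2 \rangle} = [d_{\psi_1}, d_{\psi_2}]$: one must correctly marshal the Jacobi identity on the two nested-bracket terms and recognize the resulting expression as the unwinding of Racinet's bracket $\langle \cdot, \cdot \rangle$. Everything else — uniqueness, the compatibility $d_{\psi} \circ t_g = t_g \circ d_{\psi}$, the generator computation producing the prescribed images, and the convergence in the complete graded topology — is routine.
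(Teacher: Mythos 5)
Your proof is correct, but it takes a genuinely different route from the paper. The paper's proof is purely functorial: it observes that $\K \mapsto \Aut_{\K-\alg}(\hat{\V}_G)$ is an affine $\Q$-group scheme with Lie algebra $\mathrm{Der}_{\Q-\alg}(\hat{\V}_G^{\Q})$, notes that $\Psi \mapsto \aut^{\V,(0)}_{\Psi}$ is a morphism of $\Q$-group schemes by Proposition \ref{group_morph_V}\ref{act_aut0}, and obtains $\psi \mapsto \der^{\V,(0)}_{\psi}$ as the associated morphism of Lie algebras via the $\mathbf{Lie}$ functor --- so existence, the derivation property, and the Lie-morphism property all come for free from the already-established group-level statement, at the cost of leaving the identification of the differentiated map with the announced generator values as a ``one checks'' step. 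You instead build everything by hand: you extend $d_{\psi}$ to $\delta_{\psi} = d_{\psi} \otimes \mathrm{id}$ on $\QX \rtimes G$, reduce the Leibniz rule for $\ast$ to the commutation $d_{\psi} \circ t_g = t_g \circ d_{\psi}$ (which is correct, and is indeed the infinitesimal form of Lemma \ref{commut_t_aut}), conjugate by $\beta$, read off the generator values (your sign bookkeeping $\der^{\V,(0)}_{\psi}(e_1) = -[-e_1,\beta(\psi\otimes 1)] = [e_1,\beta(\psi\otimes 1)]$ is right), and then verify $d_{\langle\psi_1,\psi_2\rangle} = [d_{\psi_1},d_{\psi_2}]$ on generators via the Jacobi identity and the unwinding $\langle\psi_1,\psi_2\rangle = [\psi_1,\psi_2] + d_{\psi_1}(\psi_2) - d_{\psi_2}(\psi_1)$; I checked this computation and it is sound. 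What your approach buys is a self-contained argument that does not presuppose the group-scheme machinery and that isolates two reusable identities (the $t_g$-equivariance of $d_{\psi}$ and the Lie-morphism property of $\psi \mapsto d_{\psi}$, the infinitesimal analogues of Lemmas \ref{commut_t_aut} and \ref{group_morphs}); what the paper's approach buys is brevity and uniformity with the proofs of Proposition-Definitions \ref{LA_act_der_1}, \ref{LA_act_end_10} and \ref{end_M_10}, which all follow the same differentiation pattern.
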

\begin{proof}
    One can prove that the assignment $\K \mapsto \Aut_{\K-\alg}(\hat{\V}_G)$ is a $\Q$-group scheme with Lie algebra $\mathrm{Der}_{\Q-\alg}(\hat{\V}_G^{\Q})$ and that the map $(\G(\KX), \circledast) \to \Aut_{\K-\alg}(\hat{\V}_G)$, $\Psi \mapsto \aut_{\Psi}^{\V,(0)}$ is a morphism of $\Q$-group schemes from $\K \mapsto (\G(\KX), \circledast)$ to the latter $\K \mapsto \Aut_{\K-\alg}(\hat{\V}_G)$ using Proposition \ref{group_morph_V}\ref{act_aut0}. One checks that the corresponding morphism of Lie algebras is as announced.
\end{proof}

\begin{propdef}
    \label{LA_act_der_1}
    For $\psi \in \widehat{\Lib}(X)$, we define $\der^{\V, (1)}_{\psi}$ the $\Q$-algebra derivation of $\hat{\V}_G^{\Q}$ given by
    \begin{equation}
        \der^{\V, (1)}_{\psi} = \mathrm{ad}_{\beta(\psi \otimes 1)} + \der^{\V, (0)}_{\psi}.
    \end{equation}
    There is a Lie algebra action of $(\widehat{\Lib}(X), \langle \cdot, \cdot \rangle)$ on $\hat{\V}_G^{\Q}$ by $\Q$-algebra derivations
    \[
        (\widehat{\Lib}(X), \langle \cdot, \cdot \rangle) \longrightarrow \mathrm{Der}_{\Q-\alg}(\hat{\V}_G^{\Q}), \psi \longmapsto \der^{\V, (1)}_{\psi}.
    \] 
\end{propdef}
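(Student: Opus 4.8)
The plan is to prove the two assertions of Proposition-Definition \ref{LA_act_der_1} separately, following closely the strategy already used for Proposition-Definition \ref{LA_act_der_0}, of which this is the ``$(1)$''-analogue.

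For the first assertion I would simply observe that $\der^{\V, (1)}_{\psi}$ is manifestly a $\Q$-algebra derivation of $\hat{\V}_G^{\Q}$: it is the sum of the inner derivation $\mathrm{ad}_{\beta(\psi \otimes 1)}$ and the derivation $\der^{\V, (0)}_{\psi}$ supplied by Proposition-Definition \ref{LA_act_der_0}, and a sum of two derivations is again a derivation. Hence $\der^{\V, (1)}_{\psi} \in \mathrm{Der}_{\Q-\alg}(\hat{\V}_G^{\Q})$.

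For the second assertion I would argue exactly as in the proof of Proposition-Definition \ref{LA_act_der_0}, but replacing $\aut^{\V, (0)}$ by $\aut^{\V, (1)}$. By Proposition \ref{group_morph_V}\ref{act_aut1}, the assignment $\Psi \mapsto \aut^{\V, (1)}_{\Psi}$ is a group homomorphism $(\G(\KX), \circledast) \to \Aut_{\K-\alg}(\hat{\V}_G)$, natural in $\K$, hence a morphism of $\Q$-group schemes from $\K \mapsto (\G(\KX), \circledast)$ to the $\Q$-group scheme $\K \mapsto \Aut_{\K-\alg}(\hat{\V}_G)$, whose Lie algebra is $\mathrm{Der}_{\Q-\alg}(\hat{\V}_G^{\Q})$. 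Applying the functor $\mathbf{Lie}$ and using part (a) of Proposition \ref{LA_of_Rac_groups} to identify the source Lie algebra with $(\widehat{\Lib}(X), \langle \cdot, \cdot \rangle)$, I obtain a Lie algebra morphism $(\widehat{\Lib}(X), \langle \cdot, \cdot \rangle) \to \mathrm{Der}_{\Q-\alg}(\hat{\V}_G^{\Q})$; it then remains only to check that this morphism is precisely $\psi \mapsto \der^{\V, (1)}_{\psi}$.

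That last identification is the one computation to carry out, and it is where the main obstacle lies. Writing $\Psi = \exp_{\circledast}^{\Q[\epsilon]/(\epsilon^2)}(\epsilon \psi) \equiv 1 + \epsilon \psi \pmod{\epsilon^2}$ (using $s_{\psi}(1) = \psi$), so that $\beta(\Psi \otimes 1) \equiv 1 + \epsilon \beta(\psi \otimes 1)$, the conjugation expands to first order as $\Ad_{\beta(\Psi \otimes 1)} \equiv \mathrm{id} + \epsilon \, \mathrm{ad}_{\beta(\psi \otimes 1)} \pmod{\epsilon^2}$, while $\aut^{\V, (0)}_{\Psi} \equiv \mathrm{id} + \epsilon \, \der^{\V, (0)}_{\psi} \pmod{\epsilon^2}$ by Proposition-Definition \ref{LA_act_der_0}. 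Composing gives $\aut^{\V, (1)}_{\Psi} = \Ad_{\beta(\Psi \otimes 1)} \circ \aut^{\V, (0)}_{\Psi} \equiv \mathrm{id} + \epsilon\left(\mathrm{ad}_{\beta(\psi \otimes 1)} + \der^{\V, (0)}_{\psi}\right) \pmod{\epsilon^2}$, whose linear term is exactly $\der^{\V, (1)}_{\psi}$. The delicate point is thus justifying this product rule for the derivative of $\Psi \mapsto \Ad_{\beta(\Psi \otimes 1)} \circ \aut^{\V, (0)}_{\Psi}$ at the identity. Alternatively, one could bypass the group-scheme machinery and verify directly that $[\der^{\V, (1)}_{\psi}, \der^{\V, (1)}_{\phi}] = \der^{\V, (1)}_{\langle \psi, \phi \rangle}$ by evaluating both sides on the generators $e_0$, $e_1$ and $g \in G$; the comparison on $e_1$ is the only nontrivial check, and it mirrors the computation of $\aut^{\V, (1)}_{\Psi \circledast \Phi}$ performed in Proposition \ref{group_morph_V}.
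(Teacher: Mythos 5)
Your proposal is correct and follows essentially the same route as the paper, which also obtains the Lie algebra action by viewing $\Psi \mapsto \aut^{\V, (1)}_{\Psi}$ as a morphism of affine $\Q$-group schemes (via Proposition \ref{group_morph_V}\ref{act_aut1}) and identifying its infinitesimal version; your $\epsilon$-expansion of $\Ad_{\beta(\Psi \otimes 1)} \circ \aut^{\V, (0)}_{\Psi}$ just makes explicit the step the paper leaves as ``one checks that the corresponding morphism of Lie algebras is as announced.''
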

\begin{proof}
    Same as proof of Proposition-Definition \ref{LA_act_der_0}, replacing the morphism $\Psi \mapsto \aut^{\V, (0)}_{\Psi}$ by $\Psi \mapsto \aut^{\V, (1)}_{\Psi}$ and using Proposition \ref{group_morph_V}\ref{act_aut1}.
\end{proof}

\begin{propdef}
    \label{LA_act_end_10}
    For $\psi \in \widehat{\Lib}(X)$, we define $\mathrm{end}^{\V, (10)}_{\psi}$ to be the $\Q$-linear endomorphism of $\hat{\V}_G^{\Q}$ given by
    \begin{equation}
        \label{eq_end10}
        \mathrm{end}^{\V, (10)}_{\psi} := \ell_{\beta(\psi \otimes 1)} + \der^{\V, (0)}_{\psi}.
    \end{equation}
    There is a Lie algebra action of $(\widehat{\Lib}(X), \langle \cdot, \cdot \rangle)$ on $\hat{\V}_G^{\Q}$ by $\Q$-linear endomorphisms
    \[
        (\widehat{\Lib}(X), \langle \cdot, \cdot \rangle) \longrightarrow \mathrm{End}_{\Q}(\hat{\V}_G^{\Q}), \psi \longmapsto \mathrm{end}^{\V, (10)}_{\psi}.
    \]
\end{propdef}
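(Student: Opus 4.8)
The plan is to reproduce, in the module-automorphism setting, the group-scheme differentiation argument already used to establish Proposition-Definition \ref{LA_act_der_0} and Proposition-Definition \ref{LA_act_der_1}. The key difference is that $\aut^{\V, (10)}_{\Psi}$ is only a $\K$-module automorphism of $\hat{\V}_G$ (not a $\K$-algebra one), so the relevant target is the group scheme $\K \mapsto \Aut_{\K-\Mod}(\hat{\V}_G)$, whose Lie algebra is $\End_{\Q}(\hat{\V}_G^{\Q})$ equipped with the commutator bracket, rather than the derivation Lie algebra. Note that, unlike in those earlier statements, existence of $\mathrm{end}^{\V, (10)}_{\psi}$ requires no argument: it is defined outright in (\ref{eq_end10}) as the sum of the $\Q$-linear operators $\ell_{\beta(\psi \otimes 1)}$ and $\der^{\V, (0)}_{\psi}$, so the content is entirely the Lie algebra action statement.

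Concretely, I would first invoke that $\K \mapsto \Aut_{\K-\Mod}(\hat{\V}_G)$ is an affine $\Q$-group scheme with Lie algebra $\End_{\Q}(\hat{\V}_G^{\Q})$, by the same reasoning that establishes the analogous fact for $\Aut_{\K-\Mod}(\KX)$ earlier in the excerpt. By Proposition \ref{act_aut10}, the assignment $\Psi \mapsto \aut^{\V, (10)}_{\Psi}$ is a morphism of $\Q$-group schemes from $\K \mapsto (\G(\KX), \circledast)$ to this target, so applying $\mathbf{Lie}$ yields a $\Q$-Lie algebra morphism $(\widehat{\Lib}(X), \langle \cdot, \cdot \rangle) \to \End_{\Q}(\hat{\V}_G^{\Q})$; once this is identified with $\psi \mapsto \mathrm{end}^{\V, (10)}_{\psi}$, the asserted Lie algebra action follows, compatibility with brackets being automatic. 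For the identification I would work over $\K = \Q[\epsilon]/(\epsilon^2)$, where by Proposition \ref{LA_of_Rac_groups}(a) a Lie algebra element of the source is $\Psi = 1 + \epsilon \psi$ with $\psi \in \widehat{\Lib}(X)$, hence $\beta(\Psi \otimes 1) = 1 + \epsilon\, \beta(\psi \otimes 1)$. Using the factorization (\ref{eq_aut10}) together with the differential $\aut^{\V, (0)}_{1 + \epsilon\psi} = \mathrm{id} + \epsilon\, \der^{\V, (0)}_{\psi}$ supplied by Proposition-Definition \ref{LA_act_der_0}, I would expand
\[
    \aut^{\V, (10)}_{\Psi} = \ell_{1 + \epsilon \beta(\psi \otimes 1)} \circ \left(\mathrm{id} + \epsilon\, \der^{\V, (0)}_{\psi}\right) = \mathrm{id} + \epsilon \left(\ell_{\beta(\psi \otimes 1)} + \der^{\V, (0)}_{\psi}\right)
\]
modulo $\epsilon^2$, and read off the linear term as $\mathrm{end}^{\V, (10)}_{\psi}$.

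The point deserving care, rather than a genuine obstacle, is that one cannot linearize factor by factor: the assignment $\Psi \mapsto \ell_{\beta(\Psi \otimes 1)}$ is \emph{not} itself a $\circledast$-group morphism, since $\ell_{\beta(\Psi \otimes 1)} \circ \ell_{\beta(\Phi \otimes 1)} = \ell_{\beta(\Psi\Phi \otimes 1)}$ involves the ordinary product $\Psi\Phi$ and not $\Psi \circledast \Phi$. Hence the Lie-algebra-morphism property must be deduced from the group-morphism property of the full composite $\aut^{\V, (10)}$ recorded in Proposition \ref{act_aut10}; the first-order expansion above is then a routine computation giving the stated formula for the differential. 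This is exactly the step flagged by the terse "one checks that the corresponding morphism of Lie algebras is as announced" in the proofs of the two preceding Proposition-Definitions.
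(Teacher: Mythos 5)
Your proposal is correct and follows exactly the paper's route: the paper likewise introduces the affine $\Q$-group scheme $\K \mapsto \Aut_{\K-\Mod}(\hat{\V}_G)$ with Lie algebra $\End_{\Q}(\hat{\V}_G^{\Q})$, views $\Psi \mapsto \aut^{\V,(10)}_{\Psi}$ as a $\Q$-group scheme morphism via Proposition \ref{act_aut10}, and identifies the induced Lie algebra morphism with $\psi \mapsto \mathrm{end}^{\V,(10)}_{\psi}$. Your explicit expansion over $\Q[\epsilon]/(\epsilon^2)$, and the remark that the linearization must go through the composite rather than the non-multiplicative factor $\Psi \mapsto \ell_{\beta(\Psi\otimes 1)}$, correctly fills in the step the paper leaves as ``one checks''.
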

\begin{proof}
    Same as proof of Proposition-Definition \ref{LA_act_der_0}, introducing the $\Q$-group scheme $\K \mapsto \Aut_{\K-\Mod}(\hat{\V}_G)$, whose Lie algebra is $\End_{\Q}(\hat{\V}_G^{\Q})$, and viewing $\Psi \mapsto \aut^{\V,(10)}_{\Psi}$ as a $\Q$-group scheme morphism from $\K \mapsto (\G(\KX), \circledast))$ to $\K \mapsto \Aut_{\K-\Mod}(\hat{\V}_G)$ thanks to Proposition \ref{act_aut10}. 
\end{proof}

\subsection{The stabilizer Lie algebra \texorpdfstring{$\stab(\hat{\Delta}^{\M}_G)$}{stabDMG}} \label{sect_stabM}
\begin{propdef}
    \label{end_M_10}
    For $\psi \in \widehat{\Lib}(X)$, there is a unique $\Q$-linear endomorphism $\mathrm{end}^{\M, (10)}_{\psi}$ of $\hat{\M}_G^{\Q}$ such that the following diagram commutes
    \begin{equation}
        \begin{tikzcd}
            \hat{\V}_G^{\Q} \ar[rrr, "\mathrm{end}^{\V, (10)}_{\psi}"] \ar[d, "- \cdot 1_{\M}"'] &&& \hat{\V}_G^{\Q} \ar[d, "- \cdot 1_{\M}"] \\
            \hat{\M}_G^{\Q} \ar[rrr, "\mathrm{end}^{\M, (10)}_{\psi}"'] &&& \hat{\M}_G^{\Q}
        \end{tikzcd}
    \end{equation}
    There is a Lie algebra action of $(\widehat{\Lib}(X), \langle \cdot, \cdot \rangle)$ on $\hat{\M}_G$ by $\Q$-linear endomorphisms
    \[
        (\widehat{\Lib}(X), \langle \cdot, \cdot \rangle) \longrightarrow \End_{\Q}(\hat{\M}_G^{\Q}), \psi \longmapsto \mathrm{end}^{\M, (10)}_{\psi}.
    \]
\end{propdef}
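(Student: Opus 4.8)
The plan is to mirror, at the infinitesimal level, the two-step argument already carried out at the group level in Proposition \ref{aut_V10_preserves}, Proposition-Definition \ref{aut_M10} and Corollary \ref{group_morph_aut_M_10}.

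For the existence and uniqueness of $\mathrm{end}^{\M, (10)}_{\psi}$, I would show that the endomorphism $\mathrm{end}^{\V, (10)}_{\psi}$ preserves the submodule $\hat{\V}_G^{\Q} e_0 + \sum_{g \in G \backslash \{1\}} \hat{\V}_G^{\Q}(g-1)$, which is the kernel of the surjection $- \cdot 1_{\M} : \hat{\V}_G^{\Q} \to \hat{\M}_G^{\Q}$. The only computational input is the Leibniz-type identity
\[
    \forall a, b \in \hat{\V}_G^{\Q}, \quad \mathrm{end}^{\V, (10)}_{\psi}(ab) = \mathrm{end}^{\V, (10)}_{\psi}(a)\, b + a\, \der^{\V, (0)}_{\psi}(b),
\]
which is the infinitesimal analogue of Lemma \ref{rel_aut_alg_mod}(\ref{rel_aut_100}) and follows at once from the definition $\mathrm{end}^{\V, (10)}_{\psi} = \ell_{\beta(\psi \otimes 1)} + \der^{\V, (0)}_{\psi}$ together with the fact that $\der^{\V, (0)}_{\psi}$ is a derivation. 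Applying this identity to a generic element $a e_0 + \sum_{g \in G \backslash \{1\}} b_g(g-1)$ of the submodule, and using $\der^{\V, (0)}_{\psi}(e_0) = 0$ and $\der^{\V, (0)}_{\psi}(g) = 0$ for all $g \in G$ (hence $\der^{\V, (0)}_{\psi}(g-1) = 0$), one sees that its image again lies in the submodule; this is the exact counterpart of Proposition \ref{aut_V10_preserves}. The induced endomorphism $\mathrm{end}^{\M, (10)}_{\psi}$ then exists and is unique, exactly as $\aut^{\M, (10)}_{\Psi}$ is obtained from $\aut^{\V, (10)}_{\Psi}$ in Proposition-Definition \ref{aut_M10}.

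For the Lie algebra action, I would transport the structure along the surjection $- \cdot 1_{\M}$. By construction $\mathrm{end}^{\M, (10)}_{\psi} \circ (- \cdot 1_{\M}) = (- \cdot 1_{\M}) \circ \mathrm{end}^{\V, (10)}_{\psi}$, the assignment $\psi \mapsto \mathrm{end}^{\V, (10)}_{\psi}$ is $\Q$-linear, and it is already a Lie algebra action by Proposition-Definition \ref{LA_act_end_10}. Using the intertwining relation twice gives, for $\psi, \phi \in \widehat{\Lib}(X)$,
\begin{align*}
    [\mathrm{end}^{\M, (10)}_{\psi}, \mathrm{end}^{\M, (10)}_{\phi}] \circ (- \cdot 1_{\M})
    &= (- \cdot 1_{\M}) \circ [\mathrm{end}^{\V, (10)}_{\psi}, \mathrm{end}^{\V, (10)}_{\phi}] \\
    &= (- \cdot 1_{\M}) \circ \mathrm{end}^{\V, (10)}_{\langle \psi, \phi \rangle} = \mathrm{end}^{\M, (10)}_{\langle \psi, \phi \rangle} \circ (- \cdot 1_{\M}),
\end{align*}
and since $- \cdot 1_{\M}$ is surjective this forces $[\mathrm{end}^{\M, (10)}_{\psi}, \mathrm{end}^{\M, (10)}_{\phi}] = \mathrm{end}^{\M, (10)}_{\langle \psi, \phi \rangle}$; $\Q$-linearity descends in the same way. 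Alternatively, following the group-scheme pattern of Proposition-Definitions \ref{LA_act_der_0}--\ref{LA_act_end_10}, one may note that $\Psi \mapsto \aut^{\M, (10)}_{\Psi}$ of Corollary \ref{group_morph_aut_M_10} is a $\Q$-group scheme morphism from $\K \mapsto (\G(\KX), \circledast)$ to $\K \mapsto \Aut_{\K-\Mod}(\hat{\M}_G)$, whose Lie algebra is $\End_{\Q}(\hat{\M}_G^{\Q})$, and check that the associated Lie algebra morphism is $\psi \mapsto \mathrm{end}^{\M, (10)}_{\psi}$.

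I do not expect a genuine obstacle: both steps are routine transports of facts already established for $\hat{\V}_G$, the surjectivity of $- \cdot 1_{\M}$ doing all the work for the descent. The only place a (trivial) verification is needed is the Leibniz identity of the first step, and the one structural point---that $\mathrm{end}^{\V, (10)}_{\psi}$ respects the relations cutting out $\hat{\M}_G$---has already been isolated in the vanishing of $\der^{\V, (0)}_{\psi}$ on $e_0$ and on $G$.
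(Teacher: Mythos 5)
Your proposal is correct, but it takes a genuinely different route from the paper. The paper obtains the commutative diagram not by a fresh computation but by specializing the group-level statement: it applies Proposition-Definition \ref{aut_M10} with $\K=\Q[\epsilon]/(\epsilon^2)$ and the group-like element corresponding to $\psi \in \ker\big(\G(\KX) \to \G(\QX)\big)$, and it obtains the Lie algebra action by exhibiting $\Psi \mapsto \aut^{\M,(10)}_{\Psi}$ (Corollary \ref{group_morph_aut_M_10}) as a morphism of affine $\Q$-group schemes into $\K \mapsto \Aut_{\K-\Mod}(\hat{\M}_G)$ and identifying $\psi \mapsto \mathrm{end}^{\M,(10)}_{\psi}$ as the associated Lie algebra morphism; this keeps §4 uniform with the neighbouring Proposition-Definitions and delegates all verifications to the functor $\mathbf{Lie}$. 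You instead work directly at the infinitesimal level: your Leibniz identity $\mathrm{end}^{\V,(10)}_{\psi}(ab) = \mathrm{end}^{\V,(10)}_{\psi}(a)\,b + a\,\der^{\V,(0)}_{\psi}(b)$ is the correct analogue of Lemma \ref{rel_aut_alg_mod}(\ref{rel_aut_100}), it does show that $\mathrm{end}^{\V,(10)}_{\psi}$ preserves the kernel $\hat{\V}_G^{\Q} e_0 + \sum_{g \neq 1}\hat{\V}_G^{\Q}(g-1)$ of $- \cdot 1_{\M}$ (mirroring Proposition \ref{aut_V10_preserves}), and your descent of the bracket through the surjection, using Proposition-Definition \ref{LA_act_end_10}, is sound. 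Your approach is more elementary and self-contained, at the cost of redoing at the Lie level computations the paper has already packaged at the group level; the paper's approach is shorter but rests on the (unproved in the text) assertion that $\End_{\Q}(\hat{\M}_G^{\Q})$ is the Lie algebra of $\K \mapsto \Aut_{\K-\Mod}(\hat{\M}_G)$ and on the compatibility of $\mathbf{Lie}$ with the various constructions. You correctly note the paper's route as your "alternative," so nothing is missing.
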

\begin{proof}
    The commutative diagram is given by applying Proposition-Definition \ref{aut_M10} for $\K=\Q[\epsilon]/(\epsilon^2)$ and $\psi \in \ker\Big(\G(\KX) \to \G(\Q\langle\langle X \rangle\rangle)\Big)$.
    For the second statement, one first checks that the assignment $\K \mapsto \Aut_{\K-\Mod}(\hat{\M}_G)$ is an affine $\Q$-group scheme whose Lie algebra is $\End_{\Q}(\hat{\M}_G^{\Q})$. Then, using Corollary \ref{group_morph_aut_M_10}, one deduces that the map $\Psi \mapsto \aut^{\M, (10)}_{\Psi}$ is a $\Q$-group scheme morphism from $\left(\K \mapsto (\G(\KX), \circledast)\right)$ to $\left(\K \mapsto \Aut_{\K-\Mod}(\hat{\M}_G)\right)$. One finally proves that $\mathrm{end}^{\M, (10)}$ is its corresponding Lie algebra morphism. 
\end{proof}

To $\psi \in \widehat{\Lib}(X)$, one associates $\gamma_{\psi} \in \Q[[x]]$ (see (\ref{gamma_function})). Then $\gamma_{\psi}(-e_1)$ is an element of $\hat{\V}_G^{\Q}$.

\begin{propdef}
    \label{LA_act_gamma_M}
    For $\psi \in \widehat{\Lib}(X)$, we define the following $\Q$-linear endomorphism of $\hat{\M}_G^{\Q}$:
    \begin{equation}
        \label{gamma_end_M_10}
        ^{\gamma}\mathrm{end}^{\M, (10)}_{\psi} := \ell_{-\gamma_{\psi}(-e_1)} + \mathrm{end}^{\M, (10)}_{\psi}. 
    \end{equation} 
    There is a Lie algebra action of $(\widehat{\Lib}(X), \langle \cdot, \cdot \rangle)$ on $\hat{\M}_G^{\Q}$ by $\Q$-linear endomorphisms
    \begin{equation}
        (\widehat{\Lib}(X), \langle \cdot, \cdot \rangle) \longrightarrow \mathrm{End}_{\Q}(\hat{\M}_G^{\Q}), \psi \longmapsto \,^{\gamma}\mathrm{end}^{\M, (10)}_{\psi}
    \end{equation}
\end{propdef}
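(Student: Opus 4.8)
The plan is to follow the pattern of Proposition-Definitions \ref{LA_act_der_0}--\ref{end_M_10}: realise $\psi\mapsto {}^{\gamma}\mathrm{end}^{\M,(10)}_{\psi}$ as the Lie algebra morphism obtained by applying the functor $\mathbf{Lie}$ to a morphism of affine $\Q$-group schemes, and then to identify it explicitly. First I would recall that $\K\mapsto\Aut_{\K-\Mod}(\hat{\M}_G)$ is an affine $\Q$-group scheme with Lie algebra $\End_{\Q}(\hat{\M}_G^{\Q})$ (established in the proof of Proposition-Definition \ref{end_M_10}). By Proposition \ref{Gamma_aut_M_10}, the assignment $\Psi\mapsto {}^{\Gamma}\aut^{\M,(10)}_{\Psi}$ is a group morphism $(\G(\KX),\circledast)\to\Aut_{\K-\Mod}(\hat{\M}_G)$, functorial in $\K$, hence a morphism of affine $\Q$-group schemes from $\K\mapsto(\G(\KX),\circledast)$ to $\K\mapsto\Aut_{\K-\Mod}(\hat{\M}_G)$. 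Applying $\mathbf{Lie}$, together with the identification $\mathbf{Lie}(\K\mapsto(\G(\KX),\circledast))=(\widehat{\Lib}(X),\langle\cdot,\cdot\rangle)$ from Proposition \ref{LA_of_Rac_groups}(a), produces a Lie algebra morphism $(\widehat{\Lib}(X),\langle\cdot,\cdot\rangle)\to\End_{\Q}(\hat{\M}_G^{\Q})$, which is automatically a Lie algebra action; it then remains to match this morphism with $\psi\mapsto {}^{\gamma}\mathrm{end}^{\M,(10)}_{\psi}$.

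For the identification I would work over $\K=\Q[\epsilon]/(\epsilon^2)$ and differentiate ${}^{\Gamma}\aut^{\M,(10)}_{\Psi}=\ell_{\Gamma^{-1}_{\Psi}(-e_1)}\circ\aut^{\M,(10)}_{\Psi}$ at the identity via the Leibniz rule, both factors being the identity at $\Psi=1$. The factor $\aut^{\M,(10)}_{\Psi}$ differentiates to $\mathrm{end}^{\M,(10)}_{\psi}$ by Proposition-Definition \ref{end_M_10}. For the scalar factor I would linearise the cocycle $\Gamma$: since each coefficient map $(-\,|\,x_0^{n-1}x_1)\colon(\G(\KX),\circledast)\to(\K,+)$ is a group morphism (the input to Lemma \ref{Gamma_aut}), its differential sends $\psi$ to $(\psi\,|\,x_0^{n-1}x_1)$, so that $\ell_{\Gamma^{-1}_{\Psi}(-e_1)}$ differentiates to $\ell$ of a power series in $-e_1$. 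The delicate point — and the step I would treat most carefully — is exactly this last matching against $\ell_{-\gamma_{\psi}(-e_1)}$: the series defining $\Gamma_{\Psi}$ in (\ref{Gamma_function}) runs over $n\geq 2$, whereas $\gamma_{\psi}$ in (\ref{gamma_function}) runs over $n\geq 1$, so the degree-one coefficient $(\psi\,|\,x_1)$ must be tracked with care on $\hat{\M}_G$, where $\ell_{e_1}$ does not vanish. I expect this low-degree bookkeeping to be the main obstacle.

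To settle this cleanly, and as the cross-check I would include, I would transport the already-established Racinet-side action through the module isomorphism $F:=\kappa\circ\overline{\q}^{-1}\colon\KX/\KX x_0\to\hat{\M}_G^{\Q}$ of Proposition \ref{isoMG}. On one hand $F$ intertwines $s^Y_{\psi}$ with $\mathrm{end}^{\M,(10)}_{\psi}$ (the infinitesimal form of Lemma \ref{link_SY_autM10}); on the other hand it intertwines $\ell_{x_1}$ with $\ell_{-e_1}$, since $\beta(x_1\otimes 1)=-e_1$ and $-\cdot 1_{\M}$ is a $\hat{\V}_G$-module morphism, exactly as in the auxiliary diagram of Proposition \ref{link_GammaSY_GammaautM10}; applying the latter to the power series $\gamma_{\psi}$ gives that $F$ intertwines $\ell_{-\gamma_{\psi}(x_1)}$ with $\ell_{-\gamma_{\psi}(-e_1)}$. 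Adding the two relations yields $F\circ {}^{\gamma}s^Y_{\psi}={}^{\gamma}\mathrm{end}^{\M,(10)}_{\psi}\circ F$. Since conjugation by the fixed isomorphism $F$ is a Lie-algebra isomorphism $\End_{\Q}(\KX/\KX x_0)\to\End_{\Q}(\hat{\M}_G^{\Q})$, the action property of $\psi\mapsto {}^{\gamma}s^Y_{\psi}$ from Proposition \ref{gamma_sY} transports directly to $\psi\mapsto {}^{\gamma}\mathrm{end}^{\M,(10)}_{\psi}$. This both pins down the identification and establishes the Lie algebra action independently of the group-scheme computation.
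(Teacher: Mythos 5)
Your first two paragraphs reproduce the paper's own argument: the paper likewise realises $\psi\mapsto{}^{\gamma}\mathrm{end}^{\M,(10)}_{\psi}$ as the $\Q$-Lie algebra morphism attached to the $\Q$-group scheme morphism $\Psi\mapsto{}^{\Gamma}\aut^{\M,(10)}_{\Psi}$, citing the proof of Proposition-Definition \ref{end_M_10} for the $\aut^{\M,(10)}$-factor and asserting, ``in view of (\ref{Def_Gamma_aut_M_10})'', that the resulting differential is the right-hand side of (\ref{gamma_end_M_10}). The low-degree point you flag is genuine and the paper's proof does not address it: with the formulas as printed, the exponent of $\Gamma_{\Psi}$ in (\ref{Gamma_function}) starts at $n=2$ while $\gamma_{\psi}$ in (\ref{gamma_function}) starts at $n=1$, so linearising $\ell_{\Gamma^{-1}_{\Psi}(-e_1)}$ at $\Psi=1+\epsilon\psi$ yields $\ell_{-\gamma_{\psi}(-e_1)-(\psi|x_1)e_1}$ rather than $\ell_{-\gamma_{\psi}(-e_1)}$, i.e.\ the differential of ${}^{\Gamma}\aut^{\M,(10)}$ agrees with ${}^{\gamma}\mathrm{end}^{\M,(10)}$ only up to $-(\psi|x_1)\ell_{e_1}$ (harmless for the eventual stabilizer, since $z_{1,1}=-e_1$ is primitive for $\hat{\Delta}^{\W}_G$ and hence $\ell_{e_1}$ satisfies the coderivation identity with respect to $\hat{\Delta}^{\M}_G$, but it does mean the identification as stated deserves a word). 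Your third paragraph is a correct and genuinely different proof of the Lie-action statement, and it is the decisive part of your argument: conjugating by the fixed isomorphism $F=\kappa\circ\overline{\q}^{-1}$ uses only the infinitesimal form of Lemma \ref{link_SY_autM10}, the intertwining $\ell_{f(x_1)}\leftrightarrow\ell_{f(-e_1)}$ already computed inside the proof of Proposition \ref{link_GammaSY_GammaautM10}, and Proposition \ref{gamma_sY}; since the same series $\gamma_{\psi}$ appears on both sides of the conjugation, the $n=1$ term causes no trouble there. What this route buys is an unconditional, self-contained proof of the action property; what the paper's route buys is uniformity with the surrounding Proposition-Definitions and the explicit realisation of ${}^{\gamma}\mathrm{end}^{\M,(10)}$ as $\mathbf{Lie}$ of $\Psi\mapsto{}^{\Gamma}\aut^{\M,(10)}_{\Psi}$, which is what Proposition \ref{stab_is_LA_Stab} later invokes — so the low-degree bookkeeping you isolate still has to be settled at that later point, just not for the statement under review.
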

\begin{proof}
    The maps $\Psi \mapsto \aut^{\M, (10)}_{\Psi}$ and $\Psi \mapsto \,^{\Gamma}\aut_{\Psi}^{\M, (10)}$ are $\Q$-group scheme morphisms from $\left(\K \mapsto (\G(\KX), \circledast)\right)$ to $\left(\K \mapsto \Aut_{\K-\Mod}(\hat{\M}_G)\right)$. The $\Q$-Lie algebra morphism associated to the former $\Q$-group scheme morphism is $\psi \mapsto \mathrm{end}_{\psi}^{\M, (10)}$ by the proof of Proposition-Definition \ref{end_M_10}.
    The Lie algebra morphism associated to the latter $\Q$-group scheme morphism takes $\psi \in \widehat{\Lib}(X)$ to the right hand side of (\ref{gamma_end_M_10}) in view of (\ref{Def_Gamma_aut_M_10}), therefore is given by $\psi \mapsto \,^{\gamma}\mathrm{end}_{\psi}^{\M, (10)}$. It follows that the latter map is a Lie algebra morphism.
\end{proof}

Thanks to this result, we are able to provide a Lie algebra action of $(\widehat{\Lib}(X), \langle \cdot, \cdot \rangle)$ on the space $\Mor_{\Q}\left(\hat{\M}_G^{\Q}, \left(\hat{\M}_G^{\Q}\right)^{\hat{\otimes} 2}\right)$ via
\begin{equation}
    \label{LA_act_on_DeltaM}
    \psi \cdot D^{\M} := \left({^{\gamma}\mathrm{end}^{\M, (10)}_{\psi}} \otimes \mathrm{id} + \mathrm{id} \otimes \, ^{\gamma}\mathrm{end}^{\M, (10)}_{\psi} \right) \circ D^{\M} - D^{\M} \circ {^{\gamma}\mathrm{end}^{\M, (10)}_{\psi}}.
\end{equation}
In particular, the stabilizer of $\hat{\Delta}^{\M}_{G}$ is the Lie subalgebra
\begin{equation}
    \stab(\hat{\Delta}^{\M}_{G}) :=
    \left\{\begin{array}{l}
        \psi \in \widehat{\Lib}(X) \, | \\
        \left({^{\gamma}\mathrm{end}^{\M, (10)}_{\psi}} \otimes \mathrm{id} + \mathrm{id} \otimes \, ^{\gamma}\mathrm{end}^{\M, (10)}_{\psi} \right) \circ D^{\M} = D^{\M} \circ {^{\gamma}\mathrm{end}^{\M, (10)}_{\psi}}
    \end{array}\right\}
\end{equation}

\noindent For a commutative $\Q$-algebra $\K$, recall the group $\Stab(\hat{\Delta}^{\M}_G)(\K)$ in (\ref{Stab_DeltaM}). One then has
\begin{proposition}
    \label{stab_is_LA_Stab} The assignment $\Stab(\hat{\Delta}^{\M}_G) : \K \mapsto \Stab(\hat{\Delta}^{\M}_G)(\K)$ is an affine $\Q$-group scheme and $\mathbf{Lie}(\Stab(\hat{\Delta}^{\M}_G)) = \stab(\hat{\Delta}^{\M}_G)$.
\end{proposition}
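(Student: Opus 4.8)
The plan is to derive both assertions from the general stabilizer lemma \cite{EF0}, Lemma 5.1, applied to the $\Q$-group scheme morphism underlying the action (\ref{act_on_DeltaM}). First I would record that this conjugation action by $^{\Gamma}\aut^{\M, (10)}_{\Psi}$ arises from a morphism of affine $\Q$-group schemes out of $\K \mapsto (\G(\KX), \circledast)$: indeed, by Proposition \ref{Gamma_aut_M_10} the assignment $\Psi \mapsto {^{\Gamma}\aut^{\M, (10)}_{\Psi}}$ defines a group action, and (as already exploited in the proof of Proposition-Definition \ref{LA_act_gamma_M}) it is a $\Q$-group scheme morphism into $\K \mapsto \Aut_{\K-\Mod}(\hat{\M}_G)$. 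Since $\hat{\Delta}^{\M}_G$ is a fixed functorial element of the module $\Mor_{\K-\Mod}(\hat{\M}_G, \hat{\M}_G^{\hat{\otimes} 2})$ on which this group scheme acts, \cite{EF0}, Lemma 5.1, guarantees that its stabilizer $\Stab(\hat{\Delta}^{\M}_G)$ is an affine $\Q$-subgroup scheme of $\K \mapsto (\G(\KX), \circledast)$, which is the first claim. As a consistency check, the same conclusion follows from the functorial identity $\Stab(\hat{\Delta}^{\M}_G)(\K) = \Stab(\hat{\Delta}^{\Mod}_{\star})(\K)$ of Theorem \ref{StabDeltaM_StabDelta*} together with Proposition \ref{group_schemes}(c).

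For the Lie algebra I would apply the functor $\mathbf{Lie}$, using the identification $\mathbf{Lie}(\K \mapsto (\G(\KX), \circledast)) = \LibX$ from Proposition \ref{LA_of_Rac_groups}(a). By the second half of \cite{EF0}, Lemma 5.1, the Lie algebra of a stabilizer group scheme is the stabilizer Lie subalgebra for the differentiated action; concretely, $\psi \in \LibX$ lies in $\mathbf{Lie}(\Stab(\hat{\Delta}^{\M}_G))$ if and only if the element $1 + \epsilon\psi \in \G(\Q[\epsilon]/(\epsilon^2)\langle\langle X\rangle\rangle)$ stabilizes $\hat{\Delta}^{\M}_G$ over $\Q[\epsilon]/(\epsilon^2)$. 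The key input is Proposition-Definition \ref{LA_act_gamma_M}, which states precisely that $\psi \mapsto {^{\gamma}\mathrm{end}^{\M, (10)}_{\psi}}$ is the Lie algebra morphism attached to the $\Q$-group scheme morphism $\Psi \mapsto {^{\Gamma}\aut^{\M, (10)}_{\Psi}}$; equivalently, ${^{\Gamma}\aut^{\M, (10)}_{1 + \epsilon\psi}} = \mathrm{id} + \epsilon\,{^{\gamma}\mathrm{end}^{\M, (10)}_{\psi}}$ modulo $\epsilon^2$.

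Substituting this first-order expansion into the defining relation (\ref{Stab_DeltaM}) and collecting the coefficient of $\epsilon$ turns the conjugation action (\ref{act_on_DeltaM}) into its infinitesimal, bracket-type counterpart (\ref{LA_act_on_DeltaM}): the $\epsilon^0$-terms cancel, while the $\epsilon^1$-terms yield
\[
    \left({^{\gamma}\mathrm{end}^{\M, (10)}_{\psi}} \otimes \mathrm{id} + \mathrm{id} \otimes {^{\gamma}\mathrm{end}^{\M, (10)}_{\psi}}\right) \circ \hat{\Delta}^{\M}_G = \hat{\Delta}^{\M}_G \circ {^{\gamma}\mathrm{end}^{\M, (10)}_{\psi}}.
\]
This is exactly the condition cutting out $\stab(\hat{\Delta}^{\M}_G)$, so $\mathbf{Lie}(\Stab(\hat{\Delta}^{\M}_G)) = \stab(\hat{\Delta}^{\M}_G)$, as desired.

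The only genuine content beyond bookkeeping is the interchange of the $\mathbf{Lie}$ functor with the stabilizer construction, namely that the Lie algebra of the stabilizer group scheme coincides with the annihilator of $\hat{\Delta}^{\M}_G$ under the differentiated action; this is the abstract statement of \cite{EF0}, Lemma 5.1, supplemented by the identification of the differential of $\Psi \mapsto {^{\Gamma}\aut^{\M, (10)}_{\Psi}}$ with $\psi \mapsto {^{\gamma}\mathrm{end}^{\M, (10)}_{\psi}}$. Both have already been secured (the latter in Proposition-Definition \ref{LA_act_gamma_M}), so the remaining steps amount to the routine first-order expansion sketched above, and I do not expect any serious obstacle.
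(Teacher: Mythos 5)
Your proposal is correct and follows essentially the same route as the paper: the affine group scheme claim is obtained from \cite{EF0}, Lemma 5.1 applied to $v=\hat{\Delta}^{\M}_G$, and the Lie algebra identification comes from the fact that the action (\ref{LA_act_on_DeltaM}) is the infinitesimal version of (\ref{act_on_DeltaM}), the key input being that $\psi \mapsto {^{\gamma}\mathrm{end}^{\M,(10)}_{\psi}}$ is the differential of $\Psi \mapsto {^{\Gamma}\aut^{\M,(10)}_{\Psi}}$ (Proposition-Definition \ref{LA_act_gamma_M}). You merely spell out the first-order expansion over $\Q[\epsilon]/(\epsilon^2)$ that the paper leaves implicit.
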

\begin{proof}
    The first statement is obtained by applying \cite{EF0}, Lemma 5.1 where $v=\hat{\Delta}^{\M}_G$ and the second one comes from the fact that the $(\widehat{\Lib}(X), \langle \cdot, \cdot \rangle)$-action on $\Mor_{\Q}\left(\hat{\M}_G^{\Q}, \left(\hat{\M}_G^{\Q}\right)^{\hat{\otimes} 2}\right)$ given in (\ref{LA_act_on_DeltaM}) is the infinitesimal version of the group action of $(\G(\KX), \circledast)$ on $\Mor_{\K-\Mod}\left(\hat{\M}_G, \left(\hat{\M}_G\right)^{\hat{\otimes} 2}\right)$ given in (\ref{act_on_DeltaM}), for any $\Q$-algebra $\K$.
\end{proof}

\begin{corollary}
    \label{stabDeltaM_stabDeltamod} $\stab(\hat{\Delta}^{\M}_G) = \stab(\hat{\Delta}^{\Mod}_{\star})$ (as Lie subalgebras of $(\widehat{\Lib}(X), \langle \cdot, \cdot \rangle)$).
\end{corollary}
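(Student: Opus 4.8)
The plan is to obtain the equality of Lie algebras as the infinitesimal shadow of the already-proven equality of group schemes, by applying the functor $\mathbf{Lie}$. Indeed, Theorem \ref{StabDeltaM_StabDelta*} asserts that $\Stab(\hat{\Delta}^{\M}_G)(\K) = \Stab(\hat{\Delta}^{\Mod}_{\star})(\K)$ as subgroups of $(\G(\KX), \circledast)$ for \emph{every} commutative $\Q$-algebra $\K$; since the identification there is induced by the single module isomorphism $\kappa \circ \overline{\q}^{-1}$ sending $\hat{\Delta}^{\Mod}_{\star}$ to $\hat{\Delta}^{\M}_G$ compatibly with the two $(\G(\KX), \circledast)$-actions, it is natural in $\K$, so the two assignments coincide as subfunctors of $\K \mapsto (\G(\KX), \circledast)$, hence as affine $\Q$-group schemes.

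First I would record that each side of the desired identity is the Lie algebra of the corresponding group scheme: Proposition \ref{stab_is_LA_Stab} gives $\mathbf{Lie}(\Stab(\hat{\Delta}^{\M}_G)) = \stab(\hat{\Delta}^{\M}_G)$, while $\mathbf{Lie}(\Stab(\hat{\Delta}^{\Mod}_{\star})) = \stab(\hat{\Delta}^{\Mod}_{\star})$ is Proposition \ref{LA_of_Rac_groups}\ref{LA_stab_Stab}. Applying the functor $\mathbf{Lie}$ to the identity of the previous paragraph then yields $\stab(\hat{\Delta}^{\M}_G) = \stab(\hat{\Delta}^{\Mod}_{\star})$ as Lie subalgebras of $(\LibX, \langle \cdot, \cdot \rangle)$. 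Unwinding the definition of $\mathbf{Lie}$, this amounts to the remark that equality of the two subgroups of $\G(\K\langle\langle X\rangle\rangle)$ at the particular algebra $\K = \Q[\epsilon]/(\epsilon^2)$ (together with equality at $\Q$) already forces equality of the two Lie subalgebras, so no additional computation is required.

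The only genuine point to verify — and the step I expect to be the crux — is the naturality in $\K$ underlying the group-scheme identification, namely that the coincidence of Theorem \ref{StabDeltaM_StabDelta*} is a coincidence of functors and not merely a fixed-$\K$ statement. This is immediate because every map entering its proof ($\kappa$, $\overline{\q}$, $\beta$, $\varpi$ and the associated actions) is defined uniformly in $\K$. Should one prefer a self-contained infinitesimal argument mirroring the proof of Theorem \ref{StabDeltaM_StabDelta*}, one would instead check directly, using Proposition \ref{link_GammaSY_GammaautM10} at the Lie-algebra level, that $\kappa \circ \overline{\q}^{-1}$ intertwines $^{\gamma}s^Y_{\psi}$ with $^{\gamma}\mathrm{end}^{\M, (10)}_{\psi}$; this transports the stabilizer condition (\ref{stab_Delta*}) defining $\stab(\hat{\Delta}^{\Mod}_{\star})$ term by term onto the defining condition of $\stab(\hat{\Delta}^{\M}_G)$, giving the same conclusion.
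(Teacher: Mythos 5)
Your proposal is correct and follows exactly the paper's own route: the paper likewise deduces the corollary by applying the functor $\mathbf{Lie}$ to the group-scheme equality of Theorem \ref{StabDeltaM_StabDelta*}, invoking Proposition \ref{stab_is_LA_Stab} and Proposition \ref{LA_of_Rac_groups}\ref{LA_stab_Stab} to identify the resulting Lie algebras with $\stab(\hat{\Delta}^{\M}_G)$ and $\stab(\hat{\Delta}^{\Mod}_{\star})$ respectively. Your additional remarks on naturality in $\K$ and the optional direct infinitesimal verification are sound but not needed beyond what the paper records.
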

\begin{proof}
    It follows from Theorem \ref{StabDeltaM_StabDelta*} thanks to Propositions \ref{stab_is_LA_Stab} and \ref{LA_of_Rac_groups}\ref{LA_stab_Stab}.
\end{proof}

\subsection{The stabilizer Lie algebra \texorpdfstring{$\stab(\hat{\Delta}^{\W}_G)$}{stabDWG}} \label{sect_stabW}
\begin{propdef}
    \label{LA_act_gamma_der_V}
    For $\psi \in \widehat{\Lib}(X)$, we define the $\Q$-algebra derivation of $\hat{\V}_G^{\Q}$:
    \begin{equation}
        ^{\gamma}\der^{\V, (1)}_{\psi} := \ad_{-\gamma_{\psi}(-e_1)} + \der^{\V, (1)}_{\psi}. 
    \end{equation}
    There is a Lie algebra action of $(\widehat{\Lib}(X), \langle \cdot, \cdot \rangle)$ on $\hat{\V}_G^{\Q}$ by $\Q$-algebra derivations
    \begin{equation}
        \label{gamma_der_V_1}
        (\widehat{\Lib}(X), \langle \cdot, \cdot \rangle) \longrightarrow \mathrm{Der}_{\Q-\alg}(\hat{\V}_G^{\Q}), \, \psi \longmapsto \,^{\gamma}\der^{\V, (1)}_{\psi}.
    \end{equation} 
\end{propdef}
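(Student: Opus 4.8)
The plan is to follow the pattern already used for Proposition-Definitions \ref{LA_act_der_0}, \ref{LA_act_der_1} and \ref{LA_act_gamma_M}: realize $\psi \mapsto {}^{\gamma}\der^{\V, (1)}_{\psi}$ as the Lie algebra morphism obtained by applying the functor $\mathbf{Lie}$ to the $\Q$-group scheme morphism $\Psi \mapsto {}^{\Gamma}\aut^{\V, (1)}_{\Psi}$ furnished by Proposition \ref{Gamma_aut_VW_1}\ref{Gamma_aut_V_1}. The derivation property is immediate: since $\der^{\V, (1)}_{\psi}$ is a $\Q$-algebra derivation (Proposition-Definition \ref{LA_act_der_1}) and $\ad_{-\gamma_{\psi}(-e_1)}$ is an inner derivation, their sum $^{\gamma}\der^{\V, (1)}_{\psi}$ is again a $\Q$-algebra derivation of $\hat{\V}_G^{\Q}$.

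For the action, recall (as in the proof of Proposition-Definition \ref{LA_act_der_0}) that $\K \mapsto \Aut_{\K-\alg}(\hat{\V}_G)$ is an affine $\Q$-group scheme with Lie algebra $\mathrm{Der}_{\Q-\alg}(\hat{\V}_G^{\Q})$, and that $\Psi \mapsto {}^{\Gamma}\aut^{\V, (1)}_{\Psi}$ is a morphism of $\Q$-group schemes into it by Proposition \ref{Gamma_aut_VW_1}\ref{Gamma_aut_V_1}. Applying $\mathbf{Lie}$ yields a Lie algebra morphism $(\widehat{\Lib}(X), \langle \cdot, \cdot \rangle) \to \mathrm{Der}_{\Q-\alg}(\hat{\V}_G^{\Q})$, and it remains to identify it. Working over $\K = \Q[\epsilon]/(\epsilon^2)$ and differentiating at a group-like $\Psi = 1 + \epsilon \psi$ (with $\psi \in \widehat{\Lib}(X)$) the defining identity (\ref{Def_Gamma_aut_V_1}), the factor $\aut^{\V, (1)}_{\Psi}$ linearizes to $\der^{\V, (1)}_{\psi}$ (Proposition-Definition \ref{LA_act_der_1}) while $\Ad_{\Gamma^{-1}_{\Psi}(-e_1)}$ linearizes to $\ad$ of the first-order term of $\Gamma^{-1}_{\Psi}(-e_1)$, which by the comparison of the series (\ref{Gamma_function}) and (\ref{gamma_function}) is $-\gamma_{\psi}(-e_1)$. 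Summing the two contributions gives $^{\gamma}\der^{\V, (1)}_{\psi}$, so this is the associated Lie algebra morphism and hence a Lie algebra action.

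The delicate point is precisely this last infinitesimal identification, where the normalization of $\gamma_{\psi}$ relative to $\Gamma_{\Psi}$ must be handled with care. I would therefore confirm the morphism property independently by a direct bracket computation, which also pins down the normalization. Writing $c_{\psi} := -\gamma_{\psi}(-e_1)$, a power series in $e_1$, one first checks from the definitions that $\der^{\V, (1)}_{\psi}(e_1) = 0$, so the derivation $\der^{\V, (1)}_{\psi}$ annihilates every power series in $e_1$. Expanding $[{}^{\gamma}\der^{\V, (1)}_{\psi_1}, {}^{\gamma}\der^{\V, (1)}_{\psi_2}]$ and using the identity $[D, \ad_{c}] = \ad_{D(c)}$ valid for any derivation $D$, the two mixed terms $\pm\,\ad_{\der^{\V, (1)}_{\psi_i}(c_{\psi_j})}$ vanish by the previous remark, and the purely inner term $[\ad_{c_{\psi_1}}, \ad_{c_{\psi_2}}] = \ad_{[c_{\psi_1}, c_{\psi_2}]}$ vanishes because power series in $e_1$ commute; what survives is $[\der^{\V, (1)}_{\psi_1}, \der^{\V, (1)}_{\psi_2}] = \der^{\V, (1)}_{\langle \psi_1, \psi_2 \rangle}$ by Proposition-Definition \ref{LA_act_der_1}. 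This matches $^{\gamma}\der^{\V, (1)}_{\langle \psi_1, \psi_2 \rangle} = \ad_{c_{\langle \psi_1, \psi_2 \rangle}} + \der^{\V, (1)}_{\langle \psi_1, \psi_2 \rangle}$ exactly because $c_{\langle \psi_1, \psi_2 \rangle} = -\gamma_{\langle \psi_1, \psi_2 \rangle}(-e_1) = 0$, the coefficients $(\,\cdot \mid x_0^{n-1} x_1)$ being additive group, hence Lie, morphisms on $(\G(\KX), \circledast)$ by \cite{EF0}, Lemma 4.12.
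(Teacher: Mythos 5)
Your proposal is correct, and its first half is essentially the paper's proof verbatim: the paper establishes the statement by viewing $\Psi \mapsto {}^{\Gamma}\aut^{\V,(1)}_{\Psi}$ as a morphism of affine $\Q$-group schemes into $\K \mapsto \Aut_{\K-\alg}(\hat{\V}_G)$ and asserting, with no further computation, that the associated Lie algebra morphism takes $\psi$ to $\ad_{-\gamma_{\psi}(-e_1)} + \der^{\V,(1)}_{\psi}$ ``in view of (\ref{Def_Gamma_aut_V_1})''. Your second half, the direct bracket verification, is not in the paper and is a genuine addition: it proves $[{}^{\gamma}\der^{\V,(1)}_{\psi_1}, {}^{\gamma}\der^{\V,(1)}_{\psi_2}] = {}^{\gamma}\der^{\V,(1)}_{\langle\psi_1,\psi_2\rangle}$ from the definition alone, using $\der^{\V,(1)}_{\psi}(e_1)=0$ (the infinitesimal counterpart of the invariance of $e_1$ under $\aut^{\V,(1)}_{\Psi}$ that the paper invokes at the group level), the commutativity of power series in $e_1$, and the vanishing of $(\langle\psi_1,\psi_2\rangle \,|\, x_0^{n-1}x_1)$; all three ingredients check out, and together with Proposition-Definition \ref{LA_act_der_1} they give a complete, self-contained proof of the morphism property. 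This robustness is welcome, because the infinitesimal identification is slightly less innocent than you (and the paper) state: since $\Gamma_{\Psi}$ in (\ref{Gamma_function}) omits the $n=1$ term that $\gamma_{\psi}$ in (\ref{gamma_function}) retains, the first-order term of $\Gamma^{-1}_{1+\epsilon\psi}(-e_1)$ is $-\gamma_{\psi}(-e_1) - (\psi|x_1)\,e_1$ rather than $-\gamma_{\psi}(-e_1)$. This degree-one discrepancy is inherited from the paper itself and does not affect the truth of the statement (the extra term is again $\ad$ of a series in $e_1$ whose coefficient vanishes on brackets, so both normalizations yield Lie actions), but it means your linearization paragraph does not by itself ``pin down the normalization'' as claimed --- only your bracket computation does.
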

\begin{proof}
    The maps $\Psi \mapsto \aut^{\V, (1)}_{\Psi}$ and $\Psi \mapsto \,^{\Gamma}\aut_{\Psi}^{\V, (1)}$ are $\Q$-group scheme morphisms from $\left(\K \mapsto (\G(\KX), \circledast)\right)$ to $\left(\K \mapsto \Aut_{\K-\alg}(\hat{\V}_G)\right)$. The $\Q$-Lie algebra morphism associated to the former $\Q$-group scheme morphism is $\psi \mapsto \der_{\psi}^{\V, (1)}$ by the proof of Proposition-Definition \ref{LA_act_der_1}.
    The Lie algebra morphism associated to the latter $\Q$-group scheme morphism takes $\psi \in \widehat{\Lib}(X)$ to the right hand side of (\ref{gamma_der_V_1}) in view of (\ref{Def_Gamma_aut_V_1}), therefore is given by $\psi \mapsto \,^{\gamma}\der_{\psi}^{\V, (1)}$. It follows that the latter map is a Lie algebra morphism.
\end{proof}

\begin{propdef}
    \label{LA_act_gamma_der_W}
    For $\psi \in \widehat{\Lib}(X)$, the derivation $^{\gamma}\der^{\V, (1)}_{\psi}$ restricts to a derivation of the subalgebra $\hat{\W}_G^{\Q}$ denoted $^{\gamma}\der^{\W, (1)}_{\psi}$. Moreover, there is a Lie algebra action of $(\widehat{\Lib}(X), \langle \cdot, \cdot \rangle)$ on $\hat{\W}_G^{\Q}$ by $\Q$-algebra derivations
    \begin{equation}
        (\widehat{\Lib}(X), \langle \cdot, \cdot \rangle) \longrightarrow \mathrm{Der}_{\Q-\alg}(\hat{\W}_G^{\Q}), \, \psi \longmapsto ^{\gamma}\der^{\W, (1)}_{\psi}
    \end{equation}
\end{propdef}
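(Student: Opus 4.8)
The plan is to obtain both assertions as the infinitesimal shadow of the group-level statements in Proposition-Definition \ref{restrict_Gammaaut} and Proposition \ref{Gamma_aut_VW_1}\ref{Gamma_aut_W_1}, following exactly the template already used in the proof of Proposition-Definition \ref{LA_act_gamma_der_V}. Recall from that proof that $\psi \mapsto {}^{\gamma}\der^{\V, (1)}_{\psi}$ is precisely the Lie algebra morphism attached to the $\Q$-group scheme morphism $\Psi \mapsto {}^{\Gamma}\aut^{\V, (1)}_{\Psi}$ from $\K \mapsto (\G(\KX), \circledast)$ to $\K \mapsto \Aut_{\K-\alg}(\hat{\V}_G)$. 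The strategy is then to differentiate the fact that this group scheme morphism restricts to the subalgebra $\hat{\W}_G$, and to read off both the restriction of the derivation and the action property from this single observation.

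For the restriction claim, I would apply Proposition-Definition \ref{restrict_Gammaaut} over the algebra of dual numbers $\K = \Q[\epsilon]/(\epsilon^2)$. For $\psi \in \widehat{\Lib}(X)$ and the corresponding element $\Psi \in \ker\big(\G(\KX) \to \G(\QX)\big)$, one has ${}^{\Gamma}\aut^{\V, (1)}_{\Psi} = \mathrm{id} + \epsilon \, {}^{\gamma}\der^{\V, (1)}_{\psi}$ since $\epsilon^2 = 0$, and Proposition-Definition \ref{restrict_Gammaaut} guarantees that the left-hand side preserves $\hat{\W}_G^{\K}$. Extracting the coefficient of $\epsilon$ shows that ${}^{\gamma}\der^{\V, (1)}_{\psi}$ maps $\hat{\W}_G^{\Q}$ into itself. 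I then define ${}^{\gamma}\der^{\W, (1)}_{\psi}$ as this restriction; since the restriction of a derivation to a subalgebra that it preserves is again a derivation, ${}^{\gamma}\der^{\W, (1)}_{\psi} \in \mathrm{Der}_{\Q-\alg}(\hat{\W}_G^{\Q})$.

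For the Lie algebra action, I would first record that $\K \mapsto \Aut_{\K-\alg}(\hat{\W}_G)$ is an affine $\Q$-group scheme with Lie algebra $\mathrm{Der}_{\Q-\alg}(\hat{\W}_G^{\Q})$; this is the verbatim analogue of the facts invoked for $\hat{\V}_G$ in the proofs of Propositions-Definitions \ref{LA_act_der_0}--\ref{LA_act_end_10}, and is justified by the freeness of $\hat{\W}_G$ on the family $Z$ established in Proposition \ref{WG_free_algebra}. By Proposition \ref{Gamma_aut_VW_1}\ref{Gamma_aut_W_1} the assignment $\Psi \mapsto {}^{\Gamma}\aut^{\W, (1)}_{\Psi}$ is a $\Q$-group scheme morphism into this scheme, and since ${}^{\Gamma}\aut^{\W, (1)}_{\Psi}$ is by definition the restriction of ${}^{\Gamma}\aut^{\V, (1)}_{\Psi}$, its attached Lie algebra morphism is the restriction of $\psi \mapsto {}^{\gamma}\der^{\V, (1)}_{\psi}$, namely $\psi \mapsto {}^{\gamma}\der^{\W, (1)}_{\psi}$. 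In particular the latter is a Lie algebra morphism, which is the asserted action.

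The only genuinely delicate point is the bookkeeping behind the affine $\Q$-group scheme $\K \mapsto \Aut_{\K-\alg}(\hat{\W}_G)$ and the identification of its Lie algebra with $\mathrm{Der}_{\Q-\alg}(\hat{\W}_G^{\Q})$ in the completed graded setting. However, because $\hat{\W}_G \cong \KZ$ is free, this reproduces the argument already used for $\hat{\V}_G$ and introduces no new obstruction; everything else is a direct transport of the $\hat{\V}_G$-level results through the inclusion $\hat{\W}_G \hookrightarrow \hat{\V}_G$.
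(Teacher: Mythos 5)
Your proposal is correct and follows essentially the same route as the paper: both arguments differentiate the group-level restriction statement over $\Q[\epsilon]/(\epsilon^2)$ to obtain the restricted derivation, and both identify $\psi \mapsto {}^{\gamma}\der^{\W,(1)}_{\psi}$ as the Lie algebra morphism attached to the $\Q$-group scheme morphism $\Psi \mapsto {}^{\Gamma}\aut^{\W,(1)}_{\Psi}$ into $\K \mapsto \Aut_{\K-\alg}(\hat{\W}_G)$. The only cosmetic difference is that you cite Proposition-Definition \ref{restrict_Gammaaut} directly where the paper packages the same fact as a commutative square deduced from Proposition \ref{Gamma_aut_VW_1}\ref{Gamma_aut_W_1}.
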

\begin{proof}
    One can prove that the assignment $\K \mapsto \Aut_{\K-\alg}(\hat{\W}_G)$ is a $\Q$-group scheme with Lie algebra $\mathrm{Der}_{\Q-\alg}(\hat{\W}_G^{\Q})$. The map $\Psi \mapsto \,^{\Gamma}\aut_{\Psi}^{\V, (1)}$ is a $\Q$-group scheme morphism from $\left(\K \mapsto (\G(\KX), \circledast)\right)$ to $\left(\K \mapsto \Aut_{\K-\alg}(\hat{\V}_G)\right)$ and, by the proof of Proposition-Definition \ref{LA_act_gamma_der_V}, its associated $\Q$-Lie algebra is $\psi \mapsto \, ^{\gamma}\der_{\psi}^{\V, (1)}$. Thanks to Proposition \ref{Gamma_aut_VW_1}\ref{Gamma_aut_W_1}, we obtain the following commutative diagram
    \[
        \begin{tikzcd}
            \hat{\W}_G \ar["^{\Gamma}\aut_{\Psi}^{\W, (1)}"]{rr} \ar[hook]{d} && \hat{\W}_G \ar[hook]{d} \\
            \hat{\V}_G \ar["^{\Gamma}\aut_{\Psi}^{\V, (1)}"']{rr} && \hat{\V}_G
        \end{tikzcd} 
    \]
    where $\Psi \in \G(\KX)$ with $\K$ a commutative $\Q$-algebra.
    Using this diagram for $\K=\Q[\epsilon]/(\epsilon^2)$ and $\psi \in \ker\Big(\G(\KX) \to \G(\Q\langle\langle X \rangle\rangle)\Big)$, one obtains that the derivation $^{\gamma}\der^{\V, (1)}_{\psi}$ restricts to a derivation on $\hat{\W}^{\Q}_G$ associated to the automorphism $^{\Gamma}\aut_{\Psi}^{\W, (1)}$, which we denoted $^{\gamma}\der_{\psi}^{\W, (1)}$.
    Moreover, the diagram states that $\Q$-group scheme morphism provided by $\Psi \mapsto \,^{\Gamma}\aut_{\Psi}^{\V, (1)}$ defines a $\Q$-group scheme morphism $\Psi \mapsto \,^{\Gamma}\aut_{\Psi}^{\W, (1)}$ from $\left(\K \mapsto (\G(\KX), \circledast)\right)$ to $\left(\K \mapsto \Aut_{\K-\alg}(\hat{\W}_G)\right)$. Therefore, the map $\psi \mapsto \, ^{\gamma}\der_{\psi}^{\W, (1)}$ from $(\widehat{\Lib}(X), \langle \cdot, \cdot \rangle)$ to $\mathrm{Der}_{\Q-\alg}(\hat{\W}_G^{\Q})$ which is the infintesimal version of the latter $\Q$-group scheme morphism is a $\Q$-Lie algebra morphism.
\end{proof}

Using Proposition-Definition \ref{LA_act_gamma_der_W}, one can define the following Lie algebra action of $(\widehat{\Lib}(X), \langle \cdot, \cdot \rangle)$ on the space $\Mor_{\Q}\left(\hat{\W}_G^{\Q}, \left(\hat{\W}_G^{\Q}\right)^{\hat{\otimes} 2}\right)$:
\begin{equation}
    \label{LA_act_on_DeltaW}
    \psi \cdot D^{\W} := \left({^{\gamma}\der^{\W, (1)}_{\psi}} \otimes \mathrm{id} + \mathrm{id} \otimes \, ^{\gamma}\der^{\W, (1)}_{\psi} \right) \circ D^{\W} - D^{\W} \circ {^{\gamma}\der^{\W, (1)}_{\psi}}.
\end{equation}
In particular, the stabilizer of $\hat{\Delta}^{\W}_{G}$ is the Lie subalgebra
\begin{equation}
    \mathfrak{stab}(\hat{\Delta}^{\W}_{G}) :=
    \left\{ \begin{array}{l}
    \psi \in \widehat{\Lib}(X) \, | \\
    \left({^{\gamma}\der^{\W, (1)}_{\psi}} \otimes \mathrm{id} + \mathrm{id} \otimes \, ^{\gamma}\der^{\W, (1)}_{\psi} \right) \circ D^{\W} = D^{\W} \circ {^{\gamma}\der^{\W, (1)}_{\psi}}
    \end{array}\right\}.
\end{equation}

\noindent For a commutative $\Q$-algebra $\K$, recall the group $\Stab(\hat{\Delta}^{\W}_G)(\K)$ in (\ref{Stab_DeltaW}). One then has
\begin{proposition}
    \label{stabW_is_LA_StabW} The assignment $\Stab(\hat{\Delta}^{\W}_G) : \K \mapsto \Stab(\hat{\Delta}^{\W}_G)(\K)$ is an affine $\Q$-group scheme and $\mathbf{Lie}(\Stab(\hat{\Delta}^{\W}_G)) = \stab(\hat{\Delta}^{\W}_G)$.
\end{proposition}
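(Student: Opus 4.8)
The plan is to follow the same two-step strategy as in the proof of Proposition \ref{stab_is_LA_Stab}, with the module $\hat{\M}_G$ replaced by the algebra $\hat{\W}_G$, the endomorphism action replaced by the derivation action, and the coproduct $\hat{\Delta}^{\M}_G$ replaced by $\hat{\Delta}^{\W}_G$.

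First, I would establish that $\Stab(\hat{\Delta}^{\W}_G)$ is an affine $\Q$-group scheme. The proof of Proposition-Definition \ref{LA_act_gamma_der_W} already records that $\Psi \mapsto {}^{\Gamma}\aut^{\W, (1)}_{\Psi}$ is a morphism of affine $\Q$-group schemes from $\K \mapsto (\G(\KX), \circledast)$ to $\K \mapsto \Aut_{\K-\alg}(\hat{\W}_G)$. Postcomposing with the conjugation representation of $\Aut_{\K-\alg}(\hat{\W}_G)$ on $\Mor_{\K-\Mod}(\hat{\W}_G, (\hat{\W}_G)^{\hat{\otimes} 2})$ by $D \mapsto (\cdot)^{\otimes 2} \circ D \circ (\cdot)^{-1}$, one obtains precisely the group action (\ref{act_on_DeltaW}); and by the defining formula (\ref{Stab_DeltaW}) the group $\Stab(\hat{\Delta}^{\W}_G)(\K)$ is the stabilizer of the fixed point $\hat{\Delta}^{\W}_G$ of this action. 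Applying \cite{EF0}, Lemma 5.1 with $v = \hat{\Delta}^{\W}_G$ then yields that $\K \mapsto \Stab(\hat{\Delta}^{\W}_G)(\K)$ is an affine $\Q$-group subscheme of $\K \mapsto (\G(\KX), \circledast)$.

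Second, for the Lie algebra, I would apply the functor $\mathbf{Lie}$ and use the general principle that it sends a stabilizer group scheme to the corresponding stabilizer Lie algebra, provided the Lie algebra action on the ambient space is the infinitesimal version of the group action. Concretely, differentiating (\ref{act_on_DeltaW}) at the identity — that is, taking $\K = \Q[\epsilon]/(\epsilon^2)$ and $\Psi \in \ker(\G(\KX) \to \G(\QX))$ — turns the conjugating automorphism ${}^{\Gamma}\aut^{\W, (1)}_{\Psi}$ into the derivation ${}^{\gamma}\der^{\W, (1)}_{\psi}$; this linearisation is exactly the relationship between the two objects recorded in the proof of Proposition-Definition \ref{LA_act_gamma_der_W}. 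Hence the conjugation action linearises to the bracket expression (\ref{LA_act_on_DeltaW}), and therefore $\mathbf{Lie}(\Stab(\hat{\Delta}^{\W}_G)) = \stab(\hat{\Delta}^{\W}_G)$.

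The routine verifications — the representability statements feeding \cite{EF0}, Lemma 5.1 and the identification of the linearised action — are handled degree by degree, each graded component of $\hat{\W}_G$ being a finite-rank free $\K$-module. The one point that deserves care, and which I expect to be the only genuine obstacle, is that the action (\ref{act_on_DeltaW}) is a priori defined on the space of \emph{algebra} morphisms $\Mor_{\K-\alg}(\hat{\W}_G, (\hat{\W}_G)^{\hat{\otimes} 2})$, whereas Lemma 5.1 of \cite{EF0} is formulated for a linear action on a module of morphisms. One resolves this by noting that conjugation by the algebra automorphism ${}^{\Gamma}\aut^{\W, (1)}_{\Psi}$ extends verbatim to the full $\K$-module $\Mor_{\K-\Mod}(\hat{\W}_G, (\hat{\W}_G)^{\hat{\otimes} 2})$ and that the stabilizer of the point $\hat{\Delta}^{\W}_G$ is the same whether computed in this linear space or in its affine subspace of algebra morphisms; it is the linear version to which Lemma 5.1 applies.
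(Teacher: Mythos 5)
Your proposal is correct and follows essentially the same route as the paper: the first assertion is obtained by applying Lemma 5.1 of \cite{EF0} with $v=\hat{\Delta}^{\W}_G$, and the second by observing that the Lie algebra action (\ref{LA_act_on_DeltaW}) is the infinitesimal version of the group action (\ref{act_on_DeltaW}). Your remark about passing from $\Mor_{\K-\alg}$ to the ambient linear space $\Mor_{\K-\Mod}(\hat{\W}_G, (\hat{\W}_G)^{\hat{\otimes} 2})$ is a point the paper handles only implicitly (its proof silently refers to the $\Mor_{\K-\Mod}$ version of the action), so making it explicit is a welcome, but not divergent, addition.
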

\begin{proof}
    The first statement is obtained by applying \cite{EF0}, Lemma 5.1 where $v=\hat{\Delta}^{\W}_G$ and the second one comes from the fact that the Lie algebra action of $(\widehat{\Lib}(X), \langle \cdot, \cdot \rangle)$ on $\Mor_{\Q}\left(\hat{\W}_G^{\Q}, \left(\hat{\W}_G^{\Q}\right)^{\hat{\otimes} 2}\right)$ given in (\ref{LA_act_on_DeltaW}) is the infinitesimal version of the group action of $(\G(\KX), \circledast)$ on $\Mor_{\K-\Mod}\left(\hat{\W}_G, \left(\hat{\W}_G\right)^{\hat{\otimes} 2}\right)$ given in (\ref{act_on_DeltaW}), for any $\Q$-algebra $\K$.
\end{proof}

\begin{corollary}
    \label{stab_inclusion} $\mathfrak{stab}(\hat{\Delta}^{\M}_{G}) \subset \mathfrak{stab}(\hat{\Delta}^{\W}_{G})$ (as Lie subalgebras of $(\widehat{\Lib}(X), \langle \cdot, \cdot \rangle$).
\end{corollary}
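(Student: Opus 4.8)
The plan is to deduce this statement from its group-scheme counterpart, Theorem \ref{Stab_Inclusion}, by applying the functor $\mathbf{Lie}$ of \cite{Wat}, Theorem 12.2. Both members of the desired inclusion have already been identified as Lie algebras of affine $\Q$-group schemes: by Proposition \ref{stab_is_LA_Stab} one has $\mathbf{Lie}(\Stab(\hat{\Delta}^{\M}_G)) = \stab(\hat{\Delta}^{\M}_G)$, and by Proposition \ref{stabW_is_LA_StabW} one has $\mathbf{Lie}(\Stab(\hat{\Delta}^{\W}_G)) = \stab(\hat{\Delta}^{\W}_G)$; moreover Proposition \ref{LA_of_Rac_groups}(a) realizes both as Lie subalgebras of $\mathbf{Lie}\big(\K \mapsto (\G(\KX), \circledast)\big) = (\LibX, \langle \cdot, \cdot \rangle)$. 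Thus it suffices to check that $\mathbf{Lie}$ carries the inclusion of subgroup schemes of Theorem \ref{Stab_Inclusion} to an inclusion of the corresponding Lie subalgebras.

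Concretely, I would use the kernel description $\mathbf{Lie}(\mathsf{G}) = \ker\big(\mathsf{G}(\Q[\epsilon]/(\epsilon^2)) \to \mathsf{G}(\Q)\big)$. Evaluating Theorem \ref{Stab_Inclusion} at the commutative $\Q$-algebra $\K = \Q[\epsilon]/(\epsilon^2)$ gives
\[
    \Stab(\hat{\Delta}^{\M}_G)\big(\Q[\epsilon]/(\epsilon^2)\big) \subset \Stab(\hat{\Delta}^{\W}_G)\big(\Q[\epsilon]/(\epsilon^2)\big),
\]
an inclusion taking place inside the single group $(\G(\KX), \circledast)$ for $\K = \Q[\epsilon]/(\epsilon^2)$. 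Since both stabilizers are subfunctors of $\K \mapsto (\G(\KX), \circledast)$, the structure projections to the $\Q$-points are restrictions of the one for $\G$ and are therefore compatible with the above inclusion; passing to kernels yields $\stab(\hat{\Delta}^{\M}_G) \subset \stab(\hat{\Delta}^{\W}_G)$ as subsets of $\LibX$, which is the claim.

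Here the steps are entirely formal and there is no real obstacle: all the substantive content sits in Theorem \ref{Stab_Inclusion} and in the two Lie-algebra identifications, which are already in place. The single point that must be noted is the functoriality of $\mathbf{Lie}$ on inclusions, i.e.\ that a monomorphism of affine $\Q$-group subschemes of a common group scheme induces an injection on Lie algebras; this is immediate from the kernel formula above. Should one prefer an intrinsic verification, the same inclusion can alternatively be obtained by repeating the argument of Theorem \ref{Stab_Inclusion} in its infinitesimal form, replacing $^{\Gamma}\aut^{\W, (1)}_{\Psi}$ and $^{\Gamma}\aut^{\M, (10)}_{\Psi}$ by the derivations $^{\gamma}\der^{\W, (1)}_{\psi}$ and the endomorphisms $^{\gamma}\mathrm{end}^{\M, (10)}_{\psi}$, and using the Leibniz analogue of Lemma \ref{compat_Gamma_MV} together with the linear compatibility of Proposition-Definition \ref{DeltaW_DeltaM}\ref{compat_DeltaW_DeltaM}; but the functorial route above is shorter and is the one I would present.
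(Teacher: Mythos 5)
Your proposal is correct and matches the paper's own argument, which likewise deduces the inclusion from Theorem \ref{Stab_Inclusion} via the identifications $\mathbf{Lie}(\Stab(\hat{\Delta}^{\M}_G)) = \stab(\hat{\Delta}^{\M}_G)$ and $\mathbf{Lie}(\Stab(\hat{\Delta}^{\W}_G)) = \stab(\hat{\Delta}^{\W}_G)$ of Propositions \ref{stab_is_LA_Stab} and \ref{stabW_is_LA_StabW}. Your explicit check via the kernel description at $\K = \Q[\epsilon]/(\epsilon^2)$ simply spells out the functoriality step that the paper leaves implicit.
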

\begin{proof}
    It follows from Theorem \ref{Stab_Inclusion} thanks to Propositions \ref{stabW_is_LA_StabW} and \ref{stab_is_LA_Stab}.
\end{proof}

\subsection{The stabilizer Lie algebra \texorpdfstring{$\stab(\hat{\Delta}^{\W}_G)$}{stabWG} in Racinet's formalism} \label{sect_stab*}
\begin{propdef}
    \label{gamma_der_Y}
    For $\psi \in \widehat{\Lib}(X)$, we denote $^{\gamma}d^Y_{\psi}$ the derivation of $\Q\langle\langle Y \rangle\rangle$ given by
    \begin{equation}
        \label{gamma_dY}
        ^{\gamma}d^Y_{\psi} := \varpi^{-1} \circ \,^{\gamma}\der^{\W, (1)}_{\psi} \circ \varpi
    \end{equation}
    where $^{\gamma}\der^{\W, (1)}_{\psi}$ is as in Proposition-Definition \ref{LA_act_gamma_der_W} and $\varpi : \Q\langle\langle Y \rangle\rangle \to \hat{\W}_G^{\Q}$ is the $\Q$-algebra isomorphism of Corollary \ref{iso_WG_MG}\ref{item_diag_projections}.
    There is a Lie algebra action of $(\widehat{\Lib}(X), \langle \cdot, \cdot \rangle)$ on $\Q\langle\langle Y \rangle\rangle$ by derivations given by
    \begin{equation}
        \widehat{\Lib}(X) \longrightarrow \mathrm{Der}_{\Q-\alg}(\Q\langle\langle Y \rangle\rangle), \, \psi \longmapsto \,^{\gamma}d^Y_{\psi} 
    \end{equation}
\end{propdef}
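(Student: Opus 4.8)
The plan is to treat this Proposition-Definition as the Lie-algebraic shadow of the group-level Proposition-Definition \ref{Gamma_aut_Y}: there, conjugation of $^{\Gamma}\aut^{\W, (1)}_{\Psi}$ by the algebra isomorphism $\varpi$ transports the action of $(\G(\KX), \circledast)$ by algebra automorphisms of $\hat{\W}_G$ to an action on $\KY$, and here the same conjugation should transport the derivation action of Proposition-Definition \ref{LA_act_gamma_der_W} to one on $\Q\langle\langle Y \rangle\rangle$. Two things must be checked: that each $^{\gamma}d^Y_{\psi}$ is genuinely a $\Q$-algebra derivation, and that $\psi \mapsto \,^{\gamma}d^Y_{\psi}$ is a morphism of Lie algebras into $\mathrm{Der}_{\Q-\alg}(\Q\langle\langle Y \rangle\rangle)$ equipped with its commutator bracket. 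Both reduce to the single structural input that $\varpi$ is a $\Q$-algebra isomorphism (Corollary \ref{iso_WG_MG}\ref{item_diag_projections}, specialised to $\K = \Q$).

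For the derivation claim I would invoke the general fact that the conjugate of a derivation by an algebra isomorphism is again a derivation. Since $^{\gamma}\der^{\W, (1)}_{\psi}$ is a $\Q$-algebra derivation of $\hat{\W}_G^{\Q}$ by Proposition-Definition \ref{LA_act_gamma_der_W}, and both $\varpi$ and $\varpi^{-1}$ are $\Q$-algebra morphisms, a one-line application of the Leibniz rule shows that $^{\gamma}d^Y_{\psi} = \varpi^{-1} \circ \,^{\gamma}\der^{\W, (1)}_{\psi} \circ \varpi$ obeys the Leibniz rule on $\Q\langle\langle Y \rangle\rangle$, hence is a derivation. This simultaneously makes the definition well posed.

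For the Lie-algebra-morphism claim I would use that $\psi \mapsto \,^{\gamma}\der^{\W, (1)}_{\psi}$ is already a Lie algebra morphism (Proposition-Definition \ref{LA_act_gamma_der_W}), together with the observation that conjugation by the fixed isomorphism $\varpi$ intertwines commutators: for linear maps $A, B$ of $\hat{\W}_G^{\Q}$ one has $\varpi^{-1} \circ [A, B] \circ \varpi = [\varpi^{-1} \circ A \circ \varpi, \,\varpi^{-1} \circ B \circ \varpi]$, the cross terms $\varpi \circ \varpi^{-1}$ cancelling. Applying this to $A = \,^{\gamma}\der^{\W, (1)}_{\psi_1}$ and $B = \,^{\gamma}\der^{\W, (1)}_{\psi_2}$, and feeding in the identity $^{\gamma}\der^{\W, (1)}_{\langle \psi_1, \psi_2 \rangle} = [\,^{\gamma}\der^{\W, (1)}_{\psi_1}, \,^{\gamma}\der^{\W, (1)}_{\psi_2}]$, yields $^{\gamma}d^Y_{\langle \psi_1, \psi_2 \rangle} = [\,^{\gamma}d^Y_{\psi_1}, \,^{\gamma}d^Y_{\psi_2}]$, which is exactly what is required. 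This is the precise analogue of the chain of equalities establishing the group action in Proposition-Definition \ref{Gamma_aut_Y}, with composition replaced by the commutator bracket.

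I do not expect a genuine obstacle, as the argument is formal; the only points demanding care are bookkeeping ones. I would confirm that the Lie bracket on the target is the commutator of derivations (so that $\mathrm{Der}_{\Q-\alg}(\Q\langle\langle Y \rangle\rangle)$ is the relevant Lie algebra) and note that $\varpi$ remains an isomorphism of $\Q$-algebras after specialising the coefficient algebra to $\Q$. An alternative, in the spirit of the rest of §\ref{LA_Side}, would be to realise $\psi \mapsto \,^{\gamma}d^Y_{\psi}$ as the infinitesimal version of the $\Q$-group scheme morphism $\Psi \mapsto \,^{\Gamma}\aut^Y_{\Psi}$ coming from Proposition-Definition \ref{Gamma_aut_Y}; but since the conjugation argument is self-contained and short, I would present that version instead.
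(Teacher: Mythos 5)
Your proof is correct, but it takes a different route from the paper's. You argue entirely at the Lie-algebra level: conjugation by the fixed algebra isomorphism $\varpi$ sends derivations to derivations (Leibniz rule plus the fact that $\varpi^{\pm 1}$ are algebra morphisms) and intertwines commutators, so the Lie-algebra-morphism property of $\psi \mapsto \,^{\gamma}\der^{\W,(1)}_{\psi}$ from Proposition-Definition \ref{LA_act_gamma_der_W} transports directly to $\psi \mapsto \,^{\gamma}d^Y_{\psi}$. The paper instead works at the level of affine $\Q$-group schemes: it observes that $\K \mapsto \Aut_{\K-\alg}(\KY)$ is a $\Q$-group scheme with Lie algebra $\mathrm{Der}_{\Q-\alg}(\QY)$, that $\Psi \mapsto \,^{\Gamma}\aut^Y_{\Psi}$ is a group scheme morphism related to $\Psi \mapsto \,^{\Gamma}\aut^{\W,(1)}_{\Psi}$ by conjugation with $\varpi$, and then reads off $^{\gamma}d^Y_{\psi}$ as the associated Lie algebra morphism — the route you mention at the end but decline to take. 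Your version is shorter and more elementary, requiring no group-scheme machinery. What the paper's version buys is the identification of $\psi \mapsto \,^{\gamma}d^Y_{\psi}$ as the \emph{infinitesimal version} of the group action $\Psi \mapsto \,^{\Gamma}\aut^Y_{\Psi}$, a fact that is not a formal consequence of your argument and that is used later (in the proof of Proposition \ref{stabalg_is_LA_Stabalg}) to conclude $\mathbf{Lie}(\Stab(\hat{\Delta}^{\alg}_{\star})) = \stab(\hat{\Delta}^{\alg}_{\star})$. So if you adopt your proof, that compatibility would still need to be supplied separately downstream; for the statement as posed, however, your argument is complete.
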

\begin{proof}
    One can prove that the assignment $\K \mapsto \Aut_{\K-\alg}(\KY)$ is a $\Q$-group scheme with Lie algebra $\mathrm{Der}_{\Q-\alg}(\QY)$. Thanks to Proposition-Definition \ref{Gamma_aut_Y}, the map $(\G(\KX), \circledast) \to \Aut_{\K-\alg}(\KY)$, $\Psi \mapsto \, ^{\Gamma}\aut_{\Psi}^{Y}$ is a morphism of $\Q$-group schemes from $\K \mapsto (\G(\KX), \circledast)$ to the latter $\K \mapsto \Aut_{\K-\alg}(\KY)$.
    It is related to the morphism of $\Q$-group schemes $\Psi \mapsto \,^{\Gamma} \aut^{\W, (1)}_{\Psi}$ of Proposition-Definition \ref{restrict_Gammaaut} by (\ref{Gamma_autY}). It follows that the corresponding $\Q$-Lie algebra morphism takes $\psi \in \LibX$ to the right hand side of (\ref{gamma_dY}). The statement then follows from (\ref{gamma_dY}).
\end{proof}

For any $\psi \in \widehat{\Lib}(X)$, the derivation $^{\gamma}d^Y_{\psi}$ can be expressed in the formalism of \cite{Rac} as follows
\begin{proposition}
    For $\psi \in \widehat{\Lib}(X)$ and $(n, g) \in \N^{\ast} \times G$ we have
    \begin{equation}
        \label{gamma_d_psi_ng}
        ^{\gamma}d_{\psi}^Y(y_{n, g}) = \q_Y\Big(\big(\psi x_0^{n-1} - x_0^{n-1} t_g(\psi)\big) x_g\Big) + \q_Y\Big(\big(x_0^{n-1} \gamma_{\psi}(x_g) -  \gamma_{\psi}(x_1)x_0^{n-1}\big) x_g\Big).
    \end{equation}
\end{proposition}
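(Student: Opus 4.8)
The plan is to reduce everything, via the definition $^{\gamma}d^Y_{\psi} = \varpi^{-1} \circ \,^{\gamma}\der^{\W, (1)}_{\psi} \circ \varpi$ of Proposition-Definition \ref{gamma_der_Y} together with $\varpi(y_{n,g}) = z_{n,g}$, to the single computation of $^{\gamma}\der^{\W, (1)}_{\psi}(z_{n,g})$ inside $\hat{\W}_G^{\Q}$; I would then transport the answer back to $\KY$ through the crossed product isomorphism $\beta$ and Lemma \ref{beta_and_q}. This is precisely the infinitesimal shadow of the group-level computation carried out in the proof of Proposition \ref{explicit_autY}, with the conjugations and left multiplications by $\beta(\Psi \otimes 1)$ and $\Gamma_{\Psi}(-e_1)$ replaced by the corresponding $\ad$'s and $\ell$'s.

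First I would unfold the definitions. By Proposition-Definition \ref{LA_act_gamma_der_W}, $^{\gamma}\der^{\W, (1)}_{\psi}$ is the restriction to $\hat{\W}_G^{\Q}$ of $^{\gamma}\der^{\V, (1)}_{\psi}$, and combining Proposition-Definitions \ref{LA_act_gamma_der_V} and \ref{LA_act_der_1} one gets
\[
    ^{\gamma}\der^{\V, (1)}_{\psi} = \ad_{\beta(\psi \otimes 1) - \gamma_{\psi}(-e_1)} + \der^{\V, (0)}_{\psi}.
\]
Here it is worth recording, from Proposition-Definition \ref{LA_act_der_0}, that $\der^{\V, (0)}_{\psi}$ annihilates $e_0$ and every $g \in G$ while sending $e_1 \mapsto [e_1, \beta(\psi \otimes 1)]$; in particular $\der^{\V, (1)}_{\psi}(e_1) = 0$, the infinitesimal counterpart of the invariance of $e_1$ under $\aut^{\V, (1)}_{\Psi}$ exploited in Proposition \ref{explicit_autY}.

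Next I would evaluate on $z_{n,g} = -e_0^{n-1} g e_1$. Since $\der^{\V, (0)}_{\psi}$ kills $e_0$ and $g$, the Leibniz rule leaves only $\der^{\V, (0)}_{\psi}(z_{n,g}) = -e_0^{n-1} g [e_1, \beta(\psi \otimes 1)]$, while the commutator part contributes $[\beta(\psi \otimes 1) - \gamma_{\psi}(-e_1), z_{n,g}]$. Adding these and using that $\gamma_{\psi}(-e_1)$, being a power series in $e_1$, commutes with $e_1$, the two occurrences of $\beta(\psi \otimes 1)$ sitting to the right of $e_1$ cancel, leaving four monomials. I would rewrite each of them through $\beta$, using $e_0 = \beta(x_0 \otimes 1)$, $g = \beta(1 \otimes g)$, $e_1 = -\beta(x_1 \otimes 1)$, $\gamma_{\psi}(-e_1) = \beta(\gamma_{\psi}(x_1) \otimes 1)$, and the crossed product rule $(1 \otimes g) \ast (a \otimes 1) = t_g(a) \otimes g$ to pull group elements past series in $x_1$ (so that $t_g(x_1) = x_g$ and $t_g(\gamma_{\psi}(x_1)) = \gamma_{\psi}(x_g)$). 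Collecting the four terms into one expression gives
\[
    ^{\gamma}\der^{\W, (1)}_{\psi}(z_{n,g}) = \beta\Big(\big(\psi x_0^{n-1} - x_0^{n-1} t_g(\psi) + x_0^{n-1}\gamma_{\psi}(x_g) - \gamma_{\psi}(x_1) x_0^{n-1}\big) x_g \otimes g\Big).
\]
As the argument has the form $(a x_g) \otimes g$ with $a \in \KX$, Lemma \ref{beta_and_q} rewrites the right-hand side as $\varpi \circ \q_Y$ of the bracketed element; applying $\varpi^{-1}$ and regrouping the four terms into the two parenthesised blocks then yields exactly (\ref{gamma_d_psi_ng}).

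The only genuinely delicate step is the middle one: assembling the commutator and Leibniz contributions, tracking the signs coming from the two conventions $z_{n,g} = -e_0^{n-1} g e_1$ and $e_1 = -\beta(x_1 \otimes 1)$, and inserting the correct $t_g$-twist each time a group element is commuted past a series in $e_1$ (this is what turns $\gamma_{\psi}(x_1)$ into $\gamma_{\psi}(x_g)$ and $\psi$ into $t_g(\psi)$). Everything else is formal transcription through the isomorphisms $\beta$ and $\varpi$. The same formula is, conceptually, what one obtains by differentiating the expression of Proposition \ref{explicit_autY} at $\Psi = 1 + \epsilon\psi$ over $\K = \Q[\epsilon]/(\epsilon^2)$, with $\gamma_{\psi}$ appearing as the first-order term of $\Gamma_{\Psi}$, which explains the shape of the answer.
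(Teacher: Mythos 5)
Your argument is correct, but it takes a genuinely different route from the paper's. The paper's proof is a two-line argument: since $\psi \mapsto {}^{\gamma}d^Y_{\psi}$ is by construction (Proposition-Definition \ref{gamma_der_Y}) the infinitesimal version of $\Psi \mapsto {}^{\Gamma}\aut^Y_{\Psi}$, it simply differentiates the already-established group-level formula of Proposition \ref{explicit_autY} at $\Psi = 1+\epsilon\psi$ over $\Q[\epsilon]/(\epsilon^2)$, with $\gamma_{\psi}$ as the first-order term of $\Gamma_{\Psi}$, obtaining $\q_Y\big(\big((-\gamma_{\psi}(x_1)+\psi)x_0^{n-1} + x_0^{n-1}t_g(\gamma_{\psi}(x_1)-\psi)\big)x_g\big)$ and then rearranging into (\ref{gamma_d_psi_ng}). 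You instead redo the crossed-product computation from scratch at the derivation level: unfolding ${}^{\gamma}\der^{\V,(1)}_{\psi} = \ad_{\beta(\psi\otimes1)-\gamma_{\psi}(-e_1)} + \der^{\V,(0)}_{\psi}$, applying it to $z_{n,g} = -e_0^{n-1}ge_1$ via the Leibniz rule, and transporting back through $\beta$ and Lemma \ref{beta_and_q}. I checked the delicate middle step: the two occurrences of $\beta(\psi\otimes1)$ to the right of $e_1$ do cancel, and the four surviving monomials assemble, with the correct $t_g$-twists from the crossed-product rule, into $\beta\big(\big(\psi x_0^{n-1} - x_0^{n-1}t_g(\psi) + x_0^{n-1}\gamma_{\psi}(x_g) - \gamma_{\psi}(x_1)x_0^{n-1}\big)x_g \otimes g\big)$, so your derivation is sound. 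The paper's route buys brevity by reusing the group-level work; yours is longer but self-contained at the Lie-algebra level and has the side benefit of re-verifying that the explicit formulas for $\der^{\V,(0)}_{\psi}$ and its twists in Proposition-Definitions \ref{LA_act_der_0}, \ref{LA_act_der_1} and \ref{LA_act_gamma_der_V} are consistent with the infinitesimal picture. Your closing remark correctly identifies the differentiation argument as the alternative — it is in fact the one the paper uses.
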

\begin{proof}
    The infinitesimal version of the identity in Proposition \ref{explicit_autY} is given by
    \[
        ^{\gamma}d_{\psi}^Y(y_{n, g}) = \q_Y\Big(\big((-\gamma_{\psi}(x_1) + \psi) x_0^{n-1} + x_0^{n-1} t_g(\gamma_{\psi}(x_1) - \psi)\big) x_g\Big).
    \]
    Identity \ref{gamma_d_psi_ng} then follows.
\end{proof}

From Proposition \ref{gamma_der_Y}, we define a Lie algebra action of $(\widehat{\Lib}(X), \langle \cdot, \cdot \rangle)$ on the space $\Mor_{\Q}\left(\Q\langle\langle Y \rangle\rangle, \Q\langle\langle Y \rangle\rangle^{\hat{\otimes} 2}\right)$ by
\begin{equation}
    \label{LA_act_on_Deltaalg}
    \psi \cdot D := \left({^{\gamma}d^{Y}_{\psi}} \otimes \mathrm{id} + \mathrm{id} \otimes {^{\gamma}d^{Y}_{\psi}}\right) \circ D - D \circ \,^{\gamma}d^{Y}_{\psi}.
\end{equation}
In particular, the stabilizer of $\hat{\Delta}^{\alg}_{\star}$ is the Lie subalgebra
\begin{equation}
    \stab(\hat{\Delta}^{\alg}_{\star}) := \left\{ \psi \in \widehat{\Lib}(X) \, | \, \left({^{\gamma}d^{Y}_{\psi}} \otimes \mathrm{id} + \mathrm{id} \otimes {^{\gamma}d^{Y}_{\psi}}\right) \circ \hat{\Delta}^{\alg}_{\star} = \hat{\Delta}^{\alg}_{\star} \circ \,^{\gamma}d^{Y}_{\psi} \right\}.
\end{equation}

\noindent For a commutative $\Q$-algebra $\K$, recall the group $\Stab(\hat{\Delta}^{\alg}_{\star})(\K)$ in (\ref{Stab_Deltaalg}). One then has
\begin{proposition}
    \label{stabalg_is_LA_Stabalg} The assignment $\Stab(\hat{\Delta}^{\alg}_{\star}) : \K \mapsto \Stab(\hat{\Delta}^{\alg}_{\star})(\K)$ is an affine $\Q$-group scheme and $\mathbf{Lie}(\Stab(\hat{\Delta}^{\alg}_{\star})) = \stab(\hat{\Delta}^{\alg}_{\star})$.
\end{proposition}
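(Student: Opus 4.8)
The plan is to mirror the proofs of Propositions \ref{stab_is_LA_Stab} and \ref{stabW_is_LA_StabW}, treating the two assertions separately: first that the functor $\K \mapsto \Stab(\hat{\Delta}^{\alg}_{\star})(\K)$ is representable, and then that applying $\mathbf{Lie}$ to it returns $\stab(\hat{\Delta}^{\alg}_{\star})$. In both steps the decisive input is the group action machinery already built in \S\ref{sect_StabW_Stab*}, namely the action (\ref{act_on_Deltaalg}) whose stabilizer is $\Stab(\hat{\Delta}^{\alg}_{\star})$ by (\ref{Stab_Deltaalg}).

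For the affine $\Q$-group scheme structure, I would apply \cite{EF0}, Lemma 5.1 with the fixed vector taken to be $v = \hat{\Delta}^{\alg}_{\star} \in \Mor_{\K-\alg}(\KY, \KY^{\hat{\otimes} 2})$. The hypothesis of that lemma demands that the relevant action arise from a morphism of affine $\Q$-group schemes. Here this is supplied by Proposition-Definition \ref{Gamma_aut_Y}, which exhibits $\Psi \mapsto {}^{\Gamma}\aut^{Y}_{\Psi}$ as a group morphism into $\Aut_{\K-\alg}(\KY)$, together with the fact (recorded in the proof of Proposition-Definition \ref{gamma_der_Y}) that $\K \mapsto \Aut_{\K-\alg}(\KY)$ is itself an affine $\Q$-group scheme with Lie algebra the derivations of $\QY$. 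The induced action on $\Mor_{\K-\alg}(\KY, \KY^{\hat{\otimes} 2})$ is exactly the conjugation action (\ref{act_on_Deltaalg}), so that $\Stab(\hat{\Delta}^{\alg}_{\star})$ is precisely the stabilizer functor to which the lemma applies.

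For the Lie algebra identity, the key point I would establish is that the Lie algebra action (\ref{LA_act_on_Deltaalg}) on $\Mor_{\Q}(\QY, \QY^{\hat{\otimes} 2})$ is the infinitesimal version of the group action (\ref{act_on_Deltaalg}). This follows because $\psi \mapsto {}^{\gamma}d^{Y}_{\psi}$ is, by Proposition-Definition \ref{gamma_der_Y}, the $\Q$-Lie algebra morphism associated to the $\Q$-group scheme morphism $\Psi \mapsto {}^{\Gamma}\aut^{Y}_{\Psi}$. Differentiating the conjugation-type assignment $D \mapsto ({}^{\Gamma}\aut^{Y}_{\Psi})^{\otimes 2} \circ D \circ ({}^{\Gamma}\aut^{Y}_{\Psi})^{-1}$ at the identity, that is, evaluating at $\K = \Q[\epsilon]/(\epsilon^2)$ on elements of $\ker(\G(\KX) \to \G(\QX))$, produces exactly the Leibniz-type expression $({}^{\gamma}d^{Y}_{\psi} \otimes \mathrm{id} + \mathrm{id} \otimes {}^{\gamma}d^{Y}_{\psi}) \circ D - D \circ {}^{\gamma}d^{Y}_{\psi}$. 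Feeding this into the functoriality of $\mathbf{Lie}$ on the stabilizer group scheme (as in \cite{EF0}, Lemma 5.1) yields the stabilizer Lie subalgebra, which is $\stab(\hat{\Delta}^{\alg}_{\star})$ by definition.

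The computations involved are routine and structurally identical to those already carried out for $\hat{\Delta}^{\M}_{G}$ and $\hat{\Delta}^{\W}_{G}$; the only point requiring genuine care is verifying that the infinitesimalization of the quadratic (conjugation) action on the Hom-space is the stated linear one, and this is standard. Alternatively, and perhaps more economically, since Theorem \ref{StabDeltaW_StabDelta*} gives $\Stab(\hat{\Delta}^{\alg}_{\star})(\K) = \Stab(\hat{\Delta}^{\W}_{G})(\K)$ naturally in $\K$, one may simply identify the two functors and invoke Proposition \ref{stabW_is_LA_StabW}; this route reduces the Lie algebra assertion to the equality $\stab(\hat{\Delta}^{\W}_{G}) = \stab(\hat{\Delta}^{\alg}_{\star})$, the infinitesimal analogue of Theorem \ref{StabDeltaW_StabDelta*}, which is obtained by transporting the derivation action along the isomorphism $\varpi$.
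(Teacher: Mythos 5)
Your proposal follows essentially the same route as the paper: representability via \cite{EF0}, Lemma 5.1 applied to $v=\hat{\Delta}^{\alg}_{\star}$, and the Lie algebra identity from the fact that the action (\ref{LA_act_on_Deltaalg}) is the infinitesimal version of (\ref{act_on_Deltaalg}), with $\psi \mapsto {}^{\gamma}d^{Y}_{\psi}$ being the Lie algebra morphism attached to $\Psi \mapsto {}^{\Gamma}\aut^{Y}_{\Psi}$ by Proposition-Definition \ref{gamma_der_Y}. The extra detail you supply on differentiating the conjugation action, and the alternative shortcut via Theorem \ref{StabDeltaW_StabDelta*} and Proposition \ref{stabW_is_LA_StabW}, are both sound but not needed beyond what the paper records.
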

\begin{proof}
    The first statement is a consequence of \cite{EF0}, Lemma 5.1 where $v= \hat{\Delta}^{\alg}_{\star}$ and the second one comes from the fact that the $(\widehat{\Lib}(X), \langle \cdot, \cdot \rangle)$-action on $\Mor_{\Q}\left(\Q\langle\langle Y \rangle\rangle, \Q\langle\langle Y \rangle\rangle^{\hat{\otimes} 2}\right)$ given in (\ref{LA_act_on_Deltaalg}) is the infinitesimal version of the group action of $(\G(\KX), \circledast)$ on $\Mor_{\K-\alg}\left(\K\langle\langle Y \rangle\rangle, \K\langle\langle Y \rangle\rangle^{\hat{\otimes} 2}\right)$ given in (\ref{act_on_Deltaalg}), for any $\Q$-algebra $\K$.
\end{proof}

\begin{corollary}
    \label{stabDeltaW_stabDelta*} $\stab(\hat{\Delta}^{\alg}_{\star}) = \stab(\hat{\Delta}^{\W}_{G})$ (as Lie subalgebras of $(\widehat{\Lib}(X), \langle \cdot, \cdot \rangle$).
\end{corollary}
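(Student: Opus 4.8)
The plan is to obtain this equality of Lie subalgebras as a formal consequence of the corresponding equality of affine $\Q$-group schemes established in Theorem \ref{StabDeltaW_StabDelta*}, by applying the functor $\mathbf{Lie}$. This follows verbatim the strategy already used for Corollary \ref{stabDeltaM_stabDeltamod}, where $\stab(\hat{\Delta}^{\M}_G) = \stab(\hat{\Delta}^{\Mod}_{\star})$ was deduced from the group-scheme identity of Theorem \ref{StabDeltaM_StabDelta*}; here the roles of the module-side objects are played by the algebra-side objects $\hat{\Delta}^{\W}_G$ and $\hat{\Delta}^{\alg}_{\star}$.

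First I would record the two identifications of stabilizer Lie algebras as $\mathbf{Lie}$-images. By Proposition \ref{stabalg_is_LA_Stabalg}, the functor $\Stab(\hat{\Delta}^{\alg}_{\star})$ is an affine $\Q$-group scheme with $\mathbf{Lie}(\Stab(\hat{\Delta}^{\alg}_{\star})) = \stab(\hat{\Delta}^{\alg}_{\star})$, and by Proposition \ref{stabW_is_LA_StabW} the functor $\Stab(\hat{\Delta}^{\W}_G)$ is an affine $\Q$-group scheme with $\mathbf{Lie}(\Stab(\hat{\Delta}^{\W}_G)) = \stab(\hat{\Delta}^{\W}_G)$. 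Then I would invoke Theorem \ref{StabDeltaW_StabDelta*}, which gives $\Stab(\hat{\Delta}^{\alg}_{\star})(\K) = \Stab(\hat{\Delta}^{\W}_G)(\K)$ as subgroups of $(\G(\KX), \circledast)$ for every commutative $\Q$-algebra $\K$; since both are subfunctors of $\K \mapsto (\G(\KX), \circledast)$, this pointwise equality is an equality of affine $\Q$-group schemes. Applying $\mathbf{Lie}$ then yields
\[
    \stab(\hat{\Delta}^{\alg}_{\star}) = \mathbf{Lie}(\Stab(\hat{\Delta}^{\alg}_{\star})) = \mathbf{Lie}(\Stab(\hat{\Delta}^{\W}_G)) = \stab(\hat{\Delta}^{\W}_G),
\]
all regarded as Lie subalgebras of $(\widehat{\Lib}(X), \langle \cdot, \cdot \rangle)$, which is exactly the claim.

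I do not expect a genuine obstacle at this stage: the substantive work lies upstream, in the group-scheme equality of Theorem \ref{StabDeltaW_StabDelta*} (itself resting on the fact that $\varpi$ is a $(\G(\KX), \circledast)$-module isomorphism carrying $\hat{\Delta}^{\alg}_{\star}$ to $\hat{\Delta}^{\W}_G$, by Proposition-Definition \ref{Gamma_aut_Y} and Lemma \ref{link_DeltaW_Delta*}), and in the two $\mathbf{Lie}$-computations of Propositions \ref{stabalg_is_LA_Stabalg} and \ref{stabW_is_LA_StabW}. The only point deserving a moment's attention is that Theorem \ref{StabDeltaW_StabDelta*} is genuinely functorial — an equality of subfunctors rather than a mere pointwise coincidence — but this is immediate because $\varpi$ is defined independently of $\K$ and is compatible with base change, so the induced identification of stabilizers respects the group-scheme structure maps and in particular passes to the evaluations at $\Q$ and $\Q[\epsilon]/(\epsilon^2)$ that enter the computation of $\mathbf{Lie}$.
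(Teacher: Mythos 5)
Your proposal is correct and follows exactly the paper's own argument: the paper likewise deduces the equality by applying $\mathbf{Lie}$ to the group-scheme identity of Theorem \ref{StabDeltaW_StabDelta*}, using Propositions \ref{stabalg_is_LA_Stabalg} and \ref{stabW_is_LA_StabW} to identify $\stab(\hat{\Delta}^{\alg}_{\star})$ and $\stab(\hat{\Delta}^{\W}_{G})$ as the Lie algebras of the corresponding affine $\Q$-group schemes. Your additional remark on the functoriality of the identification is a sensible elaboration of a point the paper leaves implicit, but does not change the route.
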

\begin{proof}
    It follows from Theorem \ref{StabDeltaW_StabDelta*} thanks to Propositions \ref{stabalg_is_LA_Stabalg} and \ref{stabW_is_LA_StabW}.
\end{proof}

\noindent Finally, in Racinet's formalism, this translates to:
\begin{corollary}
    $\stab(\hat{\Delta}^{\Mod}_{\star}) \subset \stab(\hat{\Delta}^{\alg}_{\star})$ (as Lie subalgebras of $(\widehat{\Lib}(X), \langle \cdot, \cdot \rangle$).
\end{corollary}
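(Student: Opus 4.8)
The plan is to obtain this inclusion purely formally by chaining the three results established immediately before it; no new computation should be required. The statement concerns Lie subalgebras of $(\widehat{\Lib}(X), \langle \cdot, \cdot \rangle)$, and every intermediate object in the chain is already exhibited as such a subalgebra.

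First I would rewrite the left-hand side using the identification of stabilizer Lie algebras coming from the crossed-product formalism. By Corollary \ref{stabDeltaM_stabDeltamod} one has $\stab(\hat{\Delta}^{\Mod}_{\star}) = \stab(\hat{\Delta}^{\M}_G)$, which is the Lie-algebra shadow of the group identification $\Stab(\hat{\Delta}^{\M}_G)(\K) = \Stab(\hat{\Delta}^{\Mod}_{\star})(\K)$ of Theorem \ref{StabDeltaM_StabDelta*}, transported through the Lie functor via Proposition \ref{stab_is_LA_Stab} and Proposition \ref{LA_of_Rac_groups}\ref{LA_stab_Stab}. Next I would insert the substantive inclusion at the level of the crossed product: Corollary \ref{stab_inclusion} gives $\stab(\hat{\Delta}^{\M}_G) \subset \stab(\hat{\Delta}^{\W}_G)$. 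This is the only step carrying genuine content — it is the infinitesimal version of the group-level inclusion in Theorem \ref{Stab_Inclusion}, passed to Lie algebras through Propositions \ref{stabW_is_LA_StabW} and \ref{stab_is_LA_Stab}. Finally I would rewrite the right-hand side using Corollary \ref{stabDeltaW_stabDelta*}, which yields $\stab(\hat{\Delta}^{\W}_G) = \stab(\hat{\Delta}^{\alg}_{\star})$, again a consequence of Theorem \ref{StabDeltaW_StabDelta*} transported through Propositions \ref{stabalg_is_LA_Stabalg} and \ref{stabW_is_LA_StabW}.

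Concatenating these three steps produces
\[
    \stab(\hat{\Delta}^{\Mod}_{\star}) = \stab(\hat{\Delta}^{\M}_G) \subset \stab(\hat{\Delta}^{\W}_G) = \stab(\hat{\Delta}^{\alg}_{\star}),
\]
which is exactly the claimed inclusion. Since all the analytic and combinatorial work has already been absorbed into the cited corollaries — the stabilizer identifications on one side and the crossed-product inclusion on the other — there is no remaining obstacle here; the argument is a one-line bookkeeping of inclusions and equalities of Lie subalgebras of a fixed ambient Lie algebra. If one wished to name a single point deserving care, it would be checking that all four stabilizer Lie algebras are being regarded as subalgebras of the \emph{same} $(\widehat{\Lib}(X), \langle \cdot, \cdot \rangle)$, so that the equalities and the inclusion compose literally; but this is guaranteed by the fact that each was defined in §\ref{sect_stabM}, §\ref{sect_stabW} and §\ref{sect_stab*} as a subset of $\widehat{\Lib}(X)$ cut out by a stabilizer condition for the respective action.
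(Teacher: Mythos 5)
Your proposal is correct and coincides with the paper's own proof, which likewise deduces the inclusion immediately from Corollary \ref{stab_inclusion} combined with the identifications in Corollaries \ref{stabDeltaM_stabDeltamod} and \ref{stabDeltaW_stabDelta*}. The chain $\stab(\hat{\Delta}^{\Mod}_{\star}) = \stab(\hat{\Delta}^{\M}_G) \subset \stab(\hat{\Delta}^{\W}_G) = \stab(\hat{\Delta}^{\alg}_{\star})$ is exactly the intended argument, and your remark that all four objects live inside the same ambient $(\widehat{\Lib}(X), \langle \cdot, \cdot \rangle)$ is the only point requiring care.
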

\begin{proof}
    It immediately follows from Corollary \ref{stab_inclusion} thanks to Corollaries \ref{stabDeltaM_stabDeltamod} and \ref{stabDeltaW_stabDelta*}.  
\end{proof}
	\nocite{Enr20} \nocite{EF2} \nocite{EF3} \nocite{Enr08} \nocite{Fur11} \nocite{Fur12} \nocite{Kas} \nocite{DeGa}
	\bibliographystyle{abstract}
	\bibliography{BibliPaperOne}
\end{document}